\numberwithin{equation}{subsection}
\numberwithin{figure}{subsection}
\newtheorem{dummy}{dummy}[section]
\newtheorem{lemma}[dummy]{Lemma}
\newtheorem{theorem}[dummy]{Theorem}
\newtheorem{corollary}[dummy]{Corollary}
\newtheorem{proposition}[dummy]{Proposition}
\theoremstyle{definition}
\newtheorem{definition}[dummy]{Definition}
\newtheorem{example}[dummy]{Example}
\newtheorem{remark}[dummy]{Remark}
\newcommand{\bA}{\mathbb{A}}
\newcommand{\bC}{\mathbb{C}}
\newcommand{\bF}{\mathbb{F}}
\newcommand{\bP}{\mathbb{P}}
\newcommand{\bQ}{\mathbb{Q}}
\newcommand{\bR}{\mathbb{R}}
\newcommand{\bZ}{\mathbb{Z}}
\newcommand{\cA}{\mathcal{A}}
\newcommand{\cB}{\mathcal{B}}
\newcommand{\cC}{\mathcal{C}}
\newcommand{\cE}{\mathcal{E}}
\newcommand{\cF}{\mathcal{F}}
\newcommand{\cG}{\mathcal{G}}
\newcommand{\cH}{\mathcal{H}}
\newcommand{\cL}{\mathcal{L}}
\newcommand{\cM}{\mathcal{M}}
\newcommand{\cP}{\mathcal{P}}
\newcommand{\cR}{\mathcal{R}}
\newcommand{\cS}{\mathcal{S}}
\newcommand{\cT}{\mathcal{T}}
\newcommand{\op}{\operatorname}
\newcommand{\Ext}{\mathrm{Ext}}
\newcommand{\Hom}{\mathrm{Hom}}
\renewcommand{\log}{{\op{log}}}
\newcommand{\RR}{\mathbb{R}}
\renewcommand{\SS}{\mathit{SS}}
\newcommand{\redact}[1]{}
\newcommand{\ZZ}{\mathbf{Z}}
\newcommand{\mM}{\mathcal{M}}
\renewcommand{\subset}{\subseteq}
\newcommand{\twFuk}{\mathbf{Fuk}^\bullet}
\newcommand{\dgfun}{\mathbf{Fun}^\bullet}
\newcommand{\dgsh}{\mathbf{Sh}^\bullet}
\newcommand{\Sh}{\mathrm{Sh}}
\newcommand{\Fun}{\mathrm{Fun}}
\newcommand{\Mo}{\mathrm{Mo}}
\newcommand{\PGL}{\mathrm{PGL}}
\newcommand{\mmon}{\mu\mathrm{mon}}
\newcommand{\GL}{\mathrm{GL}}
\newcommand{\Br}{\mathit{Br}}
\newcommand{\Gr}{\mathrm{Gr}}
\newcommand{\cross}[9]{\xymatrix{
{}\ar@{.}[ddrr]&#4&\ar@{.}[ddll]\\
#2\ar[ur]^{#7}&{}&#3\ar[ul]_{#8}\\
{}&#1\ar[ul]^{#5}\ar[ur]_{#6}\ar@{~>}[uu]_{#9}&{}\\
}}
\newcommand{\barbell}{\mbox{$\bullet \!\!\! -\!\!\!-\!\!\!\bullet$}}
\newcommand{\barbarbell}{\mbox{$\bullet \!\!\! -\!\!\!-\!\!\!\bullet \!\!\! -\!\!\!-\!\!\!\bullet$}}
\newcommand{\boing}{\mbox{$ \bigcirc \!\! \bullet$}}
\newcommand{\rainbow}{\mbox{${}_\prec \!\!\!\frown \!\!\! {}_\succ$}}
\let\oldtocsection=\tocsection
\let\oldtocsubsection=\tocsubsection
\let\oldtocsubsubsection=\tocsubsubsection
\renewcommand{\tocsection}[2]{\hspace{0em}\oldtocsection{#1}{#2}}
\renewcommand{\tocsubsection}[2]{\hspace{3em}\oldtocsubsection{#1}{#2}}
\renewcommand{\tocsubsubsection}[2]{\hspace{6em}\oldtocsubsubsection{#1}{#2}}
\title[Legendrian Knots and Constructible Sheaves]{Legendrian Knots and Constructible Sheaves}
\author{Vivek Shende, David Treumann, and Eric Zaslow}
\begin{document}

\begin{abstract}
We study the unwrapped Fukaya category of Lagrangian branes ending on a
Legendrian knot.
  Our knots live at contact infinity in the cotangent bundle of a surface,
the Fukaya category of which is equivalent
  to the category of constructible sheaves on the surface itself.
Consequently, our category can be
   described as constructible sheaves with singular support controlled by
the front projection of the knot.  We use a theorem of
Guillermou-Kashiwara-Schapira to show that the resulting
 category is invariant under Legendrian isotopies. 
A subsequent article establishes its equivalence
to a category of representations of 
the Chekanov-Eliashberg differential graded algebra.

 We also find two connections to topological knot theory.
 First, drawing a positive braid closure on the annulus,
  the moduli space of rank-$n$ objects maps to the space of local systems
on a circle.
The second page of the spectral sequence
 associated to the weight filtration on the pushforward of the constant
sheaf is the (colored-by-$n$) triply-graded Khovanov-Rozansky homology.
 Second, drawing a positive braid closure in the plane, the number of
points of our moduli spaces
 over a finite field with $q$ elements recovers the lowest coefficient in
`$a$' of the HOMFLY polynomial of
 the braid closure.   
\end{abstract}

\maketitle

\tableofcontents

\section{Introduction}

Isotopy invariants of knots have a long history.  Of late, they have taken the form of multiply-graded chain complexes \cite{Kh,KhR}, with isotopies inducing quasi-isomorphisms.  Witten has provided a physical context for such invariants in
gauge theory, whether they be numerical \cite{WJ}
or ``categorified'' \cite{WK}.  
In the context of Legendrian knots, further invariants can
distinguish Legendrian isotopy classes within the same topological class.
The classical Legendrian invariants are the rotation number
and the Thurston-Bennequin number, but the most powerful known invariant
is the Chekanov-Eliashberg differential graded algebra,
distinguishing as it does pairs of knots with the same classical invariants.
Though it has a combinatorial description, the
Chekanov-Eliashberg invariant \cite{C,E} and its higher-dimensional generalization Legendrian contact homology
\cite{EES,EES2}
have Morse-Floer-Fukaya-theoretic descriptions in terms of
holomorphic disks.  This suggests a route from knot invariants
to physics through open strings.  One such route was taken in \cite{AENV},
where the authors begin with a topological knot in three-space and compute
the Legendrian contact homology of the
associated Legendrian torus, i.e. the unit conormal in the cosphere bundle of
three-space.  In this paper we explore another connection:  by embedding
the standard contact three-space into the cosphere bundle of the plane,
we interpret a Legendrian knot or link as living at infinity in the cotangent
bundle of the plane.  From this geometric set-up, we define a category:
the Fukaya category of the cotangent of the plane whose geometric objects
are Lagrangians asymptotic to the knot at infinity.

Microlocalization --- the relationship between the symplectic topology of
a cotangent and the topology of the base manifold --- affords us another
perspective on this category.  In its categorical form, microlocalization means
that the Fukaya category of the cotangent of the plane is equivalent to the
category of constructible sheaves on the plane \cite{N,NZ}.  Tamarkin \cite{T} has also
developed a sheaf-theoretic treatment of symplectic problems in the cotangent bundle.
Under the microlocalization theorem of \cite{N} (see Remark 3.24 of \cite{N2}), our category is in fact equivalent to sheaves
in the plane constructible with respect to the stratification defined by the front diagram
of the knot, and satisfying some microlocal conditions. 
In this form, Hamiltonian invariance follows from the
general work of Guillermou-Kashiwara-Schapira
\cite{GKS}. 
As a category of sheaves constructible with respect to a fixed stratification,
the category also has a combinatorial, quiver-type description where invariance
under Reidemeister moves can be seen explicitly.  One final perspective on this
category is used to facilitate calculations, and this exploits the relationship between
constructible sheaves and algebraic geometry:  after isotoping the front diagram
to a rectilinear `grid diagram,' the coherent-constructible
correspondence of \cite{FLTZ} gives a description of the category in terms of
modules over the polynomial ring in two variables.  This means calculations in
the category can easily be programmed into a computer.

A Fukaya category has a distinguished
class of \emph{geometric objects}:
smooth Lagrangians with flat line bundles, or local systems.  
The Lagrangians in our category bound the knot at infinity
in the cotangent of the plane, and rank-one local systems restrict
at infinity to rank-one local systems on the knot.
The philosophy of symplectic field theory tells us that such Legendrian fillings, being cobordisms with the empty set, should furnish
one-dimensional representations, or `augmentations', of the Chekanov-Eliashberg 
differential graded algebra --- and they do \cite{Ek}.
More generally, we establish in \cite{NRSSZ} the existence of a category of augmentations
equivalent to the full subcategory of rank-one
objects of our category; a version of this statement was originally conjectured
in a previous draft of this paper.

From the perspective of mirror symmetry,  consideration of
the moduli space of rank-one objects in our category --- which we view to be the analogue in this context of special Lagrangian branes --- 
is almost obligatory: it is a kind of mirror to the knot.
Due to the fact that a geometric deformation of a special Lagrangian is related
by the complex structure on moduli space to a deformation of a unitary local system,
we think of an open set in this moduli space as being the character variety of
a generic filling surface of the Legendrian knot, i.e. one of maximal genus.

Whatever the heuristics, the augmentation conjecture above invites comparison with the augmentation varieties constructed by
Henry and Rutherford \cite{HR}.  
Central to the study of Legendrian knots by their front diagrams is the notion of
a ruling, a decomposition of the diagram into disks bounded on the left and
right by cusps.
In the sheaf-theoretic description of rank-one objects
of our category we define the notion of a ruling filtration,
i.e.~a filtration whose associated
graded sheaves are supported on the disks of a ruling.
Much more can be said in the case that the knot is the
``rainbow"
 closure of a positive braid:
the different rulings arising from ruling filtrations provide a stratification of the
moduli space into pieces that have the same structural form as those
found by Henry and Rutherford for the augmentation variety \cite{HR} of the knot.
It follows that the number of points on our moduli spaces
over the finite field with $q$ elements 
is governed by the ruling polynomial of the braid closure.  Another theorem of
Rutherford \cite{Ru} identifies this expression with a {\em topological} knot invariant --- 
the polynomial in `$q$' which is the lowest order
term in `$a$' of the HOMFLY polynomial.

This term of the HOMFLY polynomial has appeared in recent work
in the algebraic geometry of singular plane curves.  Specifically,  
the Poincar\'e polynomial of the perverse Leray filtration on the cohomology
of the compactified Jacobian of a singular plane curve is equal 
to this term in the HOMFLY polynomial of its link
 \cite{M, MY, MiSh}.  These links are all closures of positive braids,
 so fit into our present story, and the result on the number of points on our moduli
 spaces in this case can be restated as asserting that the Poincar\'e polynomial of 
 the weight filtration on their cohomology is equal to this term of the HOMFLY polynomial. 
 This identification between the perverse filtration on the cohomology of one space
 and the weight filtration on the cohomology of another has appeared before: 
 precisely the same relation is conjectured to exist between the Hitchin system 
 and character variety of a smooth curve \cite{dCHM}.  
 
This is no accident.  As originally conjectured here, and subsequently explained
in \cite{STWZ}, Betti moduli spaces of irregular connections on curves can be identified
with moduli of constructible sheaves with singular support in certain Legendrian links.  
The irregular nonabelian Hodge theory results of \cite{BiBo, Fre} serve to 
identify the Betti moduli of connections on $\mathbb{P}^1$ with a single irregular 
singularity with a moduli space of Higgs bundles.  In the case of torus knots, 
this moduli space retracts to its central fibre, which can be identified with the compactified
Jacobian of a curve of the form $x^a = y^b$.  A numerical version of the ``P = W'' 
conjecture of \cite{dCHM} for these spaces follows from the calculations of 
\cite{OS, ORS}, the above remarks, and the calculations here.  Details will appear elsewhere
\cite{Sh2}.
It may be expected 
that the constructions of \cite{OY} may have counterparts on our side: 
the spherical rational Cherednik algebra for $\GL_n$ acts on the cohomology of 
the moduli space of Lagrangian branes ending on a torus knot; perhaps
in the present context, the operators will arise from symplectic geometry, e.g. by 
considerations as in \cite{W}.

Motivated by the appearance of these wild character varieties, we 
speculated in a previous version of this article that perhaps
the intriguing structures of \cite{GMN} may be found in our
moduli spaces more generally.  This remains largely conjectural, but 
the connection between general cluster varieties and the moduli spaces
here has since been clarified in \cite{STWZ}; a key role there was played
by constructions similar to that of Proposition \ref{prop:rf}.

The connection to \cite{OS, ORS} suggests one connection between our work and 
the Khovanov-Rozansky  \cite{Kh,KhR} triply-graded knot homology.  We have
found another, more complicated, but rigorous and more general, 
relation to the Khovanov-Rozansky homology.  If we close up a positive braid by wrapping 
  it around a cylinder,
  then the moduli spaces in this case are constructed from open Bott-Samelson-type spaces over the flag
  variety. 
  The same spaces arise in Webster-Williamson's
  \cite{WW-HOMFLY} geometric construction of the Khovanov-Rozansky invariants, which
  means the category we construct gives a route to these categorified knot
  invariants.
  In some more detail, the moduli spaces of sheaves have a geometrically induced map to the adjoint
  quotient of the general linear group, obtained by restricting the sheaf
  to the top of the cylinder, where it is a local system on a circle.
  The map induces a weight filtration on cohomology, and the Khovanov-Rozansky 
  invariants are the second page of the associated spectral sequence.

\addtocontents{toc}{\protect\setcounter{tocdepth}{1}}

\subsection*{Results}

We continue now with precise statements of the key results of this paper, and some 
results of its sequels \cite{NRSSZ, Sh, NRSS, STWZ}, which were originally conjectured here.

Let $M$ be a real analytic manifold
and let $k$ be a field.  The cosphere bundle
$T^{\infty}M$ is naturally a contact manifold; let $\Lambda \subset T^{\infty}M$ be a Legendrian submanifold. 
Let $\dgsh_{\Lambda}(M,k)$ denote the dg category of constructible sheaves of $k$-modules
whose singular support intersects $T^{\infty} M$ in $\Lambda$. 
Using general results of
Guillermou, Kashiwara, and Schapira \cite{GKS}, we show in Section \ref{sec:invariance} (see Theorem \ref{thm:4.1}):

\begin{theorem}\label{thm:introinvariance} A contactomorphism inducing
a Legendrian
isotopy induces a quasi-equivalence
\begin{equation}
\label{eq:luciusjuniusbrutus}
\dgsh_{\Lambda}(M,k) \xrightarrow{\sim} \dgsh_{\Lambda'}(M,k).
\end{equation}
\end{theorem}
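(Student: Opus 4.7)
The plan is to deduce the statement directly from the main theorem of Guillermou, Kashiwara, and Schapira \cite{GKS}, which associates to any homogeneous Hamiltonian isotopy of $\dot T^*M := T^*M \setminus 0_M$ a sheaf kernel whose convolution realizes the isotopy at the level of microsupports. The first step is to lift the given contact isotopy. Let $\{\Phi_t\}_{t \in [0,1]}$ be a smooth path of contactomorphisms of $T^\infty M$ with $\Phi_0 = \mathrm{id}$ and $\Phi_1(\Lambda) = \Lambda'$, generated by a time-dependent contact Hamiltonian $h_t \colon T^\infty M \to \bR$. Extending $h_t$ to the unique function on $\dot T^*M$ that is homogeneous of degree one in the fiber direction produces a time-dependent Hamiltonian $H_t$ whose flow $\tilde\Phi_t$ is a conic symplectomorphism of $\dot T^*M$ covering $\Phi_t$. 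Since the isotopy can be arranged to be supported in a compact region of $T^\infty M$, the hypotheses of \cite{GKS} apply to $\{\tilde\Phi_t\}$.

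Next I would invoke \cite{GKS} to obtain a kernel $K \in D(M \times M \times I; k)$ with $K|_{t=0} \simeq k_\Delta$ such that the family of convolutions $\Psi_t(F) := K|_{M \times M \times \{t\}} \circ F$ consists of equivalences of $D(M;k)$ and satisfies the microsupport propagation formula
\begin{equation*}
\SS(\Psi_t F) \cap \dot T^*M = \tilde\Phi_t\bigl(\SS(F) \cap \dot T^*M\bigr).
\end{equation*}
For $F \in \dgsh_\Lambda(M, k)$ this immediately gives $\SS(\Psi_1 F) \cap T^\infty M \subseteq \Phi_1(\Lambda) = \Lambda'$, so $\Psi_1$ restricts to a functor $\dgsh_\Lambda(M, k) \to \dgsh_{\Lambda'}(M, k)$. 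A quasi-inverse is produced in the same way from the reverse isotopy $\{\Phi_{1-t}\}$, and by the uniqueness clause of the GKS construction the composition of the two resulting convolution kernels is $k_\Delta$, so the two functors are mutually quasi-inverse.

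The only substantive issue is upgrading the derived equivalence of \cite{GKS} to a quasi-equivalence of the dg enhancements $\dgsh_\Lambda(M,k)$ and $\dgsh_{\Lambda'}(M,k)$ appearing in the statement. This is essentially formal: convolution with a fixed complex of sheaves is manifestly a dg functor on the standard dg enhancement of the derived category of sheaves (via h-injective resolutions), and the quasi-isomorphism of the composite kernel with $k_\Delta$ produced above upgrades directly to a natural quasi-isomorphism of dg functors. The main obstacle in carrying out the plan is therefore not conceptual but bookkeeping: one must check that the lifting of the Legendrian isotopy to a conic Hamiltonian isotopy of $\dot T^*M$ can be performed so as to fit precisely into the framework of \cite{GKS}, in particular with appropriate support/properness conditions on the defining Hamiltonian, which is where the compactness of the support of the Legendrian isotopy is used.
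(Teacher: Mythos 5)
Your proposal is correct and follows essentially the same route as the paper: lift the compactly supported Legendrian isotopy to a homogeneous Hamiltonian isotopy of $\dot{T}^*M$ with compact horizontal support, invoke the GKS kernel and its singular-support propagation under convolution to restrict the resulting equivalence of $\dgsh(M)$ to the subcategories with singular support in $\Lambda$ and $\Lambda'$, and obtain the quasi-inverse from the reverse isotopy. The only cosmetic difference is that the paper cites \cite[Proposition 3.2(ii)]{GKS} directly for invertibility of convolution on the ambient category and then checks the hypotheses of the convolution estimate (properness via compact horizontal support, and absence of covectors of the form $(y,0,x,\xi)$), whereas you derive the quasi-inverse from the uniqueness clause of the GKS kernel; both are standard and equivalent ways of packaging the same argument.
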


Henceforth we take $M = \bR^2$ or $S^1 \times \bR$.  
It is convenient to restrict to those objects which have acyclic stalks for $z \ll 0$; we denote
the full subcategory of such objects by $\dgsh_\Lambda(M, k)_0$ --- the equivalence \eqref{eq:luciusjuniusbrutus} preserves this subcategory.

In Section \ref{sec:mishmash}, we show that 
fixing a Maslov potential on the front diagram of $\Lambda$ determines a functor
$$\mmon:  \dgsh_\Lambda(M, k) \rightarrow Loc(\Lambda)$$
to local systems (of complexes of $k$-modules up to quasi-isomorphism) on the
knot $\Lambda$. 
Define the ``subcategory of rank-$r$ objects'' $\cC_r(\Lambda),$  with moduli space $\cM_r(\Lambda),$ by setting
$$\cM_r(\Lambda) = \{ F\in \dgsh_\Lambda(M, k)_0 \mid \mmon(F) \text{ is a rank-$r$ local system
in degree zero} \}.$$

The construction of $\mmon$ reveals (compare Proposition \ref{prop:mumonshift}):

\begin{proposition} \label{prop:norot}
If $\Lambda$ has rotation number $r$, then every element of $\dgsh_\Lambda(M,k)$ is periodic
with period $2r$; in particular, if $r \ne 0$, then there are no bounded complexes of sheaves in 
$\dgsh_\Lambda(M, k)$. 
\end{proposition}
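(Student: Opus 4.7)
The plan is to derive this from the construction of $\mmon$ in Section \ref{sec:mishmash} together with the fact that the rotation number is precisely the obstruction to a $\bZ$-valued lift of the Maslov potential. I would first recall that a Maslov potential on the front diagram of $\Lambda$ is a locally constant function that increments by $\pm 1$ at cusps, well-defined as a $\bZ$-valued function exactly when the rotation number vanishes. For a connected $\Lambda$ of rotation number $r$, any local $\bZ$-valued lift chosen over a small arc, when parallel transported once around $\Lambda$, returns with a shift of $2r$ — this is the content of the identification of $r$ as counting signed cusps (up to the factor of two from the two sheets of the front meeting at a cusp).

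Next, I would trace through the definition of $\mmon(F)$. The microlocal stalks at points of $\Lambda$ are only canonically defined complexes up to shift; the Maslov potential is used precisely to rigidify these shifts so that the family of stalks assembles into an honest local system of complexes on $\Lambda$. Replacing one local $\bZ$-lift of the Maslov potential by the one obtained after going around $\Lambda$ changes the rigidification by a global shift of $2r$. Hence the very construction of $\mmon$ produces, as a byproduct, a canonical identification of the microlocal stalk at a basepoint of $\Lambda$ with its own shift by $[2r]$ coming from monodromy.

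The crucial step is to lift this microlocal periodicity to a global periodicity of $F$ itself, i.e. to exhibit a canonical isomorphism $F \cong F[2r]$ in $\dgsh_\Lambda(M,k)$. I would do this by producing the isomorphism stratum by stratum on the stratification of $M$ determined by the front diagram. Away from $\Lambda$, the singular-support constraint forces $F$ to be locally constant, and the shift by $2r$ acts trivially on the relevant identifications provided the microlocal data match up — which is exactly what the Maslov-potential monodromy gives. On each stratum of $\Lambda$ the identification is the microlocal one already described; compatibility across cusps and crossings follows from the way the Maslov potential increments there. Gluing these local shifts yields the global isomorphism $F \simeq F[2r]$.

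Finally, the second assertion is immediate: a nonzero bounded complex of sheaves cannot be isomorphic to a nontrivial shift of itself, because its cohomology is concentrated in finitely many degrees while the shift would translate this window by $2r \ne 0$. I expect the hard part to be the globalization step, i.e.\ verifying that the microlocal $[2r]$-identification really does assemble into an isomorphism of sheaves rather than merely of local systems; one either argues this intrinsically from the microlocal Morse theory on the front diagram, or invokes whatever faithfulness properties of $\mmon$ (combined with the combinatorial quiver description mentioned in the introduction) are established in Section \ref{sec:mishmash}.
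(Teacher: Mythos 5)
Your first two steps are exactly the paper's argument: the construction of $\mmon'$ sends every arrow of $\Delta$ to a quasi-isomorphism except at cusps, where it shifts by one, so traversing a component once yields a chain of quasi-isomorphisms $\mmon'(F)(a)\simeq \mmon'(F)(a)[\#\mbox{down cusps}-\#\mbox{up cusps}]=\mmon'(F)(a)[-2r]$. Up to that point you agree with the paper.

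The gap is your ``crucial step,'' the globalization to an isomorphism $F\cong F[2r]$ in $\dgsh_\Lambda(M,k)$. No such isomorphism exists in general, and the paper never claims one. First sanity check: the constant sheaf $k_M$ has singular support in the zero section, hence lies in $\dgsh_\Lambda(M,k)$, is bounded, and is certainly not isomorphic to its shift when $r\neq 0$; so any argument producing $F\cong F[2r]$ for \emph{every} object proves too much. The same problem persists even for objects whose microlocal monodromy is nonzero: what is periodic (up to quasi-isomorphism) is only the microlocal stalk $\mmon'(F)(a)=\mathrm{Cone}(F(a\to N))$, not the stalks of $F$ on the two-dimensional strata. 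Those stalks are iterated extensions, starting from the stalk in the outer region, of shifts of the microlocal stalks, and the abstract periodicity quasi-isomorphisms of the latter need not commute with the extension maps; so there is no way to glue your local shift identifications across the front into a map of sheaves, and in general none exists. The periodicity asserted in the Proposition is to be read at this microlocal level (cf.\ the Remark in \S\ref{sec:mmon} about passing to genuinely $2r$-periodic complexes as a variant category).

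The correct conclusion is reached without any globalization: if $r\neq 0$, the quasi-isomorphism $\mmon'(F)(a)\simeq\mmon'(F)(a)[-2r]$ forces each $\mmon'(F)(a)$ to be either acyclic or unbounded in both directions. Since $\mmon'(F)(a)$ is the cone on a generization map between stalks of $F$, a bounded complex $F$ must have acyclic microlocal monodromy along every component with $r\neq 0$, i.e.\ it is locally constant there and carries no nontrivial singular support on $\Lambda$ --- which is the intended content of ``no bounded complexes.'' Replace your third and fourth steps by this stalk-level argument and the proof closes.
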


and we also show (Proposition \ref{prop:zigzag})

\begin{proposition} \label{prop:nostab}
If $\Lambda$ is a stabilized legendrian knot, then every element of $\dgsh_\Lambda(M,k)$ is locally
constant; in particular $\dgsh_\Lambda(M,k)_0 = 0$.  
\end{proposition}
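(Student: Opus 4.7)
The plan is to use Theorem \ref{thm:introinvariance} to localize the stabilization to a standard model in the front, and then to perform a direct microlocal analysis there. Since $\Lambda$ is stabilized, it is Legendrian-isotopic to a representative $\Lambda'$ whose front diagram contains an isolated zigzag --- a pair of cusps enclosing a small bigon --- inside a disk $D \subset M$ that is otherwise disjoint from the front. Because the quasi-equivalence \eqref{eq:luciusjuniusbrutus} carries locally constant sheaves to locally constant sheaves and preserves $\dgsh_\Lambda(M,k)_0$, it will suffice to prove both assertions for $\Lambda'$.

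For $F \in \dgsh_{\Lambda'}(M,k)$ I would then restrict attention to $D$. On the three components of $D \setminus \Lambda'$ --- outside the bigon on one side, inside the bigon, and outside on the other side --- the sheaf has constant stalks $A$, $B$, $C$. The singular support condition along each smooth arc of the zigzag gives a distinguished triangle relating two adjacent stalks, and the microlocal condition at each cusp pins these triangles down as cones of a specific form. Because of the particular types of cusps arising in a stabilization, the two cusp conditions should turn out to be complementary, so that a short diagram chase yields a quasi-isomorphism $A \xrightarrow{\sim} C$ through the composition across the zigzag and identifies $B$ as an acyclic summand. In particular the microlocal rank of $F$ along the interior arc of the zigzag vanishes.

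To finish, I would propagate this vanishing to the whole front. The microlocal rank of a constructible sheaf is constant along each smooth arc of $\Lambda'$, and the microlocal ranks on the two arcs meeting at a cusp are forced to be equal (up to the known Maslov shift). Since $\Lambda'$ is a connected knot and the microlocal rank of $F$ vanishes on one arc, it vanishes on all of $\Lambda'$, so the singular support of $F$ does not meet $T^\infty M$; that is, $F$ is locally constant on $M$. This proves the first assertion. The ``in particular'' is then immediate: if $F \in \dgsh_\Lambda(M,k)_0$ also has acyclic stalks for $z \ll 0$, then since $M$ is connected $F$ is globally acyclic.

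The main obstacle is the local computation at the zigzag. One has to set up the microlocal picture at each smooth arc and each cusp correctly, and verify that the specific combination of cusp types occurring in a stabilization is precisely what forces the two cusp-cone triangles to assemble into an isomorphism $A \cong C$ rather than into a nontrivial extension (in which case the argument would fail). I expect this to reduce to a short but careful diagram chase in the standard model of a cusp-sheaf coming from \cite{GKS}, or equivalently in one of the combinatorial quiver-type descriptions of the category alluded to in the introduction.
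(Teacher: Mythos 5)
Your overall strategy coincides with the paper's (reduce by invariance to a front containing a zig-zag, kill the microlocal monodromy locally there, propagate the vanishing around the component, and conclude that the sheaf is locally constant, hence zero once stalks vanish for $z \ll 0$), but the step carrying all the content --- the local computation at the zig-zag --- is both deferred by you as ``the main obstacle'' and set up on an incorrect local model. A stabilization zig-zag is an embedded arc with two cusps and no crossing, so a small disk around it is cut by the front into exactly \emph{two} regions, not three: the thin region between the doubled-back strands is not enclosed, but connects past one of the cusps to an outer region, and the two cusps do not bound a bigon. The three-stalk picture ``$A$, $B$ inside, $C$ outside, with $B$ an acyclic summand'' is really the local picture of a standard Legendrian unknot (two cusps bounding a lens), and for that picture the intended conclusion is false: the eye sheaf of Example \ref{ex:31U} satisfies the singular support condition, has nonzero stalk inside, nonvanishing microlocal monodromy, and is not locally constant. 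So the diagram chase you hope for cannot take the form ``the two cusp conditions are complementary, hence $A\simeq C$ and $B$ is acyclic''; any correct argument must use exactly the feature that distinguishes a zig-zag from an eye, namely that the two cusps open toward \emph{opposite} regions.

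For comparison, the paper's local step (Proposition \ref{prop:zigzag}) runs as follows: in the combinatorial model of Theorem \ref{thm:comb}, label the arcs $x,y,z$ from top to bottom, the cusps $a,b$, and the two regions $L,R$; the singular support conditions force a specific list of generization maps to be quasi-isomorphisms, and after passing to cohomology and normalizing all downward maps to identities (Corollary \ref{cor:straight}) one is left with a chain $V \to W \to V \to W$ in which each composite of consecutive maps is the identity. This forces the middle map, and hence every map, to be an isomorphism, i.e.\ the microlocal monodromy vanishes at the zig-zag. Your propagation step is fine in spirit --- it is the local constancy of $\mmon$ along the component, Proposition \ref{prop:mmon} --- but note it must also pass through crossings (where constancy uses the acyclicity condition at the crossing), since a stabilized front will in general have them; likewise your use of Theorem \ref{thm:introinvariance} and the observation that the GKS equivalence preserves locally constant sheaves are unproblematic. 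Until the local chase is actually carried out, and on the correct two-region model, the proposal has a genuine gap at its central step.
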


Some Maslov potentials relieve us of the need to work with homological algebra, dg-categories, etc.
Proposition \ref{prop:bimaslov} gives:

\begin{proposition}
When the front diagram of $\Lambda$ carries a Maslov potential taking only the values $0$ and $1$,  then
every element of $\cC_r(\Lambda)$ is quasi-isomorphic to it's zeroeth cohomology sheaf.  Moreover, the moduli spaces
$\mM_r(\Lambda)$ are algebraic stacks.  
\end{proposition}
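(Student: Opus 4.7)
The plan is to exploit the combinatorial quiver description of $\dgsh_\Lambda(M,k)$ afforded by the stratification of $M$ coming from the front diagram, combined with the grading information supplied by the Maslov potential.

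Write the complement of the front diagram as a union of open regions $R_1,\ldots,R_m$, separated by arcs $a_1,\ldots,a_n$. An object $F$ of $\dgsh_\Lambda(M,k)$ is encoded by its stalks $F_i^\bullet$ on each $R_i$, together with the restriction maps across each arc and the compatibilities at crossings and cusps. The singular support condition at a smooth point of $\Lambda$ lying over an arc $a$ identifies, after a shift by $\mu(a)$, the cone of the corresponding restriction map with $\mmon(F)|_a$.

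\textbf{Step 1 (stalks in degree zero).} Let $F \in \cC_r(\Lambda)$, so $\mmon(F)$ is a rank-$r$ local system in degree $0$. The condition defining the subcategory $\dgsh_\Lambda(M,k)_0$ tells us that the stalk of $F$ in the bottom region is acyclic. I would induct on the \emph{height} of a region $R_i$ --- the number of arcs one must cross along a generic vertical line to reach it from the bottom region. The inductive step is controlled by the distinguished triangle
\[
F_{\text{below}}^\bullet \lra F_{\text{above}}^\bullet \lra \mmon(F)|_a[-\mu(a)] \lra F_{\text{below}}^\bullet[1]
\]
arising from the singular support description across the arc $a$. Because $\mmon(F)|_a$ is in degree $0$ and $\mu(a)\in\{0,1\}$, the third term lies in degree $0$ or $-1$; combined with the inductive hypothesis that $F_{\text{below}}^\bullet$ is concentrated in degree $0$, the long exact sequence in cohomology forces $F_{\text{above}}^\bullet$ to be quasi-isomorphic to a sheaf in degree $0$. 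Iterating across all regions, every stalk of $F$ is concentrated in degree $0$, equivalently $F \simeq H^0(F)$.

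\textbf{Step 2 (moduli as an algebraic stack).} With every stalk concentrated in degree $0$, the dg enhancement collapses and $\cC_r(\Lambda)$ becomes equivalent to the category of finite-dimensional representations, with dimension vector determined by $r$ and the acyclicity at $z\ll 0$, of an ordinary quiver with relations whose vertices are the regions of the front diagram and whose arrows are labelled by the arcs, with relations extracted from the crossings and cusps. Such a moduli problem is represented by the quotient stack $[V/G]$, where $V\subseteq\prod_j\Hom(k^{d_{s(j)}},k^{d_{t(j)}})$ is the affine variety cut out by the relations and $G=\prod_i\GL_{d_i}$ acts by change of basis at each region; this is an algebraic stack.

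\textbf{Main obstacle.} The crux is Step 1: one must verify the shift and sign conventions in the microlocal triangle so that Maslov potentials lying in $\{0,1\}$ really do collapse the stalk complexes into a single cohomological degree, rather than leaving them spread essentially across degrees $0$ and $-1$. Once this degree-tracking is in place, the quiver reformulation and the construction of the quotient stack in Step 2 are routine finite-dimensional algebra.
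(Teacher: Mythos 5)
Your Step 2 is fine and is essentially the argument in the paper: after normalizing the downward generization maps to identities, an object is a point of a locally closed subvariety of a product of Hom-spaces of linear maps between vector spaces of ranks dictated by $r$ and the Maslov potential, modulo a product of general linear groups, giving an Artin stack of finite type. The problem is Step 1, and it is not merely a matter of fixing shift conventions. At an arc $a$ with Maslov potential $1$, the singular support condition plus the hypothesis that $\mmon(F)$ is rank $r$ in degree $0$ gives (in the paper's conventions) the exact sequence
\[
0 \to H^{-1}(F_{\mathrm{below}}) \to H^{-1}(F_{\mathrm{above}}) \to \kappa_r \to H^{0}(F_{\mathrm{below}}) \to H^{0}(F_{\mathrm{above}}) \to 0 ,
\]
with $\kappa_r$ of rank $r$ and isomorphisms in all other degrees. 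If you only know inductively that $F_{\mathrm{below}}$ sits in degree $0$, this forces nothing about $H^{-1}(F_{\mathrm{above}})$: it is the kernel of the connecting map $\kappa_r \to H^0(F_{\mathrm{below}})$, which need not be injective. So the one-directional induction from the bottom region genuinely breaks at every value-$1$ arc whose upper region is not already known to vanish (it happens to squeak by for rainbow braid closures, where the only value-$1$ arcs bound the outer region, but fails already for, say, one eye nested inside another).

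The repair is a two-directional propagation rather than a one-directional induction, and this is exactly what the paper does (Proposition on binary Maslov potentials). From the two exact sequences (for $p(a)=0$ and $p(a)=1$) one reads off that $\dim H^{>0}$ of the stalk never decreases as you cross arcs downward, while $\dim H^{<0}$ never decreases as you cross arcs upward. Since in the plane every region is connected both upward and downward (through chains of arcs) to the unbounded region, where the stalk is acyclic by the definition of $\dgsh_\Lambda(\bR^2,k)_0$, any nonzero cohomology in positive or negative degree would propagate out to that region and give a contradiction; hence all stalks are concentrated in degree $0$. Note that the vanishing at infinity is used in \emph{both} directions, not just as the base of an upward induction, and this is the missing ingredient in your Step 1.
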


The simplest case is that of a positive braid carrying the zero Maslov potential, which we study in Section \ref{sec:braids}.  
Write $s_1, \ldots, s_{n-1}$ for the standard generators of the braid group $Br_n$. 
Fixing a rank $r$, we write $G := \GL_{nr}$ and $P$ for the parabolic subgroup with
block upper triangular matrices which have $n$ blocks each of size $(r\times r)$ along 
the diagonal.  By the
open Bott-Samelson variety $BS(s_{i_1} \cdots s_{i_k}) \subset (G/P)^{k+1}$, we mean
$(k+1)$-tuples of flags such that the pair $F_t, F_{t+1}$ 
are in the Schubert cell labeled by the transposition corresponding to $s_{i_t}$. 
It is well known that
$BS(s_{i_1} \cdots s_{i_k})$ only depends on the braid $s_{i_1} \cdots s_{i_k}$ and
not on its expression; this also follows from Theorem \ref{thm:introinvariance}, which yields
invariance under the braid relation $s_{j}s_{j+1}s_j = s_{j+1}s_j s_{j+1}$ through Reidemeister-3, and from
Proposition \ref{prop:modbs} and Remark \ref{rmk:modbs}, which give:
\begin{proposition}
Let $\beta$ be a positive braid.  Then $\mM_r(\beta) = G \backslash BS(\beta)$. 
\end{proposition}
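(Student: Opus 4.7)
The plan is to give a direct combinatorial description of objects of $\cC_r(\beta)$ and identify it with the quotient stack $G\backslash BS(\beta)$. Since the Maslov potential takes values in $\{0,1\}$, the preceding proposition lets us work with genuine sheaves in degree zero rather than complexes. Draw the front of $\beta$ as $n$ horizontal strands with positive crossings between them and stratify the ambient surface accordingly; an object $F \in \cC_r(\beta)$ is locally constant on each open two-dimensional stratum.

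The first step is to extract a tuple of flags.  The acyclicity of $F$ for $z \ll 0$ forces the stalk below all strands to vanish, and the rank-$r$ microlocal condition forces the stalk in a region lying above exactly $i$ strands to have dimension $ir$ (each upward crossing of a strand adds rank $r$).  Restricting to any vertical slice that avoids the crossings, the corestriction maps between the $n+1$ regional stalks assemble into a flag
\[
0 \subset V_1 \subset V_2 \subset \cdots \subset V_n, \qquad \dim V_i = ir,
\]
inside the top stalk $V_n$.  After a choice of identification $V_n \cong k^{nr}$, this is a point of $G/P$, yielding one flag $F_t$ per slice between consecutive crossings and giving a tuple $(F_0, F_1, \ldots, F_k)$.

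The central step --- and the main obstacle --- is showing that the singular support condition at each positive crossing $s_i$ translates precisely to the Schubert position of the pair $(F_{t-1}, F_t)$.  Between the two slices flanking an $s_i$-crossing the subspaces $V_j = V'_j$ agree for $j \ne i$, since the remaining strands pass unchanged, while $V_i$ and $V'_i$ are two $ir$-dimensional subspaces of $V_{i+1}$ both containing $V_{i-1}$.  The required microlocal calculation is that $\SS(F)$ agreeing with $\Lambda$ near the crossing is equivalent to $V_i$ and $V'_i$ being in general position modulo $V_{i-1}$, namely $V_i \cap V'_i = V_{i-1}$ and $V_i + V'_i = V_{i+1}$; this is precisely the open Schubert condition defining $BS(\beta)$.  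A more degenerate intersection would produce extra singular support covectors off $\Lambda$, while the transverse case produces exactly the covectors tangent to $\Lambda$ at the crossing point.

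Finally, a sheaf is reconstructed from any point of $BS(\beta)$ by specifying its stalks and restriction maps region by region, with the Schubert conditions ensuring the correct singular support at each crossing.  The only non-canonical ingredient was the trivialization $V_n \cong k^{nr}$, and two such choices differ by the diagonal $G$-action on the tuple of flags.  Passing to the quotient yields $\mM_r(\beta) \cong G\backslash BS(\beta)$; since the entire construction is functorial in $S$-parametric families, this refines to an isomorphism of stacks.
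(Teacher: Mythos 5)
Your proof is correct and follows essentially the same route as the paper: reduce to honest sheaves in degree zero via the zero Maslov potential, use the legible/quiver description of objects on a cusp-free front (regional stalks, injective upward maps), translate the crossing condition into the transversality $V_i \cap V_i' = V_{i-1}$, and frame the top stalk to identify isomorphisms with the diagonal $G$-action. The only difference is organizational — the paper cuts the braid into single-crossing strips and assembles $\mM_r(\beta)$ as an iterated fiber product over $\mM_r(\equiv) = \mathit{pt}/P$, while you read off the full tuple of flags from vertical slices at once — but both arguments rest on the same key facts, so this is a presentational rather than a substantive distinction.
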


In fact, the open Bott-Samelson variety does not depend on much at all; it is an iterated affine space bundle. 
However, the natural maps between such spaces encode a great deal of information.  The maps arise geometrically: 
in particular, glueing $\RR^2$ into a cylinder and identifying the endpoints of the braid recovers Lusztig's 
horocycle correspondence \cite[\S 2.5]{Lu}.  In this cylindrical setting, 
the composition $\Lambda \hookrightarrow \bR \times S^1 \to S^1$ is \'etale of degree equal to the braid index $n$, 
and pushing forward the local system 
$\mmon$ gives a map  $$\pi: \mM_r(\Lambda) \to \mathrm{Loc}_{\mathrm{GL}_{rn}}(S^1) = \mathrm{GL}_{rn} / \mathrm{GL}_{rn}$$ 

These ingredients can be related in a straightforward manner to those used by Williamson and Webster in their geometric construction
\cite{WW-HOMFLY}
of the Khovanov-Rozansky categorification \cite{KhR} of the HOMFLY polynomial, and we show (see Theorem \ref{thm:HHH}):

\begin{theorem} \label{thm:kr}
Let $\Lambda$ be the cylindrical closure of an $n$-stranded positive braid.  
Then the  HOMFLY homology of $\Lambda$ colored by $r$ is 
the $E_2$ page in the hypercohomology spectral sequence associated to the weight filtration
of 
$\pi_* \bQ_{\mM_r(\Lambda)}$.
\end{theorem}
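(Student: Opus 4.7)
The plan is to identify the moduli space $\mM_r(\Lambda)$ together with its map $\pi$ to $\GL_{rn}/\GL_{rn}$ with the geometric input of Webster-Williamson's construction \cite{WW-HOMFLY}, and then invoke their theorem directly.

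First, I would upgrade the planar identification $\mM_r(\beta) = G\backslash BS(\beta)$ from the preceding proposition to the cylindrical setting. When one glues the ends of the braid into a cylinder, a sheaf in $\cC_r(\Lambda)$ restricts near the seam to a local system on a transverse circle, with some monodromy $g \in \GL_{rn}$. Concretely, $\mM_r(\Lambda)$ should be the stack of tuples $(g, F_0, \ldots, F_{k-1})$ with consecutive pairs $(F_t, F_{t+1})$ in the Schubert cell corresponding to $s_{i_t}$ and with $(F_{k-1}, g F_0)$ in the cell labeled by $s_{i_k}$, all modulo simultaneous conjugation by $G$. This is the horocycle variety associated to $\beta$; the authors point precisely toward this identification by invoking Lusztig's horocycle correspondence \cite{Lu} in the paragraph just before the theorem.

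Second, I would verify that under this identification $\pi$ is the obvious map $(g, F_\bullet) \mapsto g$ modulo conjugation. The geometric input is that restricting a sheaf in $\cC_r(\Lambda)$ to a neighborhood of the top of the cylinder and applying $\mmon$ produces a local system on the $n$-fold cover $\Lambda \to S^1$; its pushforward to $S^1$ is a rank-$rn$ local system whose monodromy is precisely the parameter $g$ extracted from the horocycle description. This identifies $\pi$ with the horocycle projection to the adjoint quotient used by Webster-Williamson.

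With these two identifications in hand, $\pi_* \bQ_{\mM_r(\Lambda)}$ becomes the horocycle pushforward of the constant sheaf on the Bott-Samelson, which is exactly the object whose weight filtration Webster-Williamson show categorifies the colored HOMFLY polynomial; applying their theorem gives that the $E_2$ page of the associated hypercohomology spectral sequence is the Khovanov-Rozansky homology of $\Lambda$ colored by $r$. The main obstacle is the first step: realizing $\mM_r(\Lambda)$ as a stack isomorphic to the horocycle variety --- not merely a set-theoretic or point-counting match --- with the correct conjugation action and seam-monodromy data. A secondary subtlety is equipping $\pi_* \bQ_{\mM_r(\Lambda)}$ with a mixed structure so that the weight filtration from the statement literally agrees with the one Webster-Williamson use; this should follow from the algebraic-stack structure on $\mM_r(\Lambda)$ asserted earlier in the paper.
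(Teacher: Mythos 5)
Your proposal follows essentially the same route as the paper: your two identifications (cylindrical moduli space as the horocycle/Bott--Samelson space, and $\pi$ as the monodromy map to the adjoint quotient) are exactly what the paper establishes in Propositions \ref{prop:conv} and \ref{prop:hor} via smooth base change on the Cartesian squares obtained by cutting the cylinder, after which it likewise invokes Webster--Williamson. The only point you gloss is that Webster--Williamson's spectral sequence is formed on $P^{ad}\backslash G$ rather than $G\backslash G$; the paper bridges this by observing that the $forget$ map is proper, hence preserves weights, and that hypercohomology factors through this pushforward.
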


We can also close a positive braid in the plane, by joining the ends
in a ``rainbow'' pattern (nonintersecting
and above the braid itself).  In this case we find the moduli spaces
carry natural decompositions by graded rulings. Proposition \ref{prop:rulingmoduli} gives:

\begin{theorem}
\label{thm:augthm}
Let $\Lambda$ be the rainbow closure of a positive braid with $w$ crossings,
carrying a Maslov potential taking the value zero on all strands of the braid.
Let $\cR$ be the set of graded, normal rulings of the front diagram.  
$$\cM_1(\Lambda) \cong \bigsqcup_{r \in \cR} U(r) $$
If $|r|$ is the number of switches of the ruling $r$, then
$U(r)$ is an iterated bundle with fibres
$(\mathbb{G}_a^{x_i} \times \mathbb{G}_m^{y_i}) / \mathbb{G}_m$, where 
$\sum x_i = (w - |r|) / 2$ and $\sum y_i = |r|$. 
\end{theorem}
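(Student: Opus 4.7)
The plan is to reduce the problem to explicit combinatorics via the quiver-type description of sheaves constructible with respect to the front diagram stratification. Because the Maslov potential takes values only in $\{0,1\}$ (zero on the strands, one on the cusped arcs), the proposition preceding Section \ref{sec:braids} applies: every object of $\cC_1(\Lambda)$ is quasi-isomorphic to its zeroeth cohomology. I may therefore work with honest constructible sheaves of $k$-vector spaces, presented as a $1$-dimensional vector space on each bounded face of the diagram (the unbounded face being $0$ by the $z \ll 0$ acyclicity condition) together with linear restriction maps between adjacent faces.

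The next step is to determine the local form of the data in three standard neighborhoods of the front. Along a smooth strand, the singular support hypothesis forces one adjacent restriction to be an isomorphism and the other to be zero. At a cusp, a rank condition closes off the two incident strands. At a crossing, the configuration collapses to a $2\times 2$ matrix which, relative to a frame determined by the Maslov potential, is either upper triangular with a single free scalar in $\mathbb{G}_a$ (a \emph{non-switching} crossing) or has a distinguished non-zero off-diagonal entry in $\mathbb{G}_m$ (a \emph{switching} crossing). Reading off the switching/non-switching pattern at the $w$ crossings of $\beta$ yields a pairing of strand segments into eyes, compatible with the rainbow cusp configuration on both sides. I will verify that such a pairing is always a normal graded ruling, and conversely that every normal graded ruling is realised; this produces the set-theoretic decomposition
\begin{equation*}
\cM_1(\Lambda) = \bigsqcup_{r\in\cR} U(r),
\end{equation*}
with $U(r) \subset \cM_1(\Lambda)$ the locally closed substack of sheaves inducing the ruling $r$.

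For a fixed ruling $r$, I parametrize $U(r)$ by peeling off the eyes of $r$ from innermost to outermost. Each newly-introduced eye contributes a factor of $\mathbb{G}_m$ for each of its switching crossings and a free scalar in $\mathbb{G}_a$ at each non-switching crossing, with the parameters attached to the top-strand and bottom-strand non-switching crossings of a given eye suitably paired, so that in total the $w-|r|$ non-switching crossings yield $(w-|r|)/2$ independent $\mathbb{G}_a$-factors. Compatibility with the already-processed inner eyes reduces at each step to either an affine constraint (producing a $\mathbb{G}_a$-bundle) or a non-vanishing condition (producing a $\mathbb{G}_m$-bundle), giving the advertised iterated bundle structure. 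The global $\mathbb{G}_m$-quotient records the common scalar automorphism of every rank-one object.

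The delicate point is the last step: verifying that $U(r)$ truly has this iterated-bundle structure rather than being cut out by more intricate polynomial relations between parameters attached to distinct eyes. I plan to argue by induction on the number of eyes, using the innermost-first ordering so that at each stage the remaining compatibility is either affine in the new parameters or merely asserts their non-vanishing. The combinatorial input enabling this is precisely the \emph{normal} hypothesis on the ruling, which rules out the kind of eye-interlocking that would couple parameters nontrivially across eyes. Henry--Rutherford's analogous treatment of the augmentation variety \cite{HR} provides both a template for the induction and a cross-check on the exponent counts $\sum x_i = (w-|r|)/2$ and $\sum y_i = |r|$, and is in any case what motivates the statement.
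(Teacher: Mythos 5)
Your starting point misidentifies the objects: microlocal rank one does \emph{not} mean the sheaf assigns a $1$-dimensional vector space to each bounded face. The stalk on a region is (number of braid strands below it)-dimensional --- in the trefoil of Example \ref{ex:rainbowtrefoil} the central region carries $k^2$ --- and along an arc the upward generization map is an injection (or, above the cusps, a surjection) whose cone is rank one, not ``one restriction an isomorphism and the other zero.'' Consequently the local normal forms you propose (a single scalar, or a $2\times 2$ matrix ``relative to a frame determined by the Maslov potential'') do not exist: the Maslov potential is identically zero on the braid and furnishes no frame, and there is no purely local invariant at a crossing that distinguishes ``switching'' from ``non-switching.'' That distinction only makes sense relative to a ruling \emph{filtration} of the sheaf. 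The paper's route to the decomposition is exactly to manufacture that filtration globally: Proposition \ref{prop:oneruling} defines $(\cT_i\cF)(S)^\uparrow := \cF(S)^\uparrow \cap T_i$ using the canonical flag $T_\bullet$ visible at the top of the rainbow, proves this is the \emph{unique} normal ruling filtration (uniqueness is what makes the strata disjoint and the assignment $\cF \mapsto$ ruling well defined), and then proves --- by a separate argument using the surjection onto the top graded piece --- that the underlying ruling is normal. Your proposal has no substitute for any of these steps.

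The second gap is that the iterated-bundle structure, which you yourself flag as the delicate point, is precisely where the paper does its real work, and ``Henry--Rutherford as a template'' does not supply it: their result concerns the augmentation variety, and transporting it here is the content of the conjecture, not an available tool. In the paper (Proposition \ref{prop:rulingmoduli}) the bundle structure comes from homological algebra of eye sheaves: Lemma \ref{lemma:extlemma} computes $\cH om^\bullet(\cE,\cF)$ between two eyes (giving $\Ext^1 = k^{s+t/2}$, all other $\Ext$'s zero), Lemma \ref{lem:ssswitch} shows an extension satisfies the singular-support condition iff it is locally nontrivial at every switch (whence the $\mathbb{G}_m$ factors, with the $\mathbb{G}_a$ factors coming from transverse intersections), the vanishing $\Ext^{a\neq 1}(\cE_i, R_j\cF)=0$ yields both the filtration of $\Ext^1(\cE_n, R_{n-1}\cF)$ by the $\Ext^1(\cE_n,\cE_j)$ and the computation $\mathrm{Aut}(\cF)/\mathrm{Aut}(R_{n-1}\cF)=\mathbb{G}_m$, and one inducts by peeling off the \emph{outermost} eye via enucleation (Lemma \ref{splotch}), so that the complement is again a rainbow closure. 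Note also that in this induction it is not the combinatorial normality of the ruling that prevents ``coupling of parameters across eyes'' --- it is these $\Ext$ vanishing statements; normality is a \emph{conclusion} used to match the count with ruling polynomials. To repair your proposal you would need, at minimum, the canonical filtration construction and uniqueness, and the $\Ext$ computations for eye sheaves; at that point you have reproduced the paper's argument.
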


The theorem is proven by studying ``ruling filtrations'' of objects
$\cF \in \cC_1(\Lambda)$. 
These are filtrations whose associated graded pieces are constant sheaves supported
on the eyes of a ruling of the front diagram.  
Ruling filtrations are interesting in their own right, and can for instance be used to construct
an extension of the microlocal monodromy to a local system on an abstract (not equipped with a map to $T^*\bR^2$) surface
bounding the knot.  We leave open the very natural question
of how, in general, the `normality' condition on rulings interacts with ruling filtrations.  Resolving this
question for a rainbow closure of a positive braid is a crucial step in the above result.

The category $\dgsh_\Lambda(M,k)_0$ was created as a constructible-sheaf analogue of
the Fukaya category near infinity.  As such it bears a kinship with Legendrian contact
homology, which in the dimension at hand is the Chekanov-Eliashberg differential
graded algebra (``C-E dga'') of a Legendrian knot, or its linearization through augmentations.
In fact, we construct  a unital $A_\infty$ category of
augmentations of the C-E dga in \cite[\S 4]{NRSSZ}, and establish there the following: 
\begin{theorem}[{\cite[Theorem 7.1]{NRSSZ}}]
\label{conj:mainconj}
There exists an equivalence of $A_\infty$ categories
$$\cC_1(K) \cong \cA ug(K)$$
\end{theorem}

The proof of Theorem \ref{conj:mainconj} given in \cite{NRSSZ} is combinatorial: it proceeds by 
comparing the combinatorial presentation of  $\dgsh_\Lambda(M,k)_0$
established in Section \ref{sec:lkacs} --- see especially Proposition \ref{prop:braidlegible} --- to 
a combinatorial presentation of the augmentation category resting ultimately 
on works of Chekanov \cite{C}, Ng \cite{L}, and Sivek \cite{Siv}.  The same combinatorics
led to a proof of the equivalence between the Morse-theoretic
``generating family homology'' and linearized Legendrian contact homology \cite[Cor. 7]{Sh},
previously conjectured in \cite{Tra, FR}.  

A version of Theorem \ref{conj:mainconj} appeared as a conjecture in an earlier draft of this paper.
Propositions \ref{prop:norot} and \ref{prop:nostab} served as immediate sanity checks:
analogous statements are known to hold for augmentations.  
Theorem \ref{thm:augthm} gave a very compelling piece of evidence: it asserts that, 
at least for rainbow braid closures, 
$\cM_1(\Lambda)$ has the same sort of structural decomposition
as the ``augmentation variety'' of Henry-Rutherford \cite{HR}.  
In particular, comparing Theorem \ref{thm:augthm} and \cite[Th. 1.1]{HR} gives (Theorem \ref{thm:homflycount} in the text),
independently of \cite{NRSSZ}:

\begin{corollary}
Let $\beta$ be a positive braid, whose braid closure has $c$ components.  Let $V(\beta)$
denote the augmentation variety of its C-E dga, as in \cite[\S 3]{HR}.  Then up to a power of $q$
(indicated by a question mark below): 
$$\#\mM_1(\beta^\succ)(\bF_q) = \#  V_{\beta^\succ}(\bF_q) \cdot q^? (q-1)^{-c}$$
\end{corollary}

We have since established the following statement, a version of which was
originally conjectured here: 

\begin{theorem}[{\cite[Th. 1]{NRSS}}] \label{thm:numbers} 
For any Legendrian link $\Lambda \subset \mathbb{R}^3$, the following are equal. 
\begin{itemize}
\item 
The homotopy cardinality of the augmentation category of $\Lambda$ over $\bF_q$.
\item The homotopy cardinality
of $\cM_1(\Lambda)(\bF_q)$
\item The expression $q^{?}  (q-1)^{-c} \# V_\Lambda(\bF_q)$.
\end{itemize}
\end{theorem} 

\begin{remark}
By work of Henry and Rutherford \cite{HR}, the third option above was known
to equal a certain sum over normal rulings.  
The analogous statement in the case of periodic complexes remains open.  
This is the most interesting case, since by earlier work of Rutherford \cite{Ru}, 
the third term above is equal to a certain coefficient of $a$ in the HOMFLY polynomial of
the link.  We do know this result in the case of positive braid closures, since in this case all
2-periodic structures canonically lift to $\mathbb{Z}$-graded structures. 
\end{remark}

We also include in Section \ref{sec:conj} some computations and examples.   
We match the graded dimensions
of endomorphisms of objects with
those of the linearized Legendrian contact homology for
the $(2,3)$ torus knot (trefoil), the $(3,4)$ torus knot, and the
$m(8_{21})$ knot of \cite{CN}.
For the trefoil, we match the graded dimensions of all homs
with the bilinearized Legendrian contact homology computed in \cite{BC}.  Finally, 
we show, independent of the relation to the augmentation category, how our category separates the Chekanov pair
(see Section \ref{sec:chekpair}):

\begin{proposition} \label{prop:chek}
If $\Lambda, \Lambda'$ are the two Chekanov knots, then
$$\dgsh_\Lambda(\RR^2, k)_0 \not \cong \dgsh_{\Lambda'}(\RR^2, k)_0.$$
\end{proposition}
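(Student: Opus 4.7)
The plan is to distinguish the two categories by a numerical invariant coming from the moduli stack of rank-one objects. Both Chekanov knots are topologically the $5_2$ knot, with the same Thurston--Bennequin number and rotation number zero, so each admits a Maslov potential valued in $\{0,1\}$ and the preceding proposition applies: the moduli $\cM_1(\Lambda)$ of rank-one objects is an honest algebraic stack on both sides. The goal is to compute $|\cM_1(\Lambda)(\bF_q)|$ and $|\cM_1(\Lambda')(\bF_q)|$ and show these finite numbers differ.

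Concretely, I would start by drawing the two standard front diagrams and writing out the combinatorial quiver description of $\dgsh_\Lambda(\bR^2, k)$ promised earlier in the introduction: a vector space attached to each region of the complement of the front, a linear map for each arc, and microlocal rank conditions at the crossings and the cusps. Restricting to rank-one objects over $\bF_q$ then reduces the point count of $\cM_1$ to enumerating solutions to an explicit finite system of equations built from the front, a computation which could in principle be automated via the coherent-constructible correspondence also mentioned in the introduction. The expected output, paralleling Chekanov's original argument on the DGA side and the Henry--Rutherford theorem invoked in Theorem \ref{thm:augthm}, is that the graded normal ruling polynomials of the two knots disagree and produce different $\bF_q$-counts.

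To conclude non-equivalence, one still has to argue that $|\cM_1(\Lambda)(\bF_q)|$ is genuinely a categorical invariant of $\dgsh_\Lambda(\bR^2, k)$. For a $\{0,1\}$-valued Maslov potential the microlocal rank condition can be detected from the category alone (via $\Ext$-dimensions into suitable test objects), and the acyclicity condition at $z \ll 0$ distinguishes the subcategory $\dgsh_\Lambda(\bR^2, k)_0$ intrinsically. Hence any equivalence $\dgsh_\Lambda(\bR^2, k) \simeq \dgsh_{\Lambda'}(\bR^2, k)$ would induce an isomorphism $\cM_1(\Lambda) \cong \cM_1(\Lambda')$ and a fortiori an equality of $\bF_q$-counts, contradicting the computation.

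The main obstacle is the combinatorial count itself: verifying that the well-known ruling discrepancy between the two Chekanov knots actually manifests as a difference of $\bF_q$-point counts of $\cM_1$, rather than cancelling after one passes from augmentations to sheaves. A secondary, but purely bookkeeping, issue is to confirm that the $\{0,1\}$-valued Maslov potential and the $z \ll 0$ acyclicity normalization can be recovered intrinsically from the category, so that the moduli construction is invariant under $A_\infty$-equivalence.
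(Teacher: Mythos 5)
Your plan has two fatal problems, and they are related. First, the premise that both Chekanov fronts admit a Maslov potential valued in $\{0,1\}$ is false: rotation number zero only gives a $\bZ$-valued Maslov potential, and in fact exactly one of the two standard fronts ($\Phi_2$ in the paper's notation) is binary; the other front $\Phi_1$ admits no binary Maslov potential. This is not a technicality you can route around -- it is precisely the asymmetry the paper exploits. Because $\Phi_2$ is binary, every rank-one object there is quasi-isomorphic to an honest sheaf in degree zero, so $\Ext^{m}(F,F)=0$ for $m<0$; the paper then exhibits an explicit object $\cF\in\cC_1(\Phi_1)$ (built as an iterated extension of the three eye sheaves of the unique graded normal ruling) with $\Ext^{-1}(\cF,\cF)\neq 0$, and this negative self-Ext is the categorical invariant that separates the two sides. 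For $\Phi_1$ your appeal to Proposition \ref{prop:moduli} does not apply as stated, and the moduli of rank-one objects would at best have to be treated as a derived object.

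Second, even granting a well-defined $\bF_q$-count, the numerical invariant you chose cannot distinguish the pair. The counts you propose are governed (via the Henry--Rutherford type decompositions and Conjecture \ref{conj:numbers}) by the graded normal ruling polynomial, and the two Chekanov knots have the same graded ruling polynomial and the same augmentation count -- this is exactly why Chekanov needed the finer Poincar\'e polynomials of linearized contact homology rather than a count of augmentations. So your "expected output" that the ruling polynomials disagree is wrong, and the whole strategy collapses at that step. Note also that Theorem \ref{thm:augthm} and Proposition \ref{prop:oneruling}, which you implicitly lean on for the ruling stratification of $\cM_1$, are proved only for rainbow closures of positive braids, which the Chekanov knots are not. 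The correct mechanism is the one the paper uses: a graded, not numerical, invariant -- the existence or non-existence of rank-one objects with nonzero negative self-Ext.
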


As shown in \cite{NRSSZ}, linearized contact homology is the endomorphism complex
of the augmentation category, so Proposition \ref{prop:chek} follows from
Theorem \ref{conj:mainconj} above, together with 
Chekanov's original calculation \cite{C}.  However we give here an independent proof, solely in the
language of sheaf theory.

\medskip
The point count of Theorem \ref{thm:augthm} is of particular interest when combined with 
a theorem of Rutherford relating the ruling polynomial to the HOMFLY polynomial  \cite{Ru}. 
Let $K$ be a topological knot.  We write $P(K) \in \bZ[(q^{1/2}- q^{-1/2})^{\pm 1}, a^{\pm 1}]$ for the HOMFLY polynomial of
$K$.  Our conventions are given by the following skein relation and normalization:

\begin{eqnarray*}
  \label{eq:skein1}
  a \, P(\undercrossing) -  a^{-1} \, 
  P(\overcrossing) & = & (q^{1/2} - q^{-1/2})  
  \, P(\smoothing) \\
  a - a^{-1} & = & (q^{1/2}-q^{-1/2})P(\bigcirc)
\end{eqnarray*}

In these conventions, the lowest order term in $a$ of the HOMFLY
polynomial of the closure of a positive braid with $w$ crossings and $n$ strands is $a^{w-n}$; 
note $w-n$ is also the Thurston-Bennequin
invariant of its rainbow closure.  

Combining Theorem \ref{thm:augthm} with \cite[Theorem 4.3]{Ru}, we deduce:

\begin{theorem} 
\label{thm:112}
Let $\Lambda$ be the rainbow closure of a positive braid with $w$ crossings on $n$ strands.  Then 
$$ \left.(-aq^{-1/2})^{n-w} P_{\Lambda}(a,q)\right|_{a=0} = \# \cM_1(\Lambda)(\mathbb{F}_q) = \sum_{i,j} (-1)^i q^{j/2} \mathrm{dim \,Gr}^W_j H^i_c(\cM_1(\Lambda)(\bC), \mathbb{Q} )$$
Here, $\mathrm{Gr}^W$ is the associated graded with respect to the weight filtration, and so the expression on the right is 
the weight series of the stack $\cM_1(\Lambda)$.   
\end{theorem}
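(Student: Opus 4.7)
The plan is to chain together three separate identifications: a geometric point count from Theorem \ref{thm:augthm}, Rutherford's theorem relating ruling polynomials to HOMFLY, and a weight-theoretic computation using purity of the strata.

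First I would establish the middle equality $\#\cM_1(\Lambda)(\mathbb{F}_q) = \sum_{i,j}(-1)^i q^{j/2}\dim \mathrm{Gr}^W_j H^i_c(\cM_1(\Lambda)(\bC),\bQ)$ by exploiting Theorem \ref{thm:augthm}. The stratification $\cM_1(\Lambda) = \bigsqcup_{r\in\cR} U(r)$ is by locally closed substacks, so both sides are additive over strata, and it suffices to verify the identity for each $U(r)$. Each $U(r)$ is an iterated bundle whose fibers are built from $\mathbb{G}_a$, $\mathbb{G}_m$, and one $\mathbb{G}_m$-quotient. The compactly supported cohomology of $\mathbb{G}_a$ and $\mathbb{G}_m$ is pure of Tate type in specified weights, and purity is preserved by affine bundles and by quotients by free actions of $\mathbb{G}_m$ (as stacks, the global $B\mathbb{G}_m$-factor contributes $\sum_{k\ge 0} q^{-k}$-style weight data that still satisfies the point-count/weight equality). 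So the Grothendieck--Lefschetz trace formula together with the Frobenius eigenvalue $q^{j/2}$ on weight $j$ gives the identity on each stratum, and summing yields the identity for $\cM_1(\Lambda)$.

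Next I would compute the point count $\#\cM_1(\Lambda)(\mathbb{F}_q)$ explicitly. Using the fiber structure $(\mathbb{G}_a^{x_i}\times \mathbb{G}_m^{y_i})/\mathbb{G}_m$ together with $\sum x_i = (w-|r|)/2$ and $\sum y_i = |r|$, a direct count gives each $U(r)$ the cardinality $q^{(w-|r|)/2}(q-1)^{|r|-1}$ (where the single $(q-1)$ in the denominator accounts for the global scalar stabilizer). Summing over graded normal rulings, one recovers (up to a normalization) the \emph{ruling polynomial} of the front diagram, in the form $(q-1)^{-1}\sum_r q^{(w-|r|)/2}(q-1)^{|r|}$, which after a change of variable $z = q^{1/2}-q^{-1/2}$ is precisely Rutherford's ruling polynomial evaluated at $z$ times a monomial in $q^{1/2}$.

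Finally I would invoke Rutherford's theorem \cite{Ru}, which identifies the ruling polynomial with the coefficient $[a^{w-n}]P_\Lambda(a,q)$. Rewriting this coefficient as the result of substituting $a=0$ after multiplying by $(-aq^{-1/2})^{n-w}$, and checking that the factor $(-q^{-1/2})^{n-w}$ absorbs precisely the normalization discrepancy between Rutherford's convention and the point count computed above, yields the left-hand equality.

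The main obstacle I expect is the bookkeeping of normalizations in the last step: matching Rutherford's ruling polynomial conventions with the point count derived from Theorem \ref{thm:augthm} requires tracking the $(q-1)^{-1}$ stackiness, the exponent $(w-|r|)/2$, and the sign $(-1)^{n-w}$ against Rutherford's chosen framing and writhe normalization of the HOMFLY polynomial. A secondary, more structural issue is verifying that the \emph{normal} rulings appearing in Theorem \ref{thm:augthm} coincide with the class of rulings used by Rutherford, rather than an \emph{a priori} different variant; this relies on specifically comparing the combinatorial conditions at crossings (\emph{return}, \emph{switch}, \emph{departure}) in both setups for rainbow closures of positive braids, where the Maslov potential is identically zero.
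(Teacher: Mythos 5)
Your overall route is the paper's own: stratify $\cM_1(\Lambda)$ by graded normal rulings (Theorem \ref{thm:augthm}, which in the body is Propositions \ref{prop:oneruling} and \ref{prop:rulingmoduli}), count points stratum by stratum, note that each stratum is an iterated bundle built from $\mathbb{G}_a$'s and $\mathbb{G}_m$'s so that the $\bF_q$-point count agrees with the weight series, and finish with Rutherford's theorem \cite{Ru} as in Theorem \ref{thm:homflycount}. Your ``secondary issue'' about which rulings occur is already settled by Proposition \ref{prop:oneruling}: the ruling underlying the canonical ruling filtration is graded and normal, which is exactly the class of rulings entering \cite{Ru}. Your purity argument for the middle equality is reasonable and is in fact more detail than the paper supplies, which only remarks that the structure of the strata suffices.

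There is, however, a concrete error in your point count that would derail the final comparison. The stratum $U(r)$ is an iterated bundle with one fibre of the form $(\mathbb{G}_a^{x_i}\times\mathbb{G}_m^{y_i})/\mathbb{G}_m$ for \emph{each eye} of the ruling: there are $n$ such quotients, one per strand (equivalently per right cusp), not a single global $\mathbb{G}_m$-stabilizer. Hence the orbifold cardinality of $U(r)$ is $q^{(w-|r|)/2}(q-1)^{|r|-n}=q^{(w-n)/2}\,(q^{1/2}-q^{-1/2})^{|r|-n}$, not $q^{(w-|r|)/2}(q-1)^{|r|-1}$; compare the trefoil ($n=2$, $w=3$), whose three strata have orbifold sizes $q/(q-1)$, $q/(q-1)$, and $q-1$. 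Your formula is off by $(q-1)^{n-1}$, which is not a monomial in $q^{1/2}$ and so cannot be absorbed into the prefactor $(-aq^{-1/2})^{n-w}$ or into any ``normalization discrepancy'' with \cite{Ru}; with your count the claimed identity would simply fail for $n\ge 2$. With the corrected count, summing over graded normal rulings yields $q^{(w-n)/2}\sum_R (q^{1/2}-q^{-1/2})^{s(R)-n}$, which is precisely the expression Rutherford identifies with the lowest-order coefficient in $a$ of the HOMFLY polynomial, completing the argument as in Theorem \ref{thm:homflycount}.
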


We close this introduction with a brief discussion of the relationship of the present work
to wild character varieties and the Hitchin system, a subject explored in parallel \cite{STWZ} and subsequent \cite{Sh2} work.
The stack $\cM_1(\Lambda)$ is not proper, and so its Poincar\'e series cannot be expected to be symmetric.  
But, again because the space is noncompact,
the weight series differs from the Poincar\'e series.  It follows from the above result
and the $q \to -q^{-1}$ symmetry of the HOMFLY polynomial that, curiously, 
the weight series is symmetric.  The same phenomenon was observed by Hausel and Rodrigues-Villegas
in character varieties of surfaces \cite[Cor. 1.1.4]{HRV}.

The symmetry of the weight polynomials is the beginning of a series of conjectures about the cohomology of the character
varieties of surfaces, the strongest of which is currently the ``P = W'' conjecture \cite{dCHM}.  This asserts that the weight filtration (after
re-indexing)
on the cohomology of the (twisted) character variety of a surface can be identified with the perverse Leray filtration associated
to the moduli space of Higgs bundles on a Riemann surface with the same underlying topological surface.  The (unfiltered)
cohomologies of these spaces are identified by the nonabelian Hodge theorem, which gives a diffeomorphism
between the character variety and the moduli of Higgs bundles \cite{Do,Hi,Co, Si} .  

For the link of a plane curve singularity, there is a natural candidate for a ``P = W'' conjecture for $\cM_1$,
suggested by the relation between HOMFLY polynomial of such a curve singularity to moduli of sheaves
on the curve \cite{OS, MY, MiSh, DSV, DHS, M}.  If $C$ is a rational curve with a unique planar singularity, the cohomology of its compactified Jacobian carries a canonical filtration $P_{\leq i}$.  It is the perverse Leray filtration induced by a family (any family:  \cite[\S 3.8]{MY} or \cite[\S 6]{MiSh}) of relative compactified Jacobians with smooth total space.  
It is known that
\begin{equation}
\label{eq:last-one}
\left[(aq^{-1/2})^{1-\mu} P_{K}(a,q)\right|_{a=0} = 
(1-q)^{-b} \sum_{i,j} (-1)^i q^j \mathrm{Gr}^P_j H^i(\overline{J}(C), \mathbb{Q} )
\end{equation}
Indeed the right-hand side is identified with the Poincar\'e polynomial of a certain Hilbert scheme of  points on $C$ by the main result of any of \cite{MY, MiSh} (see also \cite{Re}), the left-hand side is identified with the same by \cite[Th. 1.1]{M}.  In \eqref{eq:last-one} $\mu$ is the Milnor number and $b$ is the number of branches of the singularity, i.e. the number of components of the link.  The link of a plane curve singularity always admits a positive braid presentation, and if this has $n$ strands and $w$ crossings, we have $1 - \mu = n - w$.  Combining \eqref{eq:last-one} and Theorem \ref{thm:112} gives us:

\begin{corollary}   \label{cor:eulpequalsw}
Let $C$ be a rational plane curve with a unique singularity; let $\Lambda$ be a rainbow closure of 
a positive braid such that the topological knot underlying $\Lambda$ is the link of the singularity. 
Then
$$ \sum_{i,j} (-1)^i q^{j/2} \mathrm{Gr}^W_j H^i_c(\cM_1(\Lambda)(\mathbb{C}), \mathbb{Q} ) 
= (q-1)^{-b} \sum_{i,j} (-1)^i q^j \mathrm{Gr}^P_j H^i(\overline{J}(C), \mathbb{Q} )$$
\end{corollary}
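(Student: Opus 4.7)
The corollary is a formal consequence of the theorem immediately preceding it together with the cited formula of Oblomkov-Shende, Maulik, and others expressing the $a$-lowest term of HOMFLY in terms of the compactified Jacobian. The plan is simply to bolt these two identities together, taking care of a sign factor using a Seifert-surface calculation.

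First, I would observe that since $\Lambda$ is a Legendrian presentation of the link $K$, the HOMFLY polynomials agree, $P_\Lambda = P_K$. Next, combine the exponents on $a$: by the theorem,
\begin{equation*}
\sum_{i,j} (-1)^i q^{j/2} \mathrm{Gr}^W_j H^i_c(\mathcal{M}_1(\Lambda)(\mathbb{C}),\mathbb{Q}) = \left[(-aq^{-1/2})^{n-w} P_\Lambda(a,q)\right]_{a=0},
\end{equation*}
and using the relation $1-\mu = n-w$ stated in the excerpt, we may write $(-aq^{-1/2})^{n-w} = (-1)^{n-w}(aq^{-1/2})^{1-\mu}$. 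Applying the Oblomkov-Shende / Maulik-Yun / Migliorini-Shende formula then gives
\begin{equation*}
\left[(-aq^{-1/2})^{n-w} P_K(a,q)\right]_{a=0} = (-1)^{n-w}(1-q)^{-b}\sum_{i,j}(-1)^i q^j \mathrm{Gr}^P_j H^i(\overline{J}(C),\mathbb{Q}).
\end{equation*}
Finally, $(1-q)^{-b}=(-1)^b(q-1)^{-b}$, so up to an overall sign $(-1)^{n-w+b}$ this matches the right-hand side of the corollary.

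The one honest step, then, is to check that $n-w+b$ is even. For this I would use the canonical Seifert surface associated to a positive braid on $n$ strands with $w$ crossings: it is built from $n$ disks joined by $w$ half-twisted bands, so its Euler characteristic is $n-w$. Since this surface has $b$ boundary components (one per link component) and orientable genus $g$, we have $n-w = 2-2g-b$, giving $n-w-b=2-2g-2b \equiv 0 \pmod 2$, as required.

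The hard input is of course invisible here: it lives in the theorem that identifies the weight polynomial of $\mathcal{M}_1(\Lambda)$ with the HOMFLY term, itself a consequence of Theorem \ref{thm:augthm} combined with Rutherford's theorem relating ruling polynomials to HOMFLY, and on the other side in the work of \cite{M, MY, MiSh} identifying the perverse Poincar\'e polynomial of $\overline{J}(C)$ with the same HOMFLY term. There is no remaining obstacle beyond the bookkeeping above; the content is precisely the comparison of these two independent expressions for the lowest $a$-coefficient.
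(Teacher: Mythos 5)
Your proposal is correct and is essentially the paper's own argument: the corollary is obtained by splicing the weight-series form of Theorem \ref{thm:homflycount} together with the cited formula of \cite{OS, M, MY, MiSh} for $\overline{J}(C)$, using $1-\mu=n-w$. Your explicit parity check that $(-1)^{n-w+b}=1$ via the Seifert surface of the positive braid closure is bookkeeping the paper leaves implicit (and is welcome, given the paper's own wavering between $(aq^{-1/2})^{n-w}$ and $(-aq^{-1/2})^{n-w}$ in the two statements of that theorem).
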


As we previously conjectured in an earlier version of this paper, and have now proven in \cite{STWZ}, 
when $\Lambda$ is a torus knot, the moduli space appearing on the 
left hand side can be interpreted as the Betti moduli space of 
connections on $\mathbb{P}^1$ with a certain irregular singularity.  
The original nonabelian Hodge theorem has been generalized to treat Higgs bundles 
with tame \cite{Si2}, split irregular \cite{BiBo}, 
and finally general irregular singularities \cite{Fre}.  As a consequence of this last result, 
the right hand side can be identified with the cohomology of the 
corresponding Dolbeault space; the above statement establishes a 
numerical form of the ``P = W" conjecture of \cite{dCHM} in this special case.

\subsection*{Acknowledgements} 
We would like to thank Philip Boalch,  Fr\'ed\'eric Bourgeois, Daniel Erman, 
Paolo Ghiggini, Tam\'as Hausel, Jacob Rasmussen, Dan Rutherford, and Steven Sivek 
for helpful conversations. 
The work of DT is supported by NSF-DMS-1206520 and a Sloan Foundation
Fellowship.  The work of EZ is supported
by NSF-DMS-1104779 and by a Simons Foundation Fellowship.

\addtocontents{toc}{\protect\setcounter{tocdepth}{2}}

\section{Legendrian knot basics}
\label{sec:lkb}

\subsection{Contact geometry}
\label{subsec:contactgeometry}

Here we review relevant notions of contact geometry --- see, e.g., \cite{Et, G} for
elaborations.

A contact structure on a $(2n-1)$-dimensional manifold $X$
is a maximally nonintegrable distribution of $(2n-2)$-planes.  A contact structure is locally the kernel
of a one-form $\alpha$, with $\alpha \wedge (d\alpha)^{n-1}$ nowhere vanishing; evidently
$f \alpha$ defines the same contact structure for any nowhere zero function $f$. 
A contact structure defined {\em globally} as $\mathrm{ker}\, \alpha$
for chosen one-form $\alpha$ is said to be ``co-oriented'' and
$\alpha$ is said to be a co-orientation. 
Given $\alpha,$ we define the Reeb vector field $R$ by the
conditions $\alpha(R)=1,$ $\iota_R d\alpha = 0.$
We will assume a chosen co-orientation $\alpha$ for all contact structures considered.

An $(n-1)$-dimensional submanifold $\Lambda \subset X$ is said to be Legendrian if its tangent bundle
is in the contact distribution, i.e., $\Lambda$ is $\alpha$-isotropic,
i.e. if $\alpha\vert_\Lambda$ vanishes.  (The nondegeneracy condition forbids
isotropic submanifolds of dimension greater than $n-1$.)  Isotopies of Legendrian submanifolds
through Legendrian submanifolds
are generated 
by (time-dependent) functions, and are also known as
Hamiltonian isotopies.  
In formulas, if a time-dependent vector field $v$ generates the isotopy,
we define the associated time-dependent
Hamiltonian function by $H = \alpha(v).$  Conversely,
given a Hamiltonian $H$, define $v$ uniquely by
$\iota_v \alpha = H$ and $\iota_v d\alpha = dH(R)\alpha
- dH.$

The cotangent bundle $T^* M$ of an $n$-dimensional manifold $M$ carries a canonical one-form; in local coordinates 
$(q_i, p_i)$, it 
is $\theta := \sum p_i dq_i$.  This gives $(T^* M, d\theta)$ the structure of an exact symplectic
manifold.  The restriction of $\theta$
equips the $(2n-1)$ dimensional cosphere bundle (at any radius)
with a (co-oriented) contact structure.   
We think of the radius of the cosphere bundle as infinite and
denote it as $T^{*,\infty}M$ or $T^\infty M$.

More generally, in an exact symplectic manifold $(Y, \omega = d \theta)$ equipped
with a ``Liouville'' vector field $v$, i.e. 
$\iota_v \omega = \theta,$ 
the symplectic primitive $\theta$ restricts to a contact
one-form on any hypersurface transverse to the Liouville vector field $v$.
In the cotangent example, $v$ is the generator of dilations.

As another example, the jet bundle $J^1(M) = T^*M \times \bR_z$
of a manifold has contact form $\alpha = \theta - dz.$

We are interested in the above
cotangent constructions when $n=2$ and  $M = \bR^2$ or $\bR \times \bR/\bZ.$
In this case, Legendrians are one-dimensional and are called ``knots,'' or sometimes ``links''
if they are not connected.  We refer to both as ``knots.''

\subsubsection{The standard contact structure on $\bR^3$}
On $\bR^3 = \bR_{x,y,z}^3$ the one-form $\alpha = dz - ydx$  defines a
contact structure with Reeb vector field $R=\partial_z$.
This space
embeds as an open contact submanifold $T^{\infty,-}\bR^2$ of ``downward'' covectors in
$T^\infty \bR_{x,z}^2 \cong \bR^2\times S^1,$ as follows.  First coordinatize $T^*\bR^2$ as $(x,z;p_{x},p_{z})$
with symplectic primitive $\theta = -p_{x} dx  -p_{z} dz.$  
Then the hypersurface $p_{z} = -1$ is transverse to the Liouville
vector field $v = p_{x} \partial_{p_x} + p_{z} \partial_{p_z}$ and
inherits the contact structure defined
by $p_{x} dx - dz$.  Identifying $y$ with $p_{x}$, i.e. mapping
$(x,y,z)\mapsto (x,z;y,-1),$ completes the construction.

Also note that the contact structure on $\bR^3$
can be identified with $J^1(\bR_x),$ the first jet space,
since $J^1(\bR_x) = T^*\bR_x \times \bR_z = \bR^2_{x,y}\times \bR_z.$  That is,
the coordinate $z$ represents the functional value, and $y$
representing the derivative of the function with respect to $x.$  Given a function $f(x),$ the
curve $(x,y = f'(x),z = f(x))$ is Legendrian.

\subsubsection{The standard contact structure on $\bR/\bZ \times \bR^2$}
The constructions of the previous section are all invariant under the $\bZ$-action
generated by $x\mapsto x+1$.  Writing $S^1_x$ for $\bR_x/\bZ$,
this observation says that the standard contact structure $ydx - dz$
on $ S_x^1 \times \bR^2_{y,z}$ embeds as the contact submanifold
$T^{\infty,-} (S^1_x \times \bR_z)$ of the thickened torus $T^\infty(S^1_x \times \bR_z) \cong \bR \times S^1 \times S^1.$

The relationship to the jet space is respected by the quotient, so
$S^1_x \times \bR^2_{y,z}$ is contactomorphic to $J^1(S^1_x).$

\subsection{The Front Projection}
\label{sec:frontprojection}

We call the map $\pi: T^\infty M \to M$ the ``front projection.''  Under the above identification
$\bR^3_{x,y,z} \subset T^\infty \bR^2_{x,z}$, the restriction of this 
map is just projection to the first and third coordinates
$\pi: \bR^3_{x,y,z} \to \bR^2_{x,z}$.  Similarly we have a front projection
$S^1_x \times \bR^2_{y,z} \subset T^\infty (S^1_x \times \bR_z) \xrightarrow{\pi} 
S^1_x \times \bR_z$.  
We term $\bR^2_{x,z}$ the \emph{front plane} and $S^1_x \times \bR_z$ the
{\em front cylinder}.
For specificity, we primarily discuss front projections in the front plane, noting
differences for the front cylinder when appropriate.

Given a Legendrian $\Lambda \subset \bR^3_{x,y,z}$, we call $\pi(\Lambda)$ the
front projection of the knot, and at least at immersed points of $\pi(\Lambda)$
we may recover $\Lambda$ from $\pi(\Lambda)$: since the contact form $ydx - dz$ vanishes, 
we have $y = dz/dx$.  In particular, since $y = dz/dx$ is finite, $\pi(\Lambda)$ can have no vertical tangencies.  
We say $\Lambda$ is in general position if:
\begin{itemize}
\item  $\pi|_\Lambda$ is locally injective
\item there are only
finitely many points of $\bR^2$ at which $\pi(\Lambda)$ is not an embedded submanifold of 
$\bR^2$; these are either: 
\begin{itemize}
\item  {\em cusps} where $\pi|_\Lambda$ is injective and
$dz/dx$ has a well defined limit of $0$ 
\item {\em crossings} where $\pi(\Lambda)$ is locally a transverse intersection of two smooth curves
\end{itemize} 
\end{itemize}
Any $\Lambda$ may be put in general position by a Hamiltonian isotopy, and we henceforth restrict
to such $\Lambda$. 

In practice we will start not with $\Lambda \subset \bR^3$ but with a picture in the plane.  
By a \emph{front diagram} $\Phi$, we mean a 
smooth parametrized curve with possibly disconnected
domain, locally of the form 
\begin{eqnarray*} 
\phi: \bR \supset U & \to & V \subset \bR_{x,z} \mbox{ or } S_x^1 \times \bR_z \\
t & \mapsto & (x(t),z(t))
\end{eqnarray*}
with the following properties: 
\begin{enumerate}
\item Away from a finite set, called cusps, $(x(t),z(t))$ is an immersion.  
\item When $(x(t_0),z(t_0))$ is a cusp, $\lim_{t \to t_0} \dot{z}(t)/\dot{x}(t) = 0$.  In other words, the tangent line is well-defined and horizontal.
\item There are finitely many self-intersections, they are transverse, and they are distinct from the cusps.
\end{enumerate}
Note in particular that a nonzero
covector $\lambda\cdot (\dot{z}(t),-\dot{x}(t)),$ $\lambda \neq 0,$
conormal to $(x(t),z(t)),$ can never be horizontal, i.e. can never have $\dot{x}(t) = 0$.
Therefore $(x(t),z(t))$ can be lifted in a unique way to a smooth Legendrian curve $\Lambda \in T^{\infty,-}\bR^2$, the curve of ``downward conormals'' to the front diagram.  

\begin{remark}[$C^0$ front diagrams]
\label{rem:C0}
It is sometimes useful to allow a finite number of points of $\Phi$ (away from the cusps and crossings) to fail to be differentiable.  We assume that at any such point $(x(t_0),z(t_0))$ the limits
\[
a = \lim_{t  \to t_0^{-}} (\dot{x}(t),\dot{z}(t)) \qquad b = \lim_{t \to t_0^+} (\dot{x}(t),\dot{z}(t))
\]
exist.
Then $\Phi$ lifts to a Legendrian $\Lambda(\Phi) \subset T^{\infty,-} M$ where, at any $t_0$ where $(x(t_0),z(t_0))$ is not differentiable, we add the 
angle in $T^{\infty,-}_{x(t),z(t)} M$ between $a$ and $b$.  The resulting knot or link is the same as if we had replaced $\Phi$ by a smoothed $\Phi'$ that differed from $\Phi$ only in a neighborhood of the nondifferentiable points.
\end{remark}

\vspace{2mm}
\noindent {\bf Terminology for the Front Projection}.
Let $\Phi = \pi(\Lambda)  = \phi(U)$ be a front diagram.  
$\Phi$ defines a stratification of $\bR^2$ into
zero-, one-, and two-dimensional strata.  The zero dimensional strata
are the cusps and crossings of $\Phi$.
Cusps are either 
{\em left} ($\prec$) or {\em right} ($\succ$). 
Given an orientation of $\Lambda$, or taking the orientation from the parameterization, 
cusps are also either {\em up} or {\em down}
according as the $z$ coordinate is increasing or decreasing
as you pass the cusp in the direction of orientation.  

The one-dimensional strata are the maximal smooth subspaces of $\Phi$, which we
call \emph{arcs}.  The two-dimensional strata are the connected components
of the complement of $\Phi$ which we term \emph{regions}.
Front diagrams in the  plane have one non-compact region, while
front diagrams in the front cylinder have two, an ``upper'' ($z\gg 0$) and
a ``lower'' ($z\ll 0$).

Each crossing of a front projection pairs two pairs of arcs, a northwest with a southeast and a
southwest with a northeast.  We can make
from these pairings an equivalence relation,
and the closure of the union of all arcs in an equivalence class is called a
\emph{strand}.  Informally, a strand is what you get when you start at a left
cusp and go ``straight through'' all crossings until you reach a right cusp.
In the front cylinder, strands may wrap around the circle $S^1_x$
and have no start or end.

\begin{example}
\label{ex:trefoil}
The front diagram of Figure \ref{fig:planartrefoil}
\begin{figure}[H]
\includegraphics[scale = .3]{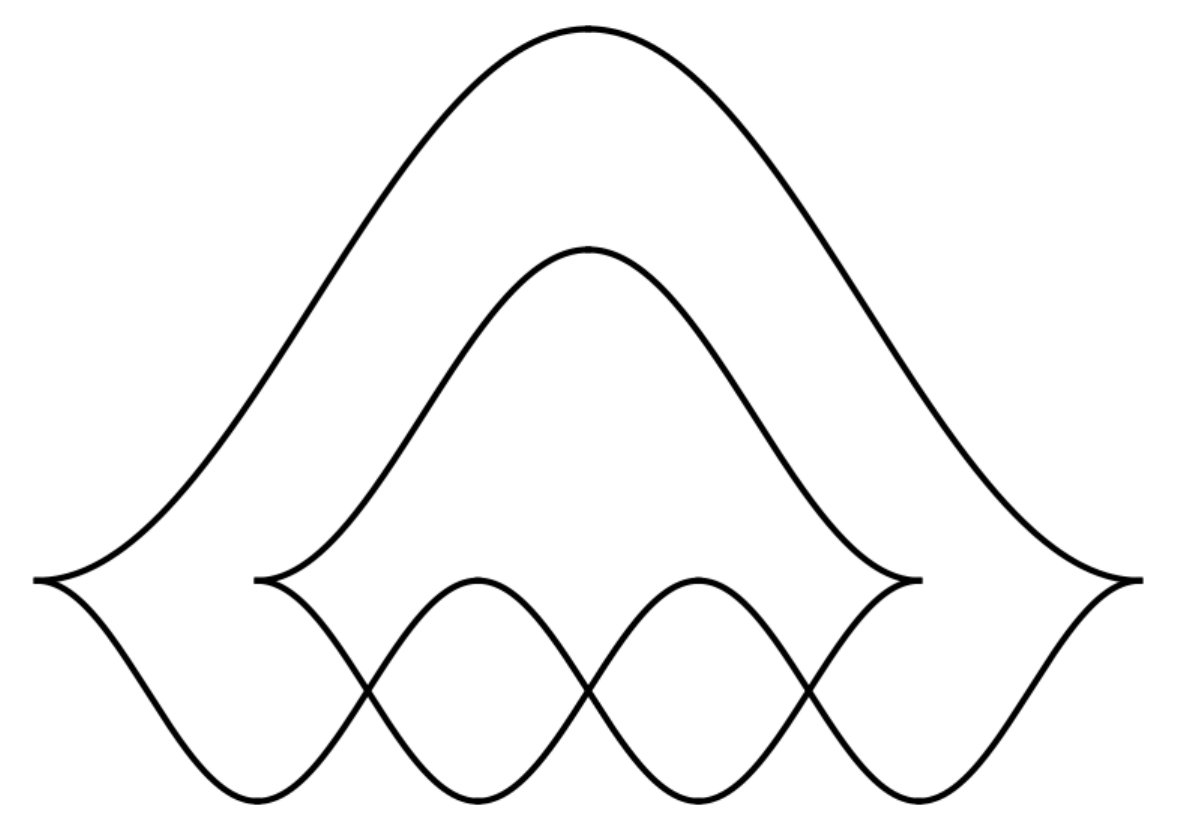}
\caption{A front plane projection of the Legendrian trefoil knot.}\label{fig:planartrefoil}
\end{figure}
\noindent in $\bR^2$ lifts to a Legendrian trefoil in $T^{\infty,-} \bR^2 \cong \bR^3$.  It has two left cusps, two right cusps, 3 crossings, 4 strands, 10 arcs, and 5 regions (4 compact regions).
\end{example}

\begin{example}
\label{ex:hopfwrap}
The cylindrical front diagram in Figure \ref{fig:cylexample}
\begin{figure}[H]
\includegraphics[scale=.3]{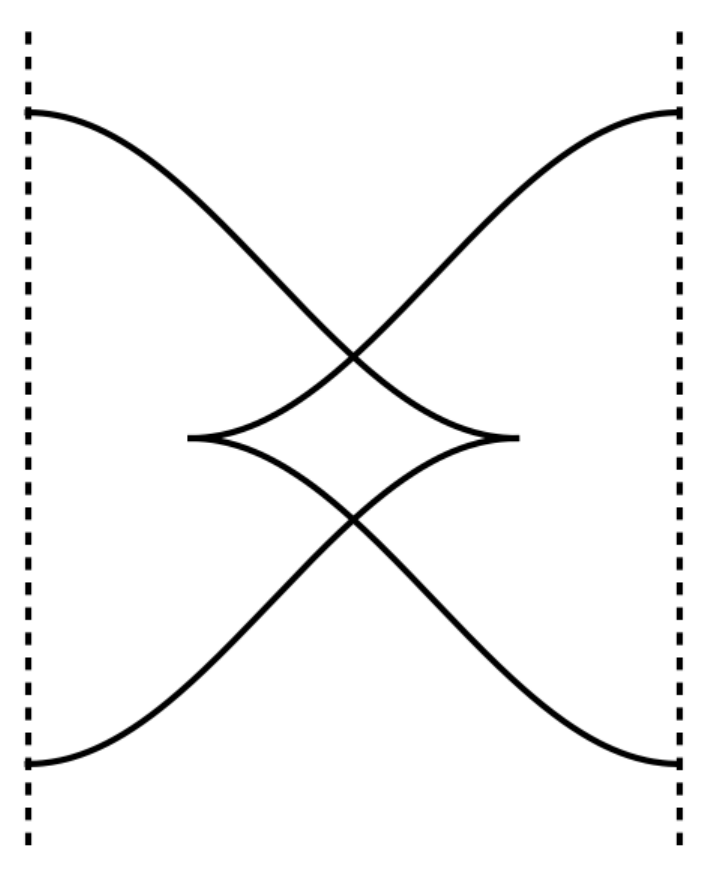}
\caption{Cylindrical front diagram of a Legendrian knot.}
\label{fig:cylexample}
\end{figure}
lifts to the knot in the solid torus which is the pattern for forming Whitehead doubles.  It has one left cusp, one right cusp, 
two crossings, two strands, 6 arcs,   
4 regions.
\end{example}

\begin{example}
\label{ex:cyltrefoil}
The front diagram of Figure \ref{fig:cyltrefoil}
\begin{figure}[H]
\includegraphics[scale=.3]{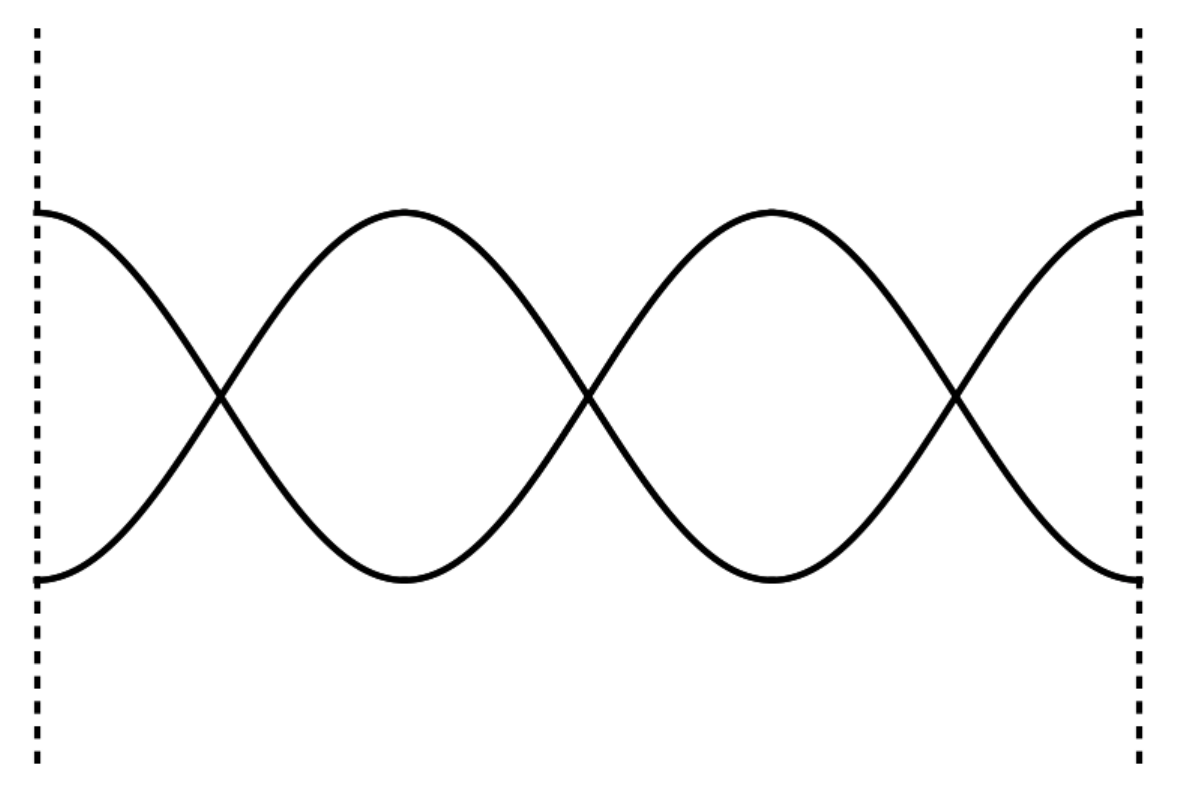}
\caption{Cylindrical front projection of a Legendrian trefoil in the solid torus.}
\label{fig:cyltrefoil}
\end{figure}
lifts to a Legendrian trefoil in the solid torus.  The dotted lines on the left and right are to be identified.  This front diagram has one strand, 5 regions,  
6 arcs, 
no cusps, 3 crossings.
\end{example}

\subsubsection{The extended Legendrian attached to a front projection}
\label{subsec:augmentedLeg}

In our study of rulings, the following construction will be useful.
Let $M$ be one of $\bR^2$ or $S^1 \times \bR$, and $\Lambda \subset T^{\infty,-} M$ a Legendrian knot in general position.  If $c \in \Phi$ is a crossing on the front projection of $\Lambda$, then $\Lambda$ meets the semicircle $T^{\infty,-}_c M$ in exactly two points.  We let $\Lambda^+$ denote the union of $\Lambda$ and the arcs of $T^{\infty,-}_c M$ joining these two points, where $c$ runs over all crossings of $\Phi$.  $\Lambda^+$ is a trivalent Legendrian graph in $T^{\infty,-} M$ --- note that it is sensitive to $\Phi$, in fact the topological type of $\Lambda^+$ is not invariant under Reidemeister moves.

\subsection{Legendrian Reidemeister moves}
\label{subsec:LegendrianReidemeisterMoves}
Two front diagrams, in either $\bR^2$ or $S^1 \times \bR$, represent the same Hamiltonian isotopy class of knots if and only if they differ by ``Legendrian Reidemeister moves'' which are pictured in Figures \ref{fig:r1}, \ref{fig:r2}, and \ref{fig:r3}.  We assume included, but have not drawn, the
reflection  across the $x$ axis of the
pictured Reidemeister 1 move, and the reflection across the $z$ axis of the pictured Reidemeister 2 move.

\begin{figure}[H]
\includegraphics[scale =.52]{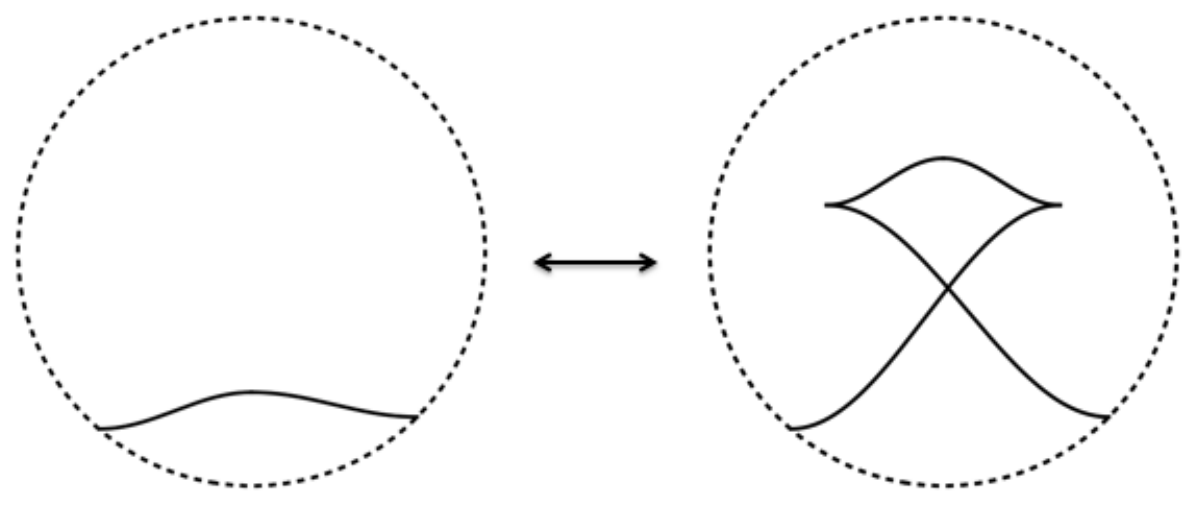}
\caption{ Reidemeister 1}
\label{fig:r1}
\end{figure}

\begin{figure}[H]
\includegraphics[scale =.52]{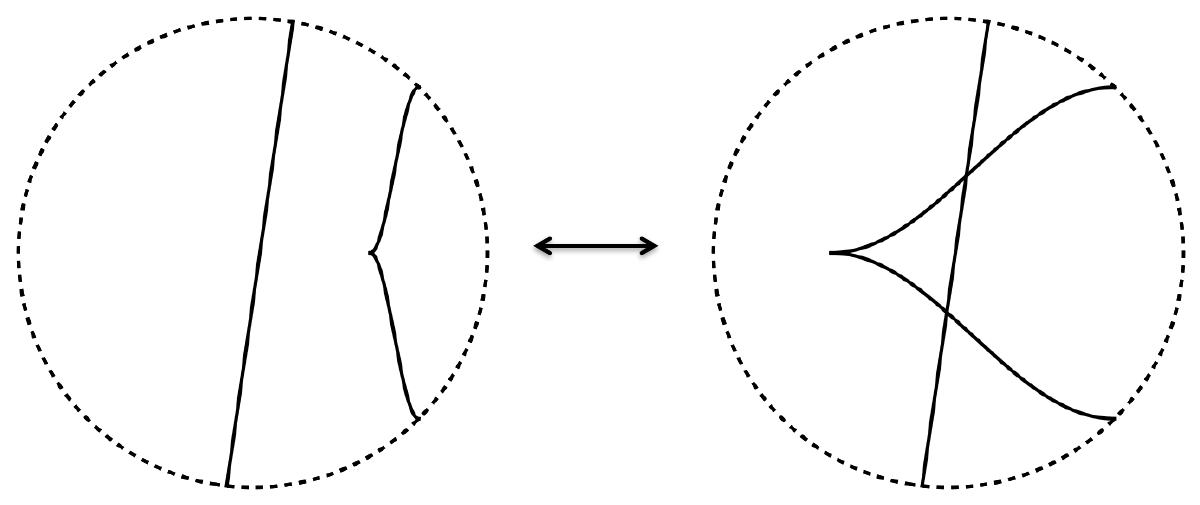}
\caption{ \label{fig:r2} Reidemeister 2}
\end{figure}

\begin{figure}[H]
\includegraphics[scale =.52]{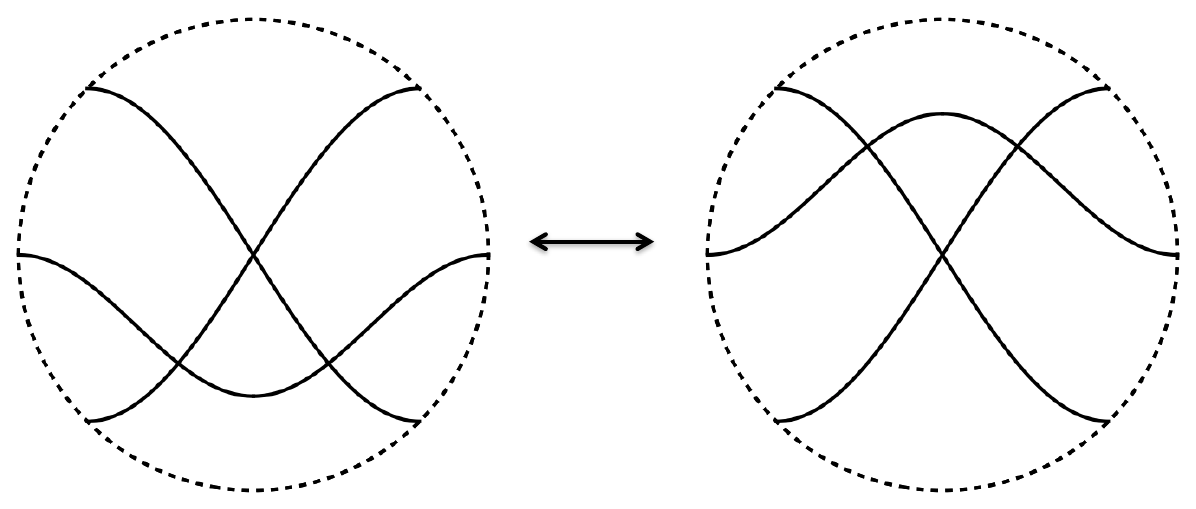}
\caption{ \label{fig:r3} Reidemeister 3}
\end{figure}

\subsection{Classical invariants} 
\label{sec:classicalinvariants}

Given a Legendrian knot $\Lambda \subset \bR^3$ (or its front diagram $\Phi \subset \bR^2$), 
the \emph{Thurston-Bennequin number} measures the twisting of the 
contact field around $\Lambda$, and can be computed as the linking number $tb(\Lambda) = lk(\Lambda,\Lambda + \epsilon z)$.
It is equal to the writhe minus the number of right cusps. 

To a connected Legendrian knot $\Lambda$ we attach a \emph{rotation number}, $r(\Lambda)$, 
which measures the obstruction to extending a vector field along $\Lambda$ to 
a section of the contact field on an embedded surface with boundary $\Lambda$.  
It is equal to half of the difference between the number of up cusps and the number
of down cusps; in particular, it depends via a sign on the orientation of $\Lambda$.

These quantities are defined in an invariant way, but also
one can see that the diagrammatic prescriptions of these quantities 
are preserved by the Legendrian Reidemeister
moves.  They are not adequate for classifying Legendrian knots up
to Legendrian isotopy:  Chekanov \cite{C} and Eliashberg \cite{E} independently
found a pair of topologically isotopic knots with the same
values of $tb$ and $r$ but which are not Legendrian isotopic.

A Maslov potential is an assignment 
$\mu: \mathrm{strands}(\Phi) \to \bZ$,
such that when two strands meet at a cusp, 
$\mu(\mbox{lower strand}) + 1 = \mu(\mbox{upper strand})$; note 
the existence of a Maslov potential is equivalent to requiring that every component
of $\Lambda$ has rotation number zero. 
More generally, 
an $n$-periodic Maslov potential is a map
$\mu:  \mathrm{strands}(\Phi) \to \bZ/n \bZ$ satisfying the same constraint;
the existence of an $n$-periodic Maslov potential is equivalent to the assertion
that, for each component of $\Lambda$,  twice the rotation number is divisible by $n$.

\section{Different views of a Legendrian invariant} \label{sec:lkacs}

We describe the category attached to a 
Legendrian knot $\Lambda \subset \bR^3 \cong T^{\infty,-}\bR^2$ in four languages. 

\begin{enumerate}
\item
As a category of sheaves.  To $\Lambda$, we associate a category $\dgsh_\Lambda(M)$ of sheaves
with singular support at infinity in $\Lambda$.  
We use this description 
for proving general theorems such as invariance
under Legendrian isotopy.  
\item As a Fukaya category. Define $\twFuk_\Lambda(T^*M)$ to be the subcategory of $\twFuk(T^*M)$ whose
Fukaya-theoretic singular support (in the sense of \cite{J,N2}) at infinity lies in $\Lambda$.
The dictionary of \cite{N,NZ} gives an 
(A-infinity quasi-)equivalence  $\dgsh(M)  \xrightarrow{\sim} \twFuk (T^* M)$,
and $\twFuk_\Lambda(T^*M)$ is the essential image of $\dgsh_\Lambda(M)$ under this map.
Informally, it is the subcategory of $\twFuk(T^*M)$ consisting of objects that ``end on $\Lambda$.''
\item As a category of representations of a quiver-with-relations associated to the dual graph of the front diagram. 
The category of sheaves on a triangulated space or more generally a  ``regular cell complex'' is equivalent
to the functor category from the poset $\cS$ of the stratification.  By translating the singular support 
condition to this functor category, we obtain a combinatorial description $\dgfun_\Lambda(\cS, k)$. 
We use this description for describing local properties, computing 
in small examples, and studying braid closures. 
\item As a category of modules over $k[x,y]$.  When the front diagram of $\Lambda$ is a ``grid'' diagram,
after introducing additional slicings, we can simplify the quiver mentioned above to one with vertices the lattice
points  $\mathbb{Z}^2 \subset \mathbb{R}^2$, and edges going from $(i,j)$ to $(i+1, j)$ and $(i, j+1)$.  
We use this description  for computer calculations of larger examples.
\end{enumerate}

We work with dg categories and triangulated categories. 
Some standard references are \cite{20}, \cite{21}, and \cite{D}.

\subsection{A category of sheaves}

\subsubsection{Conventions}
For $k$ a commutative ring, and $M$ a real analytic manifold, we 
write $\dgsh_{\mathit{naive}}(M; k)$ for the triangulated dg category whose objects are chain complexes of sheaves of $k$-modules on $M$ whose cohomology
is bounded and comprised of sheaves that are constructible
(i.e., locally constant with perfect stalks on each stratum) with respect to some
stratification ---
with the usual complex of morphisms between two complexes of sheaves.
We write 
$\dgsh(M; k)$ for the localization of this dg category with respect to acyclic complexes in the sense of \cite{D}.
We work in the analytic-geometric category of subanalytic sets, and consider only Whitney stratifications which are 
$C^p$ for a large number $p$.  Given a Whitney stratification $\mathcal{S}$ of $M$, we write 
$\dgsh_{\cS} (M; k)$ for the full subcategory of complexes whose cohomology sheaves are constructible with respect
to $\mathcal{S}$.  We suppress the coefficient $k$ and just write $\dgsh(M)$, $\dgsh_{\cS}(M)$, etc.,
when appropriate.\footnote{We do not always work with sheaves of 
$\bC$-vector spaces, but otherwise, our conventions concerning Whitney stratifications and 
constructible sheaves are the same as \cite[\S 3,4]{NZ} and \cite[\S 2]{N}.  }

\subsubsection{Review of singular support} 

To each $F \in \dgsh(M)$ is attached a closed conic subset $\SS(F) \subset T^* M$, called the singular support of $F$.  
The theory is
introduced and thoroughly developed in \cite{KS} for general, not necessarily constructible, sheaves --- see especially Chapter V and the treatment of constructible sheaves in Chapter VIII. 
Since our focus is more narrow, we have chosen to an approach through stratified Morse theory \cite{GM}, similar to the point of view of \cite{Schurmann}.  
The choice is not essential, and the more general treatment in \cite{KS} may be used.

Fix a Riemannian metric to determine $\epsilon$-balls $B_{\epsilon}(x) \subset M$
around a point $x \in M$; the following constructions are nonetheless independent of the metric. 
Let $F$ be an $\cS$-constructible sheaf on $M$.  Fix a point $x \in M$ and a smooth function $f$ in a neighborhood of $x$. 
For $\epsilon, \delta > 0$, the \emph{Morse group} $\Mo_{x,f,\epsilon,\delta}(F)$ in the dg derived category of $k$-modules is the cone on the restriction map
\[
\Gamma(B_\epsilon(x) \cap f^{-1}(-\infty,f(x) + \delta);F) \to \Gamma(B_{\epsilon}(x) \cap f^{-1}(-\infty,f(x) - \delta);F)
\]
For $\epsilon' < \epsilon$ and $\delta' < \delta$, there is a canonical restriction map
\[
\Mo_{x,f,\epsilon,\delta}(F) \to \Mo_{x,f,\epsilon',\delta'}(F)
\]

We recall that $f$ is said to be stratified Morse at $x$ if the restriction of $f$ to the
stratum containing $x$ is either (a) noncritical at $x$ or (b) has a Morse critical point
at $x$ in the usual sense,
and moreover $df_x$ does not lie in the closure of $T^*_S M$ for any larger stratum $S$.
The
above restriction is an isomorphism so long as $\epsilon$ and $\delta$ are sufficiently small, and $f$ is stratified Morse at $x$ 
(\cite{GM}, or in our present sheaf-theoretic setting \cite[Chapter 5]{Schurmann}). 
This allows us to define $\Mo_{x,f}(F)$ unambiguously for $f$ suitably generic with respect to $\cS$ (Proposition 7.5.3 of \cite{KS}).
In fact, $\Mo_{x,f}(F)$ depends only on $df_x$, up to a shift that depends only on the Hessian of $f$ at $x$.

\begin{example}
Suppose $F$ is the constant sheaf on $\bR^n$ and $f = -x_1^2 - \cdots - x_i^2 + x_{i+1}^2 + \cdots + x_n^2$.  Then $\Mo_{0,f}(F)$ is the relative cohomology of the pair $(A,B)$, where $A = \{x \in B_\epsilon(0) \mid f(x) < \delta\}$ is contractible and $B = \{x \in B_\epsilon(0) \mid f(x) < -\delta\}$ has the homotopy type of an $(i-1)$-dimensional sphere (an empty set if $i = 0$), so long as $\delta < \epsilon$.  Thus $\Mo_{0,f}(F) \cong k[i]$ --- the shift is the index of the quadratic form $f$.  
\end{example}

A cotangent vector $(x,\xi) \in T^* M$ is called characteristic with respect to $F$ if, for some stratified Morse $f$ with $df_x = \xi$, the Morse group $\Mo_{x,f}(F)$ is nonzero.\footnote{The notion of characteristic covector depends on the stratification, since in particular the existence of a stratified Morse function
to define the Morse group can already depend on the stratification.  However, the notion of singular support does not.} 
The \emph{singular support} of $F$ is the closure of the set of characteristic covectors for $F$.  We write $\SS(F) \subset T^*M$ for the singular support of $F$.  This notion enjoys
the following properties:

\begin{enumerate}
\item  If $F$ is constructible, then $\SS(F)$ is a conic Lagrangian, i.e. it is stable under dilation (in the cotangent fibers) by positive real numbers, and it is a Lagrangian subset of $T^* M$ wherever it is smooth.
Moreover, if $F$ is constructible with respect to a Whitney stratification $\cS$, then $\SS(F)$ is contained in the characteristic variety of $\cS$, defined as
\[
\Lambda_{\cS} := \bigcup_{S \in \cS} T^*_S M
\]
\item If $F' \to F \to F'' $ is a distinguished triangle in $\dgsh(M)$, then $\SS(F) \subset \SS(F') \cup \SS(F'')$.
\item (Microlocal Morse Lemma) Suppose $f:M \to \bR$ is a smooth function such that, for all $x \in f^{-1}([a,b])$, the cotangent vector $(x,df_x) \notin \SS(F)$.  Suppose additionally that $f$ is proper on the support of $F$.  Then the restriction map 
\[
\Gamma(f^{-1}(-\infty,b);F) \to \Gamma(f^{-1}(-\infty,a);F)
\]
is a quasi-isomorphism.  \cite[Corollary 5.4.19]{KS} 

\end{enumerate}

\begin{remark}
\label{rem:numapompilius}
As a special case of (1), $F$ is locally constant over an open subset $U \subset M$ if and only if $\SS(F)$ contains no nonzero cotangent vector along $U$.
\end{remark}

\begin{definition}
For a conic closed subset (usually a conic Lagrangian) $L \subset T^*M$, we write $\dgsh_L(M; k)$  for the full subcategory 
of $\dgsh(M; k)$ whose objects are sheaves with
singular support in $L$.
\end{definition}

\subsubsection{Legendrian Definitions}  
\label{subsubsec:LegDefs}

We write $0_M$ for the zero section of $T^* M$. 
If $\Lambda \subset T^\infty M$ is Legendrian, we write $\bR_{>0} \Lambda \subset T^* M$ for the cone over $\Lambda$, and 
$$\dgsh_{\Lambda}(M; k):= \dgsh_{\bR_{>0} \Lambda \cup 0_M}(M;k).$$

In our main application, $M$ is either $\bR^2$ or $S^1 \times \bR$, and we prefer to study a full subcategory $\dgsh_{\Lambda}(M)$ of sheaves that vanish on a noncompact region of $M$.  We therefore make the following notation.  If $M$ is the interior of a manifold with boundary and we distinguish one boundary component of $M$, we let $\dgsh_{\Lambda}(M;k)_0$ denote the full subcategory of $\dgsh_{\Lambda}(M;k)$ of sheaves that vanish in a neighborhood of the distinguished boundary component.  In particular we use $\dgsh_{\Lambda}(\bR^2;k)_0$ to denote the full subcategory of compactly supported sheaves, and $\dgsh_\Lambda(S^1 \times \bR;k)_0$ to denote the full subcategory of sheaves that vanish for $z \ll 0$.  

When $\Lambda$ is a Legendrian knot in $T^{\infty,-} \bR^2$ or $T^{\infty,-} (S^1 \times \bR)$, we often make use of a slightly larger category:

\begin{definition}
Let $M$ be one of $\bR^2$ or $S^1 \times \bR$.  When $\Lambda$ is a Legendrian knot in general position, let $\Lambda^+$ be as in \S\ref{subsec:augmentedLeg}.  This is again
a Legendrian (with singularities), so we have categories $\dgsh_{\Lambda^+} (M; k) \supset \dgsh_{\Lambda}(M;k)$ and 
$\dgsh_{\Lambda^+} (M; k)_0 \supset \dgsh_{\Lambda^+}(M;k)_0$.
\end{definition}

\begin{example}[Unknot]
\label{ex:31U}
The standard front diagram $\Phi$ for the Legendrian unknot $\Lambda$ is pictured left.  Let $E$ denote the union of the unique compact region and the upper strand of $\Phi$, but not the cusps.  The set $E$ is a half-open disk (it is homeomorphic to $[0,1) \times [0,1]$).  A chain complex $V^{\bullet}$ determines a constant sheaf with fiber $V^{\bullet}$ on $E$, and its extension-by-zero to $\bR^2$ is pictured right.  When $V^{\bullet} = k$, it is an example of an ``eye sheaf'' \S\ref{sec:rulingsheaf}.
\begin{figure}[H]
\begin{center}
\includegraphics[scale = .2]{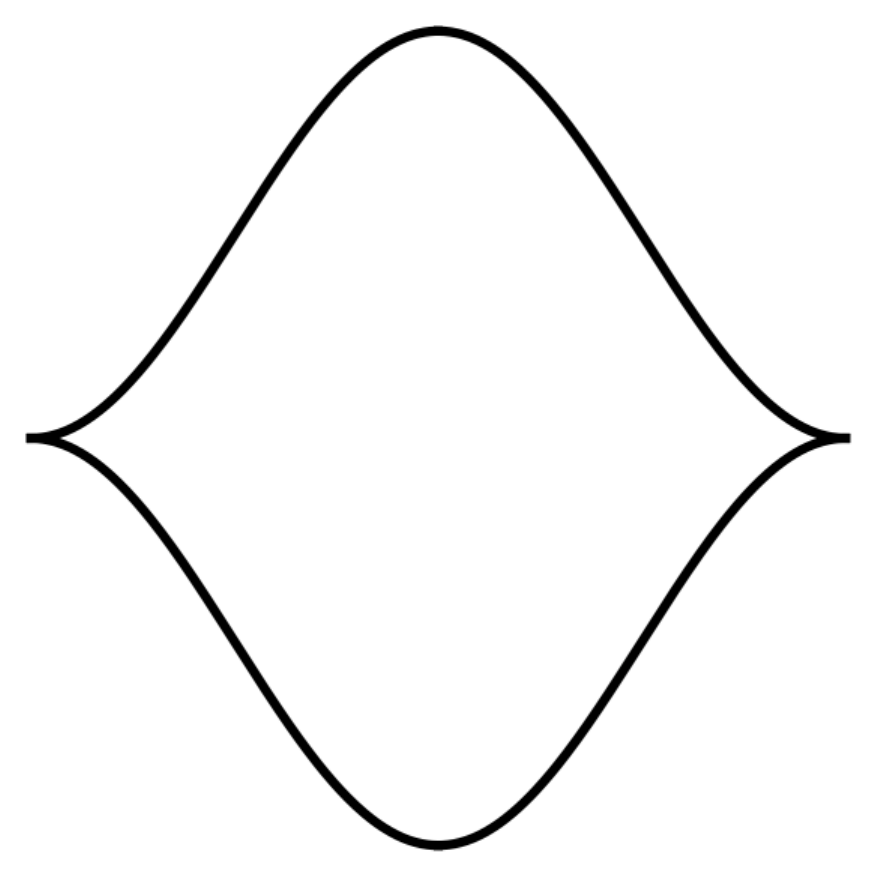} 
\quad
\includegraphics[scale = .2]{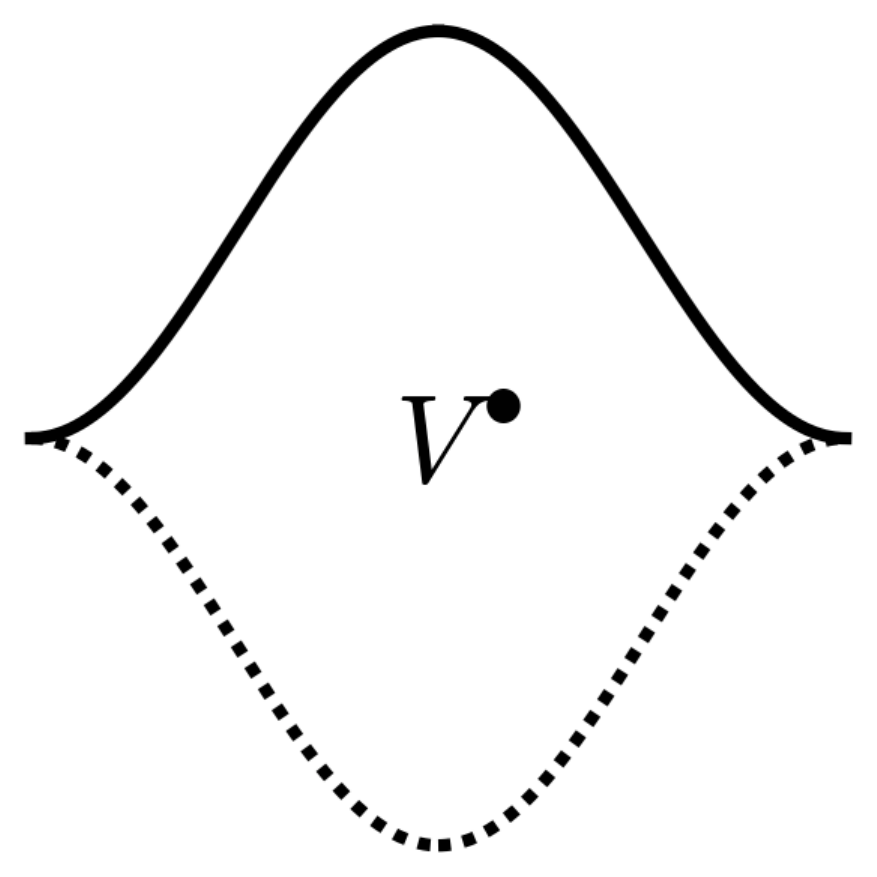}
\caption{Unknot front diagram (left) and sheaf in $\dgsh_{\Lambda}(\bR^2,k)_0$ (right).}
\label{fig:unknoteyesheaf}
\end{center}
\end{figure}
\noindent The microlocal behavior of these sheaves near the cusps is discussed in \cite[Example 5.3.4]{KS}.  In particular, they have singular support in $\Lambda$ and in fact the construction is an equivalence from the derived category of $k$-modules to $\dgsh_{\Lambda}(\bR^2)_0$.
\end{example}

\begin{example}[``R1-twisted'' unknot]
\label{ex:R1unknotsheaf}
Applying the Legendrian Reidemeister move R1 to the standard unknot of Figure
\ref{fig:unknoteyesheaf} gives the unknot $\Lambda$ depicted in Figure \ref{fig:twistedunknotsheaf}.  Legendrian
Reidemeister moves
can be realized by Legendrian isotopy, and we show in Theorem
\ref{thm:4.1} that $ \dgsh_{\Lambda}(\bR^2)_0$ is a Legendrian isotopy invariant, so the sheaves 
described in Example \ref{ex:31U} must have counterparts here.  They are produced as follows.  
Note the picture in Figure \ref{fig:twistedunknotsheaf} is topologically the union of two unknot diagrams, which
intersect at the crossing point $c$. 
Let $V^\bullet$ be a chain complex, and let $\mathcal{V}_t^\bullet, \mathcal{V}_b^\bullet$ be the sheaves
obtained by applying the construction of Example \ref{ex:31U} separately to the top and bottom unknot diagrams.  
Then  the sheaf 
$\cH om(\mathcal{V}_b^\bullet, \mathcal{V}_t^\bullet)$ 
is a skyscraper sheaf at $c$ with stalk
$\Hom(V^\bullet, V^\bullet)[-2]$. 
Thus 
$\Ext^1(\mathcal{V}_b^\bullet, \mathcal{V}_t^\bullet[1])$
$= \Hom(V^\bullet, V^\bullet)$.  One can show that
the singular support condition imposed by $\Lambda$ at the crossing $c$ is satisfied by an extension
as above if and only if its class is an isomorphism in $\Hom(V^\bullet, V^\bullet)$.  These extensions are the counterparts
of the sheaves described in Example \ref{ex:31U}, and the case $V^\bullet = k$ is illustrated in 
Figure \ref{fig:twistedunknotsheaf}. 
\begin{figure}[H]
\begin{center}
\includegraphics[scale = .3]{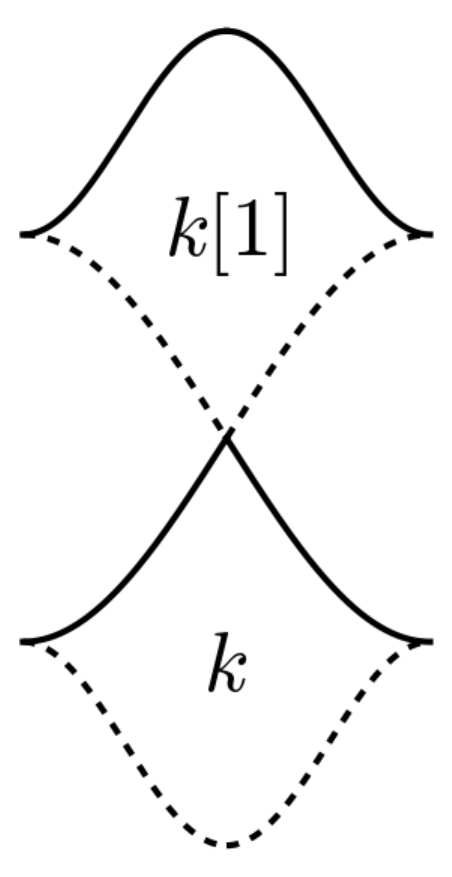}
\caption{A sheaf on the R1-twisted unknot.}
\label{fig:twistedunknotsheaf}
\end{center}
\end{figure}
\end{example}

\subsection{A Fukaya category}

We recall the relationship between
the Fukaya category of a cotangent bundle and constructible sheaves on the base manifold, which motivates many of the constructions of this paper.  In spite of this motivation, the rest of the paper is formally independent of this section, so we will not provide many details.

In \cite{NZ}, the authors
define the so-called ``unwrapped'' Fukaya category of the cotangent bundle $T^* M$
of a compact real analytic manifold $M$.
Objects are constructed from exact Lagrangian submanifolds
with some properties
and structures whose description we relegate here to a footnote (see \cite{NZ}
for details).
A composition law is constructed from moduli spaces of holomorphic disks in the usual way.  
The additional structures and properties ensure graded morphisms as well as
regularity, orientedness and compactness
of these moduli spaces.\footnote{The
additional structures are a local system on the Lagrangian, a brane structure,
and perturbation data.  The brane structure
consists of a relative pin structure ensuring orientedness of the moduli spaces
and a grading, ensuring graded morphisms.  The perturbation data
ensures regularity.  A Lagrangian object $L$ need not be compact, but must
have a ``good'' compactification $\overline{L}$ in the
spherical compactification $\overline{T}^*M$ of $T^*M$, meaning $\overline{L}$
is a subanalytic subset (or lies in a chosen analytic-geometric category).
This property, as well as an additional required ``tame perturbation,'' ensures
compactness of the moduli space by (1) bringing intersections into
compact space from infinity through perturbations, and (2) establishing tameness
so that holomorphic disks stay within compact domains and do not run off to infinity.
Lagrangian objects do not generate a triangulated category.  The triangulated
envelope is constructed from twisted complexes whose graded pieces are
Lagrangian objects.  (Another option for constructing the triangulated envelope is
the the full triangulated subcategory of modules generated by
Yoneda images of Lagrangian objects).}
We denote the triangulated $A_\infty$-category so constructed by $\twFuk(T^*M).$

\subsubsection{Standard opens and microlocalization}

In \cite{NZ}, a ``microlocalization'' functor
$$\mu_M: \dgsh(M) \to \twFuk(T^*M)$$
was constructed and shown to be a quasi-embedding; later in \cite{N} this functor was
shown to be an equivalence. 
We sketch here the construction, which is largely motivated by Schmid-Vilonen's \cite{SV}.  A fundamental observation is that, as a triangulated
category, $\dgsh(M)$ is generated by \emph{standard opens,} the pushforwards
of local systems on open sets.  Let $j:U\hookrightarrow M$ be an open embedding,
$\cL$ a local system on $U$, and
and let $j_* \cL$ be the associated standard open.  To define the corresponding
Lagrangian object $L_{U,\cL}$, choose a defining function $m:\overline{U}
\rightarrow \bR_{\geq 0}$, positive on $U$ and zero on the boundary $\overline{U}\setminus U,$
and set $$L_U := \Gamma_{d \, \log \, m\vert_U},$$
the graph of $d\, \log \, m$ over $U.$
Note that if $U = M$ then $L_U = L_M$ is the zero section.
We equip $L_U$ with the local system $\pi^*\cL,$ where $\pi: L_U
\stackrel{\sim}\rightarrow U$ and write $L_{U,\cL}$ for the corresponding object.

\subsubsection{The case of noncompact $M$}

In the formalism of \cite{NZ}, $M$ is assumed to be compact.  One crude way around this is to embed $M$ as a real analytic open subset of a compact manifold $M'$.  The functor $j_*$ is a full embedding of $\dgsh(M)$ into $\dgsh(M')$, and we may define $\twFuk(T^* M)$ to be the corresponding full subcategory of $\twFuk(T^* M')$.  

With this definition, the Lagrangians that appear in branes of $\twFuk(T^*M)$ will be contained in $T^*M \subset T^* M'$, but they necessarily go off to infinity in the fiber directions as they approach the boundary of $M$.  The Schmid-Vilonen construction suggests a more reasonable family of Lagrangians in $T^* M$.  Namely, suppose the closure of $M$ in $M'$ is a manifold with boundary (whose interior is $M$), and let $m:M' \to \bR$ be positive on $M$ and vanish outside of $M$.  Then we identify $T^*M$ with an open subset of $T^* M'$ not by the usual inclusion, but by $j_m:(x,\xi) \mapsto (x,\xi + (d \, \log\, m)_x)$. 

Thus, we define $\twFuk(T^* M)$ to be the category whose objects are twisted complexes of Lagrangians whose image under $j_m$ is an object of $\twFuk(T^*M')$ (and somewhat tautologically compute morphisms and compositions in $\twFuk(T^* M')$ as well).

\subsubsection{Singular support conditions}
\label{subsubsec:Xin}

There is a version of the local Morse group functor \cite{J} which allows us to define a purely
Fukaya-theoretic notion of singular support (see also  \S 3.7 of \cite{N2}). 
If $\Lambda \subset T^{\infty} M$ is a Legendrian,
we write 
$\twFuk_{\Lambda}(T^* M)$ for the subcategory of $\twFuk(T^*M)$ of objects whose
singular support at infinity is contained in $\Lambda$; by 
\cite{J} it is the essential image under $\mu_M$ of $\dgsh_{\Lambda}(M)$.  According to \cite{SV}, the singular support of a standard open sheaf is the limit $\lim_{\epsilon \rightarrow 0^+}\Gamma_{\epsilon d\, \log\, m\vert_U},$ thus we regard $\twFuk_{\Lambda}(T^* M)$ informally as the subcategory of objects ``asymptotic to $\Lambda$.''  The counterpart $\twFuk_{\Lambda}(M)_0$ of $\dgsh_{\Lambda}(M)_0$ is the full subcategory spanned by branes whose projection to $M$ is not incident with the distinguished boundary component of $M$.

\begin{example}[Unknot]
\label{ex:32U}
Let $\Phi$ and $\Lambda$ be as in Example \ref{ex:31U}. 
Let $m_T:\bR^2 \to \bR$ vanish on the top strand and be positive below it, and let $m_B:\bR^2 \to \bR$ vanish on the bottom strand and be positive above it.  Consider $f = \log(m_T/m_B)$ defined on the overlap.  Then the graph of $d \log(f)$ (with appropriate brane structures) is the correspondent of the sheaf of Example \ref{ex:31U}.
\end{example}

\subsection{A combinatorial model}
\label{subsec:acm}

When $M = \bR^2$ or $S^1 \times \bR$ and 
$\Lambda \subset T^{\infty, -} M$ is a Legendrian knot in general position --- i.e., its front diagram $\Phi$ has
only cusps and crossings as singularities ---
we give here a combinatorial description of $\dgsh_{\Lambda}(M)$.  The sheaves in this category are constructible with respect
to the Whitney stratification of $M$ in which the zero-dimensional strata are the cusps and crossings, 
the one-dimensional strata are the arcs, and the two-dimensional strata are the regions.

\begin{definition} 
Given a stratification $\cS$, the star of a stratum $S \in \cS$ is the union of strata that contain $S$ in their closure. 
We view $\cS$ as a poset category in which every stratum has a unique map (generization) to every stratum in its star. 
We say that $\cS$ is a regular cell complex if every stratum is contractible and moreover
the star of each stratum is contractible.  
\end{definition}

Sheaves constructible with respect to a regular cell complex can be captured combinatorially, as a
subcategory of a functor category from a poset.  If $C$ is any category and $A$ is an abelian category,
we write $\dgfun_{\mathit{naive}}(C,A)$ for the dg category of functors from $C$ to the category 
whose objects are cochain complexes in $A$,
and whose maps are the cochain maps. 
We write $\dgfun(C,A)$ for the dg quotient \cite{D}
of  $\dgfun_{\mathit{naive}}(C, A)$ by the thick subcategory of functors taking values in acyclic complexes. 
For a ring $k$, we abbreviate the case where $A$ is the abelian category of $k$-modules to $\dgfun(C,k)$.

\begin{proposition} \label{prop:star} \cite{Kashiwara84} \cite{Shepard}, \cite[Lemma 2.3.2]{N}.
Let $\cS$ be a Whitney stratification of the space $M$.  Consider the functor
\begin{equation}
\label{eq:luciustarquiniuspriscus}
 \Gamma_{\cS}: \dgsh_\cS(M;k)  \to  \dgfun(\cS,k) \qquad \qquad
\cF  \mapsto  [s \mapsto \Gamma(\text{\rm star of $s$};\cF) ]
\end{equation}
If $\cS$ is a regular cell complex, then $\Gamma_{\cS}$ is a quasi-equivalence.  
\end{proposition}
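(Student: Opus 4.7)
The plan is to construct an explicit quasi-inverse $L:\dgfun(\cS,k)\to\dgsh_\cS(M;k)$ and verify that the unit and counit are quasi-isomorphisms; the regular cell complex hypothesis enters at two decisive points.

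First I would record the geometric input from the hypothesis: in a regular cell complex the open stars $U_s := \mathrm{star}(s)$ form a basis of the topology of $M$ closed under finite intersections, with $U_s \cap U_t = \bigcup_{u \,\mid\, s,t \le u} U_u$, and moreover the poset structure is recovered from inclusions since $s \le t$ iff $U_t \subseteq U_s$. The functor $\Gamma_\cS$ is then nothing more than evaluation on this basis, with generizations corresponding to the basic restrictions.

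Next I would define $L$ as a sheafification. Given $F\in\dgfun(\cS,k)$, set $U_s \mapsto F(s)$ with the structure maps of $F$ as transition morphisms along inclusions; Čech descent on the basis produces a complex of sheaves $LF$, which I would show is $\cS$-constructible by checking that for $x$ in a stratum $S$ the cofinal system of basic opens is $\{U_T : S \le T,\ x\in U_T\}$, so that the stalk $(LF)_x$ is quasi-isomorphic to $F(S)$ (this uses that the $U_T$ with $T \ge S$ near $x$ deformation retract onto $U_S$, the second place where regularity is essential). Then I would verify the two natural transformations separately: for $\Gamma_\cS\circ L \simeq \mathrm{id}$, contractibility of each $U_s$ gives $\Gamma(U_s; LF) \simeq F(s)$ with no higher derived corrections; for $L\circ\Gamma_\cS \simeq \mathrm{id}$, compare stalks, reducing to the assertion that $\Gamma(U_S;\cF) \xrightarrow{\sim} \cF_x$ for $x\in S$.

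The main obstacle is precisely this last assertion: that for an $\cS$-constructible $\cF$, sections on the open star of $S$ compute the stalk along $S$. Without regularity the open star need not admit a stratified retraction to $S$, and higher direct image contributions from adjacent strata could spoil the isomorphism. Under the regular cell complex hypothesis, however, $U_S$ admits a stratified deformation retraction onto $S$, and by the microlocal Morse lemma applied along the retracting flow one can contract $U_S$ to a neighborhood of $x$ without crossing the characteristic variety $\Lambda_\cS$, so the restriction $\Gamma(U_S;\cF)\to \cF_x$ is a quasi-isomorphism. Once this stratified homotopy invariance is in hand, the remaining verifications are formal bookkeeping with the dg quotient by acyclics. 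I would then cite \cite{Shepard} and \cite[Lemma 2.3.2]{N} for the technical details of the retraction and the descent argument.
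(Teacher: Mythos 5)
The paper itself offers no proof of this proposition; it is quoted from Shepard's thesis and from \cite[Lemma 2.3.2]{N}, so your attempt can only be measured against the standard argument in those references. In outline you follow it: construct a quasi-inverse out of a functor on the poset, and reduce both the unit and the counit to the single assertion that for $\cS$-constructible $\cF$ and $x$ in a stratum $S$ the restriction $\Gamma(\mathrm{star}\,S;\cF)\to\cF_x$ is a quasi-isomorphism, the regularity hypothesis entering through contractibility of stars and a stratum-preserving (hence non-characteristic) contraction of the star onto the cell. You identify that crux correctly, and it is indeed the content of the cited lemmas.

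However, the construction of your quasi-inverse $L$ has a genuine gap: the open stars $U_s$ do \emph{not} form a basis of the topology of $M$. A point $x$ in a stratum $S$ lies only in the finitely many stars $U_T$ with $T$ below $S$ in the closure order (note your inequality $S\le T$ is reversed), and the smallest of these is $U_S$ itself, which does not shrink around $x$; so there is no cofinal system of stars at $x$, and neither ``sheafify a presheaf defined on the basis'' nor the stalk computation $(LF)_x\simeq F(S)$ via cofinality makes sense as written (the Kan extension of $U_s\mapsto F(s)$ to all opens has value $0$ on small opens containing no star, hence the wrong stalks). The standard repair keeps your plan intact: let $\pi:M\to\cS$ be the continuous map to the poset with its Alexandrov topology, for which $\pi^{-1}$ of the minimal open around $s$ is exactly $U_s$; then $\Gamma_{\cS}$ is identified with $\pi_*$, and the quasi-inverse is $\pi^*$, i.e.\ the complex of sheaves that is constant with value $F(S)$ along each stratum $S$ with generization maps given by $F$ --- its stalks are $F(S)$ by construction, with no cofinality argument. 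With this definition both adjunction maps reduce stalkwise to the key lemma above; note in particular that contractibility of $U_s$ alone does not give $\Gamma(U_s;LF)\simeq F(s)$, since $LF$ is not locally constant --- you need the same star-versus-stalk lemma applied to $LF$, which is exactly what Shepard and Nadler prove via the stratified retraction you describe.
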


\begin{remark} Note in case $\cS$ is a regular cell complex, the restriction map from 
$ \Gamma(\text{star of $s$};\cF)$ to the stalk of $\cF$ at any point of $s$ is a
quasi-isomorphism. \end{remark}

The Whitney stratification induced by a front diagram is not usually regular, so the corresponding 
functor $\Gamma$ can fail to be an equivalence.
We therefore choose a regular cell complex $\cS$ refining this stratification.  For convenience we will require that the
new one-dimensional strata do not meet the crossings (but they may meet the cusps).  
By Proposition \ref{prop:star} above, the restriction of 
$\Gamma_{\cS}$ to the full subcategories $\dgsh_{\Lambda}(M; k)$, $\dgsh_{\Lambda}(M;k)_0$, $\dgsh_{\Lambda^+}(M; k)$ and $\dgsh_{\Lambda^+}(M; k)_0$
of $\dgsh_{\cS}(M; k)$
is quasi-fully faithful.  By describing their essential images in $\dgfun(\cS, k)$, we give combinatorial models of these categories. 
Describing the essential image amounts to translating the singular 
support conditions $\Lambda$ and $\Lambda^+$ into constraints on elements of $\dgfun(\cS,k) $.

\begin{definition}
\label{def:romulus}
Let $\cS$ be a regular cell complex refining the stratification induced by the front diagram. 
We write $\dgfun_{\Lambda^+}(\mathcal{S}, k)$ for the full subcategory of  $\dgfun(\mathcal{S}, k)$ 
whose objects are characterized by:
\begin{enumerate}
\item Every map from a zero dimensional stratum in $\cS$ which is not a cusp or crossing, or from a one dimensional stratum
which is not contained in an arc, is sent to a quasi-isomorphism.  
\item If $S, T \in \mathcal{S}$ such that $T$ bounds $S$ from above, then $T \to S$ is sent to a quasi-isomorphism. 
\end{enumerate}

We write $\dgfun_{\Lambda}(\mathcal{S}, k)$ for the full subcategory of $\dgfun_{\Lambda^+}(\mathcal{S}, k)$
whose objects are characterized by satisfying the following additional condition at each crossing $c \in \cS$. 

Label the subcategory of $\cS$ of all objects admitting maps from $c$ as follows:
\begin{equation}
\label{eq:tullushostilius}
\xymatrix{
& & N\\
& nw \ar[ur] \ar[dl] & & ne \ar[ul] \ar[dr] \\
W  & & c \ar[ul] \ar[ur] \ar[dl] \ar[dr] \ar[uu] \ar[dd] \ar[ll] \ar[rr] & & E\\
& sw \ar[ul] \ar[dr] & & se \ar[ur]   \ar[dl] \\
& & S
}
\end{equation}
As all triangles in this diagram commute, we may form a bicomplex $F(c) \to F(nw) \oplus F(ne) \to F(N)$.
Then the additional condition that $F \in \dgfun_{\Lambda}(\mathcal{S}, k)$ must satisfy is that the complex
\[
\mathrm{Tot}\bigg(F(c) \to F(nw) \oplus F(ne) \to F(N)\bigg)
\]
is acyclic. 
\end{definition}

\begin{theorem} \label{thm:comb} Let $\cS$ be a regular cell complex refining the stratification induced
by the front diagram.  The essential image of $\Gamma_{\cS}: \dgsh_{\Lambda^+}(M; k) \to \dgfun(\mathcal{S}, k)$ 
is $\dgfun_{\Lambda^+}(\mathcal{S}, k)$, and the essential image of 
 $\dgsh_{\Lambda} (M; k)$ is $\dgfun_{\Lambda}(\mathcal{S}, k)$. \end{theorem}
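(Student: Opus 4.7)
The plan is to reduce the theorem to a local analysis at each stratum, exploiting the fact that Proposition \ref{prop:star} already gives a quasi-equivalence $\Gamma_\cS:\dgsh_\cS(M;k) \xrightarrow{\sim} \dgfun(\cS,k)$. Under this equivalence, $\dgsh_{\Lambda^+}(M;k)$ and $\dgsh_\Lambda(M;k)$ are cut out by conditions on singular support, and our task is to show that these conditions translate exactly to the combinatorial ones in Definition \ref{def:romulus}. Since singular support is local, I would check the translation point-by-point at four types of strata: interior points of two-dimensional regions (trivial, nothing there), strata introduced by the refinement or lying in arcs, cusps, and crossings.

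For the strata that are either refinement strata or interior arc points, the plan is to invoke Remark \ref{rem:numapompilius} and the microlocal Morse lemma. At a refinement stratum there are no nonzero covectors in $\bR_{>0}\Lambda^+$, so the singular-support constraint is local constancy, which on a regular cell complex is equivalent to every generization from that stratum being a quasi-isomorphism; this is exactly condition (1). At an interior arc point the only allowed covectors point downward, so $\SS(\cF)$ is permitted only in the ray of downward conormals. A stratified Morse function with upward differential there computes the ``upward'' Morse group, and its vanishing translates via $\Gamma_\cS$ to the statement that, whenever $T$ bounds $S$ from above, the map $T \to S$ is sent to a quasi-isomorphism; this is condition (2). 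The behavior at a cusp then follows from the microlocal Morse lemma together with the conditions on the two arcs meeting the cusp: a direct computation in a standard cusp model shows no further combinatorial constraint is needed beyond (1) and (2).

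The crux, and the step I expect to be the main obstacle, is the crossing $c$. Here $\bR_{>0}\Lambda^+$ is the entire semicircle of downward covectors, whereas $\bR_{>0}\Lambda$ is just the two rays conormal to the smooth branches. For the $\Lambda^+$ case I would argue that conditions (1) and (2), applied to all eight non-central strata in the diagram \eqref{eq:tullushostilius}, already force $\SS(\cF)$ at $c$ to lie in the lower half-circle: one writes down a stratified Morse function with a generic upward differential at $c$ and checks, using Schurmann's description of the Morse group as a mapping cone and the acyclicity of the generizations $nw\to N$, $ne\to N$, $sw\to W$, etc., that the Morse group is acyclic. For the $\Lambda$ case, one must further kill the Morse group in every downward direction at $c$ except the two distinguished rays. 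The local model here is the constant sheaf on the union of the four quadrants, and a generic downward Morse function at $c$ computes a relative cohomology which, after identifying star-sections with the $F(\cdot)$ via $\Gamma_\cS$, is naturally quasi-isomorphic to the totalization of the bicomplex $F(c) \to F(nw) \oplus F(ne) \to F(N)$ (up to a reflection handled by condition (2)). Acyclicity of this totalization is therefore equivalent to the vanishing of all the problematic Morse groups, matching Definition \ref{def:romulus} on the nose.

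Putting the local computations together yields both inclusions of essential images: every $\cF \in \dgsh_{\Lambda^+}(M;k)$ (resp.\ $\dgsh_\Lambda(M;k)$) has $\Gamma_\cS(\cF)$ satisfying the required conditions by the local analysis, and conversely any $F \in \dgfun_{\Lambda^+}(\cS,k)$ (resp.\ $\dgfun_\Lambda(\cS,k)$) corresponds under the quasi-equivalence of Proposition \ref{prop:star} to a sheaf whose singular support, verified stratum-by-stratum by the same local computations, is contained in $\bR_{>0}\Lambda^+\cup 0_M$ (resp.\ $\bR_{>0}\Lambda\cup 0_M$). The two halves of the theorem follow simultaneously from this dictionary.
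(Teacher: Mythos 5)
Your overall strategy coincides with the paper's: invoke Proposition \ref{prop:star}, then translate the singular support condition stratum by stratum using stratified Morse theory with (essentially linear) test functions at arcs, cusps and crossings; in particular your identification of the extra $\Lambda$-condition at a crossing with the acyclicity of $\mathrm{Tot}\bigl(F(c)\to F(nw)\oplus F(ne)\to F(N)\bigr)$ is exactly the paper's computation for the downward covector $-dz$.

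However, your treatment of the $\Lambda^+$ condition at a crossing has a genuine flaw. You assert that conditions (1) and (2) ``applied to all eight non-central strata'' of \eqref{eq:tullushostilius} already force $\SS(\cF)$ over $c$ into the allowed arc, and you support this by citing ``acyclicity of the generizations $nw\to N$, $ne\to N$, $sw\to W$.'' Those are \emph{upward} generizations: their cones compute the microlocal monodromy along the strands and are nonzero for every object of interest, so they cannot be fed into the argument as quasi-isomorphisms. More fundamentally, no condition imposed away from $c$ can control the Morse groups at $c$: a skyscraper sheaf at $c$ is $\cS$-constructible, satisfies every condition at the surrounding strata, yet has singular support equal to the whole cosphere fiber over $c$. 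The correct statement --- and the content of the paper's computations with $f=z,\pm x$ --- is that, granting the arc conditions $nw\to W$, $ne\to E$, $sw\to S$, $se\to S$, the vanishing of the upward and horizontal Morse groups at $c$ is \emph{equivalent} to the downward generizations out of $c$ itself, $F(c)\to F(sw)$ and $F(c)\to F(se)$, being quasi-isomorphisms; that is, it is Definition \ref{def:romulus}(2) applied with $T=c$, an honest extra constraint at the crossing rather than a consequence of the neighboring strata. The same caveat applies at a cusp, where the needed quasi-isomorphism $F(c)\to F(b)$ again involves the cusp point itself, and where refinement strata terminating at the cusp add extra terms to the Morse computation (the paper's ``cusp with additions''). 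With these local computations repaired, the rest of your outline, including the converse direction via Proposition \ref{prop:star}, goes through as in the paper.
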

\begin{proof}
The assertion amounts to the statement that a sheaf on $M$ has certain singular support if and only if its image in 
$\dgfun(\mathcal{S}, k)$ has certain properties.  Both the calculation of singular support and the asserted properties 
of the image can be checked locally --- we may calculate on a small neighborhood of a point in a stratum.
The singular support does not depend on the stratification, so can be calculated using only generic points in
the stratification induced by the front diagram.  It follows that the condition of Definition \ref{def:romulus}\,(1)
must hold
at a zero dimensional stratum in $\cS$ which is not a cusp or crossing, or at a point in a one dimensional stratum
which is not contained in an arc.
It remains to study the singular support condition in the vicinity of an arc, of a cusp,
and of a crossing. 

The basic point is that by definition the Legendrian lives in the downward conormal space.  Thus for any $p \in M$  
and (stratified Morse)
$f$ such that $df_p = adx + b dz$ with $b > 0$, and any $\cF \in \dgsh_{\Lambda^+}(M,k)$, we must have $\Mo_{x,f} = 0$. 
It is essentially obvious from this that the downward generization maps in $\cS$ are sent by $\Gamma_{\cS}(\cF)$ to quasi-isomorphisms;
the significance of the following  calculations is that
they will show this to be the only condition required, except at crossings. 

{\bf Arc.}  Let $L$ be a one-dimensional stratum in $\cS$ which bounds the two-dimensional stratum $B$ from above. 
Let $\ell \in L$ be any point, and $B_\epsilon(\ell)$ a small ball around $\ell$.  Then for  $\cF \in \dgsh_{\cS}(M,k)$, we should show
$\cF \in \dgsh_{\Lambda^+}(B_\epsilon(\ell), k)$ if and only if $\Gamma_S(\cF)(L \to B)$ is a quasi-isomorphism. 

Let $A$ be the two dimensional cell above $L$. 
Let $f$ be negative on $B$, zero on $L$, positive on $A$, and have $df$ nonvanishing along $L$.  Then $f$ is stratified
Morse, and $df_\ell$ ``points into $A$'', 
so if $A$ is above $B$, we have $df_\ell \notin T^{\infty, -} \RR^2$.  So for sufficiently small $\delta$, 
we must have 
$$0 \cong \Mo_{\ell,f}(\cF) \cong \mathrm{Cone}(\Gamma(f^{-1}(-\infty, \delta) \cap B_\epsilon(\ell), \cF) \to 
\Gamma(f^{-1}(-\infty, -\delta) \cap B_\epsilon(\ell), \cF)) \cong $$
$$\mathrm{Cone}(\Gamma(L, \cF) \to \Gamma(B, \cF)) \cong \mathrm{Cone}(\Gamma(\mathrm{star}\, L, \cF) \to \Gamma(\mathrm{star}\, B, \cF))
\cong \mathrm{Cone}(\Gamma_S(\cF)(L \to B))$$

{\bf Cusp.} 
Let $c$ be a cusp point.   First we treat the case when no additional one-dimensional strata were introduced ending at $c$. 
Then the subcategory of $\cS$ of objects that receive a map from $c$ consists of two arcs $a$ above and $b$ below
the cusp, and two regions, $I$ inside the cusp and $O$ outside.  The maps involved are as below: 
\begin{equation}
\label{eq:luciustarquiniuscollatinus}
\xymatrix{
& & a \ar[dll] \ar[d]  \\
O &  c \ar[dr] \ar[ur]  \ar[l] \ar[r] & I \\
& & b \ar[ull] \ar[u]
}
\end{equation}

The claim near a cusp: 
for $\cF \in \dgsh_{\cS}(M;k)$, we have
$\cF \in \dgsh_{\Lambda}(B_\epsilon(c); k) = \dgsh_{\Lambda^+}(B_\epsilon(c); k)$ if and only if the maps
\[
\begin{array}{ccccc}
\Gamma_S(\cF)(c \to b) & & \Gamma_S(\cF)(b \to O) & & \Gamma_S(\cF)(a \to I) 
\end{array}
\]
are all quasi-isomorphisms.  

We check stratum by stratum; in $O, I$ there is nothing to check; in $a, b$ the statement
is just what we have already shown above.  It remains to compute Morse groups at $c$.   Since these depend (up to a shift) only
on the differential, we may restrict ourselves to linear Morse functions $f = \alpha x + \beta z$.
This function is stratified Morse at $c$ whenever $\alpha \neq 0$.  On the other hand, the only point at infinity of $\Lambda$ (or $\Lambda^+$)
over $c$ is 
$-dz,$ which would require $\alpha = 0$.  Thus the condition 
at the stratum $c$ of being
in $\dgsh_{\Lambda^+}(B_\epsilon(c), k)$ is the vanishing of the Morse group $\mathrm{Mo}_{c,f}(\cF)$ for all nonzero $\alpha$.

The Morse group is the cone on a morphism
$
\Gamma(Y_1,\cF) \to \Gamma(Y_2,\cF)
$
where $Y_1$ and $Y_2$ are, respectively, the intersection of a 
small open
set containing the cusp with $f^{-1}((-\infty, \epsilon))$ and 
$f^{-1}((-\infty, -\epsilon))$.  Topologically, there are two cases, 
according as whether or not the line 
$\alpha x + \beta z = \epsilon$ intersects the positive $x$ axis; 
i.e., according as to whether $\alpha > 0$ or $\alpha < 0$.  

The following picture depicts the lines $f = \pm \epsilon$, 
where we have taken $f$ to be $\alpha x + \beta z$ for $\alpha > 0$;
it also depicts these lines if we take $f = - (\alpha x + \beta z)$.  
Thus it serves to represent both topological possibilities.

\begin{center}
\includegraphics[scale = .33]{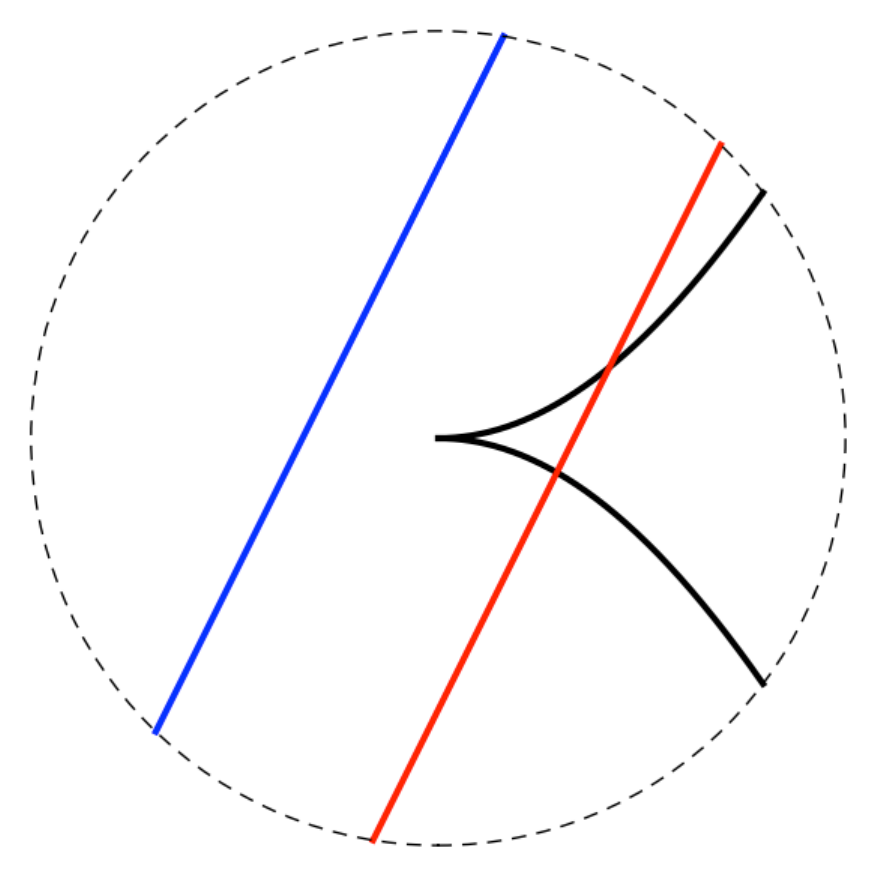}
\end{center}

The case $f = - (\alpha x + \beta z)$ is when $Y_1$ and $Y_2$ are the regions to the right of the blue and the red lines, respectively.
In this case, $\Gamma(Y_1, \cF)$ is naturally identified with 
$\Gamma(B_\epsilon(C), \cF) \cong \Gamma_{\cS}(\cF) (c)$, and 
$\Gamma(Y_2, \cF)$ with the cone of
\[
\Gamma_{\cS}(\cF) (a) \oplus \Gamma_{\cS}(\cF) (b) \xrightarrow{\Gamma_{\cS}(\cF) (a \to I) \ominus
\Gamma_{\cS}(\cF) (b \to I)} \Gamma_{\cS}(\cF) (I)
\]
The map $\Gamma(Y_1, \cF) \to \Gamma(Y_2, \cF)$ is induced by the morphisms 
$c \to a, b$, and is a quasi-isomorphism if and only if the total complex 
$$\Gamma_{\cS}(\cF)(c) \to \Gamma_{\cS}(\cF)(a) \oplus \Gamma_{\cS}(\cF)(b) \to \Gamma_{\cS}(\cF)(I) $$
is acyclic.  
As $\Gamma_\cS(a \to I)$ is a quasi-isomorphism, this happens if and only if 
$\Gamma_{\cS}(c \to b)$ is a quasi-isomorphism as well. 

The case $f = \alpha x + \beta z$ is when $Y_1$ and $Y_2$ are the regions to the left of the red and blue lines, respectively.  In this case 
the map is just $\Gamma_{\cS}(\cF)(c) \to \Gamma_{\cS}(\cF)(O)$.  This map is the composition 
$\Gamma_{\cS}(b \to O) \circ \Gamma_{\cS}(c \to b)$, both of which must be quasi-isomorphisms if the other 
singular support conditions hold.  

{\bf Cusp with additions.}  
The situation if some additional one-dimensional strata terminate at the cusp $c$ is  similar.  The inside
and outside  are subdivided; the outside as
$b \to O_1 \leftarrow o_1 \rightarrow O_2 \leftarrow o_2 \cdots \rightarrow O_n \to a $, and similarly the inside 
as $b \to I_1 \leftarrow i_1 \rightarrow I_2 \leftarrow i_2 \cdots \rightarrow I_n$, where all maps $i \to I$ or $o \to O$, 
and also $a \to I_n$ and $b \to O_1$ are
quasi-isomorphisms.  For a Morse function topologically like $f=-x$ , we now have
$$\Gamma(Y_1, \cF) =
\mathrm{Cone} \left( \Gamma_\cS(\cF) (a) \oplus \Gamma_\cS(b) \oplus \bigoplus \Gamma_\cS(\cF)(i_k) 
\to \oplus \bigoplus \Gamma_\cS(\cF)(I_k) \right)$$ while again
$\Gamma(Y_2, \cF) = \Gamma_\cS(\cF)(c)$.  
As before, since 
$\Gamma_\cS(\cF) (a) \oplus \bigoplus \Gamma_\cS(\cF)(i_k) \to \oplus \bigoplus \Gamma_\cS(\cF)(I_k)$
is a quasi-isomorphism, the acyclicity of the cone $\Gamma(Y_1, \cF) \to \Gamma(Y_2, \cF)$ is equivalent
to $\Gamma_\cS(\cF)(c) \to \Gamma_\cS(b)$ being a quasi-isomorphism.  
Note that since
$b \to O_1 \leftarrow o_1 \rightarrow O_2 \leftarrow o_2 \cdots \rightarrow O_n$ are all quasi-isomorphisms, 
it follows that all maps $\Gamma_{\cS}(\cF)(c) \to \Gamma_{\cS}(\cF)(O_i), \Gamma_{\cS}(\cF)(o_i)$ are
quasi-isomorphisms as well.  

For other Morse functions, the Morse group is 
$$\mathrm{Cone}\left(\Gamma_{\cS}(\cF)(c) \to \mathrm{Cone}\left(\bigoplus_{i=l}^m \Gamma_\cS(\cF)(o_i) \to 
\bigoplus_{i=l}^{m-1} \Gamma_{\cS}(\cF)(O_i) \right) \right)$$
and so again its acyclicity is ensured once the other singular support conditions are satisfied.

{\bf Crossing.} 
Let $c \in \cS$ be a crossing.  Let $N, E, S, W$ be the north, east, south, and west regions adjoining $c$, and let
$nw, ne, sw, se$ be the arcs separating them. 
The subcategory of $\cS$ of objects receiving maps from $c$ is: 
\[
\xymatrix{
& & N\\
& nw \ar[ur] \ar[dl] & & ne \ar[ul] \ar[dr] \\
W  & & c \ar[ul] \ar[ur] \ar[dl] \ar[dr] \ar[uu] \ar[dd] \ar[ll] \ar[rr] & & E\\
& sw \ar[ul] \ar[dr] & & se \ar[ur]   \ar[dl] \\
& & S
}
\]
For $\cF \in \dgsh_\cS(M,k)$ to be in $\dgsh_{\Lambda^+}(B_\epsilon(c), k)$ near the arcs is equivalent to 
the maps $nw \to W$, $ne \to E$, $sw \to S$, $se \to S$ all being carried to quasi-isomorphisms by 
$\Gamma_{\cS}(\cF)$.  It remains to show that having $\cF \in \dgsh_{\Lambda^+}(B_\epsilon(c), k)$ is equivalent 
to these together with the further condition that $c \to se$ and $c \to sw$ are sent to quasi-isomorphisms as well, and 
that having $\cF \in \dgsh_{\Lambda}(B_\epsilon(c), k)$ is equivalent to all of the above, together with the further condition that
\[
\Gamma_{\cS}(\cF) (c) \to \Gamma_{\cS}(\cF) (nw) \oplus \Gamma_{\cS}(\cF) (ne) \to \Gamma_{\cS}(\cF) (N)
\]
is acyclic.

Again we use a linear function to do Morse theory.  If $f(x,z) = ax + bz$ is transverse to the two strands of the crossing, then it is Morse with respect to the stratification.  There are (topologically) four cases, which we can represent by $f = x, -x, z, -z$.  A sheaf is in $\dgsh_{\Lambda^+}(B_\epsilon(c), k)$
if the Morse groups corresponding to the 
first three vanish (as these correspond to non-negative covectors), and in $\dgsh_{\Lambda}(\RR^2)$  if the Morse group of the fourth vanishes as well. 

\begin{center}
\includegraphics[scale = .33]{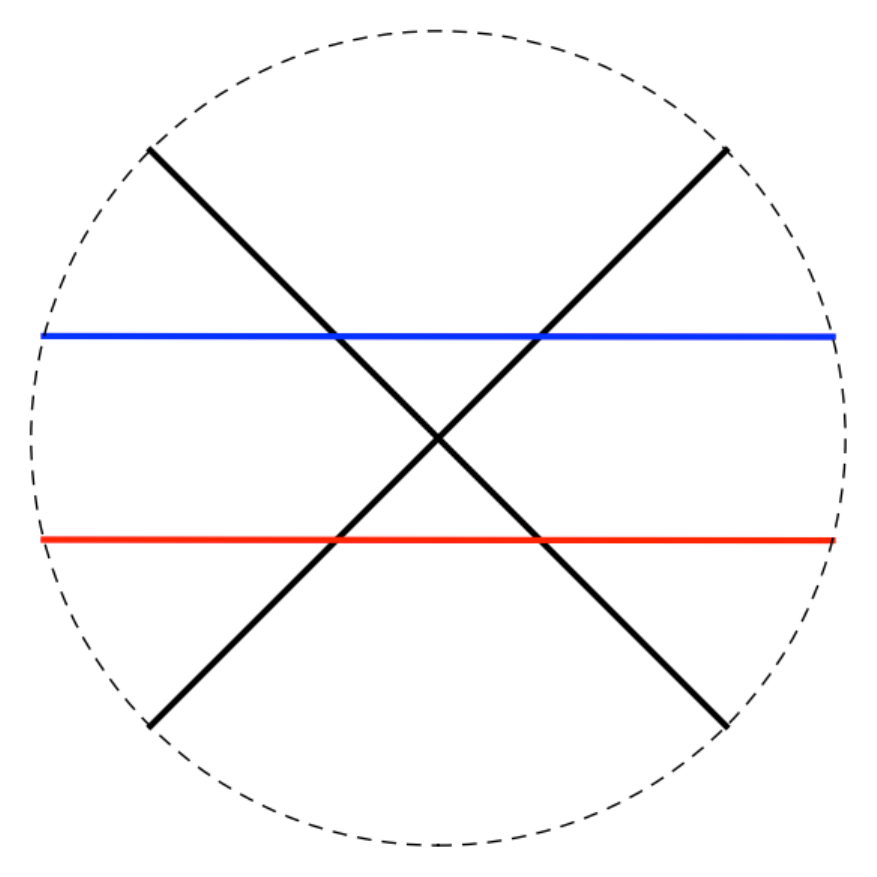}
\end{center}
When $f = z$, then
$f^{-1}(-\infty, \delta)$ is the region below the blue line in the figure above, and $f^{-1}(-\infty, \delta)$ is the region below the red line. 
We have $\Gamma(f^{-1}(-\infty, \delta), \cF) \cong \Gamma_{\cS}(\cF)(c)$ and 
on the other hand $\Gamma(f^{-1}(-\infty, -\delta), \cF)\cong \mathrm{Tot}( \Gamma_{\cS}(\cF)(se) \oplus  \Gamma_{\cS}(\cF)( sw) \to  \Gamma_{\cS}(\cF)(S) )$.  Thus
the vanishing of the Morse group $Mo_{c,f(x,z)=z}(\cF)$ 
is equivalent to the acyclicity of the complex 
$$ \mathrm{Tot}(\Gamma_{\cS}(\cF)(c) \to  \Gamma_{\cS}(\cF)(se) \oplus  \Gamma_{\cS}(\cF)( sw) \to  \Gamma_{\cS}(\cF)(S) )$$ 
Since $\Gamma_{\cS}(\cF)( sw) \to  \Gamma_{\cS}$ and $\Gamma_{\cS}(\cF)( se) \to  \Gamma_{\cS}$ are quasi-isomorphisms,
the kernel of $\Gamma_{\cS}(\cF)(se) \oplus  \Gamma_{\cS}(\cF)( sw) \to  \Gamma_{\cS}(\cF)(S)$ is quasi-isomorphic to each
of $\Gamma_{\cS}(\cF) (se)$ and $\Gamma_{\cS}(\cF) (sw)$.  Thus the vanishing of this Morse group is equivalent to the natural maps
$\Gamma_{\cS}(\cF)(s \to se)$ and $\Gamma_{\cS}(\cF)(s \to sw)$ being quasi-isomorphisms.

When $f = x$, vanishing of the Morse group $Mo_{c,f(x,z)=x}(\cF)$ is likewise equivalent to acyclicity of 
$$ \mathrm{Tot}(\Gamma_{\cS}(\cF)(c) \to  \Gamma_{\cS}(\cF)(sw) \oplus  \Gamma_{\cS}(\cF)( nw) \to  \Gamma_{\cS}(\cF)(W) )$$ 
Since $\Gamma_\cS(\cF)(nw \to W)$ is a quasi-isomorphism, acyclicity of the complex is equivalent to requiring that 
$\Gamma_{\cS}(\cF)(c) \to  \Gamma_{\cS}(\cF)(sw) $ be acyclic as well. 

When $f = -x$, vanishing of the Morse group $Mo_{c,f(x,z)=-x}(\cF)$ is likewise equivalent to acyclicity of 
$$ \mathrm{Tot}(\Gamma_{\cS}(\cF)(c) \to  \Gamma_{\cS}(\cF)(se) \oplus  \Gamma_{\cS}(\cF)( ne) \to  \Gamma_{\cS}(\cF)(E) )$$ 
Since $\Gamma_\cS(\cF)(ne \to E)$ is a quasi-isomorphism, acyclicity of the complex is equivalent to requiring that 
$\Gamma_{\cS}(\cF)(c) \to  \Gamma_{\cS}(\cF)(se) $ be acyclic as well.  

Thus we see that the singular support condition $\dgsh_{\Lambda^+}(B_\epsilon(c), k)$ near a crossing is equivalent to the statement
that downward arrows be sent to quasi-isomorphisms by $\Gamma_{\cS}(\cF)$.  

Finally, when $f = -z$, vanishing of the Morse group $Mo_{c,f(x,z)=-z}(\cF)$ is equivalent to the acyclicity of the complex
$$ \mathrm{Tot}(\Gamma_{\cS}(\cF)(c) \to  \Gamma_{\cS}(\cF)(ne) \oplus  \Gamma_{\cS}(\cF)( nw) \to  \Gamma_{\cS}(\cF)(N) )$$ 
which is the remaining  condition asserted to be imposed by $\dgsh_{\Lambda}(B_\epsilon(c), k)$.
\end{proof}

\begin{remark}
Suppose $M$ is one of $\bR^2$ or $S^1 \times \bR$, and $\Lambda$ is compact in $T^{\infty,-} M$.
Under the equivalence of the Theorem, the subcategories $\dgsh_{\Lambda}(M,k)_0, \dgsh_{\Lambda^+}(M,k)_0$ of \S\ref{subsubsec:LegDefs} are carried to the full subcategory of $\dgfun_{\Lambda}(\cS,k), \dgfun_{\Lambda^+}(M,k)$ spanned by functors whose value on any cell in the noncompact region (for $\bR^2$) or the lower region (for $S^1 \times \bR$) is an acyclic complex.  We denote these categories by $\dgfun_{\Lambda}(M,k)_0, \dgfun_{\Lambda^+}(M,k)_0$.
\end{remark}

In order to discuss cohomology objects, 
we write $\Sh(M,k)$ for the category of constructible sheaves (not complexes of sheaves) on $M$ with coefficients
in $k$, and similarly $\Sh_\cS(M,k)$ for the sheaves constructible with respect to a fixed stratification.  
Then any $\cF \in \dgsh(M)$ has cohomology sheaves $h^i(\cF) \in \mathrm{Sh}(M)$. 
When $\cS$ is a stratification, we write $\Fun(\cS, k)$ for the functors from the poset of $\cS$ to the category of $k$-modules,
so that 
$F \in \dgfun(\cS, k)$ has cohomology objects $h^i(F) \in \Fun(\cS, k) $ given by 
$h^i(F) (s) = h^i( F(s) )$.  We view $\mathrm{Sh}(M)$ as a subcategory of $\dgsh(M)$ in the obvious way,
and similarly $\Fun(\cS, k) $ as a subcategory of $\dgfun(\cS, k)$; we write 
$\mathrm{Sh}_{\Lambda^+}(M) = \dgsh_{\Lambda^+}(M) \cap \mathrm{Sh}(M)$,
and $\Fun_{\Lambda^+}(S; k) = \dgfun_{\Lambda^+}(S; k) \cap \Fun(S; k)$, 
etcetera.

\begin{proposition}
\label{prop:cohompreserve}
The categories with singular support in $\Lambda^+$ are preserved
by taking cohomology, that is, 
 $h^i: \dgfun_{\Lambda^+}(\cS, k) \to \Fun_{\Lambda^+}(\cS; k)$ and 
$h^i: \dgsh_{\Lambda^+}(M; k) \to \mathrm{Sh}_{\Lambda^+}(M) $.
\end{proposition}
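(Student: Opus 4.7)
The plan is to prove the two assertions in turn; the combinatorial statement carries the main content, and the sheaf-theoretic statement then follows in a couple of essentially formal ways.

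I begin with the combinatorial assertion. Examining Definition \ref{def:romulus}, both defining conditions (1) and (2) of $\dgfun_{\Lambda^+}(\cS, k)$ are of the form ``certain generization maps $F(s) \to F(t)$ must be quasi-isomorphisms.'' Now if $F(s) \to F(t)$ is a quasi-isomorphism of complexes of $k$-modules, then for every $i$ the induced map $h^i(F(s)) \to h^i(F(t))$ is an isomorphism of $k$-modules, hence still a quasi-isomorphism when viewed through the embedding $\Fun(\cS, k) \hookrightarrow \dgfun(\cS, k)$. Applied to each of the generization maps listed in Definition \ref{def:romulus}, this shows $h^i(F) \in \Fun_{\Lambda^+}(\cS, k)$.

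For the sheaf-theoretic assertion there are two natural routes. The direct route uses property (2) of singular support recalled in the excerpt: the truncation triangles $\tau_{\le i-1}\cF \to \tau_{\le i}\cF \to h^i(\cF)[-i]$, together with a straightforward induction on the range of cohomological degrees of $\cF$, give $\SS(h^i(\cF)) \subset \SS(\cF) \subset \bR_{>0}\Lambda^+ \cup 0_M$, which is exactly the claim that $h^i(\cF) \in \Sh_{\Lambda^+}(M, k)$. Alternatively, one transports the combinatorial statement across Theorem \ref{thm:comb}, observing that $\Gamma_\cS$ commutes with taking cohomology objects: for a regular cell complex, each star is contractible and $\cF$ is $\cS$-constructible, so $\Gamma(\mathrm{star}(s), -)$ computes the stalk at $s$ and is exact on $\Sh_\cS(M, k)$, yielding $\Gamma_\cS(h^i\cF)(s) = h^i(\Gamma_\cS(\cF)(s))$.

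Neither route presents a serious obstacle. The one point worth stressing, which is really more of a sanity check than an obstruction, is that the proposition is stated for $\Lambda^+$ and not $\Lambda$, and this is essential: the extra crossing condition in Definition \ref{def:romulus} defining $\dgfun_\Lambda(\cS, k)$ is the acyclicity of a total complex $F(c) \to F(nw) \oplus F(ne) \to F(N)$, and this is a genuinely derived condition that does not pass to individual cohomology objects, so the argument above cannot be improved to replace $\Lambda^+$ by $\Lambda$.
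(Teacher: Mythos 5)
Your argument is essentially the paper's: the combinatorial half is word-for-word the paper's observation that the defining conditions of $\dgfun_{\Lambda^+}(\cS,k)$ are quasi-isomorphism conditions, which pass to cohomology objects, and your second route for the sheaf half (transport through Theorem \ref{thm:comb}) is exactly how the paper concludes. One caution about your ``direct'' first route: property (2) of singular support only bounds the singular support of one term of a triangle by the union of the other two, so the truncation triangles plus ``straightforward induction'' do not by themselves give $\SS(\tau_{\le i}\cF)\subset\SS(\cF)$ or $\SS(h^i\cF)\subset\SS(\cF)$; for that you need the constructibility statement $\SS(\cF)=\bigcup_j \SS(h^j\cF)$ (Kashiwara--Schapira, Prop.~8.4.1), which is a genuine fact about weakly constructible complexes rather than a formal consequence of the triangle inequality. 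Since your second route is complete and matches the paper, this does not affect the correctness of the proposal, and your closing remark about why $\Lambda^+$ cannot be replaced by $\Lambda$ is exactly the right sanity check.
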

\begin{proof}
The category $\dgfun_{\Lambda^+}(\cS, k)$ is defined by requiring certain maps in $\cS$ to go
to quasi-isomorphisms.  By definition the corresponding maps in the cohomology objects will go to isomorphisms,
so the cohomology objects are again in $\dgfun_{\Lambda^+}(\cS, k)$.  The statement about
sheaves follows from Theorem \ref{thm:comb}.
\end{proof}

\begin{remark}
\label{rem:cohompreserve}
More generally, truncation in the usual $t$-structure preserves $ \dgsh_{\Lambda^+}(M; k)$ but
need not preserve $\dgsh_{\Lambda}(M; k)$.  
If $k$ is a field, $\mathrm{Sh}_{\Lambda^+}$ is an Artinian abelian category whose simple objects are the sheaves supported on a single region of the front diagram, with one-dimensional fibers.  For general $k$, any object of $\mathrm{Sh}_{\Lambda^+}$, or even of $\dgsh_{\Lambda^+}$, may be given a filtration whose  subquotients are cohomologically supported on a single region $R$.  In general there is no similar convenient class of ``generators'' for the smaller category $\dgsh_{\Lambda}$.

\end{remark}

To condense diagrams and shorten arguments, we would like to collapse the quasi-isomorphisms forced by the 
singular support condition.  For actual isomorphisms, this is possible, as follows: 

\begin{lemma}
\label{lem:simpler}
Let $\cT$ be any category, and suppose for every object $t \in \cT$ we fix some arrow $t \to \underline{t}$.  
Let $f: \cT \to \cC$ be a functor carrying the $t \to \underline{t}$ to isomorphisms.  
Then the functor $\underline{f}: \cT \to \cC$ defined by 
\begin{eqnarray*} \underline{f} (x) & := & f(\underline{x}) \\
\underline{f}(x \to y) & := & f(y \to \underline{y}) \circ f(x \to y) \circ f(x \to \underline{x})^{-1}
\end{eqnarray*} 
is naturally isomorphic to $f$, where the map $f(x) \to \underline{f}(x) = f(\underline{x})$ is the
isomorphism $f(x \to \underline{x})$.  Note that $\underline{f}$ takes all the arrows $t \to \underline{t}$ to identity maps. 
\end{lemma}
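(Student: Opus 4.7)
The proof is essentially a direct verification, so the plan is to unpack the three assertions of the lemma and check each by a computation in $\cC$, making heavy use of the hypothesis that $f(t \to \underline{t})$ is invertible for every $t$.

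First I would check that $\underline{f}$ is a functor. For identities, $\underline{f}(\mathrm{id}_x) = f(x \to \underline{x}) \circ f(\mathrm{id}_x) \circ f(x \to \underline{x})^{-1}$, and this collapses to $\mathrm{id}_{f(\underline{x})}$. For composition, given $x \xrightarrow{g} y \xrightarrow{h} z$, the inner pair $f(y \to \underline{y})^{-1}\circ f(y \to \underline{y})$ in the formula for $\underline{f}(h)\circ \underline{f}(g)$ cancels, leaving
\[
\underline{f}(h) \circ \underline{f}(g) = f(z \to \underline{z}) \circ f(h) \circ f(g) \circ f(x \to \underline{x})^{-1} = f(z \to \underline{z}) \circ f(h \circ g) \circ f(x \to \underline{x})^{-1},
\]
which is exactly $\underline{f}(h \circ g)$ by functoriality of $f$.

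Next I would construct the natural isomorphism. Define $\eta_x : f(x) \to \underline{f}(x) = f(\underline{x})$ to be $f(x \to \underline{x})$; this is invertible by hypothesis. Naturality is the claim that for $g:x \to y$ one has $\underline{f}(g) \circ \eta_x = \eta_y \circ f(g)$, and substituting the definition of $\underline{f}(g)$ the factor $f(x \to \underline{x})^{-1} \circ f(x \to \underline{x})$ cancels, leaving $f(y \to \underline{y}) \circ f(g) = \eta_y \circ f(g)$ on the nose.

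Finally, for the claim that $\underline{f}$ sends each distinguished arrow $t \to \underline{t}$ to an identity, the direct computation gives $\underline{f}(t \to \underline{t}) = f(\underline{t} \to \underline{\underline{t}}) \circ f(t \to \underline{t}) \circ f(t \to \underline{t})^{-1} = f(\underline{t} \to \underline{\underline{t}})$. The only place any real care is needed is here: one must invoke the (evidently intended) convention that the chosen arrow from $\underline{t}$ is the identity, i.e.\ $\underline{\underline{t}} = \underline{t}$ with the chosen arrow being $\mathrm{id}_{\underline{t}}$, so that $f(\underline{t} \to \underline{\underline{t}}) = \mathrm{id}_{f(\underline{t})}$. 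This mild normalization is the only subtle point; the rest of the lemma is formal manipulation of the defining formulas.
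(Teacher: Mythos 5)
Your verification is correct, and since the paper states Lemma \ref{lem:simpler} without proof (treating it as routine), your direct check of functoriality, naturality of $\eta_x = f(x \to \underline{x})$, and the final claim is exactly the intended argument. Your observation that the last assertion needs the normalization $\underline{\underline{t}} = \underline{t}$ with chosen arrow $\mathrm{id}_{\underline{t}}$ is the one genuine subtlety, and it is indeed satisfied in the paper's application (Corollary \ref{cor:straight}, where the chosen arrow out of a two-dimensional cell is the identity).
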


\begin{remark} Equivalently, the localization  $\cT[\{t \to \underline{t}\}^{-1}]$ is equivalent
to its full subcategory on the objects $\underline{t}$. \end{remark}

\begin{corollary} \label{cor:straight}
Let $\cT$ be a stratification of $\bR^2$ in which no one-dimensional stratum has vertical tangents.  
Then every object in $\Fun_{\Lambda^+}(\cT, k)$ is isomorphic to an object in which all downward arrows are 
identity morphisms. 
\end{corollary}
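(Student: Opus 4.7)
The plan is to apply Lemma \ref{lem:simpler} after choosing, for each stratum $t \in \cT$, a distinguished downward generization $t \to \underline{t}$ that lands in the two-dimensional region lying \emph{directly} below $t$.

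First I would make the following definition. For $t \in \cT$, let $\underline{t}$ be the $2$-dimensional region of $\cT$ entered by moving infinitesimally in the direction $-\partial_z$ from $t$. When $t$ is itself a $2$-region, $\underline{t} = t$. When $t$ is a $1$-stratum, the no-vertical-tangents hypothesis says that $-\partial_z$ is transverse to $t$ at every point, so this is unambiguously the unique region below $t$. When $t$ is a $0$-stratum, no $1$-stratum through $t$ is tangent to $\pm\partial_z$ at $t$, so the ray from $t$ in direction $-\partial_z$ enters a single $2$-stratum incident to $t$. In all three cases $\underline{t}$ lies in the star of $t$, so there is a generization arrow $t \to \underline{t}$ in the poset $\cT$.

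Next I would verify that every $F \in \Fun_{\Lambda^+}(\cT, k)$ sends $t \to \underline{t}$ to an isomorphism. For $2$-strata this is trivial; for $1$-strata this is exactly condition (2) of Definition \ref{def:romulus} (here an isomorphism, since $F$ takes values in $\Fun$, not $\dgfun$). For a $0$-stratum $p$, I would factor $p \to \underline{p}$ as $p \to a \to \underline{p}$ where $a$ is a $1$-stratum incident to $p$ with $\underline{a} = \underline{p}$ — such an $a$ exists as a consequence of the local analysis at cusps and crossings in the proof of Theorem \ref{thm:comb} (or, at any auxiliary $0$-stratum introduced by refinement, by condition (1) of Definition \ref{def:romulus} applied directly). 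Both factors are sent to isomorphisms, hence so is the composition.

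With these data in hand, Lemma \ref{lem:simpler} produces an isomorphic functor $\underline{F}$ with $\underline{F}(t \to \underline{t}) = \mathrm{id}$ for every $t$. The final step is to check that every downward generization $t \to s$ in $\cT$ — by which I mean an arrow such that $s$ lies in the star of $t$ and locally $s$ is below $t$, equivalently $\underline{t} = \underline{s}$ — is sent by $\underline{F}$ to the identity. This is a short calculation: since $\underline{s} = \underline{t}$ and the triangle $t \to s \to \underline{s}$ commutes with $t \to \underline{t}$, the formula of Lemma \ref{lem:simpler} gives
\[
\underline{F}(t \to s) = F(s \to \underline{s}) \circ F(t \to s) \circ F(t \to \underline{t})^{-1} = F(t \to \underline{t}) \circ F(t \to \underline{t})^{-1} = \mathrm{id}_{F(\underline{t})}.
\]
The only nontrivial step is the verification that $F(p \to \underline{p})$ is an isomorphism at each $0$-stratum $p$, which is handled by reduction to the singular support analysis already carried out at cusps, crossings, and generic points in Theorem \ref{thm:comb}; everything else is formal.
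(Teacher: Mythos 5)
Your proposal is correct and follows essentially the same route as the paper: choose for each stratum the two-dimensional cell directly below it (using the no-vertical-tangent hypothesis), note that these downward arrows are sent to isomorphisms by the $\Lambda^+$ condition, and apply Lemma \ref{lem:simpler}, after which every downward generization becomes a composite $F(t\to\underline{t})\circ F(t\to\underline{t})^{-1}=\mathrm{id}$. Your extra care at zero-dimensional strata (factoring through an incident arc below) is a harmless elaboration of what the paper asserts directly.
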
 
\begin{proof}
In Lemma \ref{lem:simpler} above, take the arrows $t \to \underline{t}$ 
to be the arrows from a stratum to the two dimensional cell below it, which must be sent to isomorphisms
by any functor in
$\Fun_{\Lambda^+}(\cT, k)$.  In the resulting functor, all downward generization maps are sent to equalities, since
these are the composition of a downward generization map to a top dimensional cell, and the inverse of such a map. 
\end{proof}

\subsection{Legible objects}
\label{subsec:legible}

Let $M$ be a front surface, $\Phi \subset M$ a front diagram, and $\Lambda \subset T^{\infty,-}$ the associated Legendrian knot.  The combinatorial presentation of \S\ref{subsec:acm} makes use of a regular cell decomposition $\cS$ of $M$ refining $\Phi$.  By Theorem \ref{thm:comb}, the categories $\dgfun_{\Lambda}(\cS,k)$ and $\dgfun_{\Lambda^+}(\cS,k)$ are not sensitive to $\cS$ (and of course $\dgfun_{\Lambda}(\cS,k)$ is even a Hamiltonian invariant of $\Lambda$, which is our purpose in studying it and what we will prove in \S\ref{sec:invariance}.)  

In this section we describe a way to produce objects of $\dgfun_{\Lambda}(\cS,k)$, where $\cS$ is not foregrounded as much.  For sufficiently complicated $\Phi$, not all objects can be produced this way.  

\begin{definition}
\label{def:legible}
Let $\Phi \subset \bR^2$ be a front diagram in the plane.  A \emph{legible diagram} $F$ on $\Phi$ is the following data:
\begin{enumerate}
\item A chain complex $F^{\bullet}(R)$ of perfect $k$-modules for each region $R \subset \Phi$.
\item A chain map $F(s):F^{\bullet}(R_1) \to F^{\bullet}(R_2)$ for each arc $s$ separating the region $R_1$ below $s$ from $R_2$ above $s$
\end{enumerate}
subject to the following conditions:
\begin{enumerate}
\item[(3)] If $s_1$ and $s_2$ meet at a cusp, with $R_1$ outside and $R_2$ inside, so that $F(s_1):F^{\bullet}(R_1) \to F^{\bullet}(R_2)$ and $F(s_2):F^{\bullet}(R_2) \to F^{\bullet}(R_1)$, then the composition
\[
F^{\bullet}(R_1) \to F^{\bullet}(R_2) \to F^{\bullet}(R_1)
\]
is the {\em identity} of $F^{\bullet}(R_1)$.  
\item[(4)] If $N,S,E,W$ are the regions surrounding a crossing, (named for the cardinal directions--- since
as stated in Section \ref{sec:frontprojection}, a front diagram can have no vertical tangent line, $N$ and $S$, and therefore also $E$ and $W$, are unambiguous) then the square
\[
\xymatrix{
& F^{\bullet}(N)\\
F^{\bullet}(W) \ar[ur] & & F^{\bullet}(E) \ar[ul] &\\
& F^{\bullet}(S) \ar[ul] \ar[ur]
}
\]
commutes.
\end{enumerate}

We say that a legible diagram obeys the \emph{crossing condition} if furthermore:

\begin{enumerate}

\item[(5)] At each crossing, the total complex of
\[
F^{\bullet}(S) \to F^{\bullet}(W) \oplus F^{\bullet}(E) \to F^{\bullet}(N)
\]
is acyclic.
\end{enumerate}
\end{definition}

A stratum $w$ of $\cS$ is incident either with exactly two regions, or (only when $w$ is a crossing of $\Phi$) four.  In either case there is a unique region incident with and ``below'' $w$, which we denote by $\rho(w)$.  If $w_1$ is in the closure of $w_2$, then
\begin{enumerate}
\item either $\rho(w_1) = \rho(w_2)$,
\item or $\rho(w_1)$ is separated from $\rho(w_2)$ by an arc $s$, with $\rho(w_1)$ below and $\rho(w_2)$ above $s$,
\item or $w_1$ is a crossing of $\Phi$, and $w_2 = N$ is the region above it.  In this case let us denote the other three regions around $w_1$ by $S, W, E$ as in \ref{def:legible}(4)
\end{enumerate}
If $F^{\bullet}$ is a legible diagram on $\Phi$, we define
an object $F^{\bullet}_{\cS} \in \dgfun_{\Lambda}(\cS,k)$ by putting $F^{\bullet}_{\cS}(w) := F^{\bullet}(\rho(w))$, and, for $w_1$
in the closure of $w_2$,
\begin{equation}
\label{eq:serviustullius}
\begin{array}{rcll}
F^{\bullet}_{\cS}(w_1 \to w_2) &=& \mathrm{id} & \text{ in case (1)} \\
F^{\bullet}_{\cS}(w_1 \to w_2) &=& F^{\bullet}(s) & \text{ in case (2)}\\
F^{\bullet}_{\cS}(w_1 \to w_2) & = &\left[F^{\bullet}(S) \to F^{\bullet}(W) \to F^{\bullet}(N)\right]  \\
& = & \left[F^{\bullet}(S) \to F^{\bullet}(\,E\,) \to F^{\bullet}(N)\right]
 & \text{ in case (3)} 
 \end{array}
\end{equation}
Note the third case is well-defined by \ref{def:legible}(4). 
By Definition \ref{def:romulus}, $F^{\bullet}_{\cS}$ is an object of $\dgfun_{\Lambda^+}(\cS,k)$, and of $\dgfun_{\Lambda}(\cS,k)$ if $F^{\bullet}$ obeys the crossing condition. 

\begin{definition}
\label{def:legibleobject}
An object of $\dgfun_{\Lambda^+}(\cS,k)$ (resp.~$\dgfun_{\Lambda}(\cS,k)$)
is said to be legible if it is quasi-isomorphic
to an object defined by a legible diagram (resp.~a legible diagram obeying the
crossing condition).
\end{definition}

\begin{remark}
\label{rem:ancusmarcius}
A reasonable notion of morphism between two legible diagrams $F_1^{\bullet}$ and $F_2^{\bullet}$ is a family of maps $n_R:F_1^{\bullet}(R) \to F_2^{\bullet}(R)$ making all of the squares
\[
\xymatrix{
F_1^{\bullet}(R_2) \ar[r] &F_2^{\bullet}(R_2) \\
F_1^{\bullet}(R_1) \ar[u] \ar[r] & F_2^{\bullet}(R_1) \ar[u] 
}
\]
commute.  The above construction induces a functor from this category to the homotopy category of $\dgfun(\cS;k)$ (which could be promoted to a dg functor if necessary). We will not directly work with this category, preferring to consider legible diagrams as a source of objects for the category $\dgfun \cong \dgsh$ that we have already defined.  However, let us say that $F_1^{\bullet}$ and $F_2^{\bullet}$ are ``equivalent as legible diagrams'' if there is a map $\{n_R\}$ as above where each $n_R$ is a quasi-isomorphism.
\end{remark}

\subsubsection{Examples}

Here are typical legible objects on some local front diagrams (near a strand, a cusp, and a crossing)

\begin{center}
\includegraphics[scale = .3]{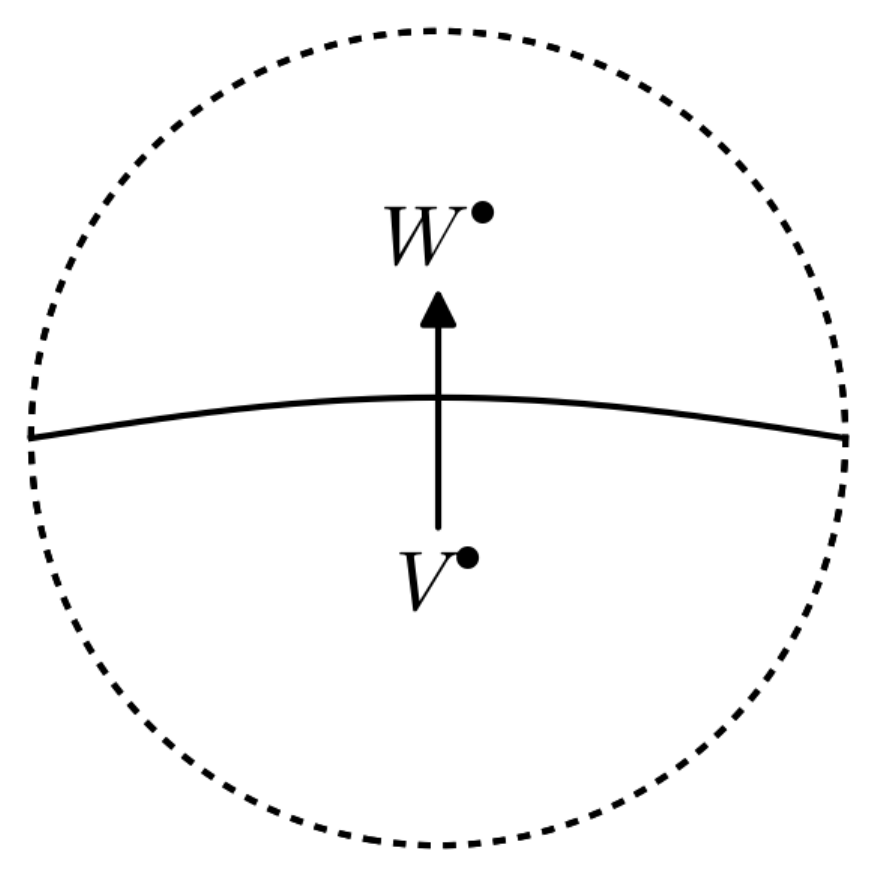}
\includegraphics[scale=.3]{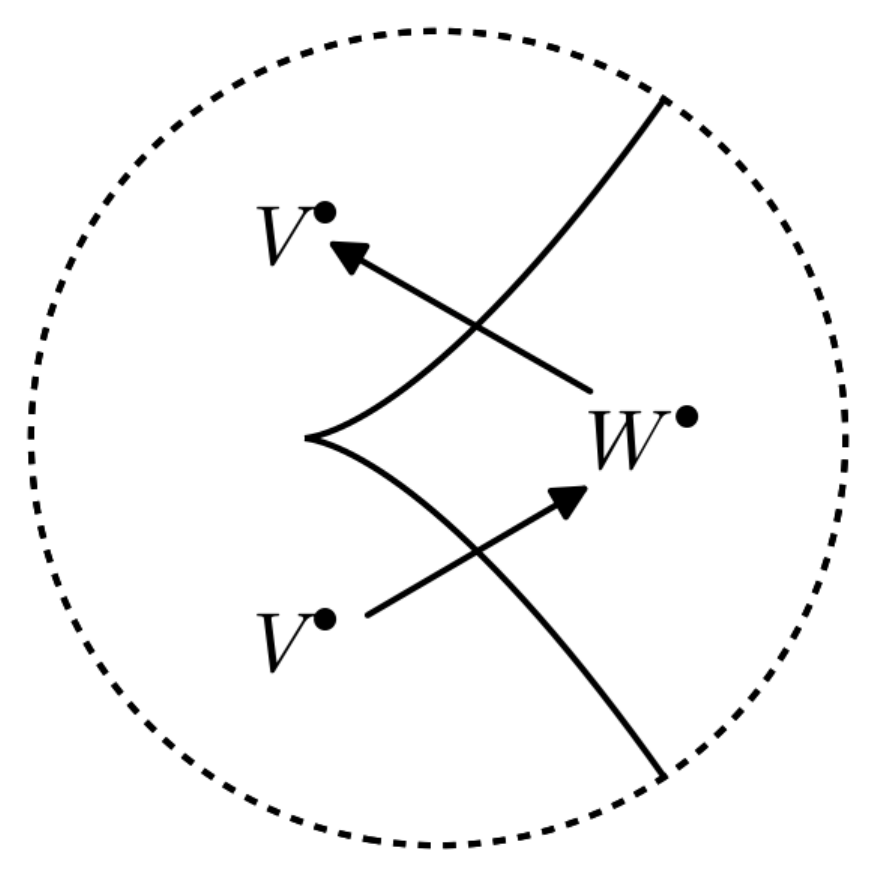}
\includegraphics[scale = .3]{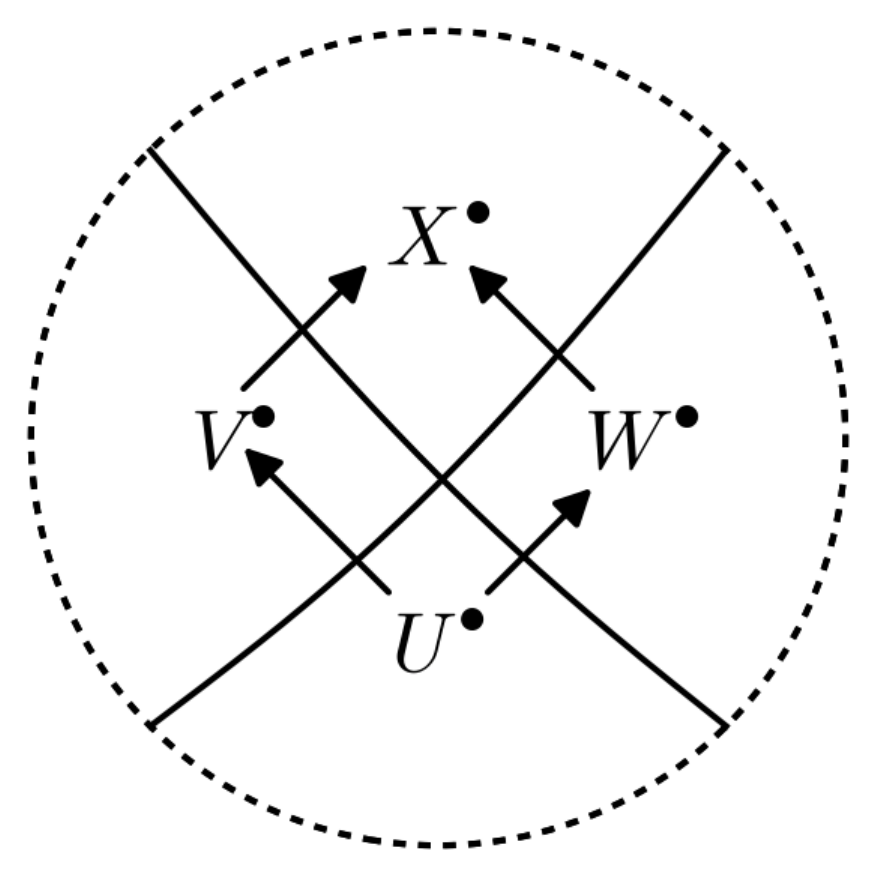}
\end{center}
The composition of the maps on the cusps is required to be the identity map of $V^\bullet$, and the square around the crossing must commute (or both commute and have acyclic total complex, if it is to obey the crossing condition). 
We use the word ``legible'' to convey that it is easier to draw the data of a legible diagram directly on the front diagram, than it is to draw the data of an object of $\Fun^{\bullet}_{\Lambda}(\cS,k)$.  For example the right-hand diagram above is more readable than \eqref{eq:tullushostilius}.

\subsubsection{Legible fronts}

Positive braids make a large class of front diagrams on which every object is legible.  Here by a positive braid we mean a front diagram in a convex open $M \subset \bR^2$ without cusps.  (For instance, $\Phi$ might be one of local front diagrams of Figure \ref{fig:r3}.)  On such a front diagram, the stratification by $\Phi$ is regular.  Moreover, the relation ``$R_1$ and $R_2$ are separated by an arc, with $R_1$ below and $R_2$ above'' generates a partial order on the set of regions, which we denote by $\cR(\Phi)$.
Then the data of a legible diagram (not necessarily obeying the crossing condition) is an object of $\dgfun(\cR(\Phi),k)$ in the sense of Section \ref{subsec:acm}.  The construction of Section \ref{subsec:legible} (see 
Equation \ref{eq:serviustullius}) describes precomposition with the map of posets $\rho:  \cS \to \cR(\Phi)$ taking $w$ to $\rho(w)$, the unique region incident with and below $w$.
Let us call this functor of precomposition $\rho^*$.

\begin{proposition}
\label{prop:braidlegible}
Suppose that $M \subset \bR^2$ is a convex open set and  $\Phi \subset M$ is a front diagram with no cusps, so that the stratification $\cS$ by $\Phi$ is a regular cell complex.  Then $\rho^*:\dgfun(\cR(\Phi),k) \to \dgfun_{\Lambda^+}(\cS,k)$ is a quasi-equivalence. 
In particular, every object of $\dgfun_{\Lambda^+}(\cS,k)$ is quasi-isomorphic to a  legible object.
\end{proposition}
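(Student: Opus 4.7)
The plan is to construct a quasi-inverse $\phi: \dgfun_{\Lambda^+}(\cS,k) \to \dgfun(\cR(\Phi),k)$ to $\rho^*$ by ``straightening.''  Conceptually the proposition says that $\cR(\Phi)$ is the localization of the poset $\cS$ at the downward arrows --- the very arrows that $\dgfun_{\Lambda^+}$ forces to be quasi-isomorphisms.  Preliminarily, one checks that $\rho: \cS \to \cR(\Phi)$ is a functor of posets (routine case analysis on the dimensions of $s$ and $t$ combined with the definition of the order on $\cR(\Phi)$), from which it follows automatically that $\rho^*$ lands in $\dgfun_{\Lambda^+}(\cS,k)$, since each downward arrow $s \to \rho(s)$ is collapsed to an identity.

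To construct $\phi$, given $F \in \dgfun_{\Lambda^+}(\cS,k)$ I apply Corollary~\ref{cor:straight} to replace $F$ by an isomorphic object $\underline{F}$ in which every downward arrow $s \to \rho(s)$ is the identity.  I then set $\phi(F)(R) := \underline{F}(R)$, and for each covering relation $R_1 \lessdot R_2$ in $\cR(\Phi)$ realized by an arc $a$ with $\rho(a) = R_1$, set $\phi(F)(R_1 \to R_2) := \underline{F}(a \to R_2)$, interpreted as a morphism $\underline{F}(R_1) = \underline{F}(a) \to \underline{F}(R_2)$ via the straightened identity.  I extend to general $R_1 \leq R_2$ by composing along any chain of covering relations.

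The main obstacle is verifying that this composition is independent of the chain chosen.  The essential content is commutativity of the square of upward arc-maps around a crossing $c$,
\[
\xymatrix{ \underline{F}(S) \ar[r] \ar[d] & \underline{F}(W) \ar[d] \\ \underline{F}(E) \ar[r] & \underline{F}(N) }
\]
which I verify as follows: functoriality of $\underline{F}$ on $\cS$ applied to the chain $c \to nw \to W$ combined with the straightened identity $\underline{F}(nw \to W) = \mathrm{id}$ gives $\underline{F}(c \to W) = \underline{F}(c \to nw)$, while applied to $c \to sw \to W$ combined with the straightened identity $\underline{F}(c \to sw) = \mathrm{id}$ it gives $\underline{F}(c \to W) = \underline{F}(sw \to W)$; hence $\underline{F}(c \to nw) = \underline{F}(sw \to W)$, and composing with $\underline{F}(nw \to N)$ shows $\underline{F}(nw \to N) \circ \underline{F}(sw \to W) = \underline{F}(c \to N)$.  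The symmetric calculation on the east side gives the same value $\underline{F}(c \to N)$ for the other composite, proving the square commutes.  That any two chains of covering relations between two regions in $\cR(\Phi)$ are related by a finite sequence of such crossing moves uses the convexity (hence simple-connectedness) of $M$, and is the one combinatorial point requiring care.  Once well-definedness is in hand, the two composites are immediate: $\rho^* G$ is already straight with region values $G(R)$, so $\phi \circ \rho^* = \mathrm{id}$ on the nose, while $\rho^* \phi(F)$ recovers $\underline{F}$, which is naturally quasi-isomorphic to $F$ by construction.
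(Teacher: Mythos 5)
Your conceptual framing --- that $\cR(\Phi)$ should be the localization of the poset $\cS$ at the downward arrows, and that the crossing square commutes once those arrows are identities --- is sound, and the path-independence combinatorics you describe would indeed work at that level. The gap is the very first step: you invoke Corollary~\ref{cor:straight} to replace $F \in \dgfun_{\Lambda^+}(\cS,k)$ by an object in which every downward arrow is the identity. That corollary (via Lemma~\ref{lem:simpler}) lives in $\Fun_{\Lambda^+}(\cS,k)$, i.e.\ functors valued in $k$-modules, where the singular support condition forces the downward generization maps to be honest isomorphisms which can then be conjugated away. For an object of $\dgfun_{\Lambda^+}(\cS,k)$ the values are chain complexes and the downward maps are only required to be quasi-isomorphisms; a quasi-isomorphism has no inverse chain map, so the conjugation trick is unavailable, and there is no elementary replacement of $F$ by a (strictly) equivalent functor with those maps equal to the identity. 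Rectifying a diagram of quasi-isomorphisms into a strict diagram indexed by the localized poset is precisely the nontrivial content of the proposition, so assuming it at the outset is circular. (Applying your argument to each cohomology functor $h^i(F)$ is legitimate and shows the cohomology descends to $\cR(\Phi)$, but that does not reconstruct $F$ up to quasi-isomorphism; also, even granting the straightening, you would still need to define $\phi$ on morphism complexes and check it is a dg functor with natural quasi-isomorphisms $\rho^*\phi \simeq \mathrm{id}$, which your proposal leaves implicit.)

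The paper takes a different, sheaf-theoretic route that avoids strictification entirely: it exhibits a quasi-inverse as the right adjoint $\rho_*$, defined by taking sections of the corresponding sheaf $\cF$ over the open sets $M_R$ (the union of the cells lying over the order ideal attached to $R$), and proves that the adjunction map $\rho^*\rho_* F \to F$ is a quasi-isomorphism. The key computation is the microlocal Morse lemma with $f(x,z) = z$: because the singular support of $\cF$ lies in the downward conormal directions and $\Phi$ has no cusps, both $\Gamma(M_{\rho(w)};\cF)$ and $\Gamma(\mathrm{star}\ w;\cF)$ restrict quasi-isomorphically to a low horizontal strip, hence agree. Note that this is where the geometric hypotheses (downward conormals, no cusps, convexity of $M$) do real work; in your proposal convexity only enters through the path-independence combinatorics, which is not where the actual difficulty of the statement lies. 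If you want to rescue your approach, you would need a genuine rectification argument (e.g.\ resolving so that the downward maps become split, or computing a homotopy right Kan extension along $\rho$), which amounts to reproving something equivalent to the paper's adjoint-plus-Morse-lemma argument.
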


\begin{proof}
We will use the right adjoint functor to $\rho^*$, which we denote
\[
\rho_*:\dgfun(\cS,k) \to \dgfun(\cR(\Phi),k)
\] 
and show that the adjunction map $\rho^* \rho_* F^\bullet \to F^\bullet$ is a quasi-isomorphism when $F^\bullet \in \dgfun_{\Lambda^+}(\cS,k)$.  For $R \in \cR(\Phi)$, define a subset (order ideal) $U_R \subset \cR(\Phi)$ by
\[
U_R = \{R' \mid R' \geq R\}
\]
The union of the cells in $\rho^{-1}(U_R)$ is an open subset of $M$ which we denote by $M_R$.  We have $M_{R'} \subset M_R$ if and only if $R' \geq R$.

Set $\cF = \Gamma_{\cS}^{-1}(F^{\bullet})$ as in Proposition \ref{prop:star};
so $F^{\bullet}(w)$ is quasi-isomorphic to $\Gamma(\text{star of $w$};\cF),$
where $\Gamma:\dgsh(M_R) \to \dgsh(\mathit{point})$ denotes the derived global sections functor.
Let us put
\[
\rho_* F^{\bullet} (R) = \Gamma(M_R,\cF)
\]
Then for $w \in \cS$, we have by definition
\[
\rho^* \rho_* F^{\bullet} (w) = \Gamma(M_{\rho(w)};\cF)
\]
As each $M_R$ is open, it contains the star of $w$ whenever $\rho(w)\geq R$, so we have a restriction of sections, i.e.~a map $\rho^* \rho_* F^{\bullet}(w) \to F^\bullet(w)$.  Now suppose that $F^{\bullet} \in \dgfun_{\Lambda^+}(M,k)$, or equivalently by Theorem \ref{thm:comb} that $\cF \in \dgsh_{\Lambda^+}(M,k)$, and let us show that the map
\begin{equation}
\label{eq:prove-this-is-iso}
\Gamma(M_{\rho(w)};\cF) \to \Gamma(\text{star of $w$};\cF)
\end{equation}
is a quasi-isomorphism for every $w$.  By Remark \ref{rem:cohompreserve}, we may find a filtration $\cF_1 \to \cF_2 \to \cdots \to \cF_m$ of $\cF$ each of whose graded pieces is supported on a single region, and belongs to $\dgsh_{\Lambda^+}$, so we may further reduce to the case where $m = 1$ and $\cF$ is supported on a single region $R$.  Such a sheaf is constant on the interior of $R$ and vanishes on the lower boundary of $R$, as in Example \ref{ex:31U}.  

The only nontrivial case is when $R$ is contained in $M_{\rho(w)}$ but not in the star of $w$.  In this case it is immediate that the codomain of \eqref{eq:prove-this-is-iso} vanishes, and we wish to conclude that the domain also vanishes.  As $R \neq \rho(w)$, $M_{\rho(w)}$ contains the lower boundary of $R$; let us denote it by $B$.  The domain of \eqref{eq:prove-this-is-iso} is identified with the relative cohomology of the pair $(\overline{R}\cap M_{\rho(w)}, B)$, which vanishes as both $\overline{R}\cap M_{\rho(w)}$ and $B$ are contractible.
\end{proof}

The Proposition applies to local front diagrams, in particular braids.  It is harder for $\Phi$ to be globally legible, even if we restrict to objects of $\dgfun_{\Lambda}(\cS,k)_0$.  For instance, there is no nonzero legible object for the following front diagram
of a Reidemeister-one-shifted unknot:
\begin{center}
\includegraphics[scale = .3]{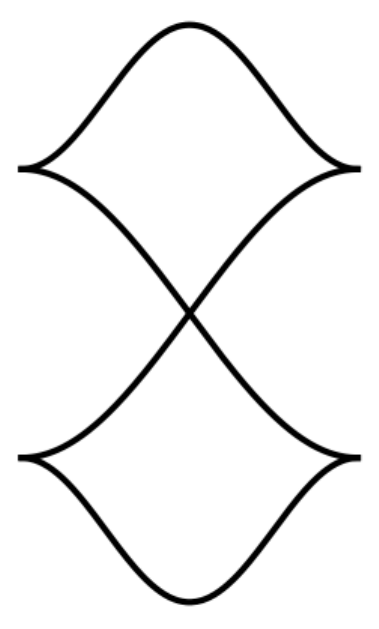}
\end{center}
On the other hand, there is a nonzero object of $\dgfun_{\Lambda}(\cS,k)_0 \cong \dgsh_{\Lambda}(\bR^2,k)_0$, for instance the sheaf of Example \ref{ex:R1unknotsheaf}.

We now describe some legible objects of $\dgsh(\Lambda, k)_0$ on some global examples.

\subsubsection{The unknot}
A  front diagram for a Legendrian unknot is shown in Figure \ref{fig:unknot}. 
\begin{figure}[H]
\includegraphics[scale=.2]{unknot.pdf}
\caption{The unknot}
\label{fig:unknot}
\end{figure}

Denote the bounded region by $R$ and the unbounded region by $O$, the upper arc by $u$ and the lower arc by $\ell$.  To give a legible diagram is to give two chain complexes $F^{\bullet}(R)$ and $F^{\bullet}(O)$, along with maps $F^{\bullet}(u):F^{\bullet}(R) \to F^{\bullet}(O)$ and $F^{\bullet}(\ell):F^{\bullet}(O) \to F^{\bullet}(R)$, subject to condition (3), the cusp condition, of Definition \ref{def:legible} (conditions (4) and (5) being vacuous for this front diagram).    The cusp condition for the left cusp and for the right cusp of Figure \ref{fig:unknot} make the same requirement: the composition $F^{\bullet}(u) \circ F^{\bullet}(\ell)$ must be equal to the identity map of $F^{\bullet}(O)$.  It follows that we can write $F^{\bullet}$ as a direct sum of the constant legible diagram (that takes the value $F^{\bullet}(O)$ on both $O$ and $R$) and a legible diagram $G^{\bullet}$ with $G^{\bullet}(O) = 0$.  

If we moreover require that $F^{\bullet}(O)$ is acyclic, i.e. that it represents an object of $\dgsh(\Lambda,k)_0$, then $F^{\bullet}$ and $G^{\bullet}$ are quasi-isomorphic --- i.e. up to quasi-isomorphism a legible diagram is just the data of a 
chain complex of $k$-modules assigned to this region.  It can be shown 
that all objects in this case are legible, and thus that 
the category attached to the unknot is quasi-equivalent to the derived category $k$-modules.

\subsubsection{The horizontal Hopf link}
\label{sec:hhl}

Figure \ref{fig:horizhopf} shows the horizontal Hopf link. 

\begin{figure}[H]
\includegraphics[scale = .3]{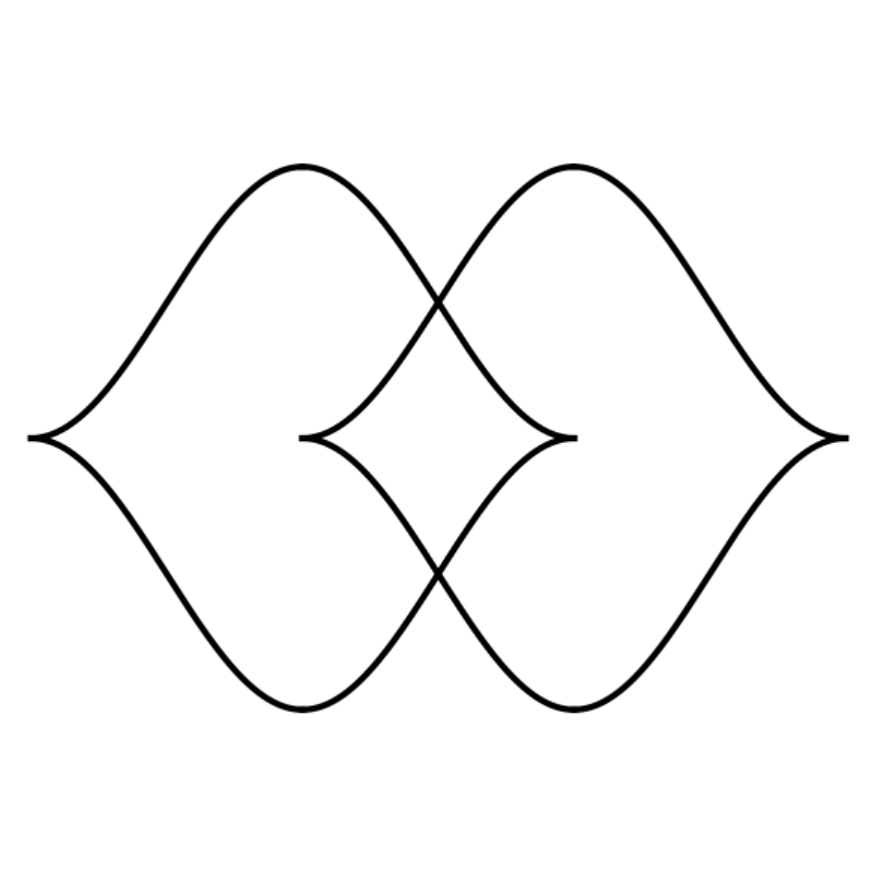}
\caption{The horizontal Hopf link}
\label{fig:horizhopf}
\end{figure}

There are three compact regions, call them ``left,'' ``middle,'' and ``right.''  
One family of legible objects on this front diagram has $F($left$) = F($right$) = k$ and $F($middle$) = k^2$, and the maps across arcs named in the diagram
\[
\xymatrix{
k & & k \\
 & \ar[ul]_p \ar[ur]^q k^2 & \\
k \ar@{=}[uu] \ar[ur]_i & & k \ar[ul]^j \ar@{=}[uu]
}
\]
The conditions at the cusps are $pi =  1$ and $qj = 1$; in particular $i$ and $j$ are injective, and $p$ and $q$ are surjective.  The crossing condition at the bottom is that $i$ and $j$ map $k \oplus k$ isomorphically onto $k^2$, and at the top similarly $p$ and $q$ map $k^2$ isomorphically onto $k \oplus k$.

We will later introduce the notion of the `microlocal rank' of an object; which depends on a Maslov potential.  It can be shown that the objects described above comprise all the objects of microlocal rank one with respect to the Maslov potential which takes the value zero on the bottom strands of the two component unknots.

\subsubsection{The vertical Hopf link}

Figure \ref{fig:verthopf}, shows the vertical Hopf link. 
\begin{figure}[H]
\includegraphics[scale = .3]{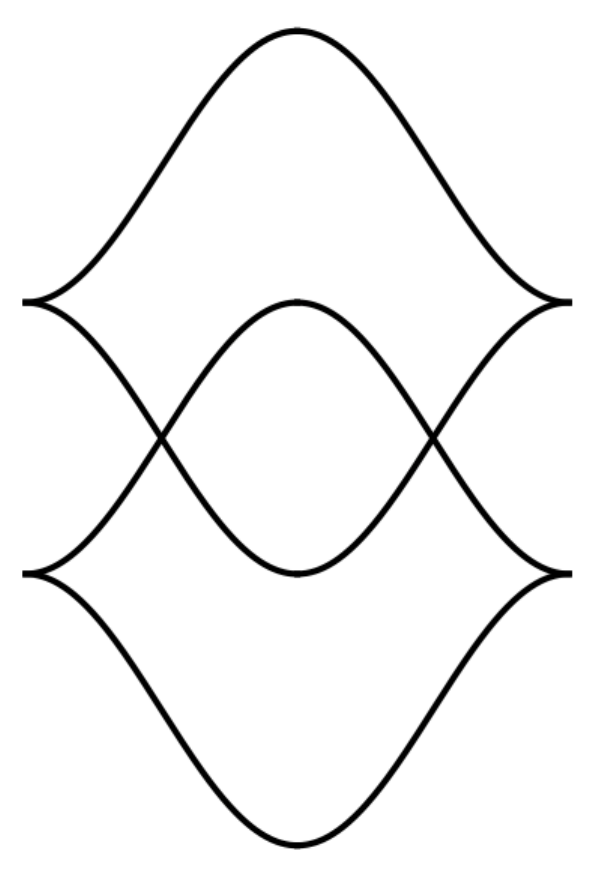} \qquad \includegraphics[scale = .3]{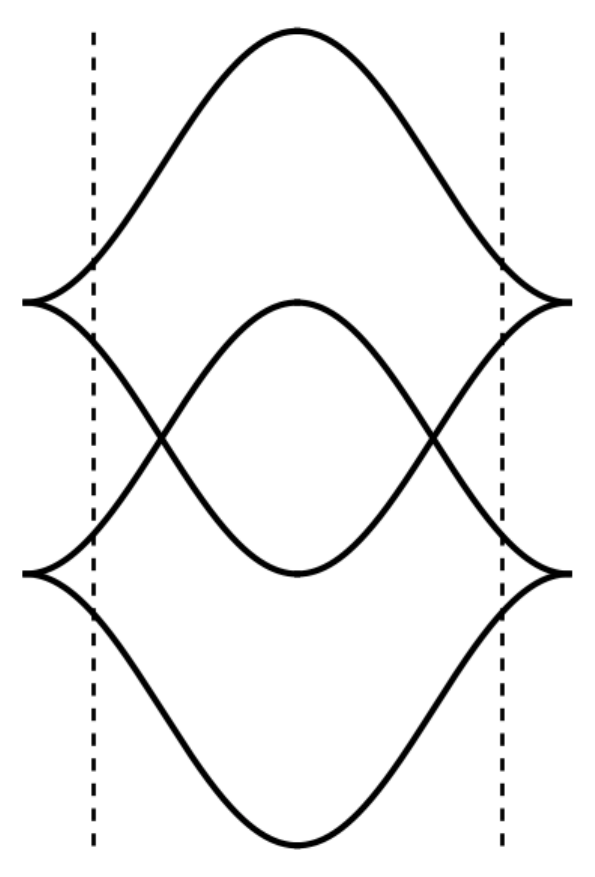}
\caption{The vertical Hopf link (left), with the cusps cut off (right)}
\label{fig:verthopf}
\end{figure}

Here we indicate the difficulties in finding globally legible objects.  We describe a family of sheaves on the vertical Hopf link that we suspect cannot be represented by a legible diagram.  
By Proposition \ref{prop:braidlegible}, if we restrict attention to the region between the two dashed lines and
view it as a front diagram, then all objects are legible.  However, their descriptions may take the following form: 
\[
\xymatrix{
& [k \stackrel{d_1}\longrightarrow k^3 \stackrel{d_2}\longrightarrow k] & \\
[0 \to k \stackrel{\alpha}{\to} k]\ar[ur]  & [0 \to k \stackrel{\beta}{\to} k]  \ar[u] & [0 \to k \stackrel{\gamma}{\to} k] \ar[ul] \\
& [0 \to 0 \to k] \ar[ul] \ar[u] \ar[ur]
}
\]
where $[U \to V \to W]$ denotes a three-term chain complex, $\alpha, \beta,\gamma \in k$, $d_2$ is the row vector $(\alpha,\beta,\gamma)$, and $d_1$ is in the kernel of $d_2$.  The maps 
are identities where possible and zero elsewhere, except for the three maps of the form $k \to k^3$ which are $(1,0,0)$ at the left, $(0,1,0)$ in the middle, and $(0,0,1)$ at the right.  
The crossing condition amounts to the statement that $\alpha$ and $\gamma$ are nonzero, and $d_1$ is neither of the form $(x,y,0)$ nor $(0,y,z)$.

Note that although the top region is labelled by a three-term chain complex, its cohomology is concentrated in only two degrees.  In fact, almost every object of this form is quasi-isomorphic to one where every chain complex is concentrated in two degrees.  The exception is when $\beta = 0$ and $d_1$ is the column vector $(1,1,-1)$.

The above description cannot extend to a globally legible diagram, because we would be forced to put the complex 
$[0 \to k \stackrel{\alpha}{\to} k]$ in the noncompact region, and then at the bottom right cusp, the identity map on this complex
would have to factor through $[0 \to k \stackrel{\alpha}{\to} k] \to [0\to 0 \to k]$.  In fact we suspect that this issue cannot be repaired by increasing the complexity of the legible diagram between the dashed lines (without changing the quasi-isomorphism type) --- i.e.~that not all objects of this
form admit legible diagrams.

\subsection{A Category of $k[x,y]$-Modules (Pixelation)}
\label{sec:pixelation}

A \emph{pixelation} of a front diagram $\Phi$ is a homeomorphism $\bR^2 \to \bR^2$ that carries the cusps, crossings, peaks and valleys of $\Phi$ to the lattice points of a square grid, and any part of a strand between two cusps, or a cusp and a crossing, or a crossing and a peak etc. to a line segment contained in the grid lines.

\begin{equation}
\label{fig:pixelation}
\includegraphics[scale=.3]{hhl.pdf}\quad \includegraphics[scale=.3]{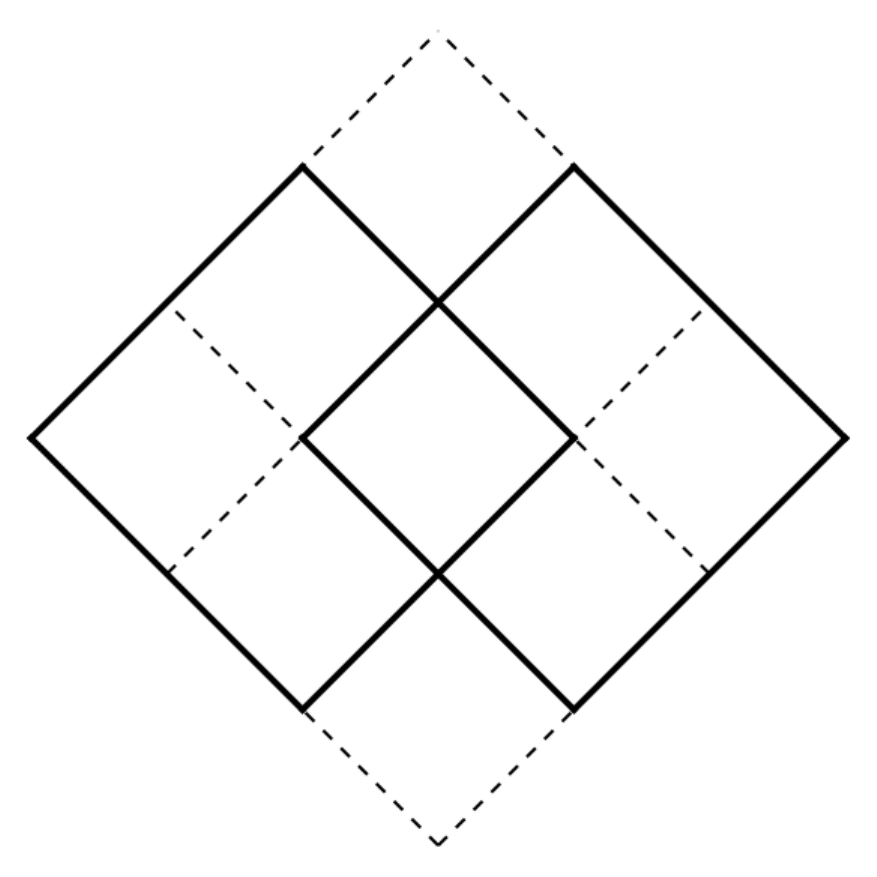}
\end{equation}

We can push $\cF \in \dgsh_{\Lambda}(\bR^2, k)$ forward along the pixelation to obtain a sheaf on $\bR^2$ constructible with respect to the square grid.  In fact, such a sheaf lands in the full subcategory spanned by objects that are constant on half-open grid squares (in the grid displayed, those that are closed on the top two sides and open on the bottom two sides), and that moreover have compact support and perfect fibers.  Let us denote this triangulated dg category by $\dgsh_{\mathrm{grid}}(\bR^2)$.

\begin{proposition}
The category $\dgsh_{\mathrm{grid}}(\bR^2)$ is equivalent to a full subcategory of the derived category of $\bZ^2$-graded modules over the ring $k[x,y]$, by a functor which takes the sheaf of rank one supported on the grid square with coordinates $(i,j)$ to the one-dimensional $k[x,y]$-module with bigrading $(i,j)$.
\end{proposition}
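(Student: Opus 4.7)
The plan is to reduce the question to the combinatorial model of \S\ref{subsec:acm} and then to recognize the resulting diagrammatic data as a $\bZ^2$-graded $k[x,y]$-module. First, refine the decomposition of $\bR^2$ into half-open squares $S_{i,j} = (i-1,i]\times(j-1,j]$ to the standard square-grid Whitney stratification $\cS$, whose strata are the open squares $V_{i,j} = (i-1,i)\times(j-1,j)$, the open horizontal and vertical edges, and the vertices $(i,j)$. This $\cS$ is a regular cell complex, so Proposition \ref{prop:star} gives a quasi-equivalence $\Gamma_\cS : \dgsh_\cS(\bR^2,k) \xrightarrow{\sim} \dgfun(\cS, k)$. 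The full subcategory $\dgsh_{\mathrm{grid}}(\bR^2)$ corresponds under $\Gamma_\cS$ to the functors $F$ sending to quasi-isomorphisms every generization arrow internal to a single half-open square $S_{i,j}$: namely the arrows from $(i,j)$ to the two edges of $V_{i,j}$ lying in $S_{i,j}$, from those edges to $V_{i,j}$, and from $(i,j)$ to $V_{i,j}$ directly.

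Applying Lemma \ref{lem:simpler}, I may replace any such $F$ by an equivalent one in which the internal arrows are literally identity morphisms, so that each stratum of $S_{i,j}$ acquires the common value $M_{i,j} := F(V_{i,j})$. The only genuinely nontrivial generization data that remain are the arrows emanating from each vertex $(i,j)$ into the two ``new'' adjacent 2-cells: $x_{i,j}:M_{i,j} \to M_{i+1,j}$ from $(i,j)\rightsquigarrow V_{i+1,j}$, and $y_{i,j}:M_{i,j} \to M_{i,j+1}$ from $(i,j)\rightsquigarrow V_{i,j+1}$. All remaining arrows of $\cS$---those through the edges $(i,i{+}1)\times\{j\}$ and $\{i\}\times(j,j{+}1)$ and the direct arrow $(i,j)\rightsquigarrow V_{i+1,j+1}$---are forced by the composition law, and the two factorizations of the last one through the adjoining plaquette yield precisely $x_{i,j+1}\circ y_{i,j} = y_{i+1,j}\circ x_{i,j}$. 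Assembling $M := \bigoplus_{(i,j)} M_{i,j}$ with $x,y$ acting through the $x_{i,j}, y_{i,j}$ realizes the data as a $\bZ^2$-graded $k[x,y]$-module, and the reverse construction is manifestly a quasi-inverse.

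The essential image is then cut out by the compactness and perfection hypotheses defining $\dgsh_{\mathrm{grid}}(\bR^2)$: these become, respectively, that only finitely many $M_{i,j}$ are nonzero and that each is a perfect complex of $k$-modules. To verify the generator lookup: the sheaf of rank one supported on $S_{i,j}$ has stalk $k$ at the unique lattice point $(i,j)\in S_{i,j}$ and stalk $0$ at every other lattice point, so all $x_{\cdot,\cdot}$ and $y_{\cdot,\cdot}$ vanish and the associated module is the one-dimensional $k[x,y]$-module concentrated in bigrading $(i,j)$.

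The main obstacle is justifying rigorously that the stripped-down combinatorial data above reconstruct the entire functor $F \in \dgfun(\cS, k)$ up to quasi-isomorphism. This is a poset calculation: one must show that, after inverting the specified arrows, the full subcategory of $\cS$ on the vertices together with the two distinguished outgoing arrows at each vertex is quasi-equivalent to all of $\cS$. Each edge's value and its outgoing arrows turn out to be uniquely determined from the adjacent vertex data, and the only relations between the vertex data are the plaquette commutativities, which are exactly the $xy = yx$ relation of $k[x,y]$. This verification follows the scheme implicit in Lemma \ref{lem:simpler} and Corollary \ref{cor:straight}, adapted to the present stratification (which does contain vertical edges, so Corollary \ref{cor:straight} itself does not apply verbatim, but its argument does).
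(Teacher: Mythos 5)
Your overall strategy (read off the bigraded module directly from the combinatorial model) is reasonable and genuinely different from the paper's proof, which does not argue combinatorially at all: the paper invokes the coherent--constructible correspondence \cite[Theorem 3.4]{FLTZ} to get a full dg embedding of bigraded $k[x,y]$-modules into $\dgsh(\bR^2)$ whose image is generated by the quadrant sheaves $\Theta(i,j)$, and then only has to check that $\dgsh_{\mathrm{grid}}(\bR^2)$ lies in that image, which it does by induction on the support using the Koszul-type complex $\Theta(i+1,j+1) \to \Theta(i+1,j)\oplus\Theta(i,j+1) \to \Theta(i,j)$ resolving the single-square sheaf. In other words, the hard part --- full faithfulness at the dg level --- is outsourced to \cite{FLTZ}, whereas your plan proposes to prove it from scratch.

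And that is where your write-up has a genuine gap. Lemma \ref{lem:simpler} (and the localization remark following it, which underlies Corollary \ref{cor:straight}) applies only to functors carrying the chosen arrows to honest \emph{isomorphisms}: the formula for $\underline{f}$ uses $f(x\to\underline{x})^{-1}$. An object of $\dgfun(\cS,k)$ lying over $\dgsh_{\mathrm{grid}}$ sends the arrows internal to a half-open square only to \emph{quasi-isomorphisms} of complexes, so you cannot ``replace $F$ by an equivalent one in which the internal arrows are literally identity morphisms'' by that lemma. What your argument actually needs is the statement that the full dg subcategory of $\dgfun(\cS,k)$ of functors inverting the internal arrows is quasi-equivalent to $\dgfun$ of the localized category, together with the identification of that (derived) localization with the poset $(\bZ^2,\le)$ --- i.e., that no higher homotopy survives the collapse and that morphism complexes, not just objects up to equivalence, match. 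This is precisely the content of the proposition, and your final paragraph defers it as ``a poset calculation'' while misdiagnosing the obstacle: the problem is not that Corollary \ref{cor:straight} excludes vertical edges, but that strict inversion of quasi-isomorphisms is not available, so a homotopy-coherent (or explicit resolution/generator) argument is required. Your object-level bookkeeping --- the identifications of the three maps $M_{i,j}\to M_{i+1,j}$, the plaquette relation $x_{i,j+1}\circ y_{i,j}=y_{i+1,j}\circ x_{i,j}$, the description of the essential image, and the computation on the rank-one square sheaf --- is correct, but without the missing full-faithfulness step the proof is incomplete; supplying it would essentially reprove the relevant case of \cite{FLTZ}, which is what the paper cites instead.
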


\begin{proof}
By \cite[Theorem 3.4]{FLTZ}, the derived category of bigraded $k[x,y]$-modules has a full embedding into $\dgsh(\bR^2)$, whose image is generated
(under sums and shifts and cones)
by sheaves of the form $\Theta(i,j):= f_! k$ where $f$ is the inclusion of an open set of the form
\[
\{(\alpha,\beta) \mid \alpha > i, \beta > j\}
\]
where $\alpha$ and $\beta$ denote the grid coordinates.  We need to check that each $F \in \dgsh_{\mathrm{grid}}(\bR^2)$ is in the image of this functor.  By induction on the number of grid squares in the support of $F$, it suffices to check that the sheaf with fiber $k$ and supported on a single grid square belongs to the image of $F$, but in fact this sheaf is quasi-isomorphic to the complex 
\[
\Theta(i+1,j+1) \to \Theta(i+1,j) \oplus \Theta(i,j+1) \to \Theta(i,j) 
\]
\end{proof}

\begin{example}
Suppose $F$ is one of the sheaves on the horizontal Hopf link of \S\ref{sec:hhl}.  Its pushforward under the pixelation displayed above is 
\[
\begin{array}{ccc}
& & 
\xymatrix{
& & 0\\
& k \ar[ur] & & k \ar[ul]\\
k \ar[ur]^= & & k^2 \ar[ur]^q \ar[ul]_p & & k \ar[ul]_=\\
& k \ar[ul]^= \ar[ur]_i & & k \ar[ul]^j \ar[ur]_= \\ 
& & 0 \ar[ul] \ar[ur]
}
\end{array} 
\]
The arrows pointing northwest (resp. northeast) assemble to an 8-by-8 nilpotent matrix acting with bidegree (0,1) (resp. (1,0)) on the bigraded vector space $k_{(0,1)} \oplus k_{(1,0)} \oplus k_{(0,2)} \oplus k^2_{(1,1)} \oplus k_{(2,0)} \oplus k_{(1,2)} \oplus k_{(2,1)}$.  These operators commute, defining an 8-dimensional bigraded $k[x,y]$-module.
\end{example}

In Example \ref{ex:34pixels}, we analyze the Legendrian (3,4)-torus knot using a pixelation.

\section{Invariance}
\label{sec:invariance}

To a Legendrian knot $\Lambda \subset \bR^3 \cong T^{\infty,-}\bR^2$ we have associated a
category $\dgsh_\Lambda(M)$.  In this section, we explain how this category is invariant under Legendrian isotopies of $\Lambda$, Theorem \ref{thm:4.1}.  This invariance theorem is a special case of the results of Guillermou-Kashiwara-Schapira \cite{GKS}, which we review in Sections \ref{subsec:convolution}--\ref{subsec:GKStheorem}.  In Section \ref{subsec:gksRmoves}, we give the explicit local equivalences for each of the Legendrian Reidemeister moves.

\begin{theorem}
\label{thm:4.1}
Let $M$ be a manifold, and let $\Omega \subset T^{\infty} M$ be an open subset of the cosphere bundle over $M$.  Suppose $\Lambda_1$ and $\Lambda_2$ are compact Legendrians in $\Omega$ that differ by a Legendrian isotopy of $\Omega$.  Then the categories $\dgsh_{\Lambda_1}(M)$ and $\dgsh_{\Lambda_2}(M)$ are equivalent.
\end{theorem}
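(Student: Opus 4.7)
\medskip

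The plan is to invoke directly the sheaf-quantization theorem of Guillermou--Kashiwara--Schapira \cite{GKS}, which associates to any homogeneous Hamiltonian isotopy of $\dot T^* M := T^* M \setminus 0_M$ a sheaf kernel on $M \times M \times I$ whose convolution induces an autoequivalence of $\dgsh(M)$ that moves singular supports according to the isotopy. The main work is reducing our hypothesis, where the isotopy lives in an open subset $\Omega \subset T^\infty M$, to the global setting in which GKS is stated.

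First I would extend the Legendrian isotopy to a globally defined one. The Legendrian isotopy $\Lambda_t \subset \Omega$ is generated by a time-dependent contact Hamiltonian $H_t$ on $\Omega$. Since each $\Lambda_t$ is compact and $\{\Lambda_t\}_{t \in [0,1]}$ varies continuously, the union $\bigcup_t \Lambda_t$ is compact in $\Omega$, hence contained in a compact set $K \subset \Omega$. Multiplying $H_t$ by a smooth cutoff function that equals $1$ on a neighborhood of $K$ and is supported in $\Omega$, I obtain a new contact Hamiltonian $\widetilde H_t$, compactly supported in $\Omega$, whose flow agrees with the original on $K$ and thus still carries $\Lambda_1$ to $\Lambda_2$. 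Extending $\widetilde H_t$ by zero yields a compactly supported contact isotopy $\varphi_t$ of all of $T^\infty M$.

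Next I would lift $\varphi_t$ to a compactly-supported homogeneous Hamiltonian isotopy $\Phi_t : \dot T^* M \to \dot T^* M$; this is the standard correspondence between contact isotopies of $T^\infty M$ and $\RR_{>0}$-equivariant Hamiltonian isotopies of $\dot T^* M$ whose Hamiltonian is homogeneous of degree $1$. Now I am in the setting of GKS: there exists a unique kernel $K \in \dgsh(M \times M \times I)$ with $K|_{t=0} \cong k_{\Delta_M}$, such that for each $s \in I$ the convolution functor
\[
K|_{t=s} \circ (-) : \dgsh(M) \xrightarrow{\sim} \dgsh(M)
\]
is a quasi-equivalence, and satisfies $\SS(K|_{t=s} \circ F) = \Phi_s(\SS(F))$ outside the zero section. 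Applied at $s = 1$, since $\Phi_1$ sends the conic Lagrangian $\RR_{>0} \Lambda_1 \cup 0_M$ onto $\RR_{>0} \Lambda_2 \cup 0_M$, this autoequivalence restricts to the desired quasi-equivalence $\dgsh_{\Lambda_1}(M) \xrightarrow{\sim} \dgsh_{\Lambda_2}(M)$.

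The step requiring the most care is the first: cutting off the Hamiltonian and checking that the resulting global contact isotopy still implements the given Legendrian isotopy of the $\Lambda_i$'s without introducing new singularities or moving Legendrians outside of $\Omega$ at intermediate times. Compactness of the $\Lambda_t$'s is essential here. A secondary technical point, which I would address by citing \cite{GKS} directly, is that on a noncompact $M$ the GKS quantization still produces a kernel with the required singular support properties provided the Hamiltonian has compact support — which is exactly what the cutoff construction guarantees.
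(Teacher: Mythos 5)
Your proposal is correct and follows essentially the same route as the paper: use compactness of the $\Lambda_i$ to cut off the contact Hamiltonian so the isotopy becomes compactly supported and extends from $\Omega$ to all of $T^{\infty}M$, lift it to a homogeneous Hamiltonian isotopy of $\dot{T}^*M$ with compact horizontal support, and then apply the Guillermou--Kashiwara--Schapira kernel, whose convolution is an equivalence transporting singular support along the isotopy (the paper's Theorem \ref{thm:where-it-happens}). The only difference is that you spell out the cutoff of the contact Hamiltonian explicitly, which the paper leaves implicit.
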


\begin{proof}

Let us indicate here how this follows from the work of \cite{GKS}.

As $\Lambda_1$ and $\Lambda_2$ are compact, the isotopy between them can be chosen to be compactly supported (i.e. to leave fixed the complement of a compact set.)  Such an isotopy can be extended from $\Omega$ to $T^{\infty} M$, so to prove the Theorem it suffices to construct an equivalence between $\dgsh_{\Lambda_1}(M)$ and $\dgsh_{\Lambda_2}(M)$ out of a compactly supported isotopy of $T^{\infty} M$.   A compactly supported isotopy of $T^{\infty} M$ induces a ``homogeneous Hamiltonian isotopy'' of the complement the zero section in $T^* M$ with an appropriate support condition (``compact horizontal support''), and that this induces an equivalence is
Corollary 3.13 of \cite{GKS}, stated here as Theorem \ref{thm:where-it-happens}.  
\end{proof}

\begin{remark}
In the statement of the Theorem we have in mind the case where $\Lambda_1$ and $\Lambda_2$ are Legendrian submanifolds, but in fact they can be arbitrary compact subsets of $\Omega$.
\end{remark}

\begin{remark}
Here are two variants of the Theorem:
\begin{enumerate}
\item If $\Lambda_1$ and $\Lambda_2$ are noncompact, but isotopic by a compactly supported Legendrian isotopy, then $\dgsh_{\Lambda_1}(M) \cong \dgsh_{\Lambda_2}(M)$.  
\item Suppose $M$ is the interior of a manifold with boundary, and that one of the boundary components is distinguished.  Then let $\dgsh_{\Lambda}(M)_0 \subset \dgsh_{\Lambda}(M)$ denote the full subcategory of sheaves that vanish in a neighborhood of the distinguished boundary component.  As any compactly supported isotopy of $T^{\infty} M$ leaves this neighborhood invariant, $\dgsh_{\Lambda}(M)_0$ is also a Legendrian invariant.
\end{enumerate}
\end{remark}

\subsection{Convolution}
\label{subsec:convolution}
A sheaf on $N \times M$ determines a functor called convolution; such a sheaf is called a kernel for the functor.  For $F \in \dgsh(M)$ and $K \in \dgsh(N \times M)$, define the convolution $K \circ F \in \dgsh(N)$ by
\begin{equation}
\label{eq:shconv}
F \mapsto q_{1!}(K \otimes q_{2}^* F)
\end{equation}
where $q_1:N \times M \to N$ and $q_2:N \times M \to M$ are the natural projections.

We also define the convolutions of singular supports, as follows:
\begin{equation}
\label{eq:SSconv}
\SS(K) \circ \SS(F) := \left\{(y,\eta)  \in T^* N \mid \exists (x,\xi) \in \SS(F) \text{ such that } (y,\eta,x,-\xi) \in \SS(K) \right\}
\end{equation}

\begin{example}
If $K$ is the constant sheaf on the diagonal in $M \times M$, then $\SS(K)$ is the conormal of the diagonal in $M \times M$.  Convolution acts as the identity: $K \circ F \cong F$ and $\SS(K) \circ \SS(F) = \SS(F)$.
\end{example}

See Remark \ref{rem:h-image} for another example.  Under a technical condition, convolution of sheaves and of singular supports are compatible:

\begin{lemma}
\label{lem:tech-ass}
Suppose that $\SS(F)$ and $\SS(K)$ obey the following conditions:
\begin{enumerate}
\item $\left(N \times \mathrm{supp}(F)\right) \cap \mathrm{supp}(K)$ is proper over $N$.
\item For any $y \in N$ and $(x,\xi) \in \SS(F)$, if $\xi \neq 0$ then $(y,0,x,\xi) \notin \SS(K)$
\end{enumerate}
Then $\SS(K \circ F) \subset \SS(K) \circ \SS(F)$.
\end{lemma}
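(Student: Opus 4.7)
The strategy is to factor convolution $K \circ F = q_{1!}(K \otimes q_2^* F)$ into its three constituent operations --- pullback along $q_2$, tensor product with $K$, proper pushforward along $q_1$ --- and apply the standard singular-support estimates of \cite[Ch.~5]{KS} at each stage, checking that hypotheses (1) and (2) supply exactly the non-characteristicity and properness required at the relevant step.

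First, since $q_2 : N \times M \to M$ is a submersion, the pullback formula gives
\[
\SS(q_2^*F) \;=\; \{(y,x;0,\xi) \mid (x,\xi) \in \SS(F)\},
\]
so that the $T^*N$-component vanishes identically. Second, the tensor-product bound
\[
\SS(K \otimes q_2^*F) \;\subset\; \SS(K) + \SS(q_2^*F) \quad (\text{fiberwise sum})
\]
holds provided $\SS(K)$ and $\SS(q_2^*F)$ have no non-trivial antipodal collision. Because $\SS(q_2^*F)$ lies entirely in directions of the form $(0,\xi)$, the only potentially problematic collisions occur when $(y,x;0,\xi) \in \SS(K)$ for some nonzero $\xi$ with $(x,\pm\xi) \in \SS(F)$; hypothesis (2) is exactly what forbids these (modulo the sign convention baked into the definition \eqref{eq:SSconv}). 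Third, the proper direct-image estimate
\[
\SS(q_{1!}G) \;\subset\; \{(y,\eta) \mid \exists\, x,\ (y,x;\eta,0) \in \SS(G)\}
\]
applies to $G := K \otimes q_2^*F$, whose support sits inside $\mathrm{supp}(K) \cap (N \times \mathrm{supp}(F))$; hypothesis (1) is precisely the properness of $q_1$ on this set that the estimate requires.

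Chaining the three bounds, any $(y,\eta) \in \SS(K \circ F)$ arises from some $(y,x;\eta,\zeta) \in \SS(K)$ and some $(x,\xi) \in \SS(F)$ with $\zeta + \xi = 0$; setting $\xi = -\zeta$ displays $(y,\eta)$ as an element of $\SS(K) \circ \SS(F)$ in the sense of \eqref{eq:SSconv}. The only point that demands care is step two: matching hypothesis (2), as written, against the antipodal non-intersection condition that appears in \cite[Ch.~5]{KS}. This is a bookkeeping check reflecting the sign in the definition \eqref{eq:SSconv}, and is the only place where the non-trivial content of the hypotheses is used; the remainder of the argument is a formal chain of standard estimates.
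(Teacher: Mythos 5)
Your proof is correct in outline, but it takes a different route from the paper: the paper's entire proof is a one-line citation, observing that the statement is the special case $M_1 = N$, $M_2 = M$, $M_3 = \mathrm{point}$ of the kernel-composition estimate in \cite[\S 1.6]{GKS}. What you have written out is, in effect, the proof of that cited result: the pullback estimate for the submersion $q_2$, the tensor estimate under the antipodal non-intersection hypothesis, and the proper direct-image estimate from \cite[Ch.~5]{KS}, chained together. That is a perfectly good, more self-contained argument, and it makes visible exactly where hypotheses (1) and (2) enter, which the citation hides.

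The one place you should be more careful is the point you yourself flag and then wave off. The non-characteristicity needed for $\SS(K \otimes q_2^*F) \subset \SS(K) + \SS(q_2^*F)$ is that $\SS(K)$ meet the \emph{antipode} of $\SS(q_2^*F)$ only in the zero section, i.e.\ that for $(x,\xi) \in \SS(F)$ with $\xi \neq 0$ one has $(y,0,x,-\xi) \notin \SS(K)$. This condition is intrinsic to the tensor estimate and has nothing to do with the sign appearing in the definition \eqref{eq:SSconv}; that sign only enters at the final chaining step, where it is consistent with your conclusion $\zeta = -\xi$. Hypothesis (2) as literally stated forbids $(y,0,x,\xi) \in \SS(K)$, which differs from the needed condition by the antipodal sign, and the two are not equivalent unless $\SS(F)$ happens to be stable under $\xi \mapsto -\xi$. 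So either read hypothesis (2) with the antipodal sign (which is what the GKS hypotheses amount to, and which is what is actually verified in the application in Theorem \ref{thm:where-it-happens}, where $\SS(K(h_t))$ contains no covector with vanishing $T^*N$-component and nonzero $T^*M$-component of either sign), or state explicitly that the lemma's hypothesis should be understood up to this antipode. As written, attributing the discrepancy to the convention in \eqref{eq:SSconv} is not a valid resolution, and that is the only real gap in an otherwise sound argument.
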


\begin{proof}
This is the special case of \cite[\S 1.6]{GKS} with $M_1 = N$, $M_2 = M$, and $M_3 =$ point.
\end{proof}

\subsection{Hamiltonian isotopies}

Let $M$ be a smooth manifold, and let $\dot{T}^* M$ denote the complement of the zero section in $T^*M$.  Let $I \subset \bR$ be an open interval containing $0$.  A ``homogeneous Hamiltonian isotopy'' of $\dot{T}^* M$ is a smooth map $t \mapsto h_t$ from $I$ to the symplectomorphism group of $\dot{T}^* M$, with the following properties:
\begin{enumerate}
\item $h_0$ is the identity
\item for each $t \in I$, the vector field $\frac{d}{dt} h_t$ is Hamiltonian
\item $h_t(x,a \xi) = a h_t(x,\xi)$ for all $a \in \bR_{>0}$.
\end{enumerate}

\begin{remark}
The third condition implies that each $h_t$ is exact, i.e. $h_t^* \alpha_M = \alpha_M$ for all $t$, where $\alpha_M$ is the standard primitive on $T^* M$.  An arbitrary isotopy with $h_t^* \alpha_M = \alpha_M$  is automatically Hamiltonian \cite[Corollary 9.19]{SalamonMcDuff}.
\end{remark}

Given such a $h$, consider its ``modified graph'' in $I \times \dot{T}^* M \times \dot{T}^* M$, defined by

\begin{equation}
\label{eq:modifiedgraph}
\Gamma(h) := \left\{\left(t,h_t\left(x,-\xi \right),\left(x,\xi\right)\right) \mid t \in I, (x,\xi) \in \dot{T}^* M\right\}
\end{equation}
Denote the restriction of $\Gamma(h)$ to $\{t\} \times \dot{T}^* M \times \dot{T}^* M$ by $L(h_t)$.  As each $h_t$ is a homogeneous symplectomorphism, $L(h_t)$ is a conic Lagrangian submanifold for each $t$.  The Hamiltonian condition is equivalent \cite[Lemma A.2]{GKS} to the existence of a conic Lagrangian lift of $\Gamma(h)$ to $T^* I \times \dot{T}^* M \times \dot{T}^* M$.  This lift is unique, and its union with the zero section is closed in $T^* (I \times M \times M)$.  We denote the
lift to $T^* I \times \dot{T}^* M \times \dot{T}^* M$ by $L(h)$, and the union of $L(h)$ with the zero section in $T^*(I \times M \times M)$ (resp. of $L(h_t)$ with the union of the zero section in $T^*(M \times M)$ by $\underline{L(h)}$ (resp. $\underline{L(h_t)}$).

Summarizing:
\begin{proposition}
\label{prop:characteristic-Lagrangian}
Let $h$ be a homogeneous  Hamiltonian isotopy of $\dot{T}^* M$.
There is a unique conic Lagrangian $L(h) \subset T^* I \times \dot{T}^*(M \times M)$ whose projection to $I \times \dot{T}^*(M \times M)$ is \eqref{eq:modifiedgraph}.  The union of $L(h)$ and the zero section is closed in $T^*(I \times M \times M)$.
\end{proposition}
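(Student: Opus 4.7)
The strategy is to construct $L(h)$ explicitly via a Hamiltonian generating function, verify the Lagrangian and conicity properties by pulling back the canonical 1-form, and handle uniqueness and closure separately.

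Since each $h_t$ is a homogeneous symplectomorphism of $\dot T^*M$, it automatically preserves the Liouville form $\alpha_M$ (homogeneity together with $\iota_{\mathcal{E}}\omega = \alpha_M$, where $\mathcal{E}$ is the Euler vector field, yields $h_t^*\alpha_M = \alpha_M$). Differentiating this identity in $t$ gives $\mathcal{L}_{v_t}\alpha_M = 0$ for $v_t := (\partial_t h_t)\circ h_t^{-1}$, so by Cartan's formula $H_t := \iota_{v_t}\alpha_M$ is a Hamiltonian for $v_t$, and is the unique one that is homogeneous of degree one in the fibers. I would then define
\[
L(h) := \bigl\{(t,\tau;(y,\eta),(x,\xi))\in T^*(I\times M\times M) \;:\; (y,\eta) = h_t(x,-\xi),\; \tau = -H_t(y,\eta),\; (x,\xi)\in\dot T^*M\bigr\},
\]
which manifestly projects onto $\Gamma(h)$ and is a submanifold of dimension $1+2\dim M$.

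To verify that $L(h)$ is conic Lagrangian, I would pull back the canonical 1-form $\tau\,dt + \eta\,dy + \xi\,dx$ via the parametrization by $(t,x,\xi)$. The spatial part splits into a fixed-$t$ piece, which vanishes because $h_t^*\alpha_M = \alpha_M$ combined with the sign flip $\xi\mapsto -\xi$ gives $-\xi\,dx + \xi\,dx = 0$, and a $dt$ piece equal to $\eta\cdot\partial_t y\,dt = H_t(h_t(x,-\xi))\,dt$, directly from the definition $H_t = \iota_{v_t}\alpha_M$. The $\tau\,dt$ term equals $-H_t(h_t(x,-\xi))\,dt$ by construction and cancels this exactly, so the Liouville form pulls back to zero and $L(h)$ is in fact exact Lagrangian. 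Conicity is immediate since $h_t$ is homogeneous of degree zero and $H_t$ of degree one in the fiber variable.

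For uniqueness, any conic Lagrangian $L'(h)$ lifting $\Gamma(h)$ is automatically a graph over it: if $\partial_\tau$ lay in a tangent space to $L'(h)$, the Lagrangian condition would force $dt$ to vanish on all other tangent vectors there, contradicting surjectivity of the projection onto $\Gamma(h)$ in the $t$-direction. Writing $L'(h)$ as $L(h)$ shifted in $\tau$ by a smooth function $\phi$ on $\Gamma(h)\cong I\times\dot T^*M$, the Lagrangian condition reduces to $d\phi\wedge dt = 0$, giving $\phi = \phi(t)$, and conicity (homogeneity of degree one of $\tau$ in $\xi$) then forces $\phi\equiv 0$. For the closure of $L(h)\cup 0_{I\times M\times M}$ in $T^*(I\times M\times M)$, the only potential issue is sequences with $\xi\to 0$: homogeneity extends $h_t$ continuously to the zero section as $h_t(x,0)=(x,0)$, and degree-one homogeneity of $H_t$ forces $\tau\to 0$, so such limits lie in the zero section. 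The main obstacle is bookkeeping the signs---particularly the interplay between the built-in reflection $\xi\mapsto -\xi$ in the definition of $\Gamma(h)$ and the sign convention in $H_t = \iota_{v_t}\alpha_M$---to ensure the pullback cancellation really goes through as claimed.
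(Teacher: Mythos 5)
Your construction is correct and is essentially the argument behind \cite[Lemma A.2]{GKS}, which is all the paper itself invokes here: the lift is cut out by $\tau=-H_t$ with $H_t=\iota_{v_t}\alpha_M$ the degree-one homogeneous Hamiltonian, and the vanishing of the pulled-back Liouville form $\tau\,dt+\eta\,dy+\xi\,dx$ gives the (exact, conic) Lagrangian property, while conicity pins down the lift uniquely and forces $\tau\to 0$ as $\xi\to 0$, giving closure after adding the zero section. The one step worth tightening is your claim that any conic Lagrangian lift is automatically a graph: set-theoretic surjectivity of the projection onto $\Gamma(h)$ does not by itself rule out $dt$ vanishing on the tangent space of $L'(h)$ at some point; instead, note that the Liouville form vanishes on any conic Lagrangian, so $\tau=-H_t$ is forced wherever the projection to $\Gamma(h)$ is submersive, hence on a dense subset of $L'(h)$, and since your $L(h)$ is closed in $T^*I\times\dot{T}^*(M\times M)$ and maps bijectively to $\Gamma(h)$, this yields $L'(h)=L(h)$.
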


\begin{remark}
\label{rem:h-image}
The definition \eqref{eq:SSconv} makes sense with $\SS(K)$ and $\SS(F)$ replaced by any closed conic sets in $T^* N \times T^* M$ and $T^* M$ respectively.  If $Z \subset T^* M$ contains the zero section then $\underline{L(h_t)} \circ Z$ is the union of the zero section with $h_t(Z \cap \dot{T}^* M)$.
\end{remark}

\subsection{The GKS theorem}
\label{subsec:GKStheorem}

\begin{theorem}[{\cite[Theorem 3.7]{GKS}}]
\label{thm:GKS}
Suppose $h$ and $L(h)$ are as in Proposition \ref{prop:characteristic-Lagrangian}.  Then there is a $K = K(h) \in \dgsh(I \times M \times M)$, unique up to isomorphism, with the following properties:
\begin{enumerate}
\item $K$ is locally bounded (has bounded restriction to any relatively compact open set),
\item the singular support of $K$, away from the zero section, is $L(h)$,
\item the restriction of $K$ to $\{0\} \times M \times M$ is the constant sheaf on the diagonal.
\end{enumerate}
\end{theorem}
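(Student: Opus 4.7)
The plan is to establish uniqueness and existence separately, both relying on the fact that the projection $\pi_I: I \times M \times M \to I$ is non-characteristic for any sheaf with singular support contained in $L(h) \cup 0_{I \times M \times M}$. The key observation is that by Proposition \ref{prop:characteristic-Lagrangian}, $L(h) \setminus 0$ is contained in $T^* I \times \dot{T}^*(M \times M)$, so the covector $dt$ (i.e.\ $(t;\tau, x, 0, y, 0)$ with $\tau \neq 0$) never appears in $L(h) \setminus 0$. This is precisely the hypothesis needed to invoke the microlocal Morse lemma along the $t$-direction.

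\textbf{Uniqueness.} Suppose $K_1, K_2 \in \dgsh(I \times M \times M)$ both satisfy (1)--(3). By standard singular support estimates for internal hom, $\SS(R\cH om(K_1, K_2))$ is contained in $L(h)^a + L(h)$, but because $L(h)$ is (conically) Lagrangian and both arguments have the \emph{same} singular support, this in fact lies in $L(h) \cup 0$. In particular, $dt$ is non-characteristic for $R\cH om(K_1, K_2)$. The canonical isomorphism $K_1|_{\{0\} \times M \times M} \cong K_2|_{\{0\} \times M \times M}$ provided by (3) gives a degree zero class in $R\Hom$ at time $t = 0$. By the microlocal Morse lemma applied to the function $\phi(t,x,y) = t$, together with a direct limit argument to handle the open interval $I$, this class extends uniquely to a global morphism $K_1 \to K_2$. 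Applying the same reasoning to the mapping cone (whose singular support is also contained in $L(h) \cup 0$) shows the morphism is an isomorphism.

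\textbf{Existence.} The strategy is to construct $K$ locally in $t$ and then glue using uniqueness plus the group property of the flow. For $t_0 \in I$ small, one constructs $K$ on $(-\epsilon, \epsilon) \times M \times M$ as a deformation of the constant sheaf on the diagonal: at $t = 0$ one has the conormal to the diagonal as singular support, and the Hamiltonian structure prescribes how to move this Lagrangian. Concretely, one can realize $K$ as a microlocal extension of the constant sheaf on the diagonal along $L(h)$, using the fact that near $t = 0$ the Lagrangian $L(h)$ is close to the conormal of the diagonal in $\{0\} \times M \times M$. For larger $|t|$, one iterates: having constructed $K$ up to time $t_0$, one uses the flow $h_{t_0 + s} = h_s \circ h_{t_0}$ and the associated \emph{composition of kernels} (a sheaf-theoretic convolution in the sense of \S\ref{subsec:convolution}) to extend. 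Uniqueness guarantees that these local constructions glue compatibly, and boundedness is preserved at each step since the construction is locally a cone on a bounded sheaf.

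\textbf{Main obstacle.} Uniqueness is essentially a direct application of non-characteristic propagation, and the bulk of the work lies in existence---specifically, in the local deformation step. The difficulty is that one cannot simply pushforward the constant sheaf on the diagonal along a parametrized flow (the flow is on cotangent directions, not on $M$ itself); one needs a genuinely microlocal construction that builds a sheaf realizing a given Lagrangian as its singular support. Controlling the \emph{local boundedness} (condition (1)) throughout this construction, and verifying that the singular support is exactly $L(h)$ and not merely contained in it, are the technical points where the homogeneous Hamiltonian hypothesis is used essentially.
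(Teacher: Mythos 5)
Note first that the paper does not prove this statement at all: it is imported verbatim as \cite[Theorem 3.7]{GKS}, so the only meaningful comparison is with the proof of Guillermou--Kashiwara--Schapira themselves. Measured against that, your proposal has a genuine gap, and it is exactly where you locate your ``main obstacle'': the existence step. Saying that one realizes $K$ locally in time ``as a microlocal extension of the constant sheaf on the diagonal along $L(h)$'' is not a construction --- producing a sheaf with prescribed Lagrangian singular support and prescribed restriction at $t=0$ is precisely the content of the theorem, so as written the argument is circular. There is no general principle at this level that quantizes an arbitrary conic Lagrangian; GKS get around this by cutting $I$ into small intervals and decomposing the isotopy on each piece into factors that admit \emph{explicit} kernels (e.g.\ composing with flows of Hamiltonians of the type $|\xi|$, whose quantizations are constant sheaves on explicitly described closed domains), then gluing the resulting kernels by the uniqueness statement and the group law $h_{t_0+s}=h_s\circ h_{t_0}$. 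Your gluing/iteration step is fine in spirit and is indeed how the global kernel is assembled, but without the explicit local quantization it has nothing to glue.

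The uniqueness half is closer to being right, but the key estimate is not justified as stated. From $\SS(K_i)\subset \underline{L(h)}$ one gets $\SS(R\cH om(K_1,K_2))\subset \underline{L(h)}\mathbin{\hat{+}}\underline{L(h)}^a$, and it is simply not true that this is contained in $L(h)\cup 0$ ``because $L(h)$ is Lagrangian and both arguments have the same singular support.'' What you actually need is the weaker but nontrivial statement that no covector of the form $(t,\tau;x,0;y,0)$ with $\tau\neq 0$ occurs, i.e.\ that the $t$-slices are non-characteristic; excluding such covectors requires using that $L(h)$ is the (twisted) graph of a homogeneous isotopy --- pointwise sums are easy to rule out, but the limit points implicit in $\hat{+}$ must be controlled using homogeneity of the Hamiltonian, not Lagrangian-ness. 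GKS avoid this issue altogether: their uniqueness argument proceeds by showing the kernel is invertible for convolution and that $K_1\circ K_2^{-1}$ has singular support in the zero section, hence is pulled back from $t=0$, where it is the constant sheaf on the diagonal. So your overall mechanism (propagation in the $t$-direction from the initial condition) is the right heuristic, but both the microlocal estimate it rests on and the entire existence construction still need to be supplied.
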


We denote the restriction of $K(h)$ to  $\{t\} \times M \times M$ by $K(h_t)$, its singular support is $\Lambda(h_t)$.  Let us say that $h$ has \emph{compact horizontal support} if there is an open set $U \subset M$ with compact closure such that $h(t,x,\xi) = (x,\xi)$ for $x \notin U$.  (This is the condition 3.3 in \cite[p. 216]{GKS}.)  In that case each $K(h_t)$ is bounded (not just locally bounded) --- in fact, $K\vert_{J \times M \times M}$ is bounded for any relatively compact subinterval $J \subset I$ \cite[Remark 3.8]{GKS}.

\begin{theorem}
\label{thm:where-it-happens}
Suppose that $h$ has compact horizontal support, and let $K(h_t)$ be as above.  Then convolution by  $K(h_t)$ gives an equivalence
\[
K(h_t)\circ:\dgsh(M) \stackrel{\sim}{\to} \dgsh(M)
\]
If $\Lambda \subset T^{\infty}(M)$ is a closed subset, then $K(h_t)$ induces an equivalence
\[
K(h_t) \circ \dgsh_{\Lambda}(M) \stackrel{\sim}{\to} \dgsh_{h_t(\Lambda)}(M)
\]
\end{theorem}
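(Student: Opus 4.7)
The plan is to invert convolution by $K(h_t)$ using the kernel of the inverse isotopy. Write $h^{-1}$ for the homogeneous Hamiltonian isotopy $s \mapsto (h_s)^{-1}$; this again has compact horizontal support, so Theorem \ref{thm:GKS} supplies a bounded kernel $K(h^{-1}) \in \dgsh(I \times M \times M)$ whose singular support off the zero section is $L(h^{-1})$ and whose restriction to $\{0\} \times M \times M$ is the constant sheaf on the diagonal. Let $K(h_t^{-1})$ denote the fibre of $K(h^{-1})$ at $s = t$.

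The first step is to verify the cocycle identities
\[
K(h_t) \circ K(h_t^{-1}) \cong k_{\Delta_M} \cong K(h_t^{-1}) \circ K(h_t),
\]
which imply at once that convolution by $K(h_t)$ is an auto-equivalence of $\dgsh(M)$ with inverse $K(h_t^{-1}) \circ$. I would establish these by the uniqueness clause of Theorem \ref{thm:GKS}. Namely, form the parametrized convolution $P \in \dgsh(I \times M \times M)$ of $K(h)$ and $K(h^{-1})$ along their common inner $M$-factor. Compact horizontal support makes the relevant pushforward proper and ensures $P$ is bounded. The three-factor version of the singular-support estimate behind Lemma \ref{lem:tech-ass} controls $\SS(P)$ off the zero section by $L(h) \circ L(h^{-1})$, which by direct computation from the definition \eqref{eq:modifiedgraph} of the modified graph equals $L(\mathrm{id})$ (the conormal of $I \times \Delta_M$). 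At $t = 0$ the restriction $P|_{\{0\} \times M \times M}$ is $k_{\Delta_M} \circ k_{\Delta_M} \cong k_{\Delta_M}$. By the uniqueness in Theorem \ref{thm:GKS}, $P$ coincides with the kernel of the constant isotopy, namely $k_{I \times \Delta_M}$; restriction at $t$ gives the first isomorphism, and the other is symmetric.

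Next I would check the singular-support refinement. For $F \in \dgsh_\Lambda(M)$, apply Lemma \ref{lem:tech-ass} to the convolution $K(h_t) \circ F$. Hypothesis (1) holds because compact horizontal support of $h$ makes $(M \times \mathrm{supp}(F)) \cap \mathrm{supp}(K(h_t))$ proper over $M$. Hypothesis (2) is automatic, because the part of $\SS(K(h_t))$ away from the zero section is $L(h_t) \subset \dot{T}^*M \times \dot{T}^*M$, hence never contains a covector with zero first component. Combining with Remark \ref{rem:h-image} and the homogeneity of $h_t$,
\[
\SS(K(h_t) \circ F) \subset \underline{L(h_t)} \circ (\bR_{>0}\Lambda \cup 0_M) = \bR_{>0} h_t(\Lambda) \cup 0_M,
\]
so $K(h_t) \circ F \in \dgsh_{h_t(\Lambda)}(M)$. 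The same argument applied to $K(h_t^{-1})$ shows that convolution by $K(h_t^{-1})$ sends $\dgsh_{h_t(\Lambda)}(M)$ into $\dgsh_{h_t^{-1}(h_t(\Lambda))}(M) = \dgsh_\Lambda(M)$. Since the two convolutions are already mutually inverse equivalences on the ambient category, they restrict to the desired equivalence.

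The main obstacle is the sheaf-theoretic composition step: verifying that the parametrized convolution $P$ is again a GKS kernel, with singular support computed by composing modified graphs. This reduces to the three-factor version of \cite[\S 1.6]{GKS}, whose hypotheses are met thanks to compact horizontal support. Once this is in hand, uniqueness in Theorem \ref{thm:GKS} does the rest.
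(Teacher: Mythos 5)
Your argument is correct, and its second half — verifying hypotheses (1) and (2) of Lemma \ref{lem:tech-ass} via compact horizontal support and the fact that $\SS(K(h_t))$ off the zero section is $L(h_t)\subset \dot{T}^*M\times\dot{T}^*M$, then invoking Remark \ref{rem:h-image} — is exactly the paper's proof of the singular-support refinement. Where you diverge is the first assertion: the paper simply cites \cite[Proposition 3.2(ii)]{GKS} for the statement that $K(h_t)\circ$ is an auto-equivalence of $\dgsh(M)$, whereas you re-derive it by taking the kernel of the inverse isotopy $s\mapsto h_s^{-1}$, forming the parametrized composition over $I$, and identifying it with $k_{I\times\Delta_M}$ by the uniqueness clause of Theorem \ref{thm:GKS}. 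This is in effect a self-contained reconstruction of the GKS invertibility statement (GKS themselves produce the inverse kernel by duality rather than by the inverse isotopy, but the uniqueness argument you sketch is legitimate and is how the group-law $K(h_1)\circ K(h_2)\simeq K(h_1h_2)$ is usually proved); the price is that you must genuinely carry out the three-factor, $I$-parametrized singular-support estimate and check that $s\mapsto h_s^{-1}$ is again a homogeneous Hamiltonian isotopy with compact horizontal support (both true — the latter since any isotopy preserving $\alpha_M$ is automatically Hamiltonian, and $h_t$ fixing $\dot{T}^*M|_{M\setminus U}$ pointwise forces $h_t^{-1}$ to do the same). Two small points to tighten: the uniqueness in Theorem \ref{thm:GKS} should be applied with a containment $\SS(P)\subset L(\mathrm{id})\cup 0$ rather than an equality (this is how the GKS statement actually reads, though the paper's paraphrase asserts equality); and your explicit application of the singular-support estimate to $K(h_t^{-1})$, showing the inverse functor carries $\dgsh_{h_t(\Lambda)}(M)$ back into $\dgsh_{\Lambda}(M)$, is a step the paper leaves implicit but which is genuinely needed to conclude that the restricted functor is an equivalence rather than merely fully faithful into $\dgsh_{h_t(\Lambda)}(M)$ — so on that point your write-up is more complete than the paper's.
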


\begin{proof}
The first assertion is \cite[Proposition 3.2(ii)]{GKS}.  The second assertion follows from Remark \ref{rem:h-image}, so long as the assumptions of Lemma \ref{lem:tech-ass} are satisfied with $K = K_t$ and any $F \in \dgsh_{\underline{\Lambda}}(M)$.  In fact as $h$ has compact horizontal support, $\mathrm{supp}(K(h_t))$ agrees with the diagonal of $M \times M$ outside of a compact set, so $\mathrm{supp}(K(h_t))$ is proper over $N$, and a fortiori (as $N \times \mathrm{supp}(F)$ is closed) so is $N \times \mathrm{supp}(F) \cap \mathrm{supp}(K)$.  This establishes (1).  As the codomain of $h_t$ is the complement of the zero section in $\dot{T}^*M$, the singular support of $K(h_t)$ contains no element of the form $(y,0,x,\xi)$ with $\xi \neq 0$.  This establishes (2) and completes the proof.\end{proof}

\begin{example}[Example 3.10 of \cite{GKS}]
\label{ex:geodesicflow}
Let $\bR^n$ be Euclidean $n$-space.  Normalized geodesic flow on $\dot{T}\bR^2\cong \dot{T}^*\bR^2$
is a homogeneous Hamiltonian isotopy defined by the time-independent Hamiltonian $|\xi|.$
Then $$L(h) = \{ (t,|v|; x,v ; x + t\hat{v}, v ) \} \subset T^*I \times \dot{T}^*\bR^2 \times \dot{T}^*\bR^2,$$
where $\hat{v} = v/|v|.$  There is a unique sheaf $K$ creating the distinguished triangle $$j_!k_{\{t>|y-x|\}}[n]\to K \to i_*k_{\{t < |y-x|\}},$$
where $i$ and $j$ are the inclusions of the indicated open sets.  Then $K$ is the GKS kernel for geodesic (Reeb) flow.
\end{example}

\subsection{Reidemeister moves}
\label{subsec:gksRmoves}

Let $\Phi_1$ and $\Phi_2$ be the local diagrams of Figure \ref{fig:r1}, \ref{fig:r2}, or \ref{fig:r3}.  Let $\Lambda_1$ and $\Lambda_2$ be the corresponding Legendrians in $T^{\infty,-}$.  By Theorem \ref{thm:4.1}, the categories $\dgsh_{\Lambda_1}$ and $\dgsh_{\Lambda_2}$ are equivalent.  
That equivalence is given by a kernel, i.e. a sheaf on $\bR^2\times \bR^2$, which is uniquely characterized by the GKS
theorem.  In fact, up to planar isotopy, each Reidemeister move can by place into the framework of
Example \ref{ex:geodesicflow}.  As a demonstration in the most difficult case, the Reidemeister-1 related red and blue curves in the
diagram below
\begin{center}
\includegraphics[scale=.4]{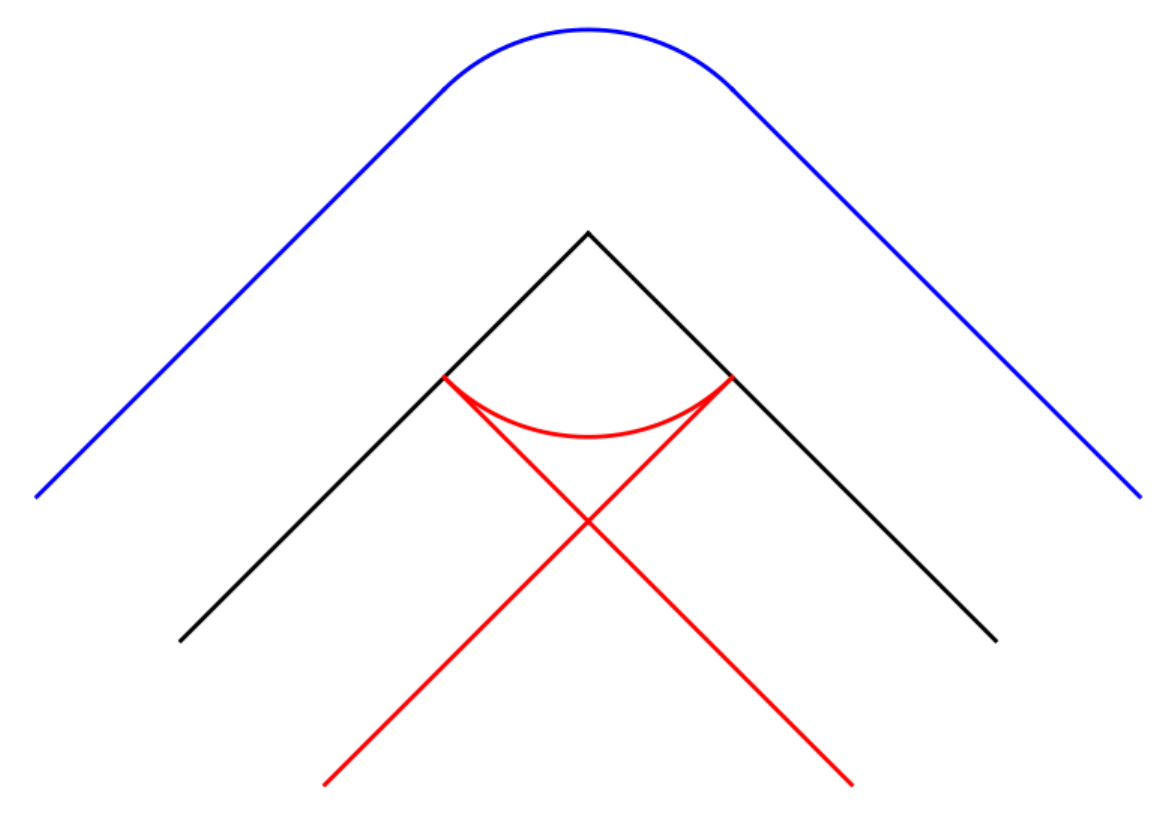}
\end{center}
are fronts of positive (red) and negative (blue) Reeb flows from the inward conormal of the quadrant surrounded by the black front.
With the kernel $K$ in hand, invariance under Reidemeister moves can be derived explicitly.
In this section we describe what happens at the level of objects, using the language of legible diagrams of \S\ref{subsec:legible}.  In each of the examples below,
all objects of the local categories are legible, either from applying 
Proposition \ref{prop:braidlegible} to one side or from a direct argument.

\subsubsection{Reidemeister 1}

Typical legible objects on $\Phi_1$ and $\Phi_2$ are displayed:

\begin{figure}[H]
\includegraphics[scale =.3]{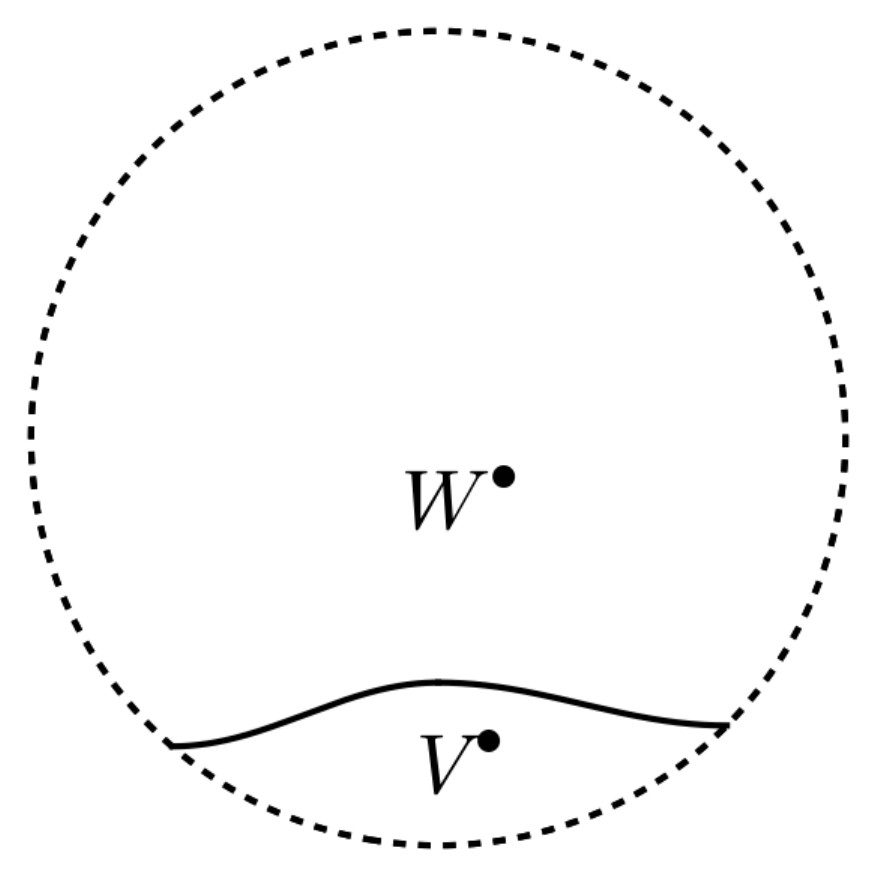} \quad \includegraphics[scale=.3]{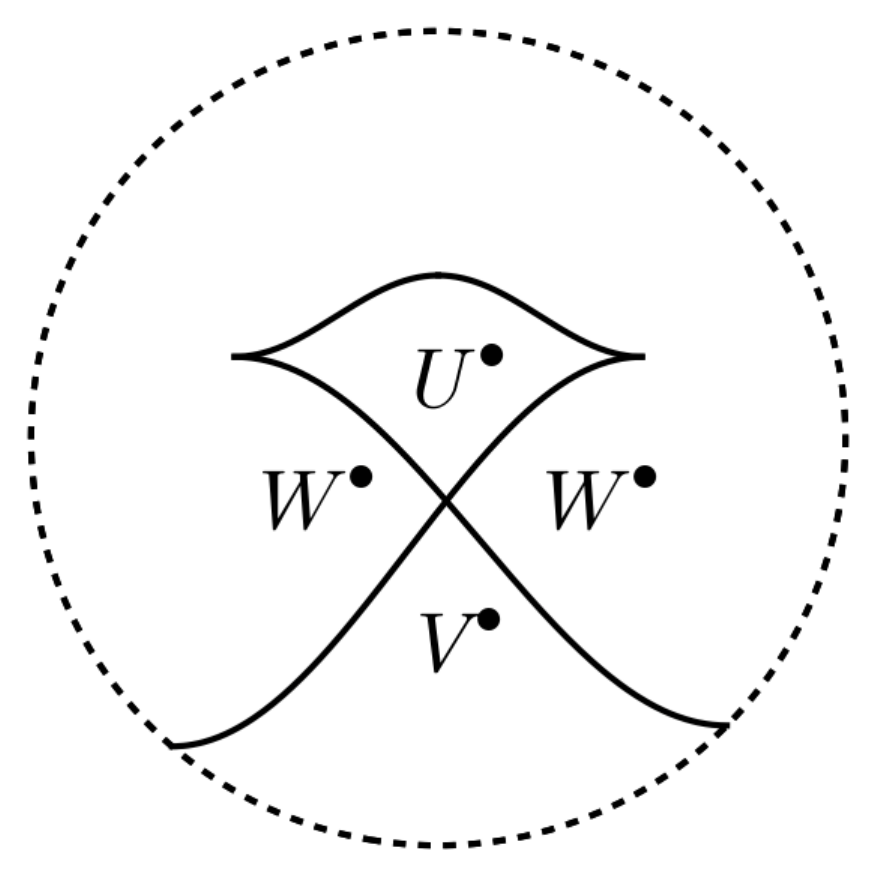}
\caption{Local description of the Reidemeister 1 move.}\label{fig:r1equiv}
\end{figure}
We have not included the maps in the diagram, let us name them
\[
\begin{array}{cc}
\xymatrix{
\\
\\
W^{\bullet} & \\
\ar[u]_f
V^{\bullet}
} &
\xymatrix{
& W^{\bullet} \\
& U^{\bullet} \ar[u]_{p} \\
W^{\bullet} \ar[ur]^{g_1} & & W^{\bullet} \ar[ul]_{g_2} \\
& V^{\bullet} \ar[ul]^{f_1} \ar[ur]_{f_2} 
}
\end{array}
\]
Note that, as $pg_1 = pg_2 = 1_{W^{\bullet}}$, we have $f_1 = f_2$.  Thus, to go from $\Phi_2$ to $\Phi_1$, we simply take $f = f_1 = f_2$.  

To go in the other direction, we assume that $f$ is an injective map of chain complexes.  (This is no restriction: replacing $f$ with the inclusion of $V^{\bullet}$ into the mapping cylinder of $f$ gives an equivalent legible diagram, in the sense of Remark \ref{rem:ancusmarcius}).  We take $V^{\bullet} = V^{\bullet}$,  $W^{\bullet} = W^{\bullet}$, $f_1 = f$ and $f_2 = f$,
\[
U^{\bullet} = \mathrm{coker}(V^{\bullet} \xrightarrow{(f,-f)} W^{\bullet} \oplus W^{\bullet})
\]
and $p:U^{\bullet} \to W^{\bullet}$ induced by the addition map $W^{\bullet} \oplus W^\bullet \to W^{\bullet}$.

\subsubsection{Reidemeister 2}  Typical legible object on $\Phi_1$ and $\Phi_2$ are displayed:

\begin{figure}[H]
\includegraphics[scale =.3]{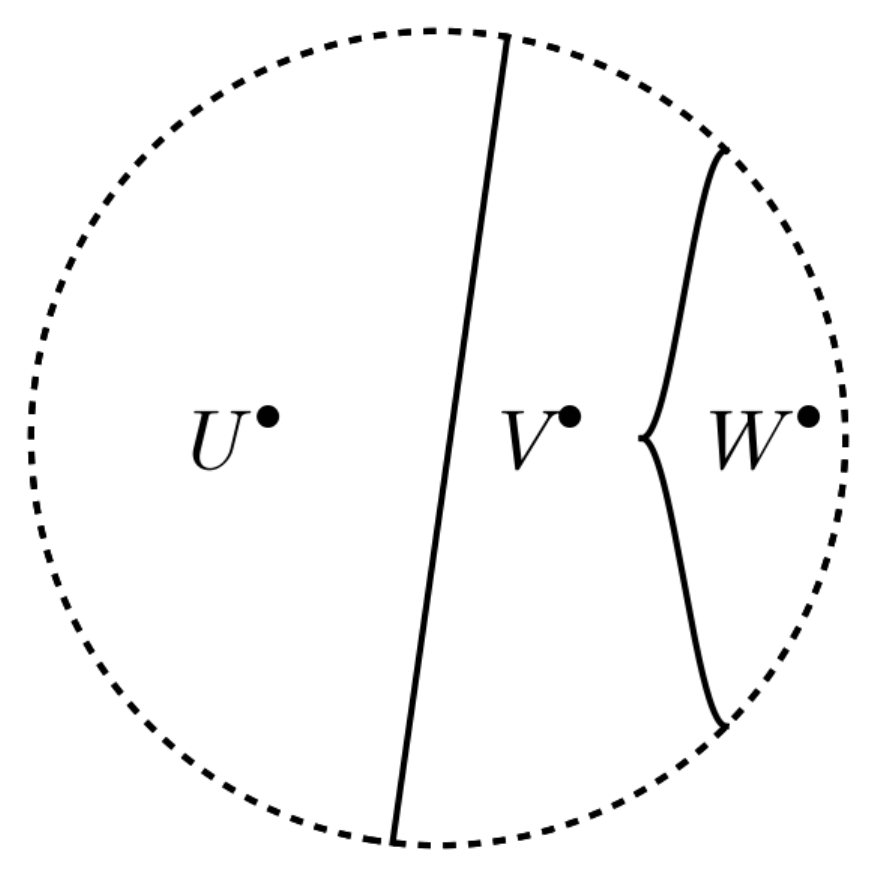} \quad \includegraphics[scale=.3]{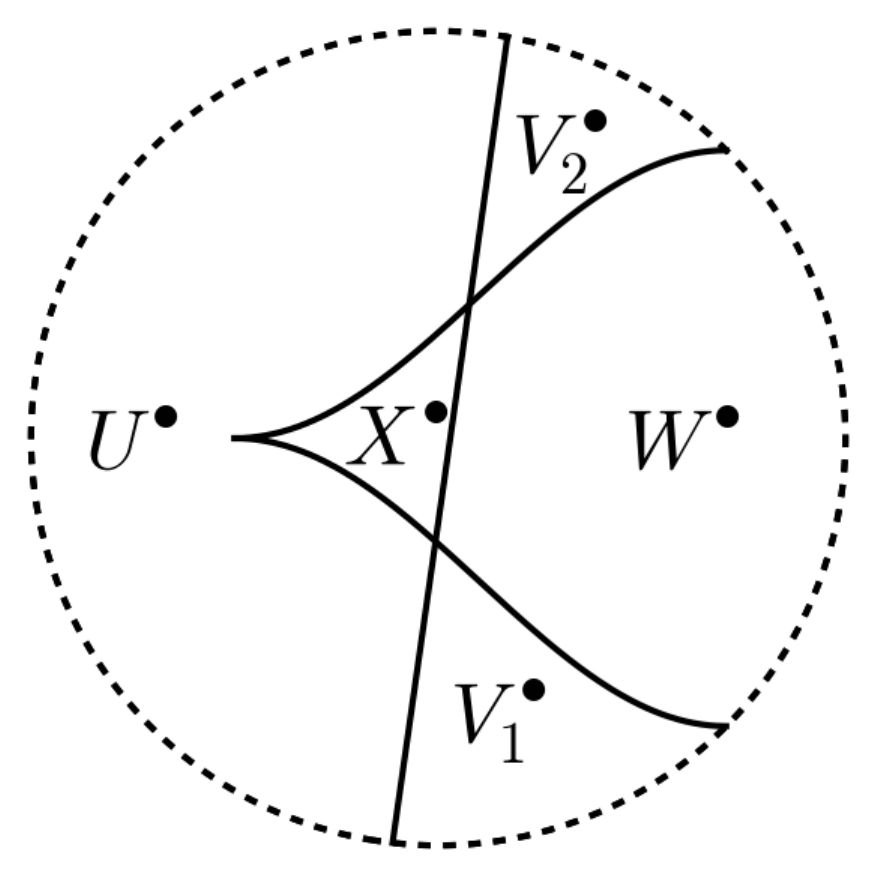}
\caption{Local description of the Reidemeister 2 move}\label{fig:r2equiv}
\end{figure}

We label the maps in the two diagrams as follows:
\[
\begin{array}{ccc}
\xymatrix{
\\
U^{\bullet} & \ar[l]_{f} \ar@/_/[r]_{g} V^{\bullet} & \ar@/_/[l]_{p} W^{\bullet}
}&
&
\xymatrix{
& & V_2^{\bullet} \ar@/_1pc/[dll]_{f_2} \\
U^{\bullet} \ar@/_/[r]_{\iota} & X^{\bullet} \ar@/_/[l]_{\pi} & & W^{\bullet}\ar[ll]_{\alpha}  \ar[ul]_{p_2} \\
& & V_1^{\bullet} \ar@/^1pc/[ull]^{f_1} \ar[ur]_{g_1}
}
\end{array}
\]
To go from $\Phi_1$ to $\Phi_2$, put $U^{\bullet} = U^{\bullet}$, $V_1^{\bullet} = V_2^{\bullet} = V^{\bullet}$, $W^{\bullet} = W^{\bullet}$, $p_2 = p$, $g_1 = g$, $f_1 = f_2 = f$, and 
\begin{itemize}
\item[-] $X^{\bullet} = \mathrm{coker}(V^{\bullet} \stackrel{(f,-g)}{\longrightarrow} U^{\bullet} \oplus W^{\bullet})$. Note that since $pg =1$, $(f,-g)$ is injective as a map of chain complexes.  It follows that the total complex of $V^{\bullet} \to U^{\bullet} \oplus W^{\bullet} \to X^{\bullet}$ is acyclic.

\item[-] The map $\alpha$ is the composition $W^{\bullet} \stackrel{(0,1)}{\longrightarrow} U^{\bullet} \oplus W^{\bullet} \to X^{\bullet}$.
\item[-] The map $\iota$ is the composition $U^{\bullet} \stackrel{(1,0)}{\longrightarrow} U^{\bullet} \oplus W^{\bullet} \to X^{\bullet}$
\item[-] The map $\pi$ is induced by
$
\xymatrix{
U^{\bullet} \oplus W^{\bullet} \ar[rr]^{\quad \, ( 1_U + f \circ p)}  & & U^{\bullet}
}
$
\end{itemize}

It is a slightly nontrivial lemma in homological algebra that any object on $\Phi_2$ come from $\Phi_1$ in this manner, let us indicate why.  It is easy to see that the following sorts of objects comes from $\Phi_1$:
\begin{itemize}
\item[(1)] Objects where $U^{\bullet}$, $V_1^{\bullet}$, and $V_2^{\bullet}$ are all acyclic, in which case the map $\alpha$ is a quasi-isomorphism.
\item[(2)] Objects where $U^{\bullet} \leftrightarrows X^{\bullet}$ are inverse isomorphisms, in which case the maps $V_1^{\bullet} \to W^{\bullet}$ and $W^{\bullet} \to V_2^{\bullet}$ must be quasi-isomorphisms.
\end{itemize}
We may conclude that every other object arises in this say so long as every legible diagram on $\Phi_2$ fits into an exact triangle $[F' \to F \to F'' \to ]$ with $F''$ of type (1) and $F'$ of type (2).  If the chain complexes and maps of $F = \{V_1^{\bullet}, W^{\bullet},\ldots\}$ are named as in the right-hand diagram above, then for $F''$ we may take the object
\[
\xymatrix{
& & 0 \ar@/_1pc/[dll] \\
0 \ar@/_/[r]  & \mathrm{im}(\iota) \ar@/_/[l]  & & \mathrm{Cone}(V_1^{\bullet} \to W^{\bullet})\ar[ll]_{\iota \circ \pi \circ \alpha}  \ar[ul] \\
& & 0 \ar@/^1pc/[ull] \ar[ur]
}
\]
There is evidently a map $F \to F''$, and the cone on this map is of type (1).

\subsubsection{Reidemeister 3}

Let $\Phi_1$ and $\Phi_2$ be the local diagrams of Figure \ref{fig:r3}.
Recall from \ref{prop:braidlegible} that every object of the associated categories is legible,
so we consider legible diagrams only.
Let $D_4$ be the graph with four nodes $e,f_1,f_2,f_3$, with an edge between $e$ and $f_i$ for $i = 1,2,3$.  Let $D_4^+$ denote this quiver oriented so that $e$ is a source, and let $D_4^-$ denote this quiver oriented so that $e$ is a sink.  Let $\mathrm{Rep}(D_4^+)$ and $\mathrm{Rep}(D_4^-)$ denote the dg derived category of representations of each of these quivers.  There are functors
$\dgfun(\Phi_1,k) \to \mathrm{Rep}(D_4^+)$ and $\dgfun(\Phi_2,k) \to \mathrm{Rep}(D_4^-)$, which take the
legible diagrams of Figure \ref{fig:r3equiv}
\begin{figure}[H]
\includegraphics[scale = .3]{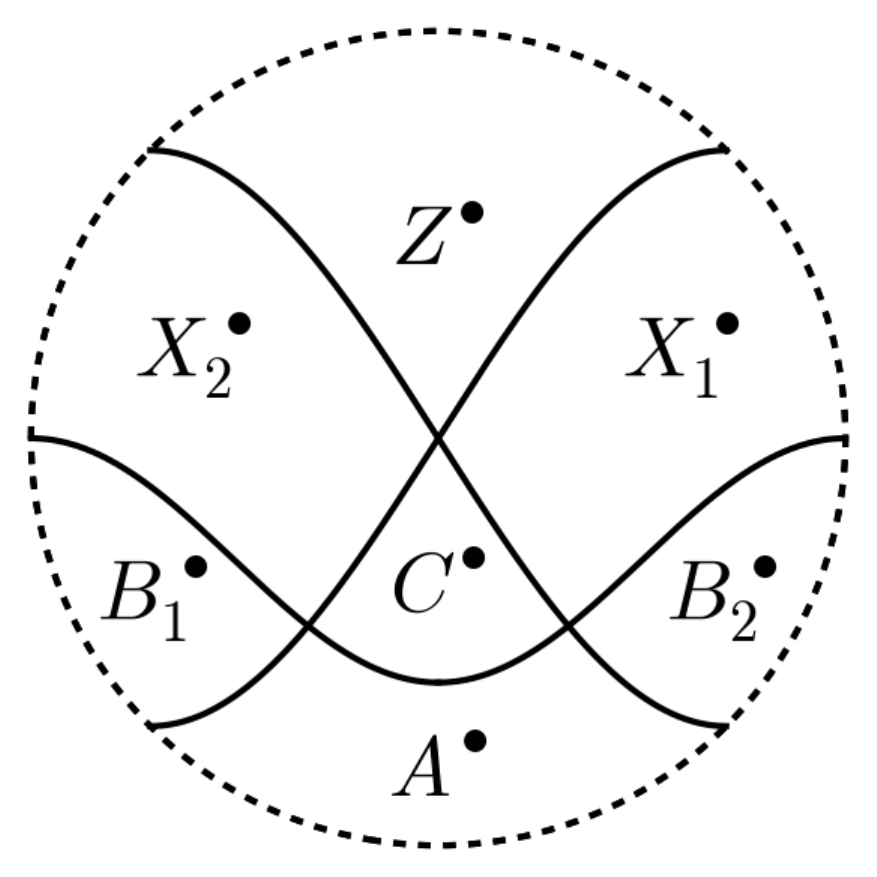} \quad \includegraphics[scale = .3]{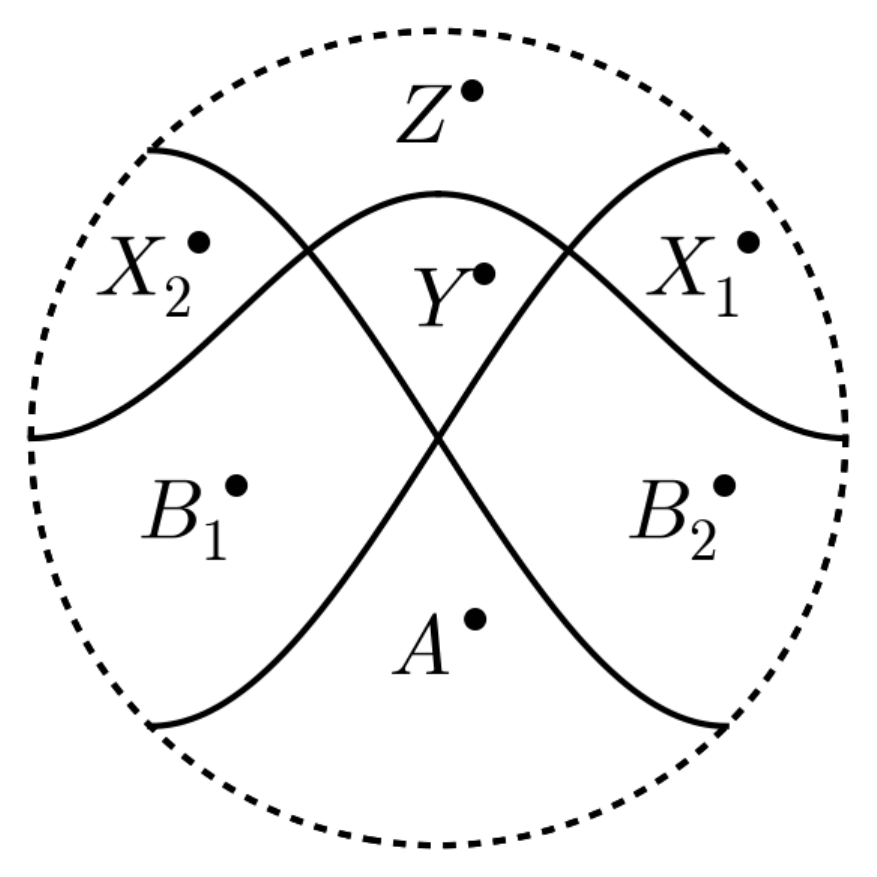}
\caption{Local categories for the Reidemeister 3 move}
\label{fig:r3equiv}
\end{figure}
\noindent to the following objects:
\[
\begin{array}{ccc}
\xymatrix{
B_1^{\bullet} & C^{\bullet} & B_2^{\bullet} \\
& A^{\bullet} \ar[ul] \ar[u] \ar[ur]
}
&
\quad
&
\xymatrix{
 & Z^{\bullet} &  \\
X_2^{\bullet}\ar[ur] & Y^{\bullet}  \ar[u] & X_1^{\bullet} \ar[ul]
}
\end{array}
\]
These functors are quasi-equivalences; for instance, the quasi-inverse $\mathrm{Rep}(D_4^-) \to \dgfun(\Phi_2,k)$ takes 
\[
\begin{array}{c}
B_1^{\bullet} = Y^{\bullet} \times_{Z^{\bullet}} X_2^{\bullet} \\
B_2^{\bullet} = X_1^{\bullet} \times_{Z^{\bullet}} Y^{\bullet} \\
A^{\bullet} = B_1^{\bullet} \times_{Y^{\bullet}} B_2^{\bullet}
\end{array}
\]
Let $W$ denote the Weyl group of type $D_4$, and $s_e,s_{f_1},s_{f_2},s_{f_3}$ its generators corresponding to the nodes $e,f_1,f_2,f_3$.  There are two standard equivalences \cite{BGP} between $\mathrm{Rep}(D_4^+)$ and $\mathrm{Rep}(D_4^-)$, naturally labeled by the Weyl group elements $s_e$ and $s_{f_1} s_{f_2} s_{f_3}$.  The Reidemeister 3 equivalence is given by the composition
\[
\xymatrix{
\dgfun(\Phi_1,k) \ar[r] &  \mathrm{Rep}(D_4^+) \ar[rrrr]^{s_e s_{f_1} s_{f_2} s_{f_3} s_es_{f_1} s_{f_2} s_{f_3} s_e} & &   & & \mathrm{Rep}(D_4^-) & \dgfun(\Phi_2,k) \ar[l]
}
\]

\section{Further constructions}
\label{sec:mishmash}

In this section we explore some additional structures on our category.  Given a knot $\Lambda\subset \bR^3\cong T^{\infty,-}\bR^2$
equipped with a Maslov
potential, the \emph{microlocal monodromy} constructs from a sheaf in $\dgsh_\Lambda(\bR^2)$ a local system on $\Lambda$ itself.
An object whose microlocal monodromy is a rank-$r$ local system in cohomological degree zero is said to have microlocal rank-$r$.
Next we define the notion of a \emph{ruling filtration} of an object $F$, which is a presentation of $F$ as an iterated cone over basic
sheaves called ``eyes.'' The notion of a ruling filtration relates to the combinatorial construction called a \emph{ruling} in the Legendrian knot literature,
and allows for the construction of an abstract Seifert surface for the knot $\Lambda.$ 
For objects with Maslov potential taking only two values, the microlocal rank-$r$ objects form a moduli space which is representable
by an Artin stack of finite type.

\subsection{Microlocal monodromy}
\label{sec:mmon}
Roughly speaking, a constructible sheaf on a manifold determines a ``local system'' on the smooth part of its singular support.  To make this precise in general is somewhat intricate, for example the monodromies of the ``local system'' may act by homological shifts.  This sort of behavior is very concrete in the setting of this paper, i.e. when the sheaf has singular support in a knot, and we describe in in this section.  A much more general version of this story is essentially the theory of the $\mu\mathit{hom}$-functor of \cite[Ch. IV]{KS} --- we will not invoke $\mu\mathit{hom}$ directly here but see Remark \ref{rem:quote-really-unquote}.

Let $\Lambda$ be a Legendrian knot in general position, and fix a
regular refinement of the stratification of its front diagram.  Let
$\cS$ be the resulting stratification.  Fix a parameterization $S^1 \to \Lambda$ of the knot such that 
the preimage of a cusp is a closed interval of non-zero length.  Pulling
back the stratification into arcs, cusps, and crossings gives a stratification of $S^1$
which is a regular cell complex.
We denote the poset of this stratification by $\Delta$.  An arc in $\cS$ has a unique preimage in $\Delta$,
a crossing has two points as preimages, and a cusp has two zero dimensional strata and a one-dimensional stratum (altogether, a closed interval)
as preimages.  If $c \in \cS$ is a cusp, we write these preimages and the maps relating them in 
$\Delta$ as $c_{\preccurlyeq} \to c_\prec \leftarrow c_{\curlyeqprec}$.

\begin{definition} 
Given $F \in  \dgfun(\cS, k)$, its unnormalized microlocal monodromy $\mmon'(F)$ is the assignment 
of a chain complex of $k$-modules to every object in $\Delta$, degree zero chain maps to all 
arrows in $\Delta$ except at cusps, where the arrow $c_{\preccurlyeq} \to c_\prec$ is assigned a 
degree one chain map (i.e. it shifts the degree by one, but commutes with the differential).  In detail,
$\mmon'(F)$ is defined 
as follows:
\begin{itemize}
\item Let $a \in \cS$ be an arc, and denote also by $a$ its preimage in $\Delta$. 
There's a region
$N$ above $a$ and a map $a \to N$ in $\cS$.  We take 
$$\mmon'(F)(a) = \mathrm{Cone}(F(a\to N))$$
\item Let $c \in \cS$ be a crossing; as in \eqref{eq:tullushostilius}
 we use the following notation for the star of $c$:
\[
\xymatrix{
& & N\\
& nw \ar[ur] \ar[dl] & & ne \ar[ul] \ar[dr] \\
W  & & c \ar[ul] \ar[ur] \ar[dl] \ar[dr] \ar[uu] \ar[dd] \ar[ll] \ar[rr] & & E\\
& sw \ar[ul] \ar[dr] & & se \ar[ur]   \ar[dl] \\
& & S
}
\]
There are two 
elements of $\Delta$ which map to $c$; we denote them $c_{\diagup}$ and $c_{\diagdown}$, where the subscript indicates the 
slope of the image of a neighborhood of $c_{\cdot}$ in the front diagram.   That is, in $\Delta$, there are maps
$nw \leftarrow c_\diagdown \rightarrow se$ and $ne \leftarrow c_\diagup \rightarrow sw$.   We define
\begin{eqnarray*} \mmon'(F)(c_{\diagup}) & = & \mathrm{Cone}(F(c \to nw)) \\ 
\mmon'(F)(c_{\diagdown}) & = & \mathrm{Cone}(F(c \to ne)) 
 \end{eqnarray*}
(Note it is the cone on the direction going off the branch of legendrian on which the point lives.)  The maps
$\mmon'(F)(ne) \leftarrow \mmon'(F)(c_\diagdown) \rightarrow \mmon'(F)(sw)$, and similarly on the other branch, 
are defined by functoriality of cones, as in the following diagram
\[
\xymatrix{
F(c) \ar[r] \ar[d]_{F(c \to ne)} & F(nw) \ar[r] \ar[d]_{F(nw \to N)} & \mathrm{Cone}(F(c \to nw)) = \mmon'(F)(c_\diagup) 
\ar[d]_{\mathrm{Cone}(F(c \to ne) \to F(nw \to N))}  \\
F(ne) \ar[r]  & F(N) \ar[r]  & \mathrm{Cone}(F(ne \to N)) =  \mmon'(F)(ne) 
}
\]
Note the following are equivalent:
\begin{itemize}
\item
 $\mathrm{Tot}\left(F(c) \to F(ne) \oplus F(nw) \to F(N)\right)$ is acyclic
 \item
  $\mmon'(F)(c_\diagup \to ne): \mmon'(F)(c_\diagup) \to \mmon'(F)(ne)$ is a quasi-isomorphism
  \item  $\mmon'(F) (c_\diagdown \to nw): \mmon'(F)(c_\diagdown) \to \mmon'(F)(nw)$ is a quasi-isomorphism
\end{itemize}

\item Let $c \in \cS$ be a cusp.  As in \eqref{eq:luciustarquiniuscollatinus}
we adopt the following notation for the star of $c$:

\[ 
\xymatrix{
& & a \ar[dll] \ar[d]  \\
O &  c \ar[dr] \ar[ur]  \ar[l] \ar[r] & I \\
& & b \ar[ull] \ar[u]
}
\]

The preimage in $\Delta$ of $c$ is an interval $c_{\preccurlyeq} \to c_\prec \leftarrow c_{\curlyeqprec}$
and the preimage of the star of $c$ is 
$$b \leftarrow c_{\preccurlyeq} \to c_\prec \leftarrow c_{\curlyeqprec} \to a$$. 

We define 
$$\mmon'(F)( c_{\preccurlyeq}) = \mmon'(F)(c_\prec) := \mathrm{Cone}(F(c \to a))$$
$$\mmon'(F)( c_{\curlyeqprec}) := \mmon'(F)(a) = \mathrm{Cone}(F(a \to O))$$

We should provide maps $\mmon'(F)(c_{\preccurlyeq}) \to \mmon'(F)(b)$ and 
$\mmon'(F)(c_{\curlyeqprec}) \to \mmon'(F)(c_\prec)$, the other maps in the preimage of the star of $c$ just 
being sent to equalities.   
The map $\mmon'(F)(c_{\preccurlyeq}) \to \mmon'(F)(b)$ is provided by 
\[
\xymatrix{
F(c) \ar[r] \ar[d]_{F(c \to b)} & F(a) \ar[r] \ar[d]_{F(a \to I)} & \mathrm{Cone}(F(c \to a)) = \mmon'(F)(c_{\preccurlyeq}) 
\ar[d]_{\mathrm{Cone}(F(c \to b) \to F(a \to I))}  \\
F(b) \ar[r]  & F(I) \ar[r]  & \mathrm{Cone}(F(b \to I)) =  \mmon'(F)(b) 
}
\]
The map $\mmon'(F)(c_{\curlyeqprec}) \to \mmon'(F)(c_\prec)$ is subtler.  Applying $F$ to 
$c \to a \to O$, and then using the octahedral axiom gives a distinguished
triangle 
$$ \mathrm{Cone}(F(c \to a)) \to \mathrm{Cone} (F(c \to O)) \to 
\mathrm{Cone}(F(a \to O)) \xrightarrow{[1]} $$
or in other words a morphism
$$ \mmon'(F)(c_{\curlyeqprec}) \to \mmon'(F)(c_\prec) [ 1] $$
which we view as a degree 1 map $\mmon'(F)(c_{\curlyeqprec}) \to \mmon'(F)(c_\prec)$.

Note the following are equivalent:
\begin{itemize}
\item $F(c \to O)$ is a quasi-isomorphism
\item $ \mmon'(F)(c_{\curlyeqprec}) \to \mmon'(F)(c_\prec) [ 1] $ is a quasi-isomorphism
\end{itemize}
\end{itemize}
\end{definition}

If $F \in \dgfun_{\Lambda}(\cS, k)$, then every arrow in $\Delta$ is sent by $\mmon'$ either to a quasi-isomorphism,
or, at a cusp, to a shifted quasi-isomorphism.  Counting the shifts, we have the following.

\begin{proposition}
\label{prop:mumonshift}
If $F \in \dgfun_{\Lambda}(\cS, k)$ and
$a$ is an arc on one component of $\Lambda$, then traveling around the component gives a sequence of 
quasi-isomorphisms
\[
\mmon'(F)(a) \xleftarrow{\sim} \cdots \xrightarrow{\sim} \mmon'(F)(a)[\#\mbox{down cusps} - \#\mbox{up cusps}] = \mmon'(F)(a)[-2r]
\]
where $r$ is the rotation number of the component.  
In particular, on any component of $\Lambda$ that has nonzero rotation number, 
$\mmon'(F)(a)$ must be either unbounded in both directions, or acyclic.  
\end{proposition}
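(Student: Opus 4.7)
The plan is to traverse the parametrizing circle $S^1 \to \Lambda$ of the component once, cataloguing the zig-zag of morphisms that $\mmon'(F)$ produces between complexes assigned to successive strata of $\Delta$, and then tallying the cohomological shifts contributed by each cusp.

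First I would verify that arcs and crossings contribute no shift. The adjacencies between an arc stratum of $\Delta$ and its neighboring $0$-dimensional strata are sent by $\mmon'(F)$ to degree-$0$ quasi-isomorphisms by construction. At a crossing, whose preimage in $\Delta$ consists of the two points $c_{\diagup}, c_{\diagdown}$, the bulleted equivalences built into the definition of $\mmon'$ at a crossing say precisely that the crossing condition defining $\dgfun_{\Lambda}$ (acyclicity of $\mathrm{Tot}(F(c) \to F(nw) \oplus F(ne) \to F(N))$ and its southern analogue) is equivalent to the maps $\mmon'(F)(c_{\diagup}) \to \mmon'(F)(ne)$, $\mmon'(F)(c_{\diagdown}) \to \mmon'(F)(nw)$ (and their southern counterparts) being quasi-isomorphisms. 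Hence crossings contribute a quasi-isomorphism with no shift.

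Next I would analyze a single cusp. Its preimage in $\Delta$ is the interval $c_{\preccurlyeq} \to c_{\prec} \leftarrow c_{\curlyeqprec}$ together with the edges $b \leftarrow c_{\preccurlyeq}$ and $c_{\curlyeqprec} \to a$ out to the neighboring arcs. By construction $\mmon'(F)$ sends $c_{\preccurlyeq} \to c_{\prec}$ and $c_{\curlyeqprec} \to a$ to equalities and $c_{\preccurlyeq} \to b$ to a degree-$0$ quasi-isomorphism; the only remaining adjacency, $c_{\curlyeqprec} \to c_{\prec}$, is the one sent to a degree-$1$ quasi-isomorphism. To verify the last point, note that $F \in \dgfun_{\Lambda} \subset \dgfun_{\Lambda^+}$ forces both $F(c \to b)$ and $F(b \to O)$ to be quasi-isomorphisms (Definition~\ref{def:romulus}), hence so is their composition $F(c \to O)$; the octahedral triangle $\mmon'(F)(c_{\prec}) \to \mathrm{Cone}(F(c \to O)) \to \mmon'(F)(c_{\curlyeqprec}) \xrightarrow{[1]}$ then has acyclic middle term, so the connecting map $\mmon'(F)(c_{\curlyeqprec}) \to \mmon'(F)(c_{\prec})[1]$ is a quasi-isomorphism. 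Composing the zig-zag through the cusp yields a quasi-isomorphism identifying $\mmon'(F)(a)$ with a $[\pm 1]$-shift of $\mmon'(F)(b)$, with sign determined by whether the orientation crosses the cusp upward or downward. Summing these contributions around the whole loop gives the displayed zig-zag with total shift $\#\mathrm{down\ cusps} - \#\mathrm{up\ cusps} = -2r$, proving the first assertion.

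For the second assertion, if $r \neq 0$ then the closing quasi-isomorphism around one loop exhibits $\mmon'(F)(a)$ as $(-2r)$-periodic in the derived category: iterating gives $H^{i}\mmon'(F)(a) \cong H^{i-2nr}\mmon'(F)(a)$ for every $n \in \bZ$. Hence the cohomology is either zero in every degree (so $\mmon'(F)(a)$ is acyclic) or nonzero in infinitely many degrees above and below (so $\mmon'(F)(a)$ is unbounded in both directions). The main obstacle I expect is threading the octahedral argument at the cusp cleanly and matching the sign of the resulting $[\pm 1]$ shift to the up/down convention so that the signed total is exactly $-2r$.
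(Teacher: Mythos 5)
Your proposal is correct and follows essentially the same route as the paper: the paper's argument is precisely the observation, built into the definition of $\mmon'$ and the singular support conditions of Definition \ref{def:romulus}, that every arrow of $\Delta$ is sent to a quasi-isomorphism except the arrow $c_{\curlyeqprec} \to c_\prec$ at each cusp, which is a degree-one quasi-isomorphism, and then one counts the shifts around the component, with the periodicity/unboundedness dichotomy following exactly as you state.
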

\begin{remark}
In this case it is natural to consider a variant of $\dgsh_\Lambda(M, k)$, 
where the objects are periodic complexes with period dividing $2r$.
\end{remark}

We can use a Maslov potential ({\em cf.} Section \ref{sec:classicalinvariants}) to correct for the shifts.

\begin{definition}
\label{def:mmon}
Fix a Maslov potential $p$ on the front diagram of $\Lambda$.  The normalized microlocal monodromy 
$\mmon: \dgfun(\cS, k) \to \dgfun(\Delta, k )$ is defined on arcs and preimages of crossings  by 
$\mmon(F)(x) = \mmon'(F)(x)[p(x)]$.  In the notation above, we set at a cusp $c$
$$\mmon(F)( c_{\curlyeqprec}) := \mmon(F)(a) = \mathrm{Cone}(F(a \to O))[-p(a)]$$
$$\mmon(F)( c_{\preccurlyeq}) = \mmon(F)(c_\prec) := \mathrm{Cone}(F(c \to a))[-p(b)]$$
\end{definition}

\begin{proposition} \label{prop:mmon}
For $F \in \dgfun_\Lambda(\cS, k)$, the microlocal monodromy
$\mmon(F)$ sends every arrow in $\Delta$ to a quasi-isomorphism.
\end{proposition}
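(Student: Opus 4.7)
The plan is to check the quasi-isomorphism property stratum by stratum, since the arrows of $\Delta$ fall into three local types: the generizations $c_\diagup \to (\text{adjacent arc})$ and $c_\diagdown \to (\text{adjacent arc})$ at a point lying over a crossing; the generizations $c_\preccurlyeq \to b$ and $c_\curlyeqprec \to a$ from a cusp endpoint into the adjacent arc; and the two internal arrows $c_\preccurlyeq \to c_\prec$ and $c_\curlyeqprec \to c_\prec$ of a cusp interval. (Within an arc there are no nontrivial arrows.) In each case the singular-support conditions defining $\dgfun_\Lambda(\cS,k)$ plus functoriality of the cone will give a quasi-isomorphism before normalization, and the Maslov potential shifts will then be seen to preserve it.

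First I would handle the crossing arrows. At a crossing $c$ laid out as in \eqref{eq:tullushostilius}, the definition of $\mmon'$ records that $\mmon'(F)(c_\diagup \to ne)$ and $\mmon'(F)(c_\diagdown \to nw)$ are quasi-isomorphisms precisely when $\mathrm{Tot}(F(c) \to F(nw) \oplus F(ne) \to F(N))$ is acyclic; this is exactly the condition imposed on $\dgfun_\Lambda(\cS,k)$ at a crossing (Definition \ref{def:romulus}). The remaining two arrows $c_\diagup \to sw$ and $c_\diagdown \to se$ are handled analogously, combining the crossing condition with the fact that $F(sw \to S)$ and $F(se \to S)$ are quasi-isomorphisms (these are the downward-arc conditions). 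Both endpoints of each of these arrows lie on a single strand, so the normalizing shifts $[p(\cdot)]$ are equal at source and target and the quasi-isomorphism survives normalization.

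Second I would handle the cusp-endpoint-to-arc arrows. By construction $\mmon'(F)(c_\preccurlyeq \to b)$ is the map on cones induced by the square with vertical arrows $F(c \to b)$ and $F(a \to I)$; both are quasi-isomorphisms by the cusp conditions for $\dgfun_\Lambda$, so the induced map on cones is a quasi-isomorphism. The arc $b$ and the point $c_\preccurlyeq$ belong to the same (lower) strand, so their Maslov potentials agree and the normalized map remains a quasi-isomorphism. The arrow $c_\curlyeqprec \to a$ is treated symmetrically on the upper strand.

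Finally I would treat the two internal arrows of the cusp interval. The arrow $c_\preccurlyeq \to c_\prec$ is the identity by construction. The arrow $c_\curlyeqprec \to c_\prec$ is, before normalization, a degree $+1$ map produced by the octahedral axiom from the composition $F(c) \to F(a) \to F(O)$, and by the last observation in the unnormalized definition it is a quasi-isomorphism exactly when $F(c \to O)$ is. In our setting $F(c \to O) = F(b \to O) \circ F(c \to b)$ is the composition of two quasi-isomorphisms (from the downward-arc condition at $b$ and the cusp condition at $c$), so this holds. The Maslov potential on the upper strand exceeds that on the lower strand by exactly $1$, so the shifts in the normalized definition of $\mmon$ at the cusp absorb the degree $+1$ and convert this into an honest degree-$0$ quasi-isomorphism. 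The main obstacle here is just bookkeeping the shift conventions so that the octahedral degree $+1$ map matches the Maslov-potential jump; everything else reduces to direct application of the conditions defining $\dgfun_\Lambda(\cS,k)$.
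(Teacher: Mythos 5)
Your proof is correct and follows essentially the same route as the paper, which treats the proposition as immediate from the equivalences recorded in the definition of $\mmon'$ (crossing acyclicity $\Leftrightarrow$ the cone maps at $c_\diagup, c_\diagdown$ being quasi-isomorphisms; $F(c\to O)$ a quasi-isomorphism $\Leftrightarrow$ the degree-one cusp map being one) together with the observation that the Maslov-potential shifts, which jump by $1$ across a cusp, normalize the remaining shift. One small slip: for the arrows $c_\diagup \to sw$ and $c_\diagdown \to se$ the quasi-isomorphism comes from cone functoriality applied to the downward maps $F(c\to sw)$ and $F(nw\to W)$ (resp. $F(c\to se)$ and $F(ne\to E)$), which are quasi-isomorphisms by the $\Lambda^+$ conditions alone, rather than from the crossing condition together with $F(sw\to S)$, $F(se\to S)$ as you cite; the conclusion is unaffected.
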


\begin{definition}
$F \in \dgfun_\Lambda(\cS, k)$ is said to have microlocal rank $r$ with respect a fixed Maslov potential $\mu$ 
if 
$\mmon(F)(x)$ is quasi-isomorphic to a locally free $k$-module of rank $r$ placed in degree zero; similarly
for $\cF \in \dgsh_\Lambda(\bR^2, k)$.   We write
$\cC_r(\Lambda)$ for the full subcategory of microlocal rank $r$ objects. 
\end{definition}

The definition of $\cC_r(\Lambda)$ is a specialization to two dimensions of the notion of \emph{pure} complexes in the sense of \cite[\S 7.5]{KS}.  When $r = 1$ and $k$ is a field, such objects are called \emph{simple} in loc. cit.  This notion of purity is not to be confused with the weight-theoretic version of purity we invoke in the discussion of Section \ref{subsubsec:weight-formalism}

\begin{remark}
\label{rem:quote-really-unquote}
We regard the microlocal monodromy
as ``really'' being a functor from sheaves constructible with respect to $\cS$ to 
sheaves on $\Lambda$; the above proposition would say it carries sheaves with singular support in $\Lambda$ to local systems on $\Lambda$.  Following a suggestion of the referee, let us briefly indicate how such a functor can be directly constructed by means of the theory of $\mu\mathit{hom}$ of \cite[\S 4.4]{KS}.  If $F_0$ belongs to $\cC_1(\Lambda)$ (with respect to a given Maslov potential), then the functor
$F \mapsto \mu\mathit{hom}(F_0,F)$, which is defined in great generality, carries an object of $\cC(\Lambda)$ to a complex of sheaves on $T^* \bR^2$ that is cohomologically supported on $\underline{\Lambda}$, and locally constant away from the zero section.
\end{remark}

\begin{proposition}
\label{prop:zigzag}
Let $\Lambda$ be a Legendrian containing a `stabilized' component, i.e., one whose front diagram contains 
a zig-zag of cusps, as in the following picture. 
\begin{center}
\includegraphics[scale=.3]{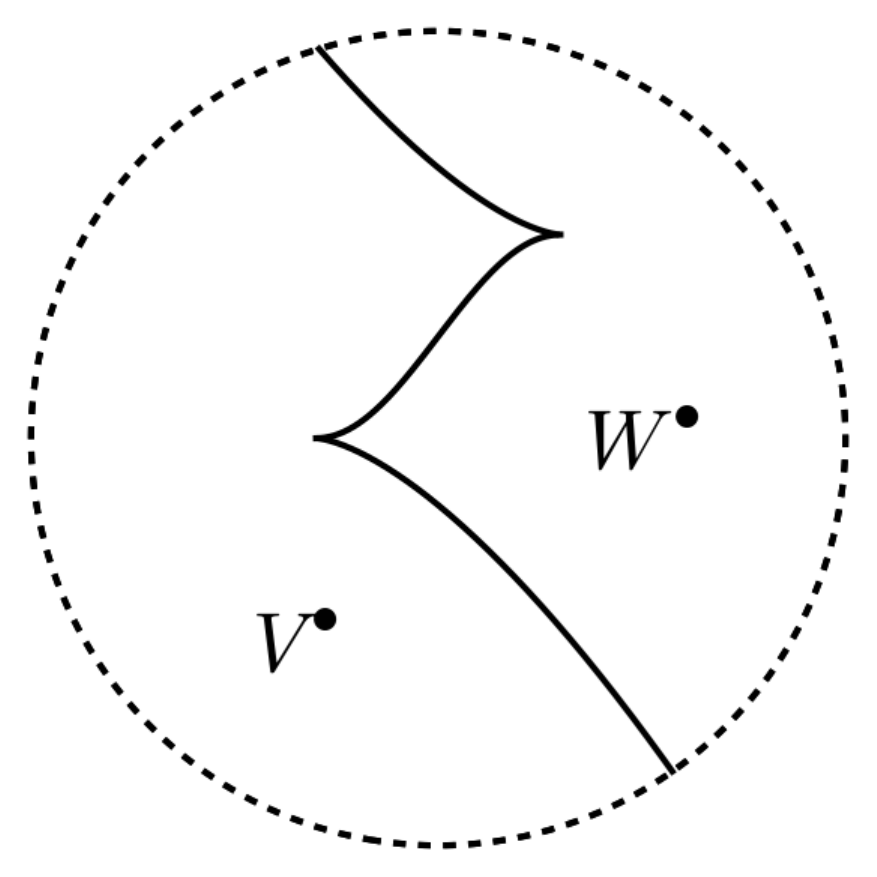}
\end{center}
Then the microlocal monodromy of any object in $\dgsh_{\Lambda}(\bR^2, k)$ is acyclic
along this component.  In particular, if $\Lambda$ has only one component, the category 
$\dgsh_{\Lambda}(\bR^2, k)_0$ is zero. 
\end{proposition}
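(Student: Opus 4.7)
The plan is to compute $\mmon(F)$ locally at the zigzag using the combinatorial model of Theorem \ref{thm:comb}, show it vanishes on one well-chosen arc there, and then propagate acyclicity to the whole component via Proposition \ref{prop:mmon}. The ``in particular'' assertion will follow by propagating the boundary vanishing of $F$ across the arcs of $\Lambda$ once the microlocal monodromy has been eliminated.

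Pass to the combinatorial model: choose a regular refinement $\cS$ of the front-diagram stratification, and let $F \in \dgfun_\Lambda(\cS, k)$ denote the image of the given sheaf. Pick an inner arc $a$ of the zigzag (an arc between the two cusps), and write $N_a$ and $D_a$ for the regions immediately above and below $a$. By Definition \ref{def:mmon}, $\mmon(F)(a)$ is a shift of $\mathrm{Cone}(F(a) \to F(N_a))$. The downward generization $F(a) \to F(D_a)$ is always a quasi-isomorphism by Definition \ref{def:romulus}(2), so to see the cone is acyclic it will suffice to produce a quasi-isomorphism $F(N_a) \simeq F(D_a)$. This is where the zigzag enters: the pair of cusps, via their cusp conditions from Definition \ref{def:romulus} and the local analysis of the proof of Theorem \ref{thm:comb}, forces a chain of quasi-isomorphisms between $F(N_a)$ and $F(D_a)$ going through the regions adjacent to the two cusps. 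Hence $\mmon(F)(a) \simeq 0$, and since Proposition \ref{prop:mmon} says $\mmon(F)$ carries every arrow of $\Delta$ to a quasi-isomorphism, the vanishing on $a$ spreads to every arc of the entire component.

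For the ``in particular'' claim, suppose $\Lambda$ is connected and $F \in \dgsh_\Lambda(\bR^2, k)_0$. By the first part of the proposition, $\mmon(F) \simeq 0$ on all of $\Lambda$, which means that at every arc $a \in \Lambda$ the upward map $F(a) \to F(N_a)$ is a quasi-isomorphism (as is the downward one). Consequently $F$ assigns quasi-isomorphic values to the two regions on either side of any arc of $\Lambda$; together with the fact that maps across one-cells of $\cS$ not contained in $\Lambda$ are quasi-isomorphisms by Definition \ref{def:romulus}(1), this means $F$ takes quasi-isomorphic values across every one-cell of $\cS$. Since $F$ vanishes on the noncompact region (by definition of the subcategory $_0$), and every region is connected to the noncompact region through a chain of adjacent regions, $F$ is acyclic on every region, whence $F \simeq 0$ by Corollary \ref{cor:straight}.

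The main obstacle is the local combinatorial step constructing the quasi-isomorphism $F(N_a) \simeq F(D_a)$ near the zigzag. One must carefully unpack the cusp conditions at both cusps of the stabilization and verify that the particular incidence pattern of arcs and regions in the zigzag provides enough forced quasi-isomorphisms to link the two sides of the middle arc --- this is the point at which the specific shape of the picture above is used in an essential way, and where the argument would fail for a generic pair of cusps that do not together form a stabilization.
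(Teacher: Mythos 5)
Your skeleton matches the paper's (work locally at the zigzag in the combinatorial model, kill $\mmon(F)$ there, then spread the vanishing along the component via Proposition \ref{prop:mmon}), but the local step that carries the entire content of the proposition is missing, and the reduction you propose for it is not valid. To make $\mmon(F)$ vanish on the middle arc you must show that the upward generization map itself is a quasi-isomorphism; exhibiting some quasi-isomorphism between the values of $F$ on the regions above and below that arc would not do this (there is no morphism between two-dimensional strata in the poset, so such an identification says nothing about the cone of the upward map, which can be nonzero even when the two values are abstractly quasi-isomorphic). Worse, the ``chain of quasi-isomorphisms forced by the cusp conditions'' that you invoke does not exist. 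Label the arcs $x,y,z$ from top to bottom, the cusps $a$ (joining $x,y$) and $b$ (joining $y,z$), and the two local regions $L,R$; the singular-support conditions force exactly $x\to L$, $y\to R$, $z\to L$, $a\to y$, $b\to z$ (hence also the composites $a\to R$, $b\to L$) to go to quasi-isomorphisms. These forced maps connect $\{x,z,b,L\}$ only to each other and $\{y,a,R\}$ only to each other; no forced map crosses between the two groups, so in particular they give no chain linking the region above the middle arc ($L$) with the region below it ($R$). The whole point of the proposition is that the remaining, a priori unconstrained maps are nevertheless quasi-isomorphisms, and that requires an argument you never supply.

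The paper's mechanism is worth recording: reduce to cohomology objects and use Corollary \ref{cor:straight} to normalize all downward maps to identities. Because the stratification poset has at most one morphism between any two strata, the unknown maps satisfy composite identities: $F(y\to L)\circ F(b\to y)=F(b\to L)=\mathrm{id}$, $F(x\to R)\circ F(a\to x)=F(a\to R)=\mathrm{id}$, and $F(a\to x)=F(y\to L)$ (compare both against $F(a\to L)$). Writing $V=F(b)=F(x)=F(L)=F(z)$ and $W=F(a)=F(y)=F(R)$, one gets a chain $V\to W\to V\to W$ in which each composite of consecutive maps is the identity; the middle map therefore has both a left and a right inverse, hence is an isomorphism, whence all the maps are, and the microlocal monodromy vanishes on the zigzag and then, by Proposition \ref{prop:mmon}, on the whole component. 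This is exactly the step you label ``the main obstacle'' and defer, so the proof is incomplete at its essential point. (Your sketch of the ``in particular'' clause --- once $\mmon(F)$ vanishes, every generization map along $\Lambda$ is a quasi-isomorphism and acyclicity spreads from the unbounded region --- is fine in outline.)
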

\begin{proof}
We work locally, and in the combinatorial model.  
The stratification $\cS$ induced by the depicted front diagram is regular; 
down from the top, we label the one-dimensional
strata $x, y, z$, and the cusps $a, b$.   Then zero- and one-dimensional cells of the stratification poset look like 
$x \leftarrow a \rightarrow y \leftarrow b \rightarrow z$, all of which map to the regions, which we'll label $L, R$ on the
left and right.  By Theorem \ref{thm:comb}, elements of $\dgfun_{\Lambda}(\cS, k) \subset
\dgfun_{\Lambda^+}(\cS, k)$ are characterized by requiring the maps 
$x \to L, y \to R, z\to L, a \to y, b \to z, a \to R, b \to L$ to be sent to quasi-isomorphisms.  Our claim that the microlocal monodromy is acyclic
amounts to the statement that all the other maps are sent to quasi-isomorphisms as well. 

It's enough to check on cohomology objects, that is, to show that any element of $\Hom_{\Lambda}(\cS, k)$ 
sends all arrows to isomorphisms.  Invoking Corollary \ref{cor:straight} and Proposition \ref{prop:cohompreserve}, we pass to an element all of whose downward maps
(as listed above) are sent to identity maps.  Fix some such $F$.  Then as in the picture we let 
$V = F(b) = F(x) = F(L) = F(z)$ and $W = F(a) = F(y) = F(R)$, and:
$$F(b) \to F(y) \xleftarrow{=} F(a) \to F(x) \to F(R)$$
which, collapsing the equality, gives
$V \to W \to V \to W$
with both compositions of consecutive arrows equal to the identity.  It follows that the central map $
F(a) = W \to V = F(x)$ is an isomorphism, from which we conclude the microlocal monodromy vanishes. 
\end{proof}

\subsection{Ruling filtrations}
\label{sec:rulingsheaf}

The microlocal monodromy of an object defines a local system on the knot.
Thinking of our object as living in the Fukaya category, if it were a geometric Lagrangian
with a local system, microlocal monodromy would correspond to the restriction of the
local system to infinity in the cotangent fibers.  However, not all objects
are geometric Lagrangians:  any object $C$ which is equivalent to a geometric brane must have
$\Hom^*(C,C)$ isomorphic to the cohomology of a (not necessarily connected) surface,
and not all objects do.  Yet any object {\em is} an iterated cone of
sums of geometric objects.  Now one might expect ``cone'' to translate geometrically into
symplectic surgery,
but sometimes this surgery cannot be performed in the ambient space.
Topologically, it is no problem, and accordingly one might 
expect that a filtration of our object whose associated graded objects
come from Lagrangians would give rise to a local system on some topological
surface.  In this subsection, we give such a construction and relate it
to the more familiar notion of rulings of front diagrams.

Let $\Phi$ be a front diagram.  A
 ruling of $\Phi$ is a partition of the arcs of $\Phi$ into paths which
travel from left to right, beginning at a left cusp, ending at a right cusp, meeting no cusps in-between, and
having the property that the two paths which begin at a given left cusp end at the same right cusp and do not meet in-between.  
The simply connected region between
these two paths is called an ``eye''.  For instance, the trefoil of Example \ref{ex:trefoil} has a ruling whose two eyes are pictured in Figure \ref{fig:trefoileyes}.
\begin{figure}[H]
\includegraphics[scale=.3]{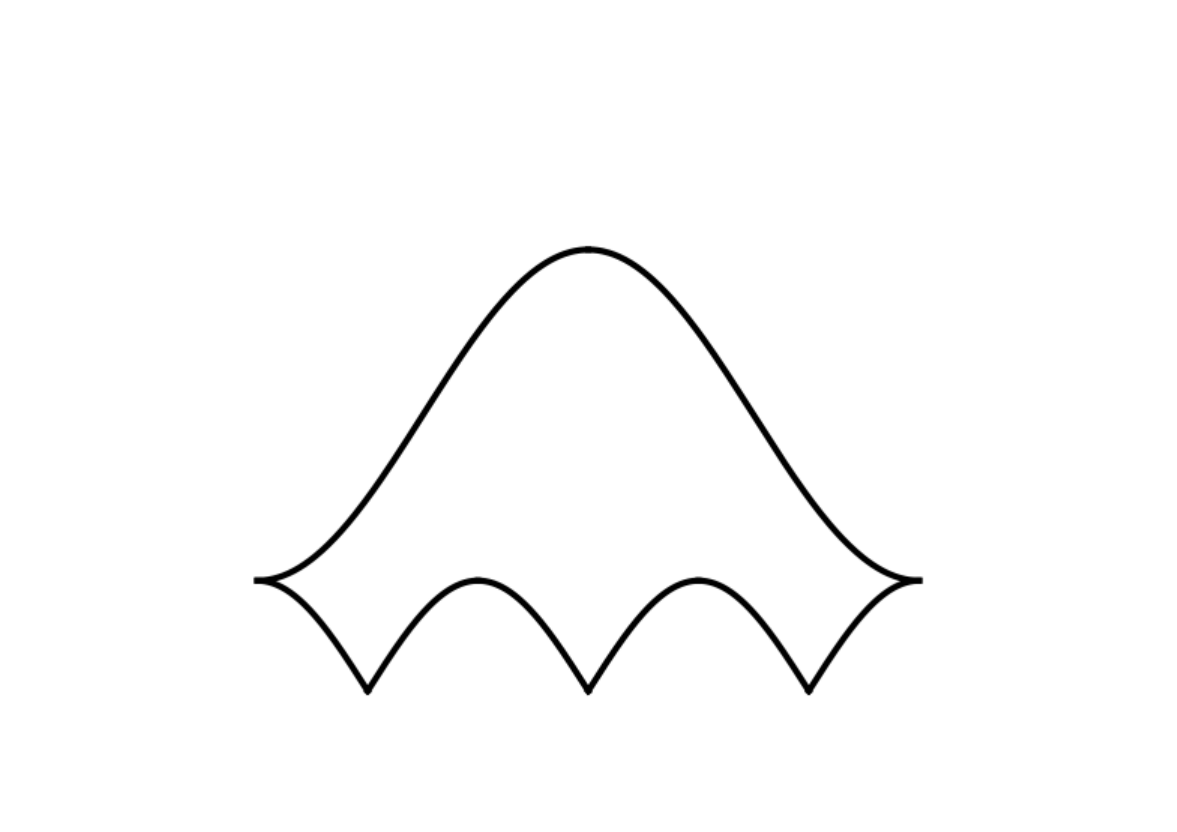}
\includegraphics[scale=.3]{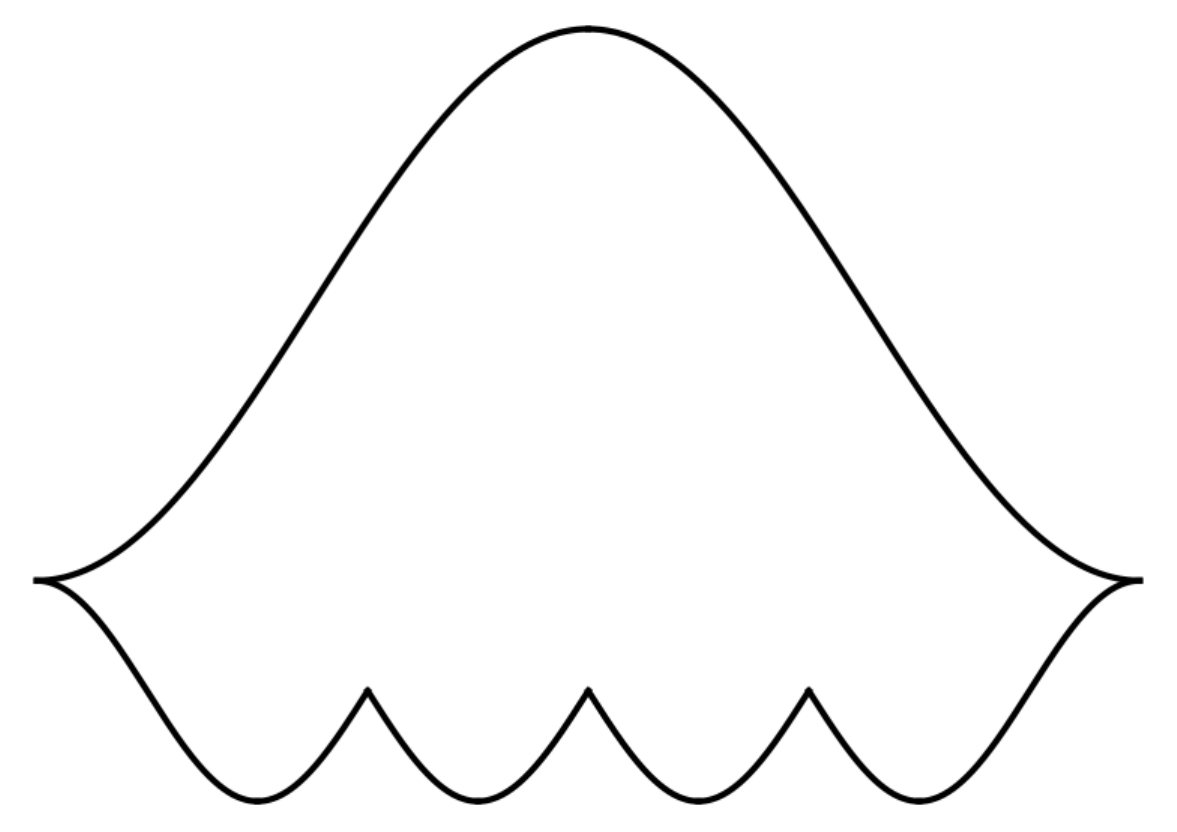}
\caption{ \label{fig:trefoileyes} Two eyes of a ruling of the Legendrian trefoil.}
\end{figure}

A ruling determines, and is determined by, a subset of the set of crossings of $\Phi$, called \emph{switches}.  Whenever a path of the ruling meets a crossing, it can either continue along the strand it came in on, or not --- in the second case we call the crossing a switch.  
 In the ruling of Figure \ref{fig:trefoileyes}, every crossing is a switch.

If two paths of a ruling meet at a switch $\mathbf{x} = (x_0,z_0)$, then near $\mathbf{x}$, one
path always has a higher $z$ coordinate
than the other at each fixed $x$.  We say the one with the higher $z$ coordinate is ``above'' and the other ``below'' $\mathbf{x}$, 
although it need not be true that the $z$ coordinate of the path ``above'' $\mathbf{x}$ is greater than $z_0$. 
  In the presence of a Maslov potential $\mu$, we say a ruling is {\em graded} if $\mu$ takes the same value on
the two strands which meet at any switched crossing.  

Let $\Lambda$ be a Legendrian knot with front diagram $\Phi \subset \bR^2$. 
Any eye of a ruling of $\Phi$ gives rise to a {\em eye sheaf} $\cE \in \dgsh_{\Lambda^+}(\Phi)$.  
As a sheaf, it is determined by the condition that its stalk is an invertible $k$-module (that is, when $k$ is a field, a one-dimensional vector space) in degree zero inside the eye and along its upper boundary, and stalk
zero elsewhere.  We sometimes refer to $\cE[d]$ as an eye sheaf as well. 
If $\cE$ is an eye sheaf, we write $\underline{\cE}$ for the underlying eye.

We recall that in a triangulated category, a filtration $R_\bullet$ of $F$ is just a diagram
\begin{equation}
\label{eq:filtration}
\xymatrix{
0= R_0 F\ar[rr] & & R_1 F\ar[dl]\ar[rr] & & R_2 F\ar[r]\ar[dl] & \dots\ar[r] & 
R_n F=F \\ 
&\Gr^R_1 F \ar[ul]_{[1]}& & \Gr^R_2 F \ar[ul]_{[1]} 
& &  & 
}
\end{equation}

\begin{definition} Let the front diagram of $\Lambda$ have $c$ left (or equivalently right) cusps, and 
let $\cF \in \dgsh_{\Lambda}(\bR^2, k)$ have microlocal rank one. 
A ruling filtration $R_\bullet \cF$ is a 
filtration such that
$\Gr^R_i F$ is an eye sheaf for $i \in [1,c]$, and zero for any other $i$.  \end{definition}

\begin{lemma} 
Let $\cF \in \dgsh_{\Lambda}(\bR^2, k)$ have microlocal rank one.  
Let $R_\bullet \cF$ be a ruling filtration.  Then the eyes 
$\underline{\Gr^R_i F}$ determine a ruling of the front diagram of $\Lambda$; in particular,
for any arc $a$, there is exactly one $i(a)$ such that 
$a$ lies on the boundary of $\underline{\Gr^R_{i(a)} \cF}$.  
\end{lemma}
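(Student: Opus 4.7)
My plan is to compute the microlocal monodromy of each eye sheaf, combine this with the triangulated exactness of $\mmon$ to pin down an assignment $a \mapsto i(a)$, and then read off the combinatorial data of a ruling from this assignment.

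First I would establish the following: if $\cE$ is an eye sheaf with eye $\underline{\cE}$, then $\mmon(\cE)$ has cohomology concentrated in degree zero, of rank one over every stratum of $\Delta$ lying above an arc of $\partial\underline{\cE}$ and of rank zero elsewhere. For an arc $a$ not in $\partial\underline{\cE}$ the sheaf $\cE$ is locally constant near the lift of $a$, so the cone defining $\mmon'$ vanishes. For an upper boundary arc the relevant cone is $k \to 0$, giving $k[1]$; for a lower boundary arc it is $0 \to k$, giving $k$; the Maslov shift in Definition~\ref{def:mmon}, together with the convention $\mu(\text{upper}) = \mu(\text{lower})+1$, places both values in degree zero. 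A similar direct calculation handles the strata of $\Delta$ over the two cusps of $\underline{\cE}$, using the degree-one connecting map of Definition~\ref{def:mmon}.

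Next I would apply $\mmon$ to the filtration \eqref{eq:filtration}. Each triangle $R_{i-1}\cF \to R_i\cF \to \Gr^R_i\cF \to$ maps to a distinguished triangle; since every $\mmon(\Gr^R_i\cF)$ is concentrated in degree zero at each stratum, an induction on $i$ via long exact sequences shows that $\mmon(\cF)$ is also concentrated in degree zero and that
\begin{equation*}
\mathrm{rk}\, h^0\mmon(\cF)(a) \;=\; \sum_{i=1}^{c} \mathrm{rk}\, h^0\mmon(\Gr^R_i\cF)(a).
\end{equation*}
The microlocal-rank-one hypothesis makes the left-hand side equal to $1$, while each summand on the right is $0$ or $1$ by the first step; therefore exactly one index $i(a)$ satisfies $a \subset \partial\underline{\Gr^R_{i(a)}\cF}$, which is the ``in particular'' clause of the lemma.

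To finish the assertion that the eyes define a ruling, I would read off the $2c$ candidate paths as the upper and lower boundary sequences of the eyes $\underline{\Gr^R_i\cF}$. By construction each such path runs between a matching pair of left and right cusps, and the arc-partition just proven shows these paths jointly partition the arcs of the front diagram. At each crossing, any ruling path incident with it enters via one of the two left arcs (NW or SW) and exits via one of the two right arcs (NE or SE), so exactly two paths meet each crossing, and the only admissible pairings are the switch configuration $\{\mathrm{NW},\mathrm{NE}\}/\{\mathrm{SW},\mathrm{SE}\}$ and the non-switch configuration $\{\mathrm{NW},\mathrm{SE}\}/\{\mathrm{SW},\mathrm{NE}\}$; the reverse-direction pairing $\{\mathrm{NW},\mathrm{SW}\}/\{\mathrm{NE},\mathrm{SE}\}$ is ruled out by the left-to-right orientation. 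I expect the main technical obstacle to lie in the microlocal-monodromy calculation at cusps: matching the degree-one connecting map extracted via the octahedral axiom in Definition~\ref{def:mmon} against the Maslov shift, so that additivity of $h^0$-ranks survives across cusp strata as well as arc strata. Once this is done, the crossing analysis above is purely combinatorial.
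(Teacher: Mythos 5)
Your route (computing $\mmon$ of the graded pieces and adding up ranks along the filtration) is genuinely different from the paper's, which disposes of the lemma in one line via singular support: $\bR_{>0}\Lambda \subset \SS(\cF) \subset \bigcup_i \SS(\Gr^R_i\cF)$, together with the fact that the number of filtration steps equals the number of left cusps. Your version, however, has a gap at its first step. You assert that $\mmon(\cE)$ of an eye sheaf is concentrated in degree zero. But the paper explicitly allows shifted eye sheaves $\cE[d]$ as graded pieces of a ruling filtration, and such shifts genuinely occur for microlocal-rank-one objects (e.g.\ the object on the Chekanov knot in Section 6, whose restriction is $\cE_T[1]\oplus\cE_B[-1]$). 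Moreover, even for an unshifted eye sheaf your own computation gives $k[1]$ on upper boundary arcs and $k$ on lower ones, so after the Maslov normalization the value on an arc $a$ sits in degree $p(a)-1$ or $p(a)$; these assemble into a \emph{single} degree along the whole boundary only when the Maslov potential matches up at the switches of the eye's boundary paths --- in effect, gradedness of the underlying ruling, which the paper only deduces \emph{after} this lemma via Lemma \ref{lem:fillmmon}. So ``concentrated in degree zero'' is both false in general and circular as an input here.

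This matters because your ``exactly one'' rests on the exact additivity $1=\sum_i \mathrm{rk}\,h^0\mmon(\Gr^R_i\cF)(a)$, which requires all contributions to live in one common degree. Once the pieces may sit in different degrees, a distinguished triangle $A\to B\to C\to A[1]$ only gives subadditivity of total rank, and cancellation occurs whenever $C\simeq A[1]$ (then $B$ is acyclic); so two or more eyes through the same arc, contributing in adjacent degrees, are not excluded by degree bookkeeping. Your argument therefore secures ``at least one $i(a)$'' (and your concluding combinatorial step from the arc partition to a ruling is fine), but not the multiplicity-one statement. To repair it you must either pin down the degrees of the graded pieces first (which amounts to already knowing the ruling is graded) or inject a shift-insensitive ingredient, which is exactly what the paper's singular-support containment provides.
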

\begin{proof}
We have $\bR_{>0} \Lambda \subset SS(\cF) \subset \bigcup SS(\Gr^R_i F)$, and
we further know that there are only as many steps in the filtration as left cusps of the front diagram. 
\end{proof}

If $R = R_\bullet \cF$ is a ruling filtration as above, we write $\underline{R}$ for the corresponding ruling
and call it the ruling
underlying the ruling filtration $R$. 

\begin{lemma} \label{lem:fillmmon}
Let $\cF \in \dgsh_{\Lambda}(\bR^2, k)$ have microlocal rank one.  
Let $R_\bullet \cF$ be a ruling filtration.  Let 
$a \to N$ be an upward
generization map from an arc.  Then the maps $\Gr^R_{i(a)} \cF \leftarrow R_{i(a)} \cF \to \cF$ 
induce quasi-isomorphisms
$$\mathrm{Cone}(\Gr^R_{i(a)} \cF(a)  \to  \Gr^R_{i(a)} \cF(N) )
\xleftarrow{\sim} \mathrm{Cone}(R_{i(a)} \cF(a)  \to  R_{i(a)} \cF(N) ) \xrightarrow{\sim}
  \mathrm{Cone}(\cF(a)  \to  \cF(N) ) 
$$
\end{lemma}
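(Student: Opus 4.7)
The plan is to reduce the computation to the associated graded pieces of the filtration. For the fixed arc $a$ and region $N$ directly above $a$, the assignment
\[
T : G \longmapsto \mathrm{Cone}\bigl(G(a) \to G(N)\bigr)
\]
is an exact functor from $\dgfun(\cS, k)$ to complexes of $k$-modules, so each distinguished triangle $R_{j-1}\cF \to R_j\cF \to \Gr^R_j\cF \xrightarrow{[1]}$ coming from \eqref{eq:filtration} yields a distinguished triangle
\[
T(R_{j-1}\cF) \to T(R_j\cF) \to T(\Gr^R_j\cF) \xrightarrow{[1]}.
\]

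The core computation is then to verify that $T(\Gr^R_j\cF)$ is acyclic whenever $j \neq i(a)$. Let $E_j := \underline{\Gr^R_j\cF}$, so that $\Gr^R_j\cF$ is supported on $E_j$ together with its upper boundary arcs, with stalk an invertible $k$-module in degree zero on this support. Since $j \neq i(a)$, the arc $a$ does not lie on $\partial E_j$, so $a$ is either contained in the open interior of $E_j$ or lies entirely outside $\overline{E_j}$. Because $N$ is a single stratum of $\cS$ whose closure meets $a$ from above, and because $\cS$ refines the front-diagram stratification (of which $\partial E_j$ is a union of strata), $N$ must lie on the same side of $\partial E_j$ as $a$. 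In the first case, local constancy of the eye sheaf on the half-open disk formed by $E_j$ and its upper boundary arcs makes $\Gr^R_j\cF(a) \to \Gr^R_j\cF(N)$ an isomorphism of rank-one free modules, so $T(\Gr^R_j\cF) = 0$; in the second case both stalks vanish and again $T(\Gr^R_j\cF) = 0$.

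Given this vanishing, I argue by induction along the filtration. Since $R_0\cF = 0$, the distinguished triangles for $j < i(a)$ inductively give $T(R_j\cF) = 0$ for all $j < i(a)$. The triangle at $j = i(a)$ then degenerates to the quasi-isomorphism $T(R_{i(a)}\cF) \xrightarrow{\sim} T(\Gr^R_{i(a)}\cF)$, which is the leftward arrow of the lemma. For $j > i(a)$, the acyclicity of $T(\Gr^R_j\cF)$ forces the natural maps $T(R_{j-1}\cF) \xrightarrow{\sim} T(R_j\cF)$ to be quasi-isomorphisms; composing these from $j = i(a) + 1$ up to $j = c$ and using $R_c\cF = \cF$ produces the rightward arrow.

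The main thing to verify carefully is the geometric claim that $N$ sits on the same side of $\partial E_j$ as $a$ whenever $a \notin \partial E_j$: this is where the definition of $i(a)$ as the unique index with $a \subset \partial E_{i(a)}$ is used, together with the fact that $\cS$ is a refinement of the front-diagram stratification (so no stratum of $\cS$ straddles a path of the ruling). With that in hand, the rest of the argument is a routine diagram chase using exactness of $T$.
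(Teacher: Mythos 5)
Your proof is correct and takes essentially the same route as the paper's: the filtration triangles induce triangles of cones (the paper obtains this from the nine lemma for triangulated categories, you from dg-exactness of the functor $T$), the graded cones $\mathrm{Cone}(\Gr^R_j\cF(a)\to\Gr^R_j\cF(N))$ are acyclic for $j\neq i(a)$, and both quasi-isomorphisms follow by climbing the filtration. Your geometric argument for that acyclicity simply fills in a step the paper asserts without proof, and it is sound.
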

\begin{proof}
By the nine lemma for triangulated categories
(see, e.g., Lemma 2.6 of \cite{May}), we have a diagram
(in which all the squares commute except the non-displayed square at the bottom right, which anti-commutes)

$$
\xymatrix{
R_{j-1} \cF(a) \ar[r] \ar[d] & R_{j-1} \cF(N) \ar[r] \ar[d] & \mathrm{Cone}(R_{j-1} \cF(a) \to R_{j-1} \cF(N) ) \ar[r] \ar[d] & \\
R_j \cF(a)  \ar[r] \ar[d] & R_j \cF(N) \ar[r] \ar[d] & \mathrm{Cone}(R_{j} \cF(a) \to R_{j} \cF(N)) \ar[r] \ar[d] & \\
\Gr^R_j \cF (a) \ar[r] \ar[d] & \Gr^R_j \cF(N) \ar[r] \ar[d] & \mathrm{Cone}(\Gr^R_j \cF(a) \to \Gr^R_j \cF(N)) \ar[r] \ar[d] & \\
 & & & 
}
$$
The complex $\mathrm{Cone}(\Gr^R_j \cF(a) \to \Gr^R_j \cF(N))$ is acyclic except for $j = i(a)$.  From
the rightmost column, we see that 
$\mathrm{Cone}(R_{j-1} \cF(a) \to R_{j-1} \cF(N) ) \to \mathrm{Cone}(R_{j} \cF(a) \to R_{j} \cF(N))$ is a quasi-isomorphism
except when $j = i(a)$.  

Thus for $j < i(a)$ we have that $\mathrm{Cone}(R_{j} \cF(a) \to R_{j} \cF(N))$ is acyclic.  
When $j = i(a)$, we see that $\mathrm{Cone}(R_{j} \cF(a) \to R_{j} \cF(N)) \to  \mathrm{Cone}(\Gr^R_j \cF(a) \to \Gr^R_j \cF(N))$
is a quasi-isomorphism.  Finally, composing  quasi-isomorphisms
for $j > i(a)$, we get
$\mathrm{Cone}(R_{j} \cF(a) \to R_{j} \cF(N)) \xrightarrow{\sim} \mathrm{Cone}( \cF(a) \to  \cF(N))$. 
\end{proof}

\begin{corollary}
The underlying ruling $\underline{R}$ of a ruling filtration $R_\bullet \cF$ is graded. 
\end{corollary}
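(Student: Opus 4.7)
The plan is to use Lemma \ref{lem:fillmmon} to reduce the microlocal monodromy of $\cF$ on a boundary arc $a$ to that of the eye sheaf $\Gr^R_{i(a)}\cF$, and then to exploit the resulting rigidity of the Maslov potential along the boundary of each eye.

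First I would carry out the elementary stalk computation: for an unshifted eye sheaf $\cE$ supported on an eye $E$ together with its upper boundary, $\mmon'(\cE)(a)\cong k[1]$ when $a$ is an upper boundary arc of $E$, $\mmon'(\cE)(a)\cong k$ when $a$ is a lower boundary arc, and $\mmon'(\cE)(a)$ is acyclic for $a$ off the boundary of $E$. Feeding this into the Maslov normalization of Definition \ref{def:mmon}, and remembering the overall shift $[d_i]$ built into $\Gr^R_{i}\cF \cong \cE_i[d_i]$, one sees that $\mmon(\Gr^R_{i}\cF)(a)$ is $k$ in a single cohomological degree depending only on $d_i$, on $p(a)$, and on whether $a$ lies on the upper or lower boundary of the eye.

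Next, Lemma \ref{lem:fillmmon} together with the microlocal rank one hypothesis on $\cF$ forces $\mmon(\Gr^R_{i(a)}\cF)(a)$ to be quasi-isomorphic to $k$ in degree zero for every boundary arc $a$. Comparing against the formulas from the previous step shows that $p$ is constant along the upper boundary of each eye, constant along the lower boundary, and the two constants differ by exactly one; the unit shift is absorbed by the $[1]$ appearing in the upper-boundary stalk computation, which is also what makes the argument compatible with the cusp condition $p(\text{upper}) = p(\text{lower})+1$. This is the rigidity that drives the rest of the proof.

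The final step is the combinatorics of a switch. By definition, at a switched crossing the ruling path entering along $nw$ leaves along $ne$ rather than continuing along the smooth strand $\alpha$ to $se$; hence $nw$ and $ne$ lie on the same ruling path, and in particular on the same side (upper or lower) of a single eye. The rigidity just established then gives $p(nw) = p(ne)$, i.e.\ $p(\alpha) = p(\beta)$ for the two strands $\alpha$ and $\beta$ meeting at the switch, which is precisely the graded condition. I expect the only nontrivial bookkeeping is in the shift conventions of Definition \ref{def:mmon}, so that the relation $p(\text{upper}) = p(\text{lower})+1$ at cusps is correctly reproduced for eye sheaves; once that is pinned down the argument is essentially mechanical and needs no further input beyond Lemma \ref{lem:fillmmon}.
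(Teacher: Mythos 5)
Your argument is correct and is essentially the paper's own proof: both rest on Lemma \ref{lem:fillmmon} plus the microlocal rank one condition to identify the degree of the microlocal monodromy along a boundary arc with the degree in which the (shifted) eye sheaf sits, and then on the observation that at a switch the arcs $nw$ and $ne$ are consecutive arcs of one ruling path, hence on the same (upper or lower) side of a single eye, so the Maslov potential, which is encoded by that degree via Definition \ref{def:mmon}, agrees on the two strands. The only cosmetic difference is that you phrase the rigidity globally ($p$ constant along each boundary path of each eye) while the paper argues directly at the crossing; the content is the same.
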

\begin{proof}
Being graded is a condition at the switched crossings of a ruling; let $c$ be one such and 
let us as usual pass to the following local model.
\[
\xymatrix{
& & N\\
& nw \ar[ur] \ar[dl] & & ne \ar[ul] \ar[dr] \\
W  & & c \ar[ul] \ar[ur] \ar[dl] \ar[dr] \ar[uu] \ar[dd] \ar[ll] \ar[rr] & & E\\
& sw \ar[ul] \ar[dr] & & se \ar[ur]   \ar[dl] \\
& & S
}
\]
By definition, for $c$ to be a switched crossing, we must have $i(nw) = i(ne)$ and $i(sw) = i(se)$.  
In particular the generization maps induced by $\cF(nw) \to \cF(N)$ and $\cF(ne) \to \cF(N)$ have 
cones which are one-dimensional vector spaces in the same degree, determined only by the degree
in which the eye sheaf
$\Gr^R_{i(nw)} \cF$ sits, and on whether $nw, ne$ are on the top or bottom of the corresponding eye. 
On the other hand, we have from the fact that $\cF$ has microlocal rank one and
from Definition \ref{def:mmon} that
this degree also encodes the Maslov potential at $ne, nw$, which are therefore equal. 
\end{proof}

Following \cite{K}, to each ruling of the front diagram
we associate a topological surface whose boundary can
be identified with the Legendrian.
Let $ \coprod \{E_c\} $ be the disjoint union of the eyes.
Orient in one way all the eyes with bottom strand of even Maslov index, and in the other
way all the eyes with bottom strand of odd Maslov index.  At each switched crossing, glue the incident eyes with a half-twisted strip.  The resulting
surface is orientable iff the ruling is 2-graded, and is in particular orientable if the ruling is graded. 
It is called the filling surface of the ruling. 

\begin{figure}[H]
\includegraphics[scale=.3]{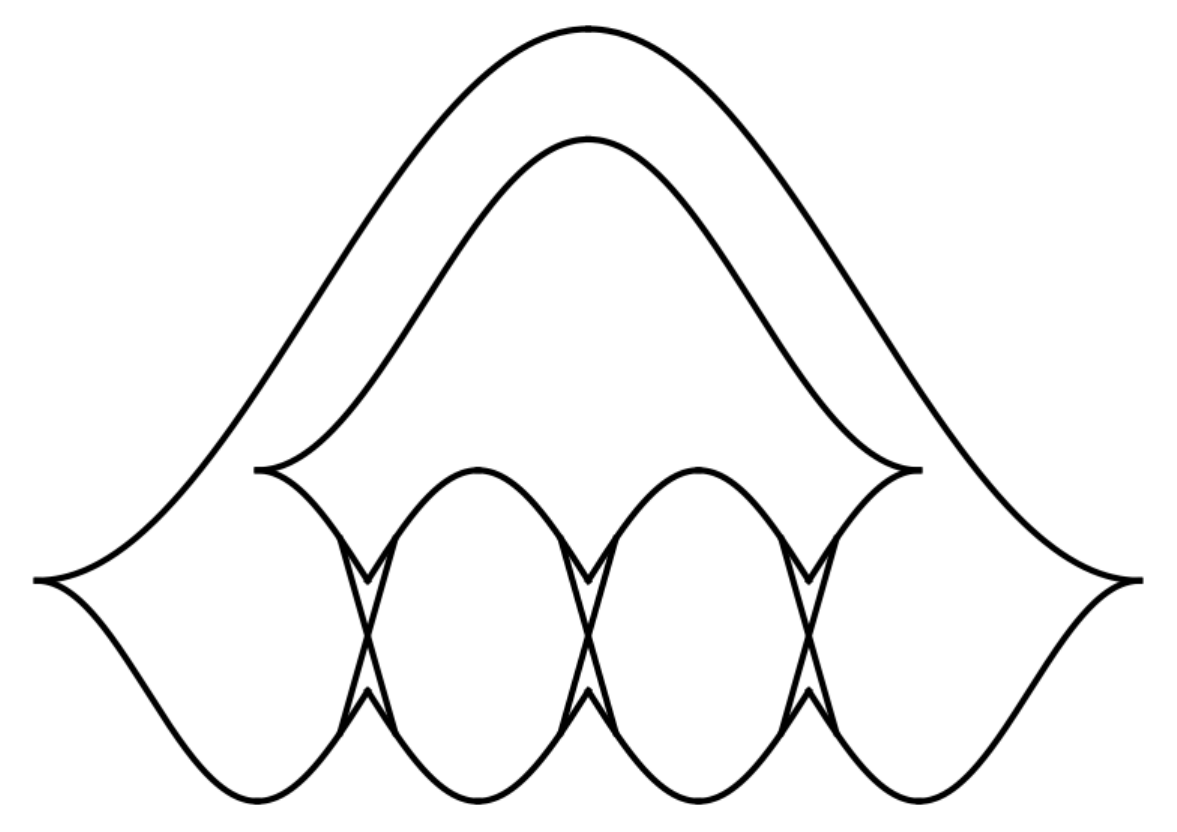}
\caption{The  filling surface associated to the ruling of Figure
\ref{fig:trefoileyes}}
\label{fig:fillingsurface}
\end{figure}

A ruling filtration of a rank one object $\cF$ gives a rank one local system on this surface.  We specify it by
trivializing on each eye, and giving gluing isomorphisms at the switched crossings.  An eye sheaf has a unique nonvanishing
homology sheaf, whose stalks are everywhere canonically identified since the eye is contractible;  we fix an isomorphism to a
one dimensional vector space.  The stalk of the eye sheaf is zero along the lower boundary of the eye, but on 
the filling surface we extend the constant local system here as well.  Then, by 
Lemma \ref{lem:fillmmon}, along the boundary of the coordinate
chart on the filling surface, the stalk of the constant local system is everywhere identified with the microlocal
monodromy of $\cF$ along that boundary.  Now at a switched crossing, the northwest ($nw$) and southeast ($se$) arcs
on the one hand belong to different eyes, and on the other the microlocal monodromies along them are canonically identified
by Proposition \ref{prop:mmon}.  We summarize this construction in: 

\begin{proposition} \label{prop:rf} Let $\cF \in \dgsh_{\Lambda}(\bR^2 , k)$ have microlocal rank one.  Then
a ruling filtration $R_\bullet \cF$ gives rise to a rank one local system on the filling surface of the ruling
$\underline{R}$. \end{proposition}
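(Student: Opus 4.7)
The plan is to build the local system chart-by-chart from the associated graded pieces of the ruling filtration, then glue at switches using the canonical identifications provided by the microlocal monodromy.

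First, I would construct the local system on each eye separately. Each graded piece $\Gr^R_i \cF$ is an eye sheaf supported on a contractible eye $E_i$, so its unique nonzero cohomology sheaf is (after fixing a shift) a constant rank-one sheaf on $E_i$. Extending the constant sheaf across the lower boundary of the eye (which is part of the filling surface but not the support of the eye sheaf) gives a rank-one constant local system $\cL_i$ defined on a neighborhood of $\overline{E_i}$ inside the filling surface.

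Next, I would identify the stalks of $\cL_i$ along the arcs on the boundary of $E_i$ with the microlocal monodromy of $\cF$. By Lemma \ref{lem:fillmmon}, for any arc $a$ on the boundary of $\underline{\Gr^R_{i(a)}\cF}$, the cone $\mathrm{Cone}(\Gr^R_{i(a)}\cF(a) \to \Gr^R_{i(a)}\cF(N))$ is quasi-isomorphic to $\mathrm{Cone}(\cF(a)\to\cF(N))$, so up to the Maslov shift prescribed in Definition \ref{def:mmon} the stalk of $\cL_{i(a)}$ at $a$ is canonically identified with $\mmon(\cF)(a)$. The orientation conventions (flipping by Maslov parity) arranged in the definition of the filling surface are precisely what is needed to make these identifications compatible across cusps, where the two sides of the eye are joined.

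Finally, I would construct the gluing across a switched crossing $c$. The filling surface attaches the eyes containing $nw,se$ and $ne,sw$ by a half-twisted strip, so a gluing datum is precisely an isomorphism between the fibers of $\cL_{i(nw)}$ and $\cL_{i(se)}$ along the switch (and similarly for $ne,sw$). By the previous paragraph these fibers are canonically identified with $\mmon(\cF)(nw)$ and $\mmon(\cF)(se)$, respectively, and Proposition \ref{prop:mmon} supplies a canonical quasi-isomorphism $\mmon(\cF)(nw)\xrightarrow{\sim}\mmon(\cF)(se)$ along the branch of $\Lambda$ passing through $c_\diagdown$. Analogously for the other branch. Since the target is a rank-one local system in degree zero, these quasi-isomorphisms are honest isomorphisms of one-dimensional $k$-modules.

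The step I expect to be the main obstacle is bookkeeping of shifts: verifying that the Maslov-potential shifts in Definition \ref{def:mmon}, combined with the orientation prescription of the filling surface (alternating by parity of the bottom strand's Maslov index), really do produce identifications concentrated in degree zero that agree on the nose when one traverses a cusp or a switched crossing. Once consistency is checked locally at each cusp, each unswitched crossing (where the local system passes through transparently) and each switched crossing, the collection $\{\cL_i\}$ together with the gluings assembles into a rank-one local system on the filling surface, completing the construction.
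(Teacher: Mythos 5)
Your construction is essentially the paper's own proof: trivialize the unique nonvanishing cohomology sheaf of each eye sheaf on its (contractible) eye, extend the constant local system over the lower boundary, identify its boundary stalks with $\mmon(\cF)$ via Lemma \ref{lem:fillmmon}, and glue at switched crossings using the canonical identification of microlocal monodromies from Proposition \ref{prop:mmon}. The shift/orientation bookkeeping you flag is exactly what the filling-surface orientation convention is designed to absorb, and the paper treats it at the same level of detail as you do.
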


\begin{remark}
If instead of using the northwest and southeast arcs, we had used the northeast and southwest arcs, the isomorphism
across each crossing would change by a sign.
\end{remark}

The following observation is useful for inductive arguments:

\begin{definition}
Let $\Phi$ be the front diagram of $\Lambda$, and let $\underline{E}$ be an eye of $\Phi$.  We write 
$\Phi \setminus \underline{E}$ for the front diagram given by removing the arcs and 
cusps of $\underline{E}$, and $\Lambda \setminus \underline{E}$ the corresponding Legendrian.  (See Remark \ref{rem:C0}.)  
Note that a Maslov potential on $\Phi$ restricts to a Maslov potential on $\Phi \setminus \underline{E}$. 
\end{definition}

\begin{lemma} \label{splotch} {\bf (Enucleation.)}
Let $\Phi$ be the front diagram of $\Lambda$, and let $\cF \in \dgsh_{\Lambda}(\bR^2, k)$
be an object of microlocal rank one. 
Let $\cE \in \dgsh_{\Lambda^+}(\bR^2, k)$ be an eye sheaf.  

Suppose given a morphism $\cF \to \cE$.  
Assume the microsupport of the cone
projects to $\overline{\Lambda \setminus \underline{\cE}}$. 
Then in fact the microsupport is contained in the closure
of the conormal lift of $\Lambda \setminus \underline{\cE}$:
i.e., it does not contain the conormals to any crossing in
the front diagram.   
Moreover, in this case the appropriate shift
$\mathrm{Cone}(\cF \to \cE)[-1] \in \dgsh_{(\Lambda \setminus \underline{\cE})}(\bR^2, k)$ has microlocal rank one. 
\end{lemma}

One mystery regarding ruling filtrations is how the notion of `normal ruling' will appear.  We recall this briefly. 
Suppose $\Phi \subset \bR^2$ is a front diagram whose cusps and crossings all have different $x$-coordinates; i.e., suppose a sufficiently 
thin vertical strip around a given crossing contains no cusps or other crossings.  Consider two eyes which meet at a switched crossing
$\mathbf{x}$.  We write $t, b$ for the top and bottom strand of one eye, and $t', b'$ for the top and bottom of the other.  
Fix a sufficiently thin vertical strip containing $x$ and no other crossings, and without loss of generality let $t$ be above $t'$ in this strip. 
Then the crossing is said to be {\em normal} so long as the vertical order in which $t, b, t', b'$ meet the left or equivalently
right boundary of the strip is {\em not} $t t' b b'$, i.e. it is either $tt'b'b$ or $tbt'b'$.  Or in other words, the vertical strip meets the two eyes in
a way that a vertical strip might meet the two eyes of a two-eye unlink.

If a plane isotopy does not preserve the relative positions of the $x$-coordinates of the cusps and crossings, it will not necessarily carry a normal ruling to a normal ruling.  However the number of normal rulings is in fact a Legendrian invariant, and indeed so is the `ruling polynomial,' which is the generating polynomial in the variable $z$ such
that the coefficient of $z^i$ is the number of normal rulings whose filling surface has Euler characteristic $i$ \cite{ChP, Ru, K}.

It is {\em not} generally true that the ruling underlying a ruling filtration is always normal, even for the trefoil.  For example, the object with $l_1 = l_4$ in Example \ref{ex:rainbowtrefoil} has a large eye sub-sheaf
corresponding to the non-normal ruling whose unique switch is the middle crossing.
However, for positive
braid closures in the plane, we will see in Proposition \ref{prop:oneruling} there is a natural condition on ruling filtrations which can be imposed to ensure
that (1) every rank one object admits a unique ruling filtration of this kind and (2) the underlying ruling is normal.

We expect the same can be said for an arbitrary Legendrian knot.  This is because rank one objects  
conjecturally correspond to augmentations of the Chekanov-Eliashberg dga, to which
normal rulings may be associated \cite{NS, HR}.
Determining how to express this condition
for a general Legendrian knot is an important open problem.

\subsection{Binary Maslov potentials}
\label{subsec:bimaslov}
We say a Maslov potential is {\em binary} when it takes only two values.  
Such a Maslov potential on a diagram is unique up to a shift, which we choose so that the value is $0$ on any strand that is below a cusp and $1$ on any strand that is above a cusp.  

Certain sheaves of microlocal rank $r$ on $(\bR^2, \Lambda)$ 
with respect to a binary Maslov potential can be described by linear algebra rather than homological algebra.  

Recall that  $\dgsh_{\cS}(M, k)_0$ denotes the $\cS$-constructible sheaves which have acyclic stalks as $z \to -\infty$, 
and similarly $\dgsh_{\Lambda}(M, k)_0$, $\dgfun_{\Lambda}(\cS, k)_0$, etc. 

\begin{proposition} \label{prop:bimaslov}
Assume $\Lambda \subset T^{\infty, -} \bR^2$ 
carries a binary Maslov potential $p$.  Then any $\cF \in \dgsh_{\Lambda}(\cS, k)_0$ with microlocal rank $r$ is
quasi-isomorphic to its zeroth cohomology sheaf.  Similarly, 
any $F \in \dgfun_{\Lambda}(\cS, k)_0$  
with microlocal rank $r$ is quasi-isomorphic to its zeroth cohomology. 
\end{proposition}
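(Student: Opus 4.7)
My plan is to pass to the combinatorial model via Theorem \ref{thm:comb}, which reduces the first (sheaf) statement to the second (functor) statement since $\Gamma_{\cS}$ commutes with taking zeroth cohomology. So fix $F\in\dgfun_{\Lambda}(\cS,k)_0$ of microlocal rank $r$, and use Corollary \ref{cor:straight} to replace $F$ by a quasi-isomorphic representative in which every downward generization map is an equality. In this presentation $F$ is determined by its values on regions together with the chain maps across arcs to the region directly above.

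The binary hypothesis on $p$ makes the microlocal condition unusually clean. At each arc $a$ with region $N$ above, the triangle $F(a)\to F(N)\to \mathrm{Cone}(F(a\to N))\to F(a)[1]$ has cone quasi-isomorphic to $k^r$ in cohomological degree $-p(a)\in\{0,-1\}$. When $p(a)=0$ this immediately expresses $F(N)$ as an extension of two complexes in degree zero, so $F(N)$ is concentrated in degree zero whenever $F(a)$ is. When $p(a)=1$, rotating the triangle gives $F(N)\simeq \mathrm{Cone}(k^r\xrightarrow{\phi_a} F(a))$, and $F(N)$ is concentrated in degree zero precisely when the map $\phi_a : k^r\to h^0F(a)$ is injective. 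I would then induct over regions of $\cS$ in a partial order compatible with the $z$-coordinate, with base case the unique bottommost noncompact region where $F=0$ by the $(\cdot)_0$ hypothesis. The inductive step reduces entirely to establishing injectivity of $\phi_a$ at each upper arc.

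The main obstacle is exactly this injectivity. The plan is to propagate it from right cusps, where it is automatic. At a right cusp $c$ the cusp condition $F(c\to b)$ a quasi-isomorphism (derived in the proof of Theorem \ref{thm:comb}) identifies $F(c)$ with $F(b)\simeq F(O)$; combined with the microlocal constraint on the lower arc meeting the cusp, this pins down the data on the adjacent upper arc and forces $\phi_a$ to be an isomorphism there. Propagating leftward along the upper strand then uses, at each crossing, the $\Lambda$-singular-support acyclicity $\mathrm{Tot}(F(c)\to F(nw)\oplus F(ne)\to F(N))\simeq 0$, which lets injectivity of $\phi_a$ on one arc of the upper strand transfer to the adjacent arc of the same strand at the crossing. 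Internal cusps that the strand passes through are handled by the same cusp identification as above, run in the opposite direction. An analogous argument starting from left cusps and lower strands (or symmetrically an application of Verdier duality, under which upper and lower arcs swap) handles any upper arcs not reachable by such a propagation.

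The hardest part will be the clean formulation of the propagation at crossings where the two strands involved have different Maslov potentials, and at cusps where multiple upper strands terminate in complicated ways; I expect this to be a bookkeeping exercise once the two local injectivity lemmas (one for crossings, one for cusps) are formulated precisely. Once injectivity is known at every upper arc, the induction produces $F(R)$ concentrated in degree zero for every region $R$. Since downward generization maps are equalities, the natural map $F\to h^0(F)$ is then a levelwise quasi-isomorphism on every stratum, and Theorem \ref{thm:comb} transports this back to $\dgsh_\Lambda(\bR^2,k)_0$.
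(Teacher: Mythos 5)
Your setup (reduction to the combinatorial model via Theorem \ref{thm:comb}, straightening the downward maps via Corollary \ref{cor:straight}, and the dichotomy at an arc according to the value of the binary Maslov potential) is sound and coincides with how the paper begins: the two cases give exactly the exact sequences $0\to F^0(a)\to F^0(N)\to k^r\to F^1(a)\to F^1(N)\to 0$ and $0\to F^{-1}(a)\to F^{-1}(N)\to k^r\to F^0(a)\to F^0(N)\to 0$. The gap is in the step you defer as ``bookkeeping'': propagating injectivity of $\phi_a$ along a strand through a crossing. The problematic crossings are not the mixed-potential ones you single out (there the other strand contributes a cone in degree $0$, so $H^0F(S)\to H^0F(E)$ is injective and the transfer is formal); they are the crossings where \emph{both} strands carry potential $1$, e.g.\ the upper crossing of the horizontal Hopf link of \S\ref{sec:hhl}. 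At such a crossing, once $F(S),F(E),F(W)$ are known to sit in degree $0$, the crossing condition exhibits $F(N)$ as the cone of $F(S)\to F(E)\oplus F(W)$, and the long exact sequence gives $H^{-1}F(N)\cong \mathrm{im}(\phi_{se})\cap\mathrm{im}(\phi_{sw})\subset H^0F(S)$; on the other hand $\ker\bigl(H^0F(S)\to H^0F(E)\bigr)=\mathrm{im}(\phi_{se})$ and $\phi_{ne}$ is the composite of $\phi_{sw}$ with that map, so injectivity of $\phi_{ne}$ is \emph{equivalent} to $H^{-1}F(N)=0$, i.e.\ to the statement you are trying to prove at the region above the crossing. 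The local transfer is therefore circular, and no strand-by-strand propagation from cusps can close without a global input. (Two smaller issues: the relation ``lies directly below across an arc'' is not well-founded on the regions of a closed front --- already for the unknot the outer region is both below and above the eye --- so the induction ``in a partial order compatible with the $z$-coordinate'' is ill-posed as stated; and the Verdier dual of an object has singular support in the \emph{upward} conormals, so that step needs at least a reflection of the plane.)

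The paper supplies the missing global input with a short monotonicity argument instead of any propagation: from the two exact sequences above, $\dim H^{>0}$ of the stalk never decreases when crossing an arc downward, and $\dim H^{<0}$ never decreases when crossing an arc upward. Walking from any region straight down (resp.\ up) along a vertical ray, one crosses finitely many arcs and lands in the unbounded region, where the $(\cdot)_0$ condition forces $F=0$; hence $H^{>0}$ and $H^{<0}$ vanish on every region, with no need for an ordering of the regions or any injectivity lemma. If you want to salvage your scheme, the injectivity of $\phi_a$ at potential-$1$ arcs must come from such a global monotone quantity (or a simultaneous sweep from $z\gg 0$), at which point you will essentially have reproduced the paper's argument.
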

\begin{proof}
We treat the case of $F \in \dgfun_{\Lambda}(\cS, k)_0$.
Since every downward map in $\cS$ is sent by $F$ to a quasi-isomorphism, it's enough to study what happens along upward maps. 
Moreover we can ignore cusps and crossings, since the stalks of these are anyway quasi-isomorphic to stalks on something below them. 

So consider an upward map $a \to N$.   We'll write $F^i(x):=H^i(F(x))$.  
If $p(a) = 0$, then by hypothesis we have an exact sequence
$$ 0 \to F^0(a) \to F^0(N) \to \kappa_r \to F^1(a) \to F^1(N) \to 0$$
where $\kappa_r$ is a projective $k$-module of rank $r$.  
Moreover $F^i(a) \cong F^i(N)$ for all $i \notin [0,1]$. 

On the other hand, if $p(a) = 1$, then 
we have an exact sequence
$$ 0 \to F^{-1}(a) \to F^{-1}(N) \to \kappa_r \to F^0(a) \to F^0(N) \to 0$$
and $F^i(a) \cong F^i(N)$ for all $i \notin [-1,0]$. 

In particular, in both cases, $\dim F^{> 0 }(a) \ge \dim F^{> 0}(N)$, and so we see the rank of $F^{>0}$ never decreases when
traversing the diagram downwards.  So if any region has positive dimensional $F^{>0}$, so will the region outside the knot,
which is a contradiction.  Likewise, $\dim F^{< 0}$ never decreases while traversing the diagram upwards, so 
if any region has positive dimensional $F^{< 0}$, then so will the region outside the knot; contradiction. 
\end{proof} 

\begin{remark}
The horizontal Hopf link and vertical Hopf link are Legendrian isotopic.  They also both admit binary Maslov potentials.  However, 
the Legendrian isotopy {\em does not} identify these Maslov potentials.  
\end{remark}

\begin{proposition} \label{prop:bimapfiltrations}
Let $\Lambda$ admit a binary Maslov potential, and assume $\cF \in \dgsh_{\Lambda}(\bR^2,k)$ has microlocal
rank one.  Let $R_\bullet \cF$ be a ruling filtration.  Then all the terms $R_\bullet \cF$ are concentrated in degree zero. 
\end{proposition}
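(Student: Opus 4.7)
The plan is to proceed by downward induction on $i$, showing that each $R_i \cF$ lies in $\dgsh_{\Lambda_i}(\bR^2, k)_0$ and has microlocal rank one, where $\Lambda_i$ is the Legendrian obtained from $\Lambda$ by deleting the outer eyes $\underline{\Gr^R_{i+1} \cF}, \ldots, \underline{\Gr^R_n \cF}$ (so that $\Lambda_n = \Lambda$ and $\Lambda_0 = \emptyset$). Once this is established, Proposition~\ref{prop:bimaslov} applied to each $R_i \cF$ will give the desired conclusion that $R_i \cF$ is quasi-isomorphic to its zeroth cohomology sheaf. Here one uses that a binary Maslov potential on $\Lambda$ restricts to a binary Maslov potential on each $\Lambda_i$, since the defining local condition at a cusp (``strand below has potential $0$, strand above has potential $1$'') only involves cusps that survive the deletion.

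For the base case $i = n$, the hypothesis gives $R_n \cF = \cF \in \dgsh_\Lambda(\bR^2, k)$ of microlocal rank one; moreover $\cF$ is compactly supported, since it is an iterated cone of the compactly supported eye sheaves $\Gr^R_j \cF$. For the inductive step, assume $R_i \cF \in \dgsh_{\Lambda_i}(\bR^2, k)_0$ has microlocal rank one. The truncated filtration $0 = R_0 \subset R_1 \subset \cdots \subset R_i$ is a ruling filtration of $R_i \cF$ with respect to $\Lambda_i$: the eye sheaves $\Gr^R_1 \cF, \ldots, \Gr^R_i \cF$ remain eye sheaves of eyes of the reduced front diagram $\Phi \setminus (\underline{\Gr^R_{i+1}\cF} \cup \cdots \cup \underline{\Gr^R_n\cF})$, and their count equals the number of left cusps there. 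The Enucleation Lemma~\ref{splotch}, applied inside $\dgsh_{\Lambda_i}$ to the map $R_i \cF \to \Gr^R_i \cF$ provided by this filtration, then yields
$$R_{i-1}\cF \;=\; \mathrm{Cone}(R_i \cF \to \Gr^R_i \cF)[-1] \;\in\; \dgsh_{\Lambda_{i-1}}(\bR^2, k)_0$$
with microlocal rank one (compact support being preserved by cones and shifts), completing the induction.

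The only real subtlety is the geometric bookkeeping that removing an eye of the ruling from the front diagram leaves the other eyes intact as eyes of the reduced diagram, and decreases the number of left cusps by exactly one, so that the truncated filtration genuinely qualifies as a ruling filtration in the smaller Legendrian context. Both facts are immediate from the definitions in Section~\ref{sec:rulingsheaf}. With these in place, the argument reduces to a mechanical alternation of Enucleation and Proposition~\ref{prop:bimaslov}.
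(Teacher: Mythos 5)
Your proposal is correct and is essentially the paper's own argument: the paper's proof is the one-line ``Follows from Lemma \ref{splotch} and induction,'' and your write-up simply fleshes out that same downward induction via enucleation, making explicit the (implicitly intended) appeal to Proposition \ref{prop:bimaslov} at each stage. The bookkeeping points you flag (the restricted binary Maslov potential, the remaining eyes forming a ruling of $\Phi\setminus\underline{\cE}$ with one fewer left cusp, compact support) are handled correctly.
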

\begin{proof}
Follows from Lemma \ref{splotch} and induction. 
\end{proof}

Moreover, in the binary Maslov setting, we have nice moduli spaces.

\begin{proposition} \label{prop:moduli}
Assume the front diagram of $\Lambda$ carries a binary Maslov potential.  Let 
$\cM_r(\Lambda)$ be the functor from $k$-algebras to groupoids by letting $\cM_r(\Lambda)(k')$ be the groupoid
whose objects are the rank $r$ sheaves in $\dgsh_{\Lambda}(\bR^2;k')_0$,
and whose morphisms are the quasi-isomorphisms.  Then $\cM_r(\Lambda)$ is representable by an Artin stack
of finite type.  
\end{proposition}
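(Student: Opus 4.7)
The plan is to reduce the moduli problem to the standard representability of a quotient stack arising from a moduli space of quiver representations with relations.

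The first step is to invoke Proposition \ref{prop:bimaslov}: every object of $\dgsh_\Lambda(\bR^2,k')_0$ of microlocal rank $r$ is quasi-isomorphic to its zeroth cohomology sheaf, concentrated in degree zero, and any quasi-isomorphism between two such sheaves is just an isomorphism of sheaves. So I may canonically identify $\cM_r(\Lambda)(k')$ with the groupoid of sheaves $\cF \in \mathrm{Sh}_\Lambda(\bR^2,k')_0$ of microlocal rank $r$, taking isomorphisms as morphisms. This removes all homological algebra from the problem.

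Next, fix a regular cell refinement $\cS$ of the stratification induced by the front diagram $\Phi$ and pass to the combinatorial model of Theorem \ref{thm:comb}: such sheaves are the same as functors $F \in \Fun_\Lambda(\cS, k')$ that vanish in the noncompact region. Since $\cS$ is a finite cell complex and the support of $F$ is compact, only finitely many strata carry nontrivial data. Under the binary Maslov potential hypothesis, the analysis in the proof of Proposition \ref{prop:bimaslov} shows that the microlocal rank $r$ condition is equivalent to demanding that each upward-generalization map $F(a) \to F(N)$ across an arc $a$ be either injective with locally free cokernel of rank $r$ (when $p(a) = 0$) or surjective with locally free kernel of rank $r$ (when $p(a) = 1$); together with $F \equiv 0$ outside the knot, this determines the rank $n_S$ of the free module $F(S)$ on every stratum $S$ as an integer depending only on $r$ and the combinatorics of $\Phi$.

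With the ranks $(n_S)_{S \in \cS}$ fixed, the data of such an $F$ is a finite collection of matrices indexed by the edges of the Hasse diagram of $\cS$, subject to: the commutativity relations built into the poset $\cS$; the identification of certain downward-generalization maps with the identity map by the singular support condition of Definition \ref{def:romulus}; the exactness of the crossing complex $0 \to F(c) \to F(nw) \oplus F(ne) \to F(N) \to 0$ at each crossing $c$; and the prescribed-rank conditions on the arc maps. These are all algebraic conditions — the commutativity and identity relations are closed, the rank conditions are open, and exactness of a three-term complex of free modules of prescribed ranks is locally closed — so they cut out a locally closed subscheme $\mathcal{X}$ of an explicit affine space over $\Spec k$. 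The natural change-of-basis action of the reductive group $G := \prod_{S} \GL_{n_S}$ on $\mathcal{X}$ then presents the moduli as the quotient stack $[\mathcal{X}/G]$, which is an Artin stack of finite type by standard results on moduli of representations of a quiver with relations.

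The main technical point to verify carefully is that the rank vector $(n_S)$ is genuinely rigid: this is what the binary Maslov hypothesis buys, since it prevents the cohomological degrees from spreading and allows one to propagate ranks inward from the exterior region by counting generalization maps. Once this rigidity is in place, the remaining steps are routine verifications that the singular support, rank, and exactness conditions translate to algebraic conditions on a finite-dimensional space of linear maps.
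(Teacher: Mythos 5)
Your proposal is correct and takes essentially the same approach as the paper's proof: reduce to degree-zero sheaves via Proposition \ref{prop:bimaslov}, pass to the combinatorial model, note that the binary Maslov potential rigidifies the rank on each region, and present the moduli as a locally closed subscheme of an affine space of linear maps modulo a product of general linear groups. The only (cosmetic) discrepancy is that the paper first uses Corollary \ref{cor:straight} to make all downward maps identities and then acts only by $\prod_{R}\GL$ over the two-dimensional cells, whereas you simultaneously impose the identity requirement and act by a group indexed by all strata; you should either act by the smaller group or replace the identity requirement by the open condition that downward maps be isomorphisms, since the full product does not preserve the identity condition.
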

\begin{proof}
We fix a regular stratification $\cS$ and work in $\dgfun_{\Lambda}(\cS,k')$; by
Proposition \ref{prop:bimaslov}
this is equivalent to $\Fun_{\Lambda}(\cS, k')$.  Recall any element of this has a representative
with zero stalks in the connected component of infinity; we always take such a representative.  Moreover by
Corollary \ref{cor:straight}, we  take a representative with all downward generization maps identities.  It suffices
therefore to discuss the two dimensional cells: any object is determined by the upward generization maps
from stalks on arcs (which may be identified with the stalks of the region below the arc) to stalks on the region above
the arc; since all downward generization maps are identities, any map between objects is
characterized by its action on regions.  

For each two dimensional cell $R$ of $\cS$, let $d(R)$ denote the number of strands below $R$ with Maslov potential zero.   
Then one sees the $k'$-module $F(R)$ is projective of rank $r \cdot d(R)$.  The data of $F$ amounts to a morphism
$F(R) \to F(S)$ for every arc $a$ separating a region $S$ above from a region $R$ below; 
in this case let $d'(a) = d(R)$ and $d''(a) = d(S)$.  
The parameter space of 
such data is the affine space $\cA:= \prod_a \Hom(\bA^{r \cdot d'(a)},\bA^{r \cdot d''(a)})$, where $\Hom$ means linear
homomorphisms; isomorphisms of such data are controlled by the algebraic group $\cG := \prod_{R} \GL_{d(R)}$.  

The conditions that such data actually give an object $F \in \Hom_{\Lambda}(\cS, k')$ are locally closed in $\cA$; let
$\cB$ be the subscheme cut out by them.  
Then $\cG \backslash \cB = \cM_r(\Lambda)$. 
\end{proof}

\begin{remark}
The moduli space of special Lagrangians in a compact Calabi-Yau manifold has been considered in \cite{H}.  When the sLags are decorated with unitary local systems, the resulting space
has the structure of a Kahler manifold.  An analogous object in our setting is the moduli of sLag surfaces in $\bR^4 \cong \bC^2$ that end on the Legendrian knot $\Lambda$.  Most of these sLags are not exact, therefore they do not directly determine objects of $\dgsh_{\Lambda}(\bR^2;k)$.  Nevertheless, we expect the $\cM_r(\Lambda)$ to resemble the moduli of sLags.  
\end{remark}

When $\Lambda$ does not carry a binary Maslov potential, there can be objects $\cF \in \cC_1(\Lambda)$ 
with $\Ext^i(F,F) \neq 0$ for negative $i$.  This happens, for instance, on one of the Chekanov knots, see
\S \ref{sec:conj}.  In this setting it may be more natural to incorporate such structures into the moduli space
by viewing $\cM_r(\Lambda)$ as a derived stack
(see for instance the survey \cite{Toen}).

\section{Braids}
\label{sec:braids}

Sheaves restrict: for an open subset $Y \subset X$, we have 
$\dgsh_\Lambda(X; k) \to \dgsh_{\Lambda \cap T^\infty Y}(Y; k)$. 
Sheaves glue, so $\dgsh_\Lambda(X; k)$ can be reconstructed from the categories
associated to an open cover and the appropriate restriction data.  
This induces restriction morphisms between moduli spaces, and 
descriptions of global moduli spaces as limits of appropriate diagrams.  

The above general notions lead to remarkably intricate structures, 
already in the special case where $X = \mathbb{R}^2$ 
and $\Lambda$ is a positive braid. 
In this case, the moduli space
admits a description in terms of iterated affine bundles over a flag variety, and
in fact is naturally identified with the open Bott-Samelson variety of the braid.  
From the main invariance theorem \cite{GKS}, we deduce that this association gives
a categorical representation of the positive braid monoid, recovering 
a construction of Deligne \cite{D-braids}. 

The connection to categorical braid representations leads, via the work of
Williamson and Webster, to a connection between our constructions and 
the triply graded Khovanov-Rozansky homology.  This connection is 
mediated by closing the positive braid in the solid, corresponding to closing
its front projection on a cylinder by gluing the ends of the horizontal strip. 

A different closure --- the ``rainbow closure'' in the front plane $\mathbb{R}^2$ --- 
gives rise to a different connection to topological knot invariants.  Indeed,
we show that each rank one object on the rainbow closure of a positive braid, 
carries a unique ruling filtration.  This provides
a decomposition of the moduli space of objects labeled by rulings, 
allowing us
to exploit the work of Rutherford \cite{Ru} to relate the orbifold cardinality of the 
category over finite fields to part of the HOMFLY polynomial.
We explain all this in the subsections below.

Throughout this section we assume $k$ is a field, in order to apply 
Proposition \ref{prop:moduli}.

\subsection{Braids and their closures}

We write $\Br_n$ for the Artin braid group on $n$ strands, i.e. 
\[
\Br_n = \langle s_1^{\pm},\ldots, s_{n-1}^{\pm} \rangle / \{s_i s_j = s_j s_i \mbox{ for } |i-j| \ne 1,\,\,\,\,  s_i s_{i+1} s_i = s_{i+1} s_i s_{i+1}\}
\]
Geometrically we take an element of $\Br_n$ to be an isotopy class of the following sort of object: an $n$-tuple of disjoint smooth sections of the projection  $[0,1]_x \times \RR^2_{y,z} \to [0,1]_x$ such that the $i$th section has $z$-coordinate some $1,\ldots,n$ in a neighborhood of $x=0$ and of  $x = 1$.

An isotopy allows one to ensure that the images of the sections under projection to the $xz$-plane are 
immersed and coincide only transversely and in pairs.  We may take 
the sections to have constant $y$ and $z$ coordinates --- $y = 0$ and $z \in \{1, 2, \ldots, n\}$
---  except for $x$ in a neighborhood of these coincidences; this gives
the standard generators of the braid group by half twists.   We write $s_i$ for the positive (counterclockwise) half twist between the 
strands with $z$ coordinate $i$ and $i+1$, and $s_i^{-1}$ for its inverse.

We say a braid $\beta \in \Br_n$ is positive if it can be expressed 
as a product of the $s_i$, and write $\Br^+_n$ for the set of positive braids.  
Taking a geometric description as above of a positive braid, 
view the projection to the $xz$ plane as a front diagram  (there are no vertical tangents since the projection to the $xz$ plane
is an immersion).   Then the associated Legendrian is isotopic to the original braid, and since the braid relation is a Legendrian Reidemeister
III move, this construction gives a well defined Hamiltonian isotopy class of Legendrian.  

For example, Figure \ref{fig:braidproj} shows
the front projection corresponding to the expression $s_2 s_1 s_1 s_3$.
\begin{figure}[H]
\includegraphics[scale = .3,angle = 180]{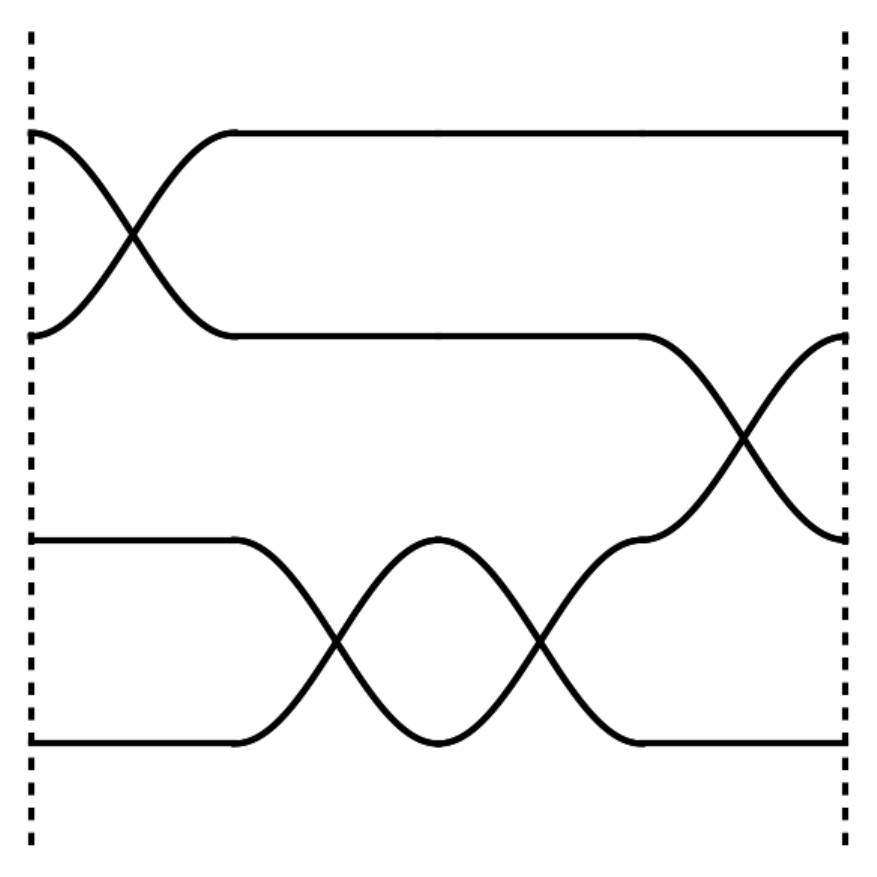} 
\caption{A braid projection}
\label{fig:braidproj}
\end{figure}

One obtains a knot from a braid by joining the ends in some way.  
There are several of these; one we will not consider is the plat closure --- 
where, at each end, the first and second; third and fourth; etc., strands are joined
together by cusps.  In fact, one can see using the Reidemeister II move that 
all Legendrian knots arise as plat closures of positive braids.  

We will instead consider the braid closure.  Topologically, 
the braid closure amounts to joining the highest strand on the right to the highest strand on
the left, and so on.  We will consider two different variants on this.  The first, 
which we call the cylindrical closure, simply identifies the right and left sides
of the front diagram, giving a front diagram in the cylinder  $S^1_x \times \bR_z$ and hence 
a legendrian knot in $T^{\infty,-} (S^1_x \times \bR_z)$.  The second, which we call the planar
or rainbow closure, gives a front diagram in the plane formed by `cusping off' the endpoints 
as in Figure \ref{fig:braidclosure} below.
\begin{figure}[H] 
\includegraphics[scale = .3,angle = 180]{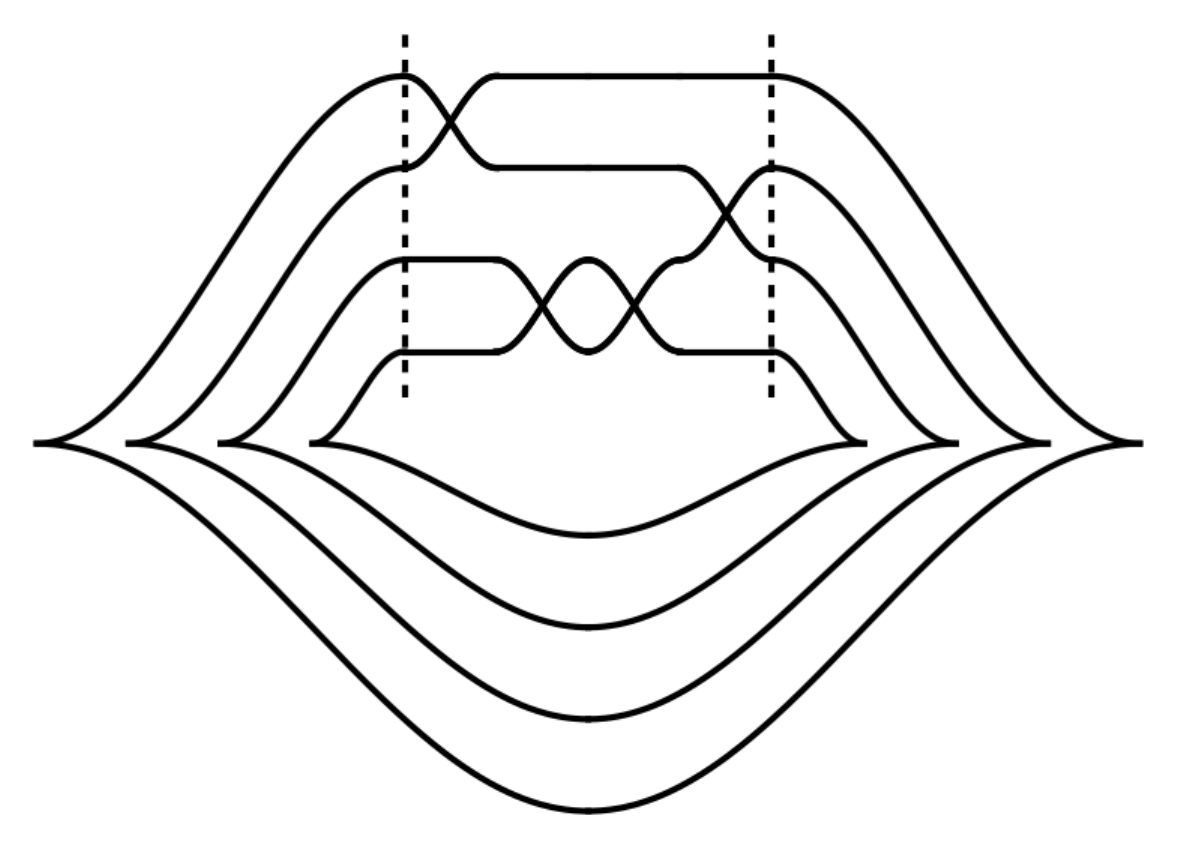} 
\caption{A braid closure ``cusped off,'' or closed ``over the top.''}
\label{fig:braidclosure}
\end{figure}

\subsection{Sheaves microsupported along braids} \label{subsec:braidmoduli}
We begin by studying the local picture: sheaves microsupported in braids, 
or equivalently, the restriction of a sheaf $\cF$ with singular support 
along the braid closure to a rectangle containing all crossings of the braid $\beta$, 
i.e., a picture as in  Figure \ref{fig:braidproj}.  

In this context we will be interested in sheaves with acyclic stalks 
in the connected component of $z \to -\infty$; we denote this full subcategory by $\dgsh_\beta(\bR^2, k)_0$, and in the Maslov potential which is identically zero
on the braid.

\begin{proposition}
Let $\beta$ be a Legendrian whose front diagram is a positive braid.  
Fix the Maslov potential which is everywhere zero. 
Let $\cF \in \dgsh_\beta(\bR^2, k)$ be such that $\mmon(\cF)$ is
concentrated in degree zero, and assume the same for the stalk of $\cF$ at a
point of $\bR^2$.  Then $\cF$ is quasi-isomorphic to its
zeroeth cohomology sheaf. 
\end{proposition}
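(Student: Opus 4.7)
The plan is to adapt Proposition~\ref{prop:bimaslov} to the present setting where $\cF$ is not assumed to vanish below the braid. Since the Maslov potential is identically zero, every arc $a$ has $p(a) = 0$, so the triangle $F(a) \to F(N) \to \mmon(\cF)(a) \xrightarrow{[1]}$ with $\mmon(\cF)(a)$ of rank $r$ in degree zero yields the long exact sequence
\[
0 \to F^0(a) \to F^0(N) \to \kappa_r \to F^1(a) \to F^1(N) \to 0
\]
together with isomorphisms $F^i(a) \cong F^i(N)$ for $i \notin [0,1]$. Since the downward generization $F(a) \xrightarrow{\sim} F(R_{\mathrm{below}})$ is a quasi-isomorphism, this makes $F^i$ locally constant across arcs whenever $i \notin [0,1]$.

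The first step is to show $F^i \equiv 0$ globally for $i \notin [0,1]$. At each crossing, the crossing condition plus the downward generization isomorphisms produces the distinguished triangle $F(S) \to F(W) \oplus F(E) \to F(N) \xrightarrow{[1]}$; each component map factors through an upward generization across a single arc, hence is an isomorphism on $F^i$ for $i \notin [0,1]$. A routine long-exact-sequence argument then forces $F^i$ to take a common value on all four surrounding regions. Combining this with the previous paragraph, $F^i$ is locally constant on the connected complement of $\beta$ for $i \notin [0,1]$, and the hypothesized degree-zero stalk forces it to vanish globally.

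It remains to prove $F^1 \equiv 0$. The exact sequence above contains the surjection $F^1(a) \twoheadrightarrow F^1(N)$, so across every arc, $F^1$ vanishing on the region below implies $F^1$ vanishing on the region above. In a positive braid every region can be reached from the bottom region $R_{\bot}$ (lying below all strands) by an upward sequence of arcs, so $F^1 \equiv 0$ globally will follow from $F^1(R_{\bot}) = 0$. The hard part is extracting this vanishing from the hypothesis that the stalk at some \emph{a priori} arbitrary point is concentrated in degree zero: along a descending path from the given stalk down to $R_{\bot}$, the exact sequence expresses $F^1(R_{\mathrm{below}})$ as the cokernel of the microlocal monodromy map $F^0(R_{\mathrm{above}}) \to \kappa_r$, so one must verify surjectivity of this map at each descent using the rank constraints on $\cF$ imposed by the singular support condition. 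Once $F^1(R_{\bot}) = 0$ is established, combining all three steps gives $H^i(\cF) = 0$ for $i \neq 0$, whence $\cF \simeq H^0(\cF)$.
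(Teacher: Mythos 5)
Your overall strategy is the paper's: the proof there is literally ``like Proposition \ref{prop:bimaslov}, but easier,'' i.e.\ the long-exact-sequence-across-arcs argument you run, and your Step 1 (for $i \notin \{0,1\}$ the sheaves $H^i(\cF)$ are locally constant, hence killed by the stalk hypothesis) and your reduction of $F^1 \equiv 0$ to $F^1(R_{\bot}) = 0$ via the surjections $F^1(\mathrm{below}) \twoheadrightarrow F^1(\mathrm{above})$ are both correct. The problem is that you stop exactly at the step where the hypothesis has to do its work: you never establish $F^1(R_{\bot}) = 0$, only remark that ``one must verify surjectivity of this map at each descent.'' That is not a deferrable detail, and the route you sketch cannot be made to work: the surjectivity of $F^0(\mathrm{above}) \to \kappa_r$ is simply false in general, and no rank constraint from the singular support condition rescues it. Concretely, for the one-strand trivial braid take $\cF = k_{\{z \le 0\}}[-1]$; this has singular support in the downward conormal of the strand, its microlocal monodromy is $k$ in degree zero, and its stalk at any point above the strand is acyclic (so vacuously concentrated in degree zero), yet $H^1(\cF) \neq 0$. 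So descending from an arbitrary point with degree-zero stalk can never reach $F^1(R_{\bot}) = 0$.

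The way the argument is actually meant to close (and the way Proposition \ref{prop:bimaslov} closes it) is to apply the degree-zero condition at a point of the unbounded region \emph{below} the braid --- which is the situation in the paper's application, where the objects lie in $\dgsh_\beta(\bR^2,k)_0$ and the stalk there is acyclic. Then $F^1(R_{\bot}) = 0$ is immediate, your upward surjections propagate the vanishing of $F^{\ge 1}$ to every region (every region is reachable from $R_{\bot}$ by upward arcs), and $F^{<0}$ is constant across arcs (zero Maslov potential), so it too vanishes once it vanishes at one point; the monotonicity runs only in the easy direction and no surjectivity of $F^0 \to \kappa_r$ is ever needed. In short: same skeleton as the paper, but the one step you leave open is the essential one, and your proposed mechanism for it fails; you must instead invoke the stalk condition below the braid.
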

\begin{proof}
Like Proposition \ref{prop:bimaslov}, but easier. 
\end{proof}

Combining this with Proposition \ref{prop:braidlegible}, we see that objects
of $\dgsh_\beta(\bR^2, k)_0$ can be described by legible diagrams (in the sense
of Section \ref{subsec:legible}), and moreover that
every region is assigned a $k$-module, rather than a complex of them.  
That is, 

\begin{proposition} \label{prop:bos} 
Let $\beta$ be a braid; fix the zero Maslov potential on its front diagram.  
Let $Q_\beta$ be the quiver with one vertex for each region in the front diagram,
and one arrow $S \to N$ for each arc $a$ separating a region $N$ above from a region $S$ below. 
Then $\dgsh_\beta(\bR^2, k)_0$ is equivalent to the full subcategory of representations of $Q$ in which
\begin{itemize}
\item The vertex corresponding to the connected component of $z \to - \infty$ is sent to zero.
\item All maps are injective.
\item If $N, E, S, W$ are the north, east, south, and west regions at a crossing, then the 
sequence $0 \to F(S) \to F(E) \oplus F(W) \to F(N) \to 0$ is exact. 
\end{itemize}
\end{proposition}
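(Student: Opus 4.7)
The plan is to combine Proposition \ref{prop:braidlegible} (legibility for braid fronts) with the combinatorial crossing condition of Theorem \ref{thm:comb} and the preceding proposition (passage to $H^0$ under a monodromy hypothesis), thereby identifying $\dgsh_\beta(\bR^2,k)_0$ with the described subcategory of $\mathrm{Rep}(Q_\beta)$.

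First I would pass through the legibility equivalence $\rho^{*}:\dgfun(\cR(\Phi),k) \xrightarrow{\sim} \dgfun_{\beta^+}(\cS,k)$ of Proposition \ref{prop:braidlegible}, for any regular cell refinement $\cS$ of the braid front; this is applicable because the braid has no cusps. Combined with Theorem \ref{thm:comb}, it presents every object of $\dgsh_{\beta^+}(\bR^2,k)_0$ as a diagram $F^{\bullet}$ of complexes on the region poset $\cR(\Phi)$, acyclic on the connected component of $z \to -\infty$. The preceding proposition then lets us replace $F^{\bullet}$ by $H^{0}(F^{\bullet})$, producing a representation of $Q_\beta$ in $k$-modules rather than in complexes. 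Specializing from $\dgsh_{\beta^+}$ to $\dgsh_\beta$ imposes, by Definition \ref{def:romulus}, acyclicity of the three-term complex $F(S) \to F(W)\oplus F(E) \to F(N)$ at each crossing; for modules concentrated in a single degree this is precisely the exactness of the short sequence $0 \to F(S) \to F(W)\oplus F(E) \to F(N) \to 0$ appearing in the statement.

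Next I would extract injectivity of each arc map from the microlocal picture of \S\ref{sec:mmon}. For an arc $a$ with region $S$ below and $N$ above, the microlocal monodromy is $\mmon(F)(a) = \Cone(F(S) \to F(N))$ (Maslov shift zero), whose $H^{-1}$ is $\ker(F(S) \to F(N))$. Monodromy in degree zero therefore forces each arc map to be injective, and this is equivalent to the third listed condition. Conversely, any representation satisfying the three conditions assembles a legible diagram obeying the crossing condition, hence by Theorem \ref{thm:comb} and Proposition \ref{prop:braidlegible} defines an object of $\dgsh_\beta(\bR^2,k)_0$ whose stalks and monodromy are automatically concentrated in degree zero. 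Fully faithfulness is then routine from the legibility equivalence together with the description of morphisms of legible diagrams in Remark \ref{rem:ancusmarcius}.

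The main obstacle is the degree-zero reduction: the preceding proposition hypothesizes that both the monodromy and at least one stalk lie in degree zero, so the cleanest argument requires showing these hypotheses hold automatically for objects in the essential image of the putative functor (or restricting \ref{prop:bos} to, say, rank-$r$ objects where Proposition \ref{prop:moduli} and its binary-Maslov companion \ref{prop:bimaslov} already supply the degree-zero conclusion). Once this subtlety is navigated, the identification of the crossing SES with acyclicity of the three-term complex, and of arc-injectivity with the vanishing of $H^{-1}$ of the microlocal monodromy, closes the proof.
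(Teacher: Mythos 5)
Your proposal follows essentially the same route as the paper, which itself offers nothing beyond ``combining'' the preceding degree-zero proposition (proved ``like Proposition \ref{prop:bimaslov}, but easier'') with Proposition \ref{prop:braidlegible}; your translation of the crossing acyclicity into the short exact sequence and of degree-zero microlocal monodromy into injectivity of the arc maps is exactly the intended content. The ``obstacle'' you flag is real, but it is a looseness in the statement rather than in your argument: since $\dgsh_\beta(\bR^2,k)_0$ is stable under shifts, the literal claim must be read with the implicit restriction to objects whose stalks and microlocal monodromy sit in degree zero (the hypotheses of the preceding proposition), which is precisely how the paper applies it, namely to the subcategories $\cC_r(\beta)$.
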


For a positive braid $\beta$, we write $\cC(\beta):=\dgsh_\beta(\bR^2, k)_0$, 
and $\overline{\cC}(\beta) = \dgsh_{\beta^+}(M, k)_0$. 
We write $\cC_r(\beta)$ and $\overline{\cC}_r(\beta)$ for the corresponding subcategories 
of objects of microlocal rank $r$ with respect to the zero Maslov potential.  

We write `$\equiv_n$' for the identity in $\Br_n$, and we omit the subscript when no confusion
will arise.  By cutting the front diagram into overlapping vertical strips,
each of which contains a single crossing, and such that the overlaps contain trivial braids, we find from the sheaf axiom that
\begin{equation}
\cC(s_{i_1} \ldots s_{i_w}) = \cC(s_{i_1}) \times_{\cC(\equiv)} \cC(s_{i_2}) \times_{\cC(\equiv)} \cdots \times_{\cC(\equiv)} \cC(s_{i_w})
\end{equation}
and similarly for $\overline{\cC}$, $\cC_r$, and $\overline{\cC}_r$. 
And likewise, for moduli spaces, 
\begin{equation} \label{modprod} \mM_r(s_{i_1} \ldots s_{i_w}) = \mM_r(s_{i_1}) \times_{\mM_r(\equiv)} \mM_r(s_{i_2}) \times_{\mM_r(\equiv)} \cdots \times_{\mM_r(\equiv)} \mM_r(s_{i_w})\end{equation}
and likewise for $\overline{\cM}_r$.

\begin{remark} The above moduli spaces are Artin stacks, and the fiber products should be understood in the sense of such stacks.  For a fixed $r$, it is possible to work equivariantly with schemes instead by framing appropriately.  
\end{remark}

To calculate the $\cC_r$ and $\cM_r$ in general, it now suffices to determine these for the trivial braid and one-crossing braids
(and to understand the maps between these).  As special cases of Proposition \ref{prop:bos}, we have: 

\begin{corollary}  $\cC_r(\equiv_n)$ is the subcategory of representations of the $A_n$ quiver $\bullet \to \bullet \to \cdots \to \bullet$
which take the $k$'th vertex to a $rk$ dimensional vector space, and all arrows to injections.  
Writing $\mathrm{P}_{r,n} \subset \mathrm{GL}_{rn}$ for the group of  $r \times r$ block upper triangular matrices, 
$\mM_r(\equiv_n) = \mathit{pt} / \mathrm{P}_{r,n}$. 
\end{corollary}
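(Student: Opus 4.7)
The plan is to apply Proposition \ref{prop:bos} to the trivial braid $\equiv_n$ and unwind what the conditions say combinatorially, then read off the moduli stack.

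First I would identify the regions and arcs of the front diagram of $\equiv_n$. The $n$ disjoint horizontal strands cut the strip into $n+1$ horizontal regions $R_0, R_1, \ldots, R_n$ ordered by increasing $z$-coordinate, with exactly one arc $a_k$ between consecutive regions $R_{k-1}$ and $R_k$. So the quiver $Q_{\equiv_n}$ of Proposition \ref{prop:bos} is the $A_{n+1}$ quiver $R_0 \to R_1 \to \cdots \to R_n$. Since there are no crossings, the crossing condition is vacuous, and Proposition \ref{prop:bos} reduces to two requirements: $F(R_0) = 0$ (as $R_0$ is the $z \to -\infty$ component) and every arrow $F(R_{k-1}) \to F(R_k)$ is injective. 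This identifies $\cC(\equiv_n)$ with the full subcategory of representations of the $A_n$ quiver on vertices $R_1, \ldots, R_n$ whose structure maps are all injective.

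Next I would impose microlocal rank $r$ by computing $\mmon(F)$ along each arc. By Definition \ref{def:mmon}, with the identically-zero Maslov potential the monodromy along $a_k$ is $\mmon(F)(a_k) = \mathrm{Cone}(F(R_{k-1}) \to F(R_k))$ with no shift. Injectivity of the structure map makes this cone quasi-isomorphic to the quotient $F(R_k)/F(R_{k-1})$ concentrated in degree zero. Requiring this to be free of rank $r$ forces $\dim F(R_k) - \dim F(R_{k-1}) = r$; starting from $F(R_0) = 0$ and inducting on $k$ yields $\dim F(R_k) = rk$, proving the first assertion.

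Finally, for the moduli stack, I would observe that an object of $\cC_r(\equiv_n)$ is precisely a complete flag $0 \subset V_1 \subset V_2 \subset \cdots \subset V_n$ in $k^{rn}$ with $\dim V_k = rk$, and a morphism is an invertible linear map of the ambient space preserving the filtration. Up to isomorphism there is a unique such flag (the standard one), and its automorphism group in $\mathrm{GL}_{rn}$ is exactly the parabolic $\mathrm{P}_{r,n}$ of $r\times r$ block upper triangular invertible matrices. Hence $\mM_r(\equiv_n) = B\mathrm{P}_{r,n} = \mathit{pt}/\mathrm{P}_{r,n}$, as claimed. There is no real obstacle here; the one point requiring care is tracking how the zero Maslov potential in Definition \ref{def:mmon} leaves the microlocal monodromy equal to the unshifted cokernel, so that the dimension count is straightforward.
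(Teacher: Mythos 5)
Your proof is correct and is essentially the argument the paper intends: the Corollary is stated as an immediate special case of Proposition \ref{prop:bos}, and your unwinding (regions $R_0,\dots,R_n$ with $F(R_0)=0$, injective upward maps, the unshifted cone computation of $\mmon$ giving $\dim F(R_k)=rk$, and the automorphism group of the standard flag being $\mathrm{P}_{r,n}$) is exactly the intended specialization. No gaps worth flagging.
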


\begin{corollary} \label{cor:bs}
Let $s_i$ be the interchange of the $i$ and $i+1$ st strands in $Br_n$.  
An object of $\overline{\cC}_r(s_i)$ is determined by two flags 
$L_\bullet k^{\oplus rn}$ and $R_\bullet k^{\oplus rn}$ such that 
$\dim L_k / L_{k-1} = r = \dim R_k / R_{k-1}$ and $L_k = R_k$ except possibly
for $k = i$.  Moreover, for such an object, the following are equivalent: 
\begin{itemize}
\item $L_{i-1} = L_i \cap R_i = R_{i-1}$
\item $L_{i+1} = L_i + R_i = R_{i+1}$
\item $F \in \cC_r(s_i)$
\end{itemize}
Two such pairs $(L_\bullet, R_\bullet)$ and $(L'_\bullet, R'_\bullet)$ are isomorphic
if and only if there's a linear automorphism of $k^{rn}$ carrying one to the other,
and moreover all isomorphisms arise in this manner. 
\end{corollary}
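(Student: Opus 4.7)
My plan is to deduce the corollary from the quiver description of Proposition \ref{prop:bos} together with the degree-zero concentration of Proposition \ref{prop:bimaslov} and the normalization from Corollary \ref{cor:straight}. I would first enumerate the regions of the front diagram of $s_i$: a bottom and top half-plane, a ``west'' and an ``east'' region $L_i, R_i$ separated by the crossing at height $i$, and a single connected region $V_k$ at each height $k \neq i$ (the adjacent strands do not cross, so the region between them is connected across the picture). The bottom region is $0$ by the subscript-$0$ condition, and the top region $V_n$ is a single global piece.

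Proposition \ref{prop:bimaslov} puts $F$ in degree $0$, the $\Lambda^+$-conditions make all downward generization arrows quasi-isomorphisms, and Corollary \ref{cor:straight} lets me assume those arrows are honest identities. The microlocal-monodromy computation (Definition \ref{def:mmon}) under the zero Maslov potential then says each upward arc-to-region map is injective with cokernel a rank-$r$ projective $k$-module, so, identifying stalks of adjacent regions via the downward isomorphisms and choosing $V_n \cong k^{\oplus rn}$, the data assembles into two complete flags $L_\bullet, R_\bullet$ of $k^{\oplus rn}$ with steps of size $r$, agreeing at every height except $i$. This delivers the parameterization of objects of $\overline{\cC}_r(s_i)$ asserted by the corollary.

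For the equivalent conditions, the full $\Lambda$-condition as opposed to $\Lambda^+$ is, by Definition \ref{def:romulus}, precisely the acyclicity of the complex $F(c) \to F(nw) \oplus F(ne) \to F(N)$ at the crossing. Under the identifications this becomes $0 \to L_{i-1} \to L_i \oplus R_i \to L_{i+1} \to 0$ with diagonal and difference maps, so acyclicity translates as $L_{i-1} = L_i \cap R_i$ (exactness in the middle) and $L_{i+1} = L_i + R_i$ (surjectivity on the right). A dimension count using $\dim(L_i \cap R_i) + \dim(L_i + R_i) = 2ri$, combined with $L_{i-1} \subset L_i \cap R_i$, $L_i + R_i \subset L_{i+1}$, and the known dimensions $r(i-1)$ and $r(i+1)$ of these ambient spaces, shows that each of those equalities implies the other and that either implies the acyclicity.

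The isomorphism statement is essentially formal once the normalization is in place: a morphism between such functors is a linear map on each stalk compatible with all arrows, but since the downward arrows are identities and the upward arrows are inclusions of subspaces of $V_n \cong k^{\oplus rn}$, the entire morphism is encoded by a single endomorphism of $k^{\oplus rn}$ carrying each $L_k$ into $L_k'$ and each $R_k$ into $R_k'$, and it is an isomorphism in the category exactly when that endomorphism is invertible. The main obstacle I anticipate is really just making precise the geometric identification $L_k = R_k$ (not just abstract isomorphism) for $k \neq i$, which rests on connectedness of the corresponding regions in the front diagram, and checking carefully that the singular-support conditions at the four loci around the crossing (the four arcs plus the vertex) translate into the intended linear-algebra picture with no hidden extra constraints.
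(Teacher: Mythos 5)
Your argument is correct and follows essentially the same route as the paper: the paper's (one-sentence) proof likewise invokes Proposition \ref{prop:bos}, frames the stalk of the top-most region as $k^{\oplus rn}$, and identifies all other stalks with their images under the injections, so that the crossing condition becomes the stated linear-algebra condition on the two flags. Your added details --- the dimension count showing the two flag conditions are equivalent to each other and to acyclicity at the crossing, and the reduction of morphisms to a single automorphism of $k^{\oplus rn}$ preserving the flags --- are exactly the points the paper leaves implicit, and they check out.
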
 
\begin{proof}
The correspondence with the description in Proposition \ref{prop:bos} comes from framing 
the stalk in the top-most region of the braid by fixing an isomorphism with
$k^{\oplus rn}$, and then identifying all other stalks with their image under the injections
into this vector space.  
\end{proof}

\begin{remark}
Our moduli spaces
$\cM_n(\beta)$ appear explicitly in the work of Brou\'e and Michel \cite{BrMi}, where they are called
$\cB(\beta)$.  They are sometimes called {\em open Bott-Samelson varieties}; the spaces
 $\overline{\mM}_r(\beta)$  are the corresponding closed Bott-Samelson varieties.  
 It was shown by Deligne \cite{D-braids} that the association
$\beta \mapsto \cB(\beta)$ gives a categorical representation of the positive braids.  

Here we have seen that, in type A, 
these spaces arise as moduli of objects in the Fukaya category which end on the given braid.
We note that in our presentation, all the {\em data} of a categorical positive braid representation 
(i.e. the higher homotopies etc.) are automatically present because isotopies of positive
braids can be chosen to be Legendrian isotopies, for which all desired categorical
data is furnished by (appropriate family versions of) Theorem \ref{thm:where-it-happens}. 
That is, for us, $\beta \mapsto \mM_r(\beta)$ was a categorical braid invariant {\em for a priori
geometric reasons},
which subsequently we checked combinatorially to be equivalent to a classical construction.
\end{remark}

We can similarly calculate the moduli space for the cylindrical closure of a braid: 

\begin{definition}
For $\beta$ a positive braid, we write $\beta^\circ$ for the legendrian knot in 
$T^\infty (S^1_x \times \RR_z)$ whose front diagram in the annulus 
is obtained by gluing the $x = 0$ and $x=1$ boundaries of the front plane. 
\end{definition}

Note this gluing creates no vertical (i.e., parallel to the $z$-axis)  tangents. 
The gluing data for taking a sheaf on strip to a sheaf on the cylinder is just a choice of isomorphism between the restriction to the right boundary
and the restriction to the left.  Thus

\begin{eqnarray} \label{circ} 
\cC_r(\beta^\circ) & = & \cC_r(\beta) \times_{\cC_r(\equiv) \times \cC_r(\equiv) } \cC_r(\equiv) \\
\mM_r(\beta^\circ) & = & \mM_r(\beta) \times_{\mM_r(\equiv) \times \mM_r(\equiv) } \mM_r(\equiv)
\end{eqnarray}

\subsection{Braid moduli as correspondences}
\label{sec:cylbraidclosure}

Our sheaves live on strips or cylinders decorated by braids.  Specifying a smaller strip or cylinder,
and taking the correspondingly restricted braid, induces a restriction morphism on moduli spaces. 
We are interested here in morphisms and diagrams of morphisms which can be constructed in this 
way, and especially in the use of these as convolution kernels. 

\vspace{2mm} \noindent {\bf Convention:}
We will be interested in the derived categories of constructible sheaves {\em on our moduli spaces
$\cM_r(\Lambda)$ of constructible sheaves}.  In the cases of interest here, where $\Lambda$ 
is a braid, the space 
$\cM_1(\Lambda)$ will always be an algebraic Artin stack over $k$, and we will be interested in 
its derived category of sheaves constructible with respect to {\em algebraic} stratifications.  
For an Artin stack $\mathfrak{A}$, we write $D(\mathfrak{A})$ for its derived category of constructible sheaves; 
the relevant foundations are worked out in \cite{LO2, LO3}.  In fact, all our stacks are global quotient stacks,
so the reader could use the equivalent notion of equivariant derived category \cite{BernsteinLunts} instead.  

We will sometimes want to use the mixed structure on the derived category; we will indicate as much
by writing $D_m(\mathfrak{A}).$
\vspace{2mm}

We first discuss a simpler moduli problem to which ours maps. 
Let $G$ be a semisimple algebraic group, and let $P \subset G$ be a parabolic subgroup.  Let $X$ be a compact one-manifold with boundary and carrying marked points, such that every point
on the boundary is marked.  We write $\mathrm{Bun}_{G, P}(X)$ for the moduli space of $G$ bundles on $X$, with reduction of structure
at the marked points to $P$.  There are natural maps given by cutting at or forgetting a marked point in the interior. 
$$  \mathrm{Bun}_{G, P}( \mathrm{Cut}_p(X))  \xleftarrow{cut} \mathrm{Bun}_{G, P} (X) \xrightarrow{forget}
\mathrm{Bun}_{G,P}(\mathrm{Forget}_p(X))$$ 
Here, if $S\subset X$ is the set of marked points and $p\in S$, then
$\mathrm{Forget}_p(X)$ 
is just $X$ as a space with $S \setminus p$ as the marked points.  By 
$\mathrm{Cut}_p(X)$ we mean the space where the interior point $p$ is replaced by two boundary points
$p', p''$.

We use the constant sheaf on $\mathrm{Bun}_{G, P} (X)$ as the kernel for an integral transform: 
\begin{eqnarray*}
D( \mathrm{Bun}_{G, P}( \mathrm{Cut}_p(X))) & \to & D(\mathrm{Bun}_{G,P}(\mathrm{Forget}_p(X))) \\
\cF & \mapsto & forget_! cut^* \cF
\end{eqnarray*} 
Note that $cut$ is a smooth map with fibre $P$, and $forget$ is a proper map with fibre $G/P$ (this is
why it is important that $P$ is parabolic), so this transform preserves purity.

The main cases of interest 
are when $X = \barbarbell$ or $X = \boing$.  We use the first

$$ \mathrm{Bun}_{G,P}(\barbell) \times \mathrm{Bun}_{G,P}(\barbell) \xleftarrow{cut}
\mathrm{Bun}_{G,P}(\barbarbell) \xrightarrow{forget} \mathrm{Bun}_{G,P}(\barbell)$$
to define a convolution product:  
\begin{eqnarray*}
\ast: D( \mathrm{Bun}_{G,P}(\barbell) ) 
\times D(\mathrm{Bun}_{G,P}(\barbell)) & \to & D(\mathrm{Bun}_{G,P}(\barbell)) \\
(\cF, \cG) & \mapsto & forget_! cut^* (\cF \otimes \cG) 
\end{eqnarray*}

In the second, it gives Lusztig's horocycle correspondence \cite{Lu}: 
$$ \mathrm{Bun}_{G,P}(\barbell) \xleftarrow{cut}
\mathrm{Bun}_{G,P}(\boing) \xrightarrow{forget} \mathrm{Bun}_{G,P}(\bigcirc)$$
which allows us to define the horocycle morphism
\begin{eqnarray*}
\mathscr{H}: D( \mathrm{Bun}_{G,P}(\barbell) )& \to & D( \mathrm{Bun}_{G,P}(\bigcirc) \\
\cF & \mapsto & forget_! cut^* \cF
\end{eqnarray*}

A point $p \in X$ determines a map to $pt/P$ which forgets everything except the structure at that point.  
Gluing is given by fibre products:
$$ \mathrm{Bun}_{G, P}( X/\{p = q\} ) = \mathrm{Bun}_{G,P}(X) \times_{pt/P \times pt/P} pt/P $$
$$ \mathrm{Bun}_{G,P}( (X, p) \cup_{p=q} (Y,q)) = \mathrm{Bun}_{G, P}(X) \times_{pt / P} \mathrm{Bun}_{G,P}(Y)$$

In particular, 

$$ \mathrm{Bun}_{G, P}( \boing ) = \mathrm{Bun}_{G,P}(\barbell) \times_{\bullet/P \times \bullet/P} \bullet/P $$
$$ \mathrm{Bun}_{G,P}( \barbarbell ) = \mathrm{Bun}_{G, P}(\barbell) \times_{\bullet/ P} \mathrm{Bun}_{G,P}(\barbell)$$

We return to the discussion of positive braids.  Fix a microlocal rank $r$, let $G = \mathrm{GL}_{rn}$ and $P = \mathrm{P}_{r,n}$, 
we write $\cM$ for $\cM_r$. 
Let $\beta$ be a positive braid in a strip; there is a natural map 
$\pi: \cM(\beta) \to \mathrm{Bun}_{G,P}(\barbell)$: the $G$ bundle structure is visible at the top of the strip,
and the reduction to $P$ bundles is imposed by the flags at the left and right.  Likewise if 
$\beta | \beta'$ is a diagram where we have in a strip $\beta$ on the left, $\beta'$ on the right, and we
imagine a line separating them (passing through no crossings), we have a map
$\cM(\beta | \beta') \to \mathrm{Bun}_{G,P}(\barbarbell)$.   We have similarly 
$\overline{\pi}_{\beta}: \overline{\cM}(\beta) \to  \mathrm{Bun}_{G,P}(\barbell)$ and
$\overline{\pi}_{\beta|\beta'}: \overline{\cM}(\beta|\beta') \to  \mathrm{Bun}_{G,P}(\barbarbell)$.

\begin{proposition} \label{prop:conv}
We have $\pi_{\beta*} \bQ \ast \pi_{\beta'*} \bQ = \pi_{\beta \beta' *} \bQ$, and similarly
$\overline{\pi}_{\beta*} \bQ \ast \overline{\pi}_{\beta'*} \bQ = \overline{\pi}_{\beta \beta'*} \bQ$
\end{proposition}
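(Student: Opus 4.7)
The plan is to reduce the identity to smooth base change in a single Cartesian square of moduli stacks. The key geometric observation is that $\cM(\beta|\beta')$ is tautologically equal to $\cM(\beta\beta')$: a sheaf on the strip containing $\beta\beta'$ automatically determines its flag structure along any interior vertical line that meets no crossings. Under this identification, $\pi_{\beta\beta'} = forget \circ \pi_{\beta|\beta'}$, and analogously for $\overline\pi$.

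First I would verify that the square
\[
\xymatrix@C=3em{
\cM(\beta|\beta')\ar[r]^-{c}\ar[d]_{\pi_{\beta|\beta'}} & \cM(\beta)\times\cM(\beta')\ar[d]^{\pi_\beta\times\pi_{\beta'}}\\
\mathrm{Bun}_{G,P}(\barbarbell)\ar[r]^-{cut} & \mathrm{Bun}_{G,P}(\barbell)\times\mathrm{Bun}_{G,P}(\barbell)
}
\]
is Cartesian. Both columns are fibre products over $\cM(\equiv) = \bullet/P = \mathrm{Bun}_{G,P}(\bullet)$: on the left by the gluing formula \eqref{modprod}, and on the right by the $\mathrm{Bun}$ gluing description recalled in Section \ref{sec:cylbraidclosure}. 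Concretely, an element of the fibre product in the lower-right amounts to a pair of sheaves on the two sub-strips together with a $P$-reduction at the middle marked point intertwining their boundary flags — but this datum is exactly a sheaf on the full strip, i.e.\ a point of $\cM(\beta|\beta')$.

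Given this, smooth base change along the smooth morphism $cut$ yields
\[
cut^*(\pi_\beta\times\pi_{\beta'})_*\bQ \;=\; \pi_{\beta|\beta'\,*}\,c^*\bQ \;=\; \pi_{\beta|\beta'\,*}\bQ .
\]
Since $(\pi_\beta\times\pi_{\beta'})_*\bQ = \pi_{\beta*}\bQ\boxtimes\pi_{\beta'*}\bQ$, applying $forget_*$ and using $forget\circ \pi_{\beta|\beta'} = \pi_{\beta\beta'}$ gives
\[
\pi_{\beta*}\bQ\ast\pi_{\beta'*}\bQ \;=\; forget_*\,cut^*(\pi_{\beta*}\bQ\boxtimes\pi_{\beta'*}\bQ) \;=\; forget_*\,\pi_{\beta|\beta'\,*}\bQ \;=\; \pi_{\beta\beta'\,*}\bQ,
\]
which is the claim. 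The argument for $\overline\pi$ is word for word identical: $\overline\cM(\beta|\beta') = \overline\cM(\beta\beta')$ and the analogous square is Cartesian by the same reasoning. The only point requiring real care is the verification of Cartesianness — in particular, checking that the open condition carving $\cM$ out of $\overline\cM$ is compatible with the fibre product — but this is automatic, as open immersions are stable under base change.
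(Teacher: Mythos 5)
Your argument is correct and is essentially the paper's own proof: you use the same diagram (the Cartesian square relating $\cM(\beta|\beta')$ and $\cM(\beta)\times\cM(\beta')$ over $\mathrm{Bun}_{G,P}(\barbarbell)$ and $\mathrm{Bun}_{G,P}(\barbell)\times\mathrm{Bun}_{G,P}(\barbell)$, which in the paper is the assertion of Equation \eqref{modprod} together with the gluing description of $\mathrm{Bun}_{G,P}$), the tautological identification $\cM(\beta|\beta')\cong\cM(\beta\beta')$, smooth base change along $cut$, and commutativity with $forget$. The extra remarks you add (the explicit check of Cartesianness and the compatibility of the open condition for $\cM$ versus $\overline{\cM}$) simply spell out what the paper leaves implicit.
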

\begin{proof}
Equation (\ref{modprod}) is the assertion that
the
left hand square in the following diagram is Cartesian, and 
(our imaginary line had no power to change the moduli space) the upper right horizontal arrow is an isomorphism: 
$$ 
\xymatrix{
\cM(\beta) \times \cM(\beta') \ar[d]^{\pi \times \pi} & \cM(\beta | \beta') \ar[d]^{\pi} \ar[l] \ar[r]^{\sim} & \cM(\beta \beta') \ar[d]^{\pi} \\
\mathrm{Bun}_{G,P}(\barbell)  \times  \mathrm{Bun}_{G,P}(\barbell) & 
\mathrm{Bun}_{G,P}(\barbarbell) \ar[r]^{forget} \ar[l]_{\,\,\,\,\,\,\,\,\,\,\,\,\,\,\,\,\,\,\,\,\,cut} & \mathrm{Bun}_{G,P}(\barbell)
}
$$
Now use smooth base change on the left hand square and commutativity on the right.  The statement for
$\overline{\pi}$ follows by an identical argument. 
\end{proof}

Let $\beta$ be a braid, $\beta^\circ$ its cylindrical closure, and $\beta|^\circ$ its cylindrical closure with
an imaginary line drawn where the braid was closed up.  There are natural maps
$\pi_{\beta^\circ}: \mM(\beta^\circ) \to \mathrm{Bun}_{G,P}(\bigcirc)$ and
$\pi_{\beta|^\circ}: \mM(\beta|^\circ) \to \mathrm{Bun}_{G,P}(\boing)$, and similarly 
$\overline{\pi}_{\beta^\circ}: \overline{\mM}(\beta^\circ) \to \mathrm{Bun}_{G,P}(\bigcirc)$ and
$\overline{\pi}_{\beta|^\circ}: \overline{\mM}(\beta|^\circ) \to \mathrm{Bun}_{G,P}(\boing)$.  
We have:

\begin{proposition} \label{prop:hor}
We have $\mathscr{H}(\pi_{\beta*} \bQ)  = \pi_{\beta^\circ *} \bQ$, and similarly
$\mathscr{H}(\overline{\pi}_{\beta*} \bQ)  = \overline{\pi}_{\beta^\circ *} \bQ$
\end{proposition}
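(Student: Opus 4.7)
The plan is to mirror the proof of Proposition \ref{prop:conv}, with the horocycle correspondence replacing the convolution correspondence. The geometric input is a ``cut-and-glue'' decomposition of the cylindrical closure: let $\beta|^\circ$ denote $\beta^\circ$ equipped with an imaginary vertical line drawn along the seam where the two sides of the strip are identified. Since the line carries no data, the natural map $\mM(\beta|^\circ)\xrightarrow{\sim}\mM(\beta^\circ)$ is an isomorphism, and forgetting the cylindrical identification of the left and right flags gives a natural map $\mM(\beta|^\circ)\to\mM(\beta)$.

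With this setup I would assemble the diagram
\[
\xymatrix{
\mM(\beta)\ar[d]^{\pi_\beta} & \mM(\beta|^\circ)\ar[d]^{\pi_{\beta|^\circ}}\ar[l]\ar[r]^{\sim} & \mM(\beta^\circ)\ar[d]^{\pi_{\beta^\circ}} \\
\mathrm{Bun}_{G,P}(\barbell) & \mathrm{Bun}_{G,P}(\boing)\ar[l]_{cut}\ar[r]^{forget} & \mathrm{Bun}_{G,P}(\bigcirc)
}
\]
and check that the left square is Cartesian. This is the $\mathrm{Bun}_{G,P}$-enhanced version of \eqref{circ}: both sides of the putative isomorphism $\mM(\beta|^\circ)\cong\mM(\beta)\times_{\mathrm{Bun}_{G,P}(\barbell)}\mathrm{Bun}_{G,P}(\boing)$ classify an object of $\mM(\beta)$ together with a gluing of the left and right $P$-flags, using the general gluing formula $\mathrm{Bun}_{G,P}(\boing) = \mathrm{Bun}_{G,P}(\barbell)\times_{pt/P\times pt/P}pt/P$ already recorded. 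Since $cut$ is smooth (with fibre $P$), smooth base change then yields $cut^{*}\pi_{\beta*}\bQ\cong\pi_{\beta|^\circ*}\bQ$.

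Pushing forward along $forget$ and using the commutativity of the right-hand square together with the isomorphism $\mM(\beta|^\circ)\xrightarrow{\sim}\mM(\beta^\circ)$ gives
\[
\mathscr{H}(\pi_{\beta*}\bQ) \;=\; forget_{*}\, cut^{*}\,\pi_{\beta*}\bQ \;\cong\; forget_{*}\,\pi_{\beta|^\circ*}\bQ \;\cong\; \pi_{\beta^\circ*}\bQ,
\]
which is the first assertion. The claim for $\overline{\pi}$ follows by a word-for-word identical argument, with $\overline{\mM}$ and $\overline{\pi}$ in place of $\mM$ and $\pi$ throughout; the only feature of $\mM$ used above is the gluing description \eqref{circ}, which holds equally for $\overline{\mM}$.

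The one point requiring real care is verifying that the left square is Cartesian at the level of the $P$-reduction datum along the seam (not merely of the underlying $G$-bundle), since that is the step where the flag on the right of $\beta$ is forced to coincide with the flag on the left of $\beta$. I do not anticipate a substantive obstacle: this is a direct translation of the already-established \eqref{circ} into the $\mathrm{Bun}_{G,P}$ framework, and it proceeds in close parallel to the analogous verification for \eqref{modprod} that underlies Proposition \ref{prop:conv}.
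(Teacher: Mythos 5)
Your argument is correct and coincides with the paper's own proof: the same three-column diagram with $\mM(\beta)$, $\mM(\beta|^\circ)$, $\mM(\beta^\circ)$ over the cut/forget correspondence, Cartesianness of the left square, smooth base change, and commutativity on the right, with the $\overline{\pi}$ case handled verbatim. Your extra justification of the Cartesian square via the gluing formula $\mathrm{Bun}_{G,P}(\boing) = \mathrm{Bun}_{G,P}(\barbell)\times_{pt/P\times pt/P}pt/P$ and \eqref{circ} is exactly the content the paper leaves implicit, so there is nothing to change.
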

\begin{proof}
The
left hand square in the following diagram is Cartesian, and the upper right horizontal arrow is an isomorphism: 
$$ 
\xymatrix{
\cM(\beta)  \ar[d]^{\pi } & \cM(\beta |^\circ ) \ar[d]^{\pi} \ar[l] \ar[r]^{\sim} & \cM(\beta^\circ ) \ar[d]^{\pi} \\
 \mathrm{Bun}_{G,P}(\barbell) & 
\mathrm{Bun}_{G,P}(\boing) \ar[r]^{forget} \ar[l]_{\,\,\,\,cut} & \mathrm{Bun}_{G,P}(\bigcirc)
}
$$
Now use smooth base change on the left hand square and commutativity on the right.  The statement
for $\overline{\pi}$ follows by an identical argument. 
\end{proof}

There are (at least) two standard presentations of the spaces involved we have been using: 
$$ 
\xymatrix{
P \backslash G / P  \ar@{=}[d] & P^{ad} \backslash G \ar@{=}[d] \ar[l] \ar[r] & G \backslash G \ar@{=}[d]\\
 \mathrm{Bun}_{G,P}(\barbell) & 
\mathrm{Bun}_{G,P}(\boing) \ar[r]^{forget} \ar[l]_{\,\,\,\,cut} & \mathrm{Bun}_{G,P}(\bigcirc) \\
G\backslash (G/P \times G/P)  \ar@{=}[u] & G \backslash (G/P \times G) \ar@{=}[u] \ar[l] \ar[r] & G \backslash G \ar@{=}[u]\\
}
$$

In the above, $P$ acts on the left or right as described, except for the upper middle where we have written
$P^{ad}$ to indicate it acts by conjugation.  On the other hand $G$ acts on $G/P$ by left multiplication and on $G$
by conjugation.  The middle row gives the most ``coordinate free'' description, in which terms we restate Corollary
\ref{cor:bs}:

\begin{proposition}
\label{prop:modbs}
The map $\overline{\cM}(s_i) \to \mathrm{Bun}_{G,P}(\barbell) = G\backslash (G/P \times G/P)$ is injective.  Its image
is the locus of pairs of flags  $(F^{\ell},F^r)$ which are identical except possibly at the $i$-th position, i.e. if we write
\[
\overline{BS}_i =  \{ (0 \subset F^{\ell}_1 \subset \cdots \subset F^\ell_{n-1} \subset V, 0 \subset F^r_1 \subset \cdots \subset F^r_{n-1} \subset V) | 
 \, F^{\ell}_j = F^r_j \mbox{ for } j \ne i\}
 \]  
 then $\overline{\cM}(s_i) = G \backslash \overline{BS}_i$.  Likewise if we write 
  $BS_i$ of flags where $F^\ell_i \cap F^r_i = F_{i-1}$, then
 the image of $\cM(s_i) = G \backslash BS_i$.
\end{proposition}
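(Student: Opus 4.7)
The proposition is essentially a translation of Corollary \ref{cor:bs} into the language of double quotients of the flag variety. I will proceed in three steps: (i) describe the map $\overline{\cM}(s_i) \to G \backslash (G/P \times G/P)$ concretely via framed stalks at the top region; (ii) use Corollary \ref{cor:bs} to deduce injectivity and identify the essential image with $G \backslash \overline{BS}_i$; (iii) impose the crossing singular-support condition to cut out $G \backslash BS_i$.

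For (i), an object $F \in \overline{\cC}_r(s_i)$ has top-region stalk a projective $k$-module of rank $rn$, and the vertical arcs on the left and right boundaries of the strip produce two chains of inclusions of stalks below this top stalk. A framing of the top stalk identifies it with $k^{rn}$, and under this identification the left and right chains become two full flags $L_\bullet, R_\bullet$ whose successive quotients have rank $r$, i.e., a point of $G/P \times G/P$. Changing the framing is the action of $G = \mathrm{GL}_{rn}$, so the construction descends to a map to $G \backslash (G/P \times G/P) = \mathrm{Bun}_{G,P}(\barbell)$.

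For (ii), Corollary \ref{cor:bs} asserts two things: the pair $(L_\bullet, R_\bullet)$ determines the framed object up to unique isomorphism, and every pair with $L_k = R_k$ for $k \neq i$ arises. Moreover every isomorphism of (unframed) objects lifts to a change of framing, i.e., to an element of $G$. These statements translate directly into saying that distinct $G$-orbits give distinct isomorphism classes of objects (injectivity of the map of stacks), and that the essential image is exactly $G \backslash \overline{BS}_i$.

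For (iii), the same corollary characterizes when $F$ lies in $\cC_r(s_i)$ rather than merely $\overline{\cC}_r(s_i)$ via the crossing singular-support condition, which translates to $L_{i-1} = L_i \cap R_i = R_{i-1}$ (equivalently $L_{i+1} = L_i + R_i = R_{i+1}$). Since $L_{i-1}$ and $R_{i-1}$ coincide as the shared $(i{-}1)$-st piece $F_{i-1}$ of the two flags (they agree away from position $i$), this collapses to $F^\ell_i \cap F^r_i = F_{i-1}$, which is precisely the defining condition of $BS_i$. The hard work has already been done in Corollary \ref{cor:bs}; the only bookkeeping is confirming that the stacky quotient by $G$ matches the groupoid of isomorphisms in $\overline{\cC}_r(s_i)$, and I anticipate no real obstacle here.
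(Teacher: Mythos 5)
Your proposal is correct and follows the paper's own route: the paper treats this proposition as a coordinate-free restatement of Corollary \ref{cor:bs}, whose proof is exactly the framing argument you give (fix an isomorphism of the top-region stalk with $k^{rn}$, so the two boundary chains become the flags $L_\bullet, R_\bullet$, with the $G$-action absorbing the choice of framing and the crossing condition cutting out $BS_i$ inside $\overline{BS}_i$). Nothing further is needed.
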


$BS_i$ is what is called in \cite{WW-HOMFLY} a Bott-Samelson variety. 
Figure \ref{fig:bs} below illustrates the correspondence between points in the Bott-Samelson variety and sheaves on a front diagram:
\begin{figure}[H]
\includegraphics[scale = .3]{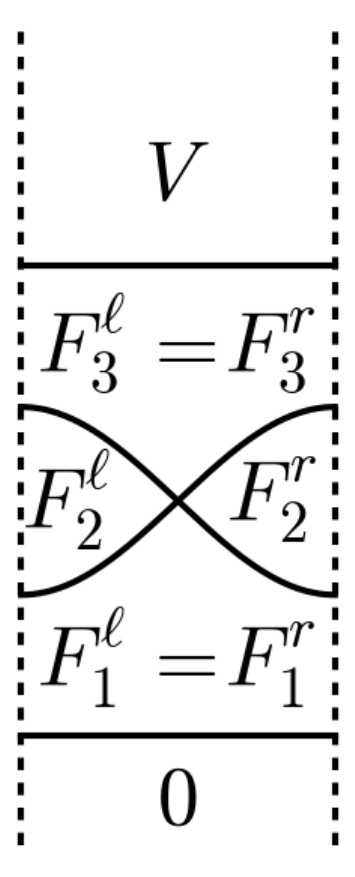}
\caption{A point of the Bott-Samelson lies over a pair of flags}
\label{fig:bs}
\end{figure}

\begin{remark}
\label{rmk:modbs}
Writing $s_{i_1} \ldots s_{i_w}$ as $| s_{i_1} | s_{i_2} | \ldots  | s_{i_w} |$ and taking the ``bottom row'' point of view leads to the description
$$\cM_r(s_{i_1} \ldots s_{i_w}) = G \backslash \{(F^1, F^2, \ldots, F^{w+1})  \in (G/P)^{w+1} | (F^j, F^{j+1}) \in BS_{i_j} \} $$
Similarly, writing $s_{i_1} \ldots s_{i_w}^\circ$ as $| s_{i_1} | \ldots | s_{i_w} |^\circ$ leads to the description
$$\cM_r(s_{i_1} \ldots s_{i_w}^\circ) = G \backslash \{ (F^1,F^2,\ldots, F^{w+1}, g) \in (G/P)^{w+1} \times G \, | \, (F^j, F^{j+1}) \in BS_{i_j} \, \& \, gF^{w+1} = F^1\}
$$
\end{remark}

\begin{example}
\label{ex:trivialbraid}
For the trivial braid and in rank 1, we have 
$$
\xymatrix{
\cM_1(\equiv_n^\circ) \ar[d]^{\pi} \ar@{=}[r] & G \backslash \{(F, g) \in G/B \times G\, | \, gF = F\}   \ar[d]^{\pi} \\
\cM_1(\bigcirc) \ar@{=}[r] &  G \backslash G 
}
$$
The right hand side is the Grothendieck simultaneous resolution of $G = \mathrm{GL}_n$.
\end{example}

\begin{remark}
The horocycle correspondence originates in the work of Lusztig on character sheaves \cite{Lu}, where the 
$\overline{\cM}_n(\beta)$ also appears explicitly. In particular, as
observed in \cite{WW-traces}, $\mathscr{H}(\overline{\pi}_{\beta*} \bQ) =  \overline{\pi}_{\overline{\beta} *} \bQ$ is a character
sheaf by definition, and in fact is a cuspidal character sheaf.  
\end{remark}

\subsection{HOMFLY homology}

There are many variants of Khovanov homology; 
we are interested here in the triply graded version constructed in \cite{KhR, Kh}, which categorifies
 the HOMFLY polynomial in the same sense that the original Khovanov homology categorifies the Jones
 polynomial.  This ``HOMFLY homology'' was predicted by the physicists
 Gukov, Schwarz, and Vafa \cite{GSV}; mathematically it is constructed from the
 braid group categorifications of Deligne \cite{D-braids} and Rouquier \cite{Ro}. 
 Williamson and Webster reinterpreted their construction in the language of geometric 
 representation theory, formulating it in terms of certain character sheaves arising from the 
 horocycle correspondence \cite{WW-Hochschild, WW-traces, WW-HOMFLY}.  

We have seen above that
all these ingredients emerge naturally from the consideration of moduli of sheaves
microsupported on braids.  This leads to a description of the HOMFLY 
homology in terms of these moduli.  The precise statement is somewhat complicated,
and requires the use of a formalism of mixed complexes of sheaves and their weight filtrations
(a notion we briefly recall and give references for in Section \ref{subsubsec:weight-formalism}).

The meaning of the theorem below should not be obscured by the proliferation of weights and 
spectral sequences and the 
combinatorics of this section.  Although we have used diagrams and constructible sheaves on the plane to compute them, 
neither the moduli space $\cM_n(\beta^\circ) $ nor the map $\cM_n(\beta^\circ) \to G \backslash G$ need ever mention
a knot diagram.  Indeed, the space may be defined as the moduli of `rank one' objects in the Fukaya category which have singular
support in the knot, and the map amounts to looking at the local system at infinity.  The significance of the theorem
is that it produces the HOMFLY homology from the geometric data of the knot itself, by a geometric construction.

\begin{theorem}  \label{thm:HHH} Let $\beta^\circ$ be the cylindrical closure of a positive braid.   
As in Proposition \ref{prop:hor}, consider the map induced by restriction to the top of the 
cylinder: 
$$\pi_{\beta^\circ}: \cM_n(\beta^\circ) \to \mathrm{Bun}_{G,P}(\bigcirc) = G\backslash G$$ 
Let $W$ be any weight filtration on
$\pi_{\beta^\circ *} \bQ$.
The $E_1$ hypercohomology spectral sequence 
$$E^{ij}_1 = H^{i+j}(G \backslash G, \mathrm{Gr}_{-i}^W \pi_{\beta^\circ *} \bQ) \Rightarrow H^{i+j}(G\backslash G, \pi_{\beta^\circ*} \bQ)$$ 
is, up to a shift of gradings, identical to the spectral sequence constructed in \cite{WW-HOMFLY}. 
In particular its $E_2$ page is the colored-by-$n$ triply graded HOMFLY homology.
\end{theorem}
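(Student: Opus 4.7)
The plan is to match the inputs of the Webster--Williamson construction with the sheaves $\pi_{\beta^\circ *}\bQ$ step by step, so that their spectral sequence and ours are literally the same sequence up to a grading shift.

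First I would recall the structure of the construction in \cite{WW-HOMFLY}: to each simple positive generator $s_i$ one assigns a specific $P$-biequivariant sheaf $\cK_i$ on $G/P$ (equivalently on $P \backslash G / P$), braid composition is implemented by the convolution $\ast$ on $D(G\backslash (G/P\times G/P))$ reviewed in \S\ref{sec:cylbraidclosure}, and taking the cylindrical closure of the braid is implemented by the horocycle correspondence $\mathscr{H}:D(\mathrm{Bun}_{G,P}(\barbell))\to D(\mathrm{Bun}_{G,P}(\bigcirc))$. The colored-by-$n$ HOMFLY homology of $\beta^\circ$ is then the $E_2$-page of the hypercohomology spectral sequence associated to a weight filtration on $\mathscr{H}(\cK_{i_1}\ast \cdots \ast \cK_{i_w})$.

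Next I would identify the single-crossing input. By Corollary \ref{cor:bs}, $\overline{\cM}_n(s_i)=G\backslash \overline{BS}_i$ and $\cM_n(s_i)=G\backslash BS_i$, so $\pi_{s_i*}\bQ$ (and likewise $\overline{\pi}_{s_i *}\bQ$) is, up to a cohomological shift coming from the dimension of the fibre, exactly the constant sheaf on the closed (respectively open) Bott--Samelson cell which is the standard choice of $\cK_i$ in \cite{WW-HOMFLY}. This is the first key step: show that my $\pi_{s_i *}\bQ$ agrees, as an object of $D(P\backslash G/P)$ equipped with its standard weight filtration, with the Webster--Williamson generator (I would use either the open or the closed version, keeping track of shifts and of the twist needed to match the conventions in Appendix \ref{app:HOMFLY}).

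Having matched the inputs, I would assemble the whole braid. Proposition \ref{prop:conv} gives
\[
\pi_{\beta *}\bQ \;=\; \pi_{s_{i_1}*}\bQ \,\ast\, \pi_{s_{i_2}*}\bQ \,\ast\, \cdots \,\ast\, \pi_{s_{i_w}*}\bQ,
\]
which is exactly the Webster--Williamson assignment to the word $s_{i_1}\cdots s_{i_w}$. Then Proposition \ref{prop:hor} gives
\[
\pi_{\beta^\circ *}\bQ \;=\; \mathscr{H}\bigl(\pi_{\beta *}\bQ\bigr),
\]
again matching the closure step in \cite{WW-HOMFLY}. Because convolution and $\mathscr{H}$ are defined by smooth pullback and proper pushforward along the Cut/Forget correspondences, the weight filtration is functorial and is (again up to a standard shift) transported to the weight filtration on $\pi_{\beta^\circ *}\bQ$. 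Consequently the hypercohomology spectral sequences agree on the $E_1$-page, hence at $E_2$, which by \cite{WW-HOMFLY} is the colored-by-$n$ Khovanov--Rozansky triply graded homology.

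The main obstacle I anticipate is purely bookkeeping: matching the three gradings (homological, $q$, and $a$) between our geometric setup and the Webster--Williamson conventions, since each of the steps (choice of open vs.\ closed Bott--Samelson, the shifts produced by proper/smooth pushforward and by the horocycle correspondence, and the weight-vs.\ perverse normalizations) contributes a shift. I would do this matching once in a single lemma fixing normalizations, using Appendix \ref{app:HOMFLY} to pin down the HOMFLY skein conventions and Example \ref{ex:trivialbraid} as a sanity check on the trivial braid (where $\pi_{\equiv_n^\circ*}\bQ$ is the pushforward from the Grothendieck--Springer resolution, whose weight filtration is well known to recover the colored HOMFLY of the $n$-component unlink).
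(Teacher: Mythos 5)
Your plan is essentially the paper's proof: identify the Webster--Williamson crossing sheaf with $\pi_{s_i*}\bQ$, assemble the braid word via Proposition \ref{prop:conv}, close it up via Proposition \ref{prop:hor}, and use that the Cut/Forget correspondence is smooth pullback followed by proper pushforward so that weights are preserved --- which is exactly how the paper transports the spectral sequence of \cite{WW-HOMFLY} (taken there on $P^{ad}\backslash G$ for $cut^*\mathcal{T}_\beta$, not on $G\backslash G$) to the one for $\pi_{\beta^\circ *}\bQ$, since hypercohomology factors through the proper, weight-preserving map $forget$. One caution on your first step: the generator for a positive crossing is the star-extension $j_{s_i*}\bQ$ of the constant sheaf from the \emph{open} orbit, which carries a nontrivial two-step weight filtration and is not a shifted constant sheaf on the closed Bott--Samelson cell (that constant sheaf is only its weight-zero piece, and using it would yield the pure Soergel object rather than the Rouquier complex); but since $\pi_{s_i}$ is an isomorphism onto that orbit, the identification $\pi_{s_i*}\bQ=\mathcal{T}_i$ is immediate and your matching step goes through.
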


If we take \cite{WW-HOMFLY} as the definition of triply graded homology, the proof
of this theorem is almost immediate from Propositions \ref{prop:hor} and \ref{prop:conv}. 
However, we will take \cite{Kh} as the definition of triply graded homology --- which we 
choose to do here because \cite{Kh} is by far the most elementary and explicit account, 
and even more importantly,
has explicit normalization conventions.  In this case, the proof of this Theorem can be 
extracted from \cite[\S 6.3]{WW-HOMFLY}, via Propositions \ref{prop:hor} and \ref{prop:conv}.  
However, \cite{WW-HOMFLY} can be difficult to navigate, and moreover has nowhere explicit
normalization conventions.  Thus we give in this subsection a complete account 
of the necessary ideas from \cite{WW-HOMFLY}, leading up to a proof of the above theorem.

 \label{sec:HOMFLY}

Our goal in Sections \ref{subsubsec:weight-formalism}--\ref{subsubsec:Khovanov} is to provide a crash course in triply graded knot homology: the construction of  Williamson and Webster, 
its relation to the claim we attribute to them in the body of the text,
its relation to the `Soergel bimodule' construction of Khovanov \cite{Kh}, and some sample computations.

\subsubsection{Weights}
\label{subsubsec:weight-formalism}

In algebraic geometry, the formalism of constructible sheaves has a richer variant.  Working either with mixed $\ell$-adic complexes on a variety defined over a finite field \cite{SGA45, Weil2}, or with mixed Hodge modules on a variety defined over the complex numbers \cite{Sai}, one has a triangulated category $D_m(X)$ of ``mixed complexes,'' along with a forgetful functor to the usual derived category of sheaves.

For each $k \in \ZZ$, there is a subcategory $P_k(X) \subset D_m(X)$ of pure complexes of weight $k$.  
  Shifting the homological degree changes the weight, $P_k(X)[1] = P_{k+1}(X)$;
there is a Tate twist denoted $(1)$ which 
 shifts weights without changing the homological degree: $P_k(X)(1) = P_{k-2}(X)$. 
We write $\langle 1 \rangle := [2](1)$ for the weight-preserving shift-twist.  
The constant sheaf on a smooth variety placed  in cohomological degree zero is
pure of weight zero --- we will denote it by $\bQ$, though the reader who prefers the theory of $\ell$-adic sheaves should replace this symbol with $\overline{\bQ}_{\ell}$.

The word `mixed' means that objects $K \in D_m(X)$ admit {\em weight filtrations}, i.e.  
$$ \cdots \to W_{i-1} K \to W_i K \to W_{i+1} K \to \cdots $$  
so that $\mathrm{Gr}_k^W:= \mathrm{Cone}(W_{k-1} \to W_k) \in P_k(X)$, $W_{\gg 0} = K$ and $W_{\ll 0} = 0$.  
When $K$ admits a weight filtration whose subquotients all have weight $\ge n$, one says
$K$ has weight $\ge n$.  Every map between complexes which comes from a map of (cohomologies of) algebraic
varieties is strictly compatible with weight filtrations.
 The basic fact about weight filtrations \cite{Weil2} is that star-pushforward
preserves the property of having weight $\ge n$.  

Weights interact well with perverse sheaves \cite{BBD}.  
When $K$ is (shifted) perverse, there is a unique weight filtration by (shifted) perverse subsheaves, which in this
case we prefer and term `the' weight filtration.  A fundamental result (the `decomposition theorem') 
is that objects in $\bigoplus P_k$ are all direct sums of shifted simple perverse sheaves.

For any complex of sheaves $K$ with any filtration $W$, there is a spectral sequence 
$$E^{ij}_1 = H^{j}(X, \Gr_{-i}^W K[i]) \Rightarrow H^{i+j}(X, K)$$
We recall the construction of the first differential.  There is an exact triangle 
\begin{equation} \label{eqn:chr} 
\Gr_i^W K \to W_{i+1} / W_{i-1} K \to \Gr_{i+1}^W K \xrightarrow{[1]}
\end{equation}
and in particular a boundary morphism $\Gr_{i+1}^W K \to \Gr_i^W K [ 1]$;
one can show 
that composing these morphisms forms a complex.
$$ \cdots \to \mathrm{Gr}_{-i+1}^W K [i-1] \to \mathrm{Gr}_{-i}^W K [i] \to \Gr_{-i-1}^W K [i+1] \to \cdots$$
The degree in the complex is the number in brackets. 
By definition the first differential in the above spectral sequence is induced by this map:
$$d_1: E^{i,j}_1 = H^{j}(X, \Gr_{-i}^W K[i]) \to H^{j}(X, \Gr_{-i-1}^W K [i+1]) = E^{i+1,j}_1$$

Webster and Williamson prove that different choices of the weight filtration $W$ yield homotopic complexes
of the form (\ref{eqn:chr}), thus defining the chromatography\footnote{
According to Wikipedia, ``Chromatography 
is the collective term for a set of laboratory techniques for the separation of mixtures.''}  map: 
$$\mathrm{Chr}: D_m(X) \to \mathrm{Kom} \,\, P_0(X)$$

\vspace{2mm} \noindent {\bf Notation}. We define $[i]$, $(i)$, $\langle i \rangle$ on 
$\mathrm{Kom}\, \, P_0(X)$ so that these symbols commute with Chr.  Explicitly,
for  $K^\bullet \in \mathrm{Kom} \,\, P_0(X)$, we write 
$K[i]$ for the complex whose terms are $K[i]^j = K^{i+j}$; i.e. the square bracket
shifts the external degree and does not affect the internal degree (which could anyway
not shift without having the terms leave $P_0(X)$).  We write
$K\langle i \rangle^\bullet$ for the complex whose terms are 
$K\langle i \rangle^j = K^i [2j](j) = K^i \langle j \rangle$; that is, it shift-twists interally and leaves
the external degree alone.  Finally we write
$K^\bullet(i) := K^\bullet \langle i \rangle [-2i]$.   \vspace{2mm}

\subsubsection{Representations of the braid group via convolution kernels}

As we recalled in \ref{sec:cylbraidclosure}, given the data of a semisimple group 
and a parabolic subgroup, there is a diagram 

$$ \mathrm{Bun}_{G,P}(\barbell) \times \mathrm{Bun}_{G,P}(\barbell) \xleftarrow{cut}
\mathrm{Bun}_{G,P}(\barbarbell) \xrightarrow{forget} \mathrm{Bun}_{G,P}(\barbell)$$

\noindent and a corresponding convolution product on the sheaf categories

\begin{eqnarray*}
\ast: D( \mathrm{Bun}_{G,P}(\barbell) ) 
\times D(\mathrm{Bun}_{G,P}(\barbell)) & \to & D(\mathrm{Bun}_{G,P}(\barbell)) \\
(\cF, \cG) & \mapsto & forget_! cut^* (\cF \otimes \cG) 
\end{eqnarray*}

We wish to be more explicit about the spaces 
and categories which arise.  For the spaces, we use the
identification $\mathrm{Bun}_{G,P}(\barbell)  \cong P \backslash G / P$.  
By $P \backslash G / P$, we really mean the quotient of $G$ by $P \times P$ 
acting on the left by 
$(b_1, b_2)\cdot g = b_1 g b_2^{-1}$ -- note the restriction to the diagonal $P$ gives
the conjugation action $b \cdot g = b g b^{-1}$.

Recall that $cut$ is smooth (with fibres $P$) and $forget$ is proper
(with fibres $G/P$), 
so if we are working with mixed sheaves, convolution preserves the pure sheaves, weights, etc.

Let us restrict attention to the case where $P= B$ is a Borel subgroup.  
Recall that the 
$B \times B$ orbits
on $G$ are enumerated by the Weyl group (in the case relevant here, $G = GL_n$ and the Weyl group is 
the group of permutations); we write $G_w$ for the orbit corresponding to $w \in W$.  
The inclusion of the orbit $G_w$ induces an inclusion $j_w: B\backslash G_w / B \to B \backslash G/ B$.
Let us then fix notation for certain sheaves:

$$ \mathcal{T}_w := j_{w *} \bQ \,\,\,\,\,\,\,\,\,\,\,\,\,\,\,\,\,\,\,\,\,\,\,\,\,\,\,\,\, \mathcal{T}_w^{-1} := j_{w !} \bQ \langle length(w) \rangle$$

Rouquier \cite[\S 5]{Ro} shows there is a representation of the braid group of $G$ (i.e., the usual
braid group when $G = GL_n$)  on $D(B \backslash G / B)$
determined by $w^{\pm} \mapsto \ast \mathcal{T}_w^{\pm}$.  
In particular, 
given a braid $\beta = s_{i_1}^\pm \ldots s_{i_k}^{\pm}$, the mixed complex
$\mathcal{T}_\beta := \mathcal{T}_{i_1}^{\pm} \ast \cdots \ast \mathcal{T}_{i_k}^{\pm}
\in D(B \backslash G / B)$ only depends on the braid.  

Henceforth we restrict ourselves to the case $G = \GL_n$ and $B$ the upper triangular matrices. 
Recall that we have defined a morphism $\pi_\beta: \cM_1(\beta) \to \mathrm{Bun}_{G,B}(\barbell)$
by restriction of the sheaf to the boundary of the front strip.  

\begin{proposition} \label{prop:rrr} 
We have $\mathcal{T}_\beta \cong \pi_{\beta*}\bQ$
\end{proposition}
\begin{proof}
It follows from the calculations 
in \ref{subsec:braidmoduli} that, for an elementary reflection $\beta = s$ we have a natural 
identification $\pi_{s} = j_s$, hence in particular, $j_{s*} \bQ \cong \pi_{s*} \bQ$.  
The desired assertion now follows from Proposition \ref{prop:conv}.   
\end{proof}

As we have previously remarked, the fact that $\beta \mapsto \pi_{\beta*}\bQ$ gives a representation
of the positive braid monoid follows {\em without calculation} from the invariance of the underlying categories
of sheaves microsupported on braids under Legendrian isotopy --- calculations are only required to then identify
it with the representation of Rouquier.  However, we do not see from this perspective
any reason why the representation should extend to the braid group.

\subsubsection{From geometric representation theory to algebra} 
Here we explain in detail how to pass from
the geometric representation theory formulation of HOMFLY homology in 
\cite{WW-Hochschild, WW-HOMFLY, WW-traces} to the algebraic formulation in terms of Soergel bimodules
of \cite{Kh}.  We {\em do not} use the shift conventions of \cite{WW-HOMFLY}, avoiding especially the `half Tate twist'.
Let us establish the following notation.

\vspace{2mm} \noindent {\bf Notation}.  For $A, B$ abelian categories and $T: A \to B$ a functor, 
we write $\underline{T}: \mathrm{Kom}(A) \to \mathrm{Kom}(B)$ for the induced functor on the categories of
complexes up to homotopy.  We use the same notation for complexes of Soergel bimodules in Section \ref{subsubsec:Khovanov}.
\vspace{2mm}

With this in hand, we will give an argument for the following assertion:

\begin{theorem} \label{thm:WWH} \cite{WW-HOMFLY} Let $\beta$ be a braid. 
Let $B \to B \times B$ be the inclusion of the diagonal
and $I: B^{ad} \backslash G \to B \backslash G / B$ the corresponding quotient map.
The $E_1$ hypercohomology spectral sequence 
$$E^{ij}_1 = H^{i+j}(B^{ad} \backslash G, \mathrm{Gr}_{-i}^W I^* \mathcal{T}_\beta) \Rightarrow H^{i+j}(B^{ad} \backslash G, I^* \mathcal{T}_\beta)$$ determined by the complex
$I^* \mathcal{T}_\beta$ and any weight filtration $W$ does not depend on the weight filtration
after the first page, and depends, up to an overall shift of the gradings, only on the 
topological knot type of the braid closure.  

Each term in the spectral sequence itself carries a weight filtration,
which we denote $\Omega$.  The triply graded vector space
$$\mathbb{H}^{ij}_k(\beta) := \mathrm{Gr}_{j+k} E_2^{ij} =
\mathrm{Gr}^\Omega_{j+k} H^i(\underline{H}^j(B^{ad} \backslash G, \mathrm{Chr}(I^* \mathcal{T}_\beta )))
$$ 
coincides with the triply graded HOMFLY homology of \cite{KhR, Kh}.
\end{theorem}

A proof of this statement is given in \cite[\S 6.3]{WW-HOMFLY}, using
\cite{WW-Hochschild}.  Here we give an expanded treatment, specialized to the case of 
the usual (uncolored) HOMFLY homology, for the purpose of 
spelling things out in enough detail to explicitly match gradings. 
\vspace{2mm}

The basic calculation is the following: 

\begin{proposition}
The pullback and Gysin pushforward for the inclusion $G_1 \to \overline{G_{s_i}}$ induce:
\begin{eqnarray*}
\mathrm{Chr}(\mathcal{T}_i) & = & \bQ_{G_1} \langle -1 \rangle \to \bQ_{\overline{G_{s_i}}} \\
\mathrm{Chr}(\mathcal{T}_i^{-1}) & = & \bQ_{\overline{G_{s_i}}} \langle 1 \rangle 
\to \bQ_{G_1}\langle 1 \rangle  
\end{eqnarray*}
where in each case $\bQ_{\overline{G_{s_i}}}$ is the degree zero term of the chromatographic complex.
\end{proposition}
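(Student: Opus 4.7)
The plan is to identify $\mathrm{Chr}(\mathcal{T}_i)$ and $\mathrm{Chr}(\mathcal{T}_i^{-1})$ by writing down their weight filtrations explicitly and reading off the connecting morphisms. The whole argument takes place on the closed subvariety $\overline{G_{s_i}} \subset G$, which is equivariantly isomorphic to the minimal parabolic $P_i$ and in particular smooth; the identity orbit $G_1 = B$ sits inside as a smooth, $B \times B$-stable, codimension-one closed subvariety with open complement $G_{s_i}$. This is precisely the input needed for the smooth-pair purity triangles, and everything will be manifestly $B \times B$-equivariant by functoriality.

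For $\mathcal{T}_i = j_{s_i*}\bQ$, I would apply the standard adjunction triangle $i_* i^! \to \mathrm{id} \to Rj_* j^* \to $ to $\bQ_{\overline{G_{s_i}}}$, and use the smooth purity identification $i^! \bQ_{\overline{G_{s_i}}} \simeq \bQ_{G_1}\langle -1 \rangle$ to rewrite it as
\[
\bQ_{G_1}\langle -1 \rangle \longrightarrow \bQ_{\overline{G_{s_i}}} \longrightarrow \mathcal{T}_i \xrightarrow{[1]}.
\]
Both outer terms are pure of weight $0$, so this yields the weight filtration of $\mathcal{T}_i$ with $\Gr^W_0 = \bQ_{\overline{G_{s_i}}}$ and $\Gr^W_1 = \bQ_{G_1}\langle -1 \rangle [1]$. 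Applying the chromatography recipe, which puts $\Gr^W_{-a}[a]$ in external degree $a$, gives the two-term complex $\bQ_{G_1}\langle -1 \rangle \to \bQ_{\overline{G_{s_i}}}$ whose differential is by construction the connecting morphism of the above triangle, i.e.~the Gysin pushforward.

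The case $\mathcal{T}_i^{-1} = j_{s_i!}\bQ\langle 1 \rangle$ is handled dually. I would apply the triangle $j_! j^* \to \mathrm{id} \to i_* i^* \to $ to $\bQ_{\overline{G_{s_i}}}$ to produce
\[
j_{s_i!}\bQ \longrightarrow \bQ_{\overline{G_{s_i}}} \longrightarrow i_*\bQ_{G_1} \xrightarrow{[1]},
\]
in which the second morphism is ordinary restriction. Both outer terms are pure of weight $0$, and tensoring with $\langle 1 \rangle$ (which preserves weights) produces the weight filtration of $\mathcal{T}_i^{-1}$ with $\Gr^W_0 = \bQ_{\overline{G_{s_i}}}\langle 1 \rangle$ and $\Gr^W_{-1} = \bQ_{G_1}\langle 1 \rangle[-1]$. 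The chromatography recipe then yields the advertised complex $\bQ_{\overline{G_{s_i}}}\langle 1 \rangle \to \bQ_{G_1}\langle 1 \rangle$ with restriction as its differential.

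I expect no serious obstacles. The only real content is verifying that the connecting morphisms match the geometric maps named in the statement, but this is tautological: the smooth-pair purity triangle is \emph{the definition} of the Gysin map, and the triangle $j_! j^* \to \mathrm{id} \to i_* i^* \to $ has its second arrow equal by definition to the unit of the $(i^*, i_*)$-adjunction, i.e.~restriction. The main bookkeeping subtlety is to track the $\langle \cdot \rangle$-shifts carefully through the chromatography convention $\Gr^W_{-a}[a]$ so that the Tate twist introduced by purity lands in the right external degree; this is a mechanical verification rather than a conceptual hurdle.
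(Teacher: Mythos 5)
Your proof is correct and is essentially the paper's argument: the adjunction/purity triangles you use are exactly (rotations of) the two short exact sequences of shifted perverse sheaves the paper exhibits, and reading off the weight filtration and the chromatographic terms proceeds identically. The only difference is that you spell out the identification of the differentials with the Gysin and restriction maps, a verification the paper explicitly leaves to the reader, and your treatment of it is accurate.
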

\begin{proof}
In the category of (appropriately shifted) perverse sheaves, there are exact sequences
$$ 0 \to \bQ_{\overline{G_{s_i}}} \to j_{s_i *}  \bQ_{G_{s_i}}  \to \bQ_{G_1}[-1](-1) \to 0 $$
$$ 0 \to \bQ_{G_1}[-1]  \to j_{s_i !}  \bQ_{G_{s_i}} \to \bQ_{\overline{G_{s_i}}}\to 0 $$
The weight filtrations are visible in these sequences.  For $\mathcal{T}_i =j_{s_i *}  \bQ_{G_{s_i}}$ , we have 
$W_{-1}  \mathcal{T}_i = 0$, $W_0 \mathcal{T}_i = \bQ_{\overline{G_{s_i}}}$, and
$W_1 \mathcal{T}_i = \mathcal{T}_i$. 
Indeed, then $\mathrm{Gr}_0^W \mathcal{T}_i = \bQ_{\overline{G_{s_i}}}$
and $\mathrm{Gr}_1^W \mathcal{T}_i = \bQ_{\overline{G_1}}[-1](-1) $, which shows
the terms in Chr are as advertised. 

Similarly for $\mathcal{T}_i^{-1} =  j_{s_i !}  \bQ_{G_{s_i}} \langle 1 \rangle$, 
we have $W_{-2} \mathcal{T}_i^{-1} = 0 $, $W_{-1} \mathcal{T}_i^{-1} =
\bQ_{G_1}[-1] \langle 1 \rangle$, and $W_0 \mathcal{T}_i^{-1} = \mathcal{T}_i^{-1}$. 
Then $\mathrm{Gr}_{-1}^W  \mathcal{T}_i^{-1} = \bQ_{G_1}[-1] \langle 1 \rangle$
and $\mathrm{Gr}_0^W\mathcal{T}_i = \bQ_{\overline{G_{s_i}}} \langle 1 \rangle$.  This again
shows the terms in Chr are as advertised.

We leave
the statement about the maps in Chr to the reader.
\end{proof}

Both convolution and the smooth pullback $I^*$ preserve weights and thus commute with
Chr. 
That is, 

$$\mathrm{Chr}(I^* ( \mathcal{T}_{i_1}^\pm \ast \cdots \ast \mathcal{T}_{i_t}^\pm ) ) = 
\underline{I}^* \left( \mathrm{Chr}(\mathcal{T}_{i_1}^{\pm} ) \underline{\ast} \cdots \underline{\ast} 
 \mathrm{Chr}(\mathcal{T}_{i_t}^{\pm}) \right)
$$

As is our standing notation, $\underline{I}^*$ just means you pull back the complex term by term, and
$\underline{\ast}$ means the monoidal structure on $\mathrm{Kom}(P_0)$ induced by $\ast$,
i.e., make the double complex of every possible convolution of a term in one complex
by a term in the other, then totalize.  

Consider the fibre product diagram

$$\xymatrix{
B^{ad} \backslash G \ar[r]^{I} \ar[d]^{\pi_B} & B \backslash G / B \ar[d]^{\pi_{B \times B}}  \\
  B \backslash \bullet \ar[r]^{\Delta} &(B \times B) \backslash \bullet
}$$

By smooth base change, 
$H^*(B^{ad} \backslash G, I^* \cF) = 
H^*(B \backslash \bullet, \pi_{B*} I^* \cF) 
= H^*(B \backslash \bullet, \Delta^* \pi_{B \times B*}  \cF) $.  
Similarly,  $\pi_{B \times B} (\cF \ast \cG) 
= (\pi_{B \times B} \cF) \ast (\pi_{B \times B} \cG)$, the latter convolution
being convolution over a point.   
We find: 
$$
 \underline{H}^*(B^{ad} \backslash G, \mathrm{Chr}(I^* \mathcal{T}_\beta ) ) 
= \underline{H}^*\left(B^{ad} \backslash \bullet,
\underline{\Delta}^* \left(  (\underline{\pi_{B \times B*}} \mathrm{Chr}
\mathcal{T}_{i_1}^\pm) \underline{\ast} \cdots \underline{\ast} (\underline{\pi_{B \times B*}} \mathrm{Chr}
\mathcal{T}_{i_t^\pm}) \right) \right)
$$

The main 
theorem of Bernstein-Lunts \cite{BernsteinLunts} tells us that the constant
sheaf generates the category of $H^*(\bullet / G)$ for any connected Lie group
$G$, and that consequently there is an equivalence commuting with passage to cohomology
$ R\Gamma_G: D_m(\bullet / G) \to \mathrm{dgmod} \,\,H^*(\bullet / G) = H^*_G$.   Moreover for 
$i: H \subset G$ we have the pullback  $H^*_G \to H^*_H$
and compatibly $R \Gamma_H (i^* \cF) = H^*_H \otimes_{H^*_G} R \Gamma_G (\cF)$.
The tensor products is of dg-modules, and appropriately derived.  Substituting, 
$$
 \underline{H}^*(B^{ad} \backslash G, \mathrm{Chr}(I^* \mathcal{T}_\beta ) ) 
= \underline{H}^*\left(H_B \underline{\otimes}_{H_{B \times B}}  
\underline{R\Gamma}_{B \times B}   \left(\underline{\pi_{B \times B*}} \mathrm{Chr}
\mathcal{T}_{i_1}^\pm) \underline{\ast} \cdots \underline{\ast} (\underline{\pi_{B \times B*}} \mathrm{Chr}
\mathcal{T}_{i_t^\pm}) \right) \right)
$$
where now the outermost $\underline{H}$ means: each term in the complex to which $\underline{H}$ is being
applied is a dg-module; take its cohomology, these again form a complex with the differential induced by
that of the original complex.

Recall that the convolution (now over a point) is: take the diagram
$$B \backslash \bullet / B \times B \backslash \bullet / B \xleftarrow{res} B \backslash \bullet \times_B \bullet / B
\xrightarrow{m} B \backslash \bullet / B$$
and form $\cF \ast \cG = m_* res^* (\cF \boxtimes \cG)$.   
Viewing a $H_{B \times B}$-dgmodule as a $H_B$-dgbimodule, we have 
$$R\Gamma_{B \times B}(\cF \ast \cG) = R \Gamma_{B \times B}(\cF) \otimes_{H_B} R\Gamma_{B \times B}(\cG)$$

Thus we may expand
$$
\underline{R\Gamma}_{B \times B}\left(  (\underline{\pi_{B \times B*}} \mathrm{Chr}
\mathcal{T}_{i_1}^\pm) \underline{\ast} \cdots \underline{\ast} (\underline{\pi_{B \times B*}} \mathrm{Chr}
\mathcal{T}_{i_t^\pm}) \right)$$ into  
$$\underline{R\Gamma}_{B \times B} (\underline{\pi_{B \times B*}} \mathrm{Chr}
\mathcal{T}_{i_1}^\pm) \underline{\otimes}_{H_B} \cdots \underline{\otimes}_{H_B} 
\underline{R\Gamma}_{B \times B} (\underline{\pi_{B \times B*}} \mathrm{Chr}
\mathcal{T}_{i_t}^\pm)
$$

We have 
\begin{eqnarray}
{\bf T}_i & := &
\underline{R\Gamma}_{B \times B} (\pi_{B \times B*} \mathrm{Chr} \mathcal{T}_i) = 
H_{B \times B}(B) \langle -1 \rangle \to H_{B \times B} (\overline{G_{s_i}}) 
\\
{\bf T}_i^{-1} & := & \underline{R\Gamma}_{B \times B} (\pi_{B \times B*} \mathrm{Chr} \mathcal{T}_i^{-1}) = 
 H_{B \times B} (\overline{G_{s_i}}) \langle 1 \rangle  \to H_{B \times B}(B) \langle 1 \rangle 
\end{eqnarray}
In each case, the $G_{s_i}$ term is in degree $0$ in the complex.  Implicit in the above equality 
is an equivariant formality statement, which is however obvious: the sheaves involved,
viewed as $B$-equivariant sheaves on $G/B$, are respectively the constant sheaf on a point and a projective line.

The complexes ${\bf T}_i$ and ${\bf T}_i^{-1}$ are complexes of `Soergel' bimodules over
$H_B$.  Rouquier showed
that the map $s_{i_1}^{\pm} \mapsto {\bf T}_i^{\pm}$ determines a categorical representation of the braid
group, and in particular if $\beta = s_{i_1}^\pm \ldots s_{i_t}^\pm$ then, upto homotopy,
${\bf T}_\beta := {\bf T}_{i_1} \underline{\otimes}_{H_B} \cdots \underline{\otimes}_{H_B} {\bf T}_{i_n}$ only
depends on $\beta$.  (This would seem to follow from the above discussion, except that in \cite{Ro}
the well defined-ness of the representation on bimodules is used to prove the well defined-ness
of the representation on sheaves.)  We discuss ${\bf T}_i^\pm$ more explicitly in the next subsection.

In any case, we have established: 
$$H^i(\underline{H}^j(B^{ad} \backslash G, \mathrm{Chr}(I^* \mathcal{T}_\beta ))) = 
H^i(\underline{H}^j(H_B \underline{\otimes}_{H_B \otimes H_B} {\bf T}_\beta ) )$$

The left hand side is Williamson and Webster's definition of the invariant, and after
we write down the ${\bf T}_\beta^\pm$ explicitly it will be clear that the  
right hand side is Khovanov's \cite{Kh}.  It remains to match up the gradings. 
First a comment on $H^j (H_B \otimes_{H_B \otimes H_B} H_{B \times B}(\cF))$: 
the term $H_{B \times B}(\cF)$ is here regarded as $H_B \otimes H_B$ dg-module,
with trivial differential; so the tensor product gives the following $H_B$ dg-module with
trivial differential: $\oplus \mathrm{Tor}_k^{H_B \otimes H_B}(H_B, H_{B\times B}(\cF))[k]$. 
That is,  
$$H^j(H_B \otimes_{H_B \otimes H_B} H_{B \times B}(\cF) ) = 
\bigoplus_k \mathrm{Tor}_k^{H_B \otimes H_B}(H_B, H_{B\times B}(\cF))^{j+k}$$ where
the `$j+k$' refers to the degree coming from the fact that $H_B$, $H_{B \times B}(\cF)$ are
graded modules.  In Khovanov's picture, the three gradings
are $i, j, k$ (in the language of \cite{Kh}, $i$ is the homological grading,
$j+k$ is the polynomial grading, and $k$ is the Hochschild grading); 
we must explain how to recover $k$ on the LHS. 

According to \cite{WW-HOMFLY}, the answer is that $k$ can be recovered from the weight
filtration we have denoted $\Omega$.  Indeed, every $\cF$ which appears for us is the 
$B$-equivariant pushforward of either the constant sheaf on a point or the constant sheaf on a projective line
along $G/B \to \bullet/B$.  This map is {\em not} proper, but these sheaves are $B$-equivariantly formal
and their $B$-fixed loci are compact, which is enough to ensure that the pushforward is again
pure of weight zero.  Thus the only source of discrepancy between the weights and homological 
degrees comes from the shift by $k$ in the Tor.  That is, 

\begin{eqnarray*}
\mathrm{Gr}^W_{j+k} H^j(H_B \otimes_{H_B \otimes H_B} H_{B \times B}(\cF) ) & = &
\mathrm{Gr}^W_{j+k} 
\left ( \bigoplus_k \mathrm{Tor}_k^{H_B \otimes H_B}(H_B, H_{B\times B}(\cF))^{j+k} \right) \\
& = & \mathrm{Tor}_k^{H_B \otimes H_B}(H_B, H_{B\times B}(\cF))^{j+k} 
\end{eqnarray*}

Finally, we have established the following triply graded identity: 

\begin{equation} \label{WWequalsK}
\mathbb{W}^{i,j}_k = \mathrm{Gr}^W_{j+k} H^i(\underline{H}^j(B^{ad} \backslash G, \mathrm{Chr}(I^* \mathcal{T}_\beta ))) = 
H^i \left( \underline{\mathrm{Tor}}^{H_B \otimes H_B}_k (H_B, \mathbf{T}_\beta)^{\underline{j+k}}\right) 
\end{equation}
where as before, the $\underline{j+k}$ means that each term of the complex it's applied to is a graded vector
space, and you take the $j+k$'th graded piece.  The left hand side is identical to the invariant
introduced by Khovanov in \cite{Kh}, which we review in the next subsection.

\subsubsection{Calculations with Soergel Bimodules}
\label{subsubsec:Khovanov}

We now review from \cite{Kh} how to compute the RHS of Equation (\ref{WWequalsK}), and work out some examples.

Let $${\bf R} = \mathbb{Q}[x_1, \ldots, x_n] = H^*_B = H^*_{B \times B} (G_1)$$ and let
$${\bf B}_i = {\bf R} \otimes_{{\bf R}^{\sigma_i}} {\bf R}   = H^*_{B \times B}(\overline{G_{s_i}})$$
Note ${\bf B}_i$ is a ${\bf R}$-bimodule.  As in the previous section
the inclusion $G_1 \to \overline{G_{s_i}}$ induces the pullback
\begin{eqnarray*}  H^*( B \backslash \overline{G_{s_i}} / B ) & \to & H^* ( B \backslash G_1 / B) \\
{\bf B}_i & \to & {\bf R} \\ 
1 \otimes 1 & \mapsto & 1
\end{eqnarray*}
and, as $G_1 \to \overline{G_i}$ is a complex codimension 1 embedding of one 
smooth space in another, 
a corresponding Gysin pushforward map
\begin{eqnarray*}  H^{*} (B \backslash G_i / B) \langle -1 \rangle & \to & H^*( B \backslash \overline{G_{s_i}} / B ) \\
 {\bf R} \langle -1\rangle & \to & {\bf B}_i \langle 1 \rangle  \\
1 & \mapsto & (x_i - x_{i+1} \otimes 1) + 1 \otimes (x_{i+1} - x_i)  
\end{eqnarray*}
This Gysin map is of course not a map of rings, but does remain a map of ${\bf R}$-bimodules.  
In this section we use the term `polynomial degree' for $\deg(x_i) = 2$, although
this is in some sense a cohomological degree. We reserve `cohomological degree' for the external cohomological
degree, i.e. in a complex of bimodules, the cohomological degree will just be where
the bimodule is in the complex.  

The complexes of bimodules which arose in the last subsection were:
\begin{eqnarray*}
{\bf T}_i & = & [{\bf R}\langle -1 \rangle \to {\bf B} ] \\
{\bf T}_i^{-1} & = &  [{\bf B} \langle 1 \rangle \to {\bf R} \langle 1 \rangle] 
\end{eqnarray*}
where the ${\bf B}$ is in cohomological degree zero.

\begin{remark} This is identical to Khovanov's convention in \cite{Kh}, except that he writes
$\{-2\}$ for what we denote $\langle 1 \rangle$. \end{remark}

The Hochschild homology of a ${\bf R}$-bimodule $M$ is by definition 
$\mathrm{HH}_i(M) = \mathrm{Tor}_i^{{\bf R}\otimes {\bf R}^c}({\bf R}, M)$; we henceforth drop the $c$ as ${\bf R}$ is commutative.

Khovanov's prescription for the triply graded knot homology is:
$$\beta  \mapsto 
H^*(\underline{\mathrm{HH}}_* ({\bf T}_\beta) )$$
We have seen in the previous section that this agrees with the definition of Webster and Williamson, and
how to compare the gradings.  We indicate the differential of the complex ${\bf T}_\beta$, or the differential
it induces on Hochschild homology, by $d_R$. 

To compute the Hochschild homology, which is after all a Tor over ${\bf R} \otimes {\bf R}$, 
one needs a free graded bimodule resolution of ${\bf R}$. There
is the Koszul resolution: one tensors together the complexes
$$0 \to ({\bf R} \otimes {\bf R})\langle - 1 \rangle  \xrightarrow{[x_i, \cdot]} {\bf R} \otimes {\bf R} \to 0$$
In other words, to compute the Hochschild homology of a bimodule $M$, taking $\mathfrak{h}$ 
to be the vector space with basis $\delta x_i$, which we regard as living in polynomial degree
2, one takes on $M \otimes \Lambda^* \mathfrak{h}$ the differential
$$d_H(m \,\, \delta x_{i_1} \wedge \cdots \wedge \delta x_{i_k}) = \sum_{j = 1}^k (-1)^{j-1} [x_{i_j},m] \,\, \delta x_{i_1}
\wedge \cdots \wedge \widehat{\delta x_{i_j}} \wedge \cdots \wedge \delta x_{i_k}$$
The homology at the term $M \otimes \Lambda^k \mathfrak{h}$ gives $\mathrm{HH}_k$. 

We encode the dimensions of the triply graded vector spaces as follows. 
We use the letter $q$ to denote half the polynomial grading minus  the Hochschild grading, 
$q(x_i) = 1$, the letter $a$ to denote twice the Hochschild grading, $a(\mathrm{HH}_i) = 2i$, and $t$ to denote the
Hochschild grading minus the cohomological grading.  We write the Poincar\'e series of a 
triply graded vector space $V$ as 
$$[V]_{a,q,t} = \sum a^i q^{j} t^k \dim \{ v \in V \, | \, a(v) = i, q(v) = j, t(v) = k \}$$
Note in particular that $[V \langle -k \rangle]_{a,q,t} = q^{k} [V]_{a,q,t}$.

\begin{example}
Let $\bigcirc_1$ be the one-strand unknot.  The corresponding complex of bimodules is just 
${\bf R} = \bQ[x]$ in degree zero.  
The Hochschild homology is computed by the complex $\bQ[x]_{1} \langle -1 \rangle 
\xrightarrow{0}  \bQ[x]_{0}$, the subscripts indicating the 
Hochschild grading.  Thus the Poincar\'e series is:
$$[H^*(\underline{\mathrm{HH}}_*({\bf R}))]_{a,q,t} =  \frac{1 + a^2 t}{1-q}$$
\end{example}
\begin{example}
Let $(\bigcirc_1)^n$ be the $n$-strand $n$-component unlink.  The corresponding complex of bimodules is again just 
${\bf R} = \bQ[x_1,\ldots, x_n]$ 
in degree zero; ${\bf R}$ itself  now has Poincar\'e series $(1-q)^{-n}$.  
The complex computing Hochschild homology is 
a tensor product of many copies of the previous one, so now
$$[H^*(\underline{\mathrm{HH}}_* ({\bf R}))]_{a,q,t} = \left(  \frac{1 + a^2 t}{1-q} \right)^n$$
\end{example}

\begin{example}
 Let $\bigcirc_{2+}$ be the two-strand unknot with a single positive crossing.  The ring is 
 ${\bf R} = \bQ[x,y]$. The corresponding
 complex of bimodules is $\mathcal{T} = [{\bf R} \langle -1 \rangle \to {\bf B}]$, with ${\bf B}$ in cohomological
 degree 0, the map being 
  $1 \mapsto \Delta:= (x-y) \otimes 1 + 1 \otimes (x-y)$.  As a left $R$-module, $B$ is free of rank $2$ generated
 by $1 \otimes 1$ and $\rho := 1 \otimes (x -y)$.  The bimodule structure is recorded in the operators
 of right multiplication by $x$ and $y$.  Since in fact right multiplication by $x+y$ is the same as
 left multiplication by $x+y$, it suffices to record the matrix of right multiplication by $x-y$:
 $$R_{x-y} = \left(\begin{array}{cc} 0 & (x-y)^2 \\ 1 & 0 \end{array}\right)$$
 The complex computing the Hochschild homology of ${\bf B}$ is 
 $$0 \to {\bf B} \langle -2 \rangle \xrightarrow{([x-y, \bullet], [x-y, \bullet])/2} 
 {\bf B}\langle -1 \rangle \oplus {\bf B}\langle -1 \rangle 
  \xrightarrow{[x-y,\bullet_1 - \bullet_2]/2} 
 {\bf B}
 \to 0$$
 The matrix giving the operator of $[x-y, \bullet]:{\bf B}\langle -1 \rangle \to {\bf B}$ is 
 $$ \left(\begin{array}{cc} x-y & -(x-y)^2 \\ -1 & x-y \end{array}\right)$$
 Its kernel is  the submodule spanned by $(x-y) \mathbf{1} + \rho = 
 (x-y) \otimes 1 + 1 \otimes (x-y)$ and its image is the submodule spanned
 by $(x-y) \mathbf{1} - \rho = (x-y) \otimes 1 - 1 \otimes (x-y)$.  
 
 On the other hand the Hochschild homology of ${\bf R}$ is 
 computed by the complex with trivial differentials
 $$ 0 \to {\bf R} \langle -2 \rangle \to {\bf R} \langle -1 \rangle \oplus {\bf R} \langle -1 \rangle \to 
 {\bf R}
 \to 0$$
 
 The map ${\bf R}\langle -1 \rangle \to {\bf B} $ induces a map on Hochschild homology.
 We treat the three Hochschild
 gradings, enumerated below. We abbreviate  
 $\delta := (x-y) \otimes 1 - 1 \otimes (x-y)$.

 \begin{enumerate}
  \item[0.] We have $\mathrm{HH}_0({\bf B} ) = {\bf B}  / {\bf R}\delta$.  The map
  $d_R: {\bf R}\langle -1 \rangle = \mathrm{HH}_0({\bf R}\langle -1 \rangle) \to \mathrm{HH}_0({\bf B})$ is evidently 
  injective and has image ${\bf R} \Delta$; the cokernel is  
  $${\bf B} / {\bf R}(\delta, \Delta) \cong {\bf R} /(x-y)$$
  \item[1.] The kernel in the Hochschild complex is generated as a ${\bf R}$ left submodule of 
  ${\bf B} \langle -1 \rangle \oplus {\bf B} \langle -1 \rangle$ by 
  $(1,1), (\rho, \rho), (0, \Delta)$.  The image is generated by $(\delta, \delta)$, so 
  $$\mathrm{HH}_1({\bf B}) = \frac{{\bf R}(1,1) + {\bf R}(\rho, \rho) + {\bf R}(0,\Delta)}{{\bf R}(\delta, \delta)}
  \langle -1 \rangle
  $$
  Again $d_R: \mathrm{HH}_1({\bf R}\langle -1 \rangle) = {\bf R} \langle -2 \rangle \oplus {\bf R} \langle -2 \rangle 
  \to \mathrm{HH}_1({\bf B})$ is injective; its 
  image is ${\bf R}(\Delta, 0) + {\bf R}(0, \Delta)$.
  So the quotient is $$\frac{ {\bf R}(1,1) + {\bf R}(\rho,\rho)}{{\bf R}(\delta, \delta) + {\bf R}(\Delta, \Delta)} 
  \langle -1 \rangle
  \cong {\bf B} \langle -1 \rangle/(\delta,\Delta) \cong {\bf R} \langle -1 \rangle/(x-y)$$
  \item[2.] We have $\mathrm{HH}_2({\bf B}) = {\bf R} \Delta$.  This is exactly the image of the map from 
  $\mathrm{HH}_2({\bf R})$. 
 \end{enumerate} 
 In all, we have
 $$[H^*(\mathrm{HH}_* (\mathcal{T}))]_{a,q,t} = \left(\frac{1 + a^2 t}{1-q}\right)$$
 \end{example}

  \begin{example}
   Let $\bigcirc_{2-}$ be the two-strand unknot with a single negative crossing.  The Rouquier
   complex is $\mathcal{T}^{-1} = [{\bf B} \langle 1 \rangle \to {\bf R} \langle 1 \rangle]$, with 
   ${\bf B} \langle 1 \rangle$ in cohomological degree 0.  
   The map is 
   surjective and has kernel generated by $\delta$.  We have already computed the Hochschild
   homology of ${\bf R}$ and ${\bf B}$; we discuss  $d_R$.  
  \begin{enumerate}
  \item[0.] We had $\mathrm{HH}_0({\bf B}\langle 1 \rangle) = {\bf B}\langle 1 \rangle / {\bf R} \delta$.  The map to 
  $\mathrm{HH}_0({\bf R}\langle 1 \rangle) = {\bf R}\langle 1 \rangle $ is an isomorphism. 
  \item[1.] We had
    $$\mathrm{HH}_1({\bf B}\langle 1 \rangle) = \frac{{\bf R}(1,1) + {\bf R}(\rho, \rho) + {\bf R}(0,\Delta)}{{\bf R}(\delta, \delta)}
  $$
  so $d_R$ has no kernel.  Its image is spanned by $(1,1)$ and $(0,x+y)$ 
  in $\mathrm{HH}_1({\bf R}\langle 1 \rangle) = {\bf R} \oplus {\bf R} $. 
  So the cohomology here is ${\bf R}/(x+y)$, in cohomological degree 1. 
  \item[2.] We had $\mathrm{HH}_2({\bf B}\langle 1 \rangle) = {\bf R}\langle -1 \rangle \Delta$, 
  which  does not intersect $\ker d_R = {\bf R}\langle -1 \rangle \delta$. 
  The image is spanned by $(x+y)$ in $\mathrm{HH}_2({\bf R}\langle 1 \rangle) = {\bf R} \langle -1 \rangle$, so the cohomology here 
  is $ {\bf R} \langle -1 \rangle /(x+y)$.
 \end{enumerate}
  Note the cohomology of $d_R$ is now in cohomological degree $1$.  All in all the Poincar\'e series is 
   $$[H^*(\mathrm{HH}_* (\mathcal{T}^{-1}))]_{a,q,t}   = \frac{a^2 q^{-1} + a^4 q^{-1} t}{1-q} = \left(aq^{-1/2} \right)^2 
   \left(\frac{1+a^2 t}{1 - q}\right)$$
 \end{example}

\begin{definition} 
  Let $\beta$ be a braid; write $w(\beta)$ for its writhe and $n(\beta)$ for its number of strands. 
  The Poincar\'e series of the HOMFLY homology of $\overline{\beta}$ is 
  \begin{eqnarray*} \cP(\overline{\beta}) & := & (aq^{-1/2})^{w(\beta) - n(\beta)}  
  [H^*(\underline{\mathrm{HH}}_* (\mathbf{T}_\beta)])_{a,q,t} \\
  & = & (aq^{-1/2} )^{w(\beta) - n(\beta)} \sum a^{2k} q^{(j-k)/2} t^{k-i} \dim H^i(\underline{\mathrm{HH}}_k
  (\mathbf{T}_\beta))^{\underline{j+k}}   \\
  & = &  (aq^{-1/2} )^{w(\beta) - n(\beta)}\sum a^{2k} q^{(j-k)/2} t^{k-i} \dim \mathrm{Gr}^\Omega_{j+k} H^i(\underline{H}^j
  (\mathrm{Chr} I^* \mathcal{T}_\beta))^{\underline{j+k}} \\
  & = &   (aq^{-1/2} )^{w(\beta) - n(\beta)} \sum a^{2k} q^{(j-k)/2} t^{k-i} \dim \mathrm{Gr}^\Omega_{j+k} E_2^{ij} 
  \end{eqnarray*}
  It is an invariant of the knot.
\end{definition}

\begin{remark} This normalization convention differs from that of \cite{ORS} 
in that our $q^{1/2}$ is their $q$.
It agrees with the convention used in this paper for the HOMFLY polynomial, in that the HOMFLY polynomial is recovered
at $t = -1$. 
\end{remark}

\subsubsection{The proof of Theorem \ref{thm:HHH}}

Comparing the assertions of Theorems \ref{thm:HHH} and \ref{thm:WWH}, we need only 
to identify the spectral sequences 
$$E^{ij}_1 = H^{i+j}(G \backslash G, \mathrm{Gr}_{-i}^W \pi_{\beta^\circ *} \bQ) \Rightarrow H^{i+j}(G\backslash G, \pi_{\beta^\circ*} \bQ)$$ 
and
$$E^{ij}_1 = H^{i+j}(B^{ad} \backslash G, \mathrm{Gr}_{-i}^W I^* \mathcal{T}_\beta) \Rightarrow H^{i+j}(B^{ad} \backslash G, I^* \mathcal{T}_\beta)$$ 
To do so, note that the map $B^{ad} \backslash G \to G \backslash G$ is proper (it has fibers $G/B$) 
and hence the pushforward along it preserves weights.  Thus we may as well compute the latter
sequence after first pushing forward along this morphism.  Having done so, it can be rewritten as
$$E^{ij}_1 = H^{i+j}(G\backslash G, \mathrm{Gr}_{-i}^W \mathscr{H}(\mathcal{T}_\beta)) \Rightarrow H^{i+j}(G \backslash G, \mathscr{H}(\mathcal{T}_\beta))$$ 
by definition of the horocycle transform $\mathscr{H}$.  

It remains to identify $\mathscr{H}(\mathcal{T}_\beta)$ with $\pi_{\beta^\circ*} \bQ$.  But by 
Proposition \ref{prop:rrr}, we have $\mathcal{T}_\beta \cong \pi_{\beta *} \bQ$, and  by 
Proposition \ref{prop:hor}, we have  $\mathscr{H}(\pi_{\beta *} \bQ) \cong \pi_{\beta^\circ*} \bQ$. 
$\square$

\subsection{Rainbow braid closures, ruling filtrations, point counts, and HOMFLY} \label{subsec:rainbow}

Let $\beta$ be a positive $n$-stranded braid.  We write $\beta^{\succ}$ for its rainbow closure.  

We analyze
$\cC(\beta^{\succ})$ and $\mM(\beta^{\succ})$ in terms of the following covering.
\begin{center}  
\includegraphics[scale = .3]{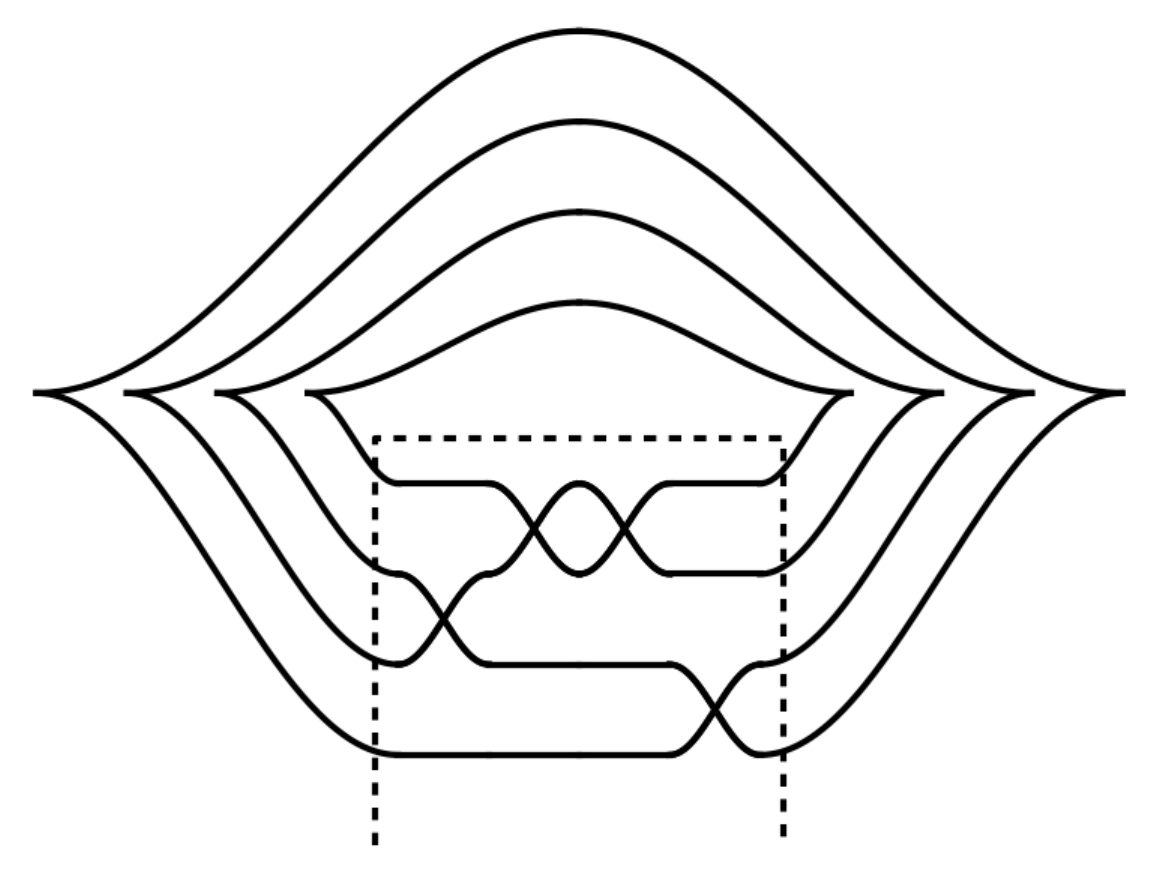} 
\end{center}
Writing $\rainbow$ for the region above the dashed line and $\Pi$ for a slight thickening of the dashed
line, we have by the sheaf axiom 
$$\cC(\beta^{\succ}) = \cC(\beta) \times_{\cC(\Pi)} \cC(\rainbow)$$
and similarly for all the other related categories and moduli spaces.  We focus on the latter.  We preserve
the notation of the previous sub-section, fixing in particular some microlocal rank $r$ and just writing
$\mM, \overline{\mM}$ for $\mM_r, \overline{\mM}_r$, and $G$ for $\mathrm{GL}_{nr}$ and $P$ for the parabolic
of $r \times r$ block-upper-triangular matrices.  

It is straightforward to see that $\mM(\Pi) = \mathrm{Bun}_{G,P}(\barbell) =  G \backslash (G /P \times G / P)$; in fact, 
the restriction map $\mM(\beta) \to \mM(\Pi) = \mathrm{Bun}_{G,P}(\barbell)$ is just the same map we were studying
in the previous section.  

On the other hand, the space $\mM(\rainbow)$ can be described in terms of the data of three flags: one each from the 
 left ($L_\bullet$) and right ($R_\bullet$) sides of where the braid was removed from the picture, and one from the top
 of the rainbow ($T_\bullet$).  The flag $T_\bullet$ is defined by taking $T_i$ to be the kernel of the map from the middle
 to the spot $i$ steps above the middle;  $T_i k^{\oplus nr} = \mathrm{ker}(k^{\oplus nr} \to k^{\oplus (n-i) r})$.  
 This gives
 three increasing flags.  The singular support conditions at the cusp amounts to the statement that the pairs
 ($L_\bullet, T_\bullet$) and $(T_\bullet, R_\bullet)$ are completely transverse flags; that is, for all $i$, we have
 $L_i k^{\oplus nr} + T_{n-i} 
 k^{\oplus nr}= k^{\oplus nr}$, or equivalently $L_i k^{\oplus nr} \cap T_{n-i} 
 k^{\oplus nr}= 0$.  Such flags are parameterized by the big open Schubert cell, which is isomorphic to the open 
 Bott-Samelson for any braid presentation of it; in particular, for the half-twist $\Delta$ which lies in the center of 
 the braid group.  Thus we have 
 $$\mM(\rainbow) = \mM(\Delta) \times_{\mM(\equiv)} \mM(\Delta) = \mM(\Delta^2)$$
 
 \begin{proposition} \label{prop:rainbowmoduli} 
   We have 
   $$\mM(\beta^\succ) = \mM(\beta) \times_{G \backslash (G /P \times G / P)} \mM(\Delta^2) = \mM(\Delta \beta \Delta) 
   \times_{G \backslash (G /P \times G / P)} \mM(\equiv) $$
   and moreover
   $$\mM(\beta^\succ) = 
   \mM(\Delta^2 \beta^\circ) 
   \times_{G\backslash G} G \backslash id$$ 
   where $G \backslash id \to G\backslash G$ is the inclusion of the identity.  
 \end{proposition}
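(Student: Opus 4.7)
My plan is to assemble the identity from the sheaf axiom, the identifications $\mM(\Pi) = G\backslash(G/P \times G/P)$ and $\mM(\rainbow) = \mM(\Delta^2)$ already made in the paragraph preceding the statement, and the horocycle machinery of \S\ref{sec:cylbraidclosure}. The first equation is essentially bookkeeping; the real content is the passage through $\mM(\Delta^2\beta^\circ)$ in the second equation.

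For the first equality, I apply the sheaf axiom to the cover by $\beta$ and $\rainbow$ with overlap $\Pi$ to get $\mM(\beta^\succ) = \mM(\beta) \times_{\mM(\Pi)} \mM(\rainbow)$, then substitute the two identifications. For the second form, I use that $\mM(\Delta^2) = \mM(\Delta) \times_{\mM(\equiv)} \mM(\Delta)$ (equation \eqref{modprod}) and reassociate the fiber products: a point of $\mM(\beta) \times_{G\backslash(G/P\times G/P)} \mM(\Delta^2)$ consists of $\beta$-data with left and right flags $(L,R)$, together with an intermediate flag $T$ making $(L,T)$ and $(T,R)$ completely transverse. Viewing $(L,T)$ and $(T,R)$ as the end flags of the two $\Delta$-factors, one repackages the data as a flag sequence through $\Delta\cdot\beta\cdot\Delta$ in which the leftmost and rightmost flags are identified --- which is precisely $\mM(\Delta\beta\Delta) \times_{G\backslash(G/P\times G/P)} \mM(\equiv)$, since the diagonal map $\mM(\equiv) = pt/P \hookrightarrow G\backslash(G/P\times G/P)$ picks out the locus where this pair of end flags coincides. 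The symmetry $\Delta = \Delta^{-1}$ in the Weyl group (i.e.~$w_0 = w_0^{-1}$) is what makes the reversal of orientation on the rainbow's right $\Delta$ harmless.

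For the second equation, the plan is to compute $\mM(\Delta^2\beta^\circ) \times_{G\backslash G} G\backslash\mathrm{id}$ via the horocycle correspondence. From Proposition \ref{prop:hor} one has $\mM(\gamma^\circ) = \mM(\gamma) \times_{\mathrm{Bun}_{G,P}(\barbell)} \mathrm{Bun}_{G,P}(\boing)$ together with a map to $\mathrm{Bun}_{G,P}(\bigcirc) = G\backslash G$ given by the forgetful map on the $\boing$ factor. The basic calculation is that
\[
\mathrm{Bun}_{G,P}(\boing) \times_{G\backslash G} (G\backslash \mathrm{id}) = pt/P,
\]
because restricting the monodromy $g \in G$ (modulo $P$-conjugation) to lie in the $G$-conjugacy class of the identity forces $g = \mathrm{id}$. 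Moreover the induced map $pt/P \to \mathrm{Bun}_{G,P}(\barbell) = G\backslash(G/P \times G/P)$ is the diagonal $\mM(\equiv) \hookrightarrow G\backslash(G/P \times G/P)$ (when the monodromy is trivial, cutting produces the pair $(P,P)$). Combining these and applying associativity of fiber products gives
\[
\mM(\Delta^2\beta^\circ) \times_{G\backslash G} (G\backslash \mathrm{id}) \;=\; \mM(\Delta^2\beta) \times_{G\backslash(G/P \times G/P)} \mM(\equiv).
\]

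It remains to identify the right-hand side with $\mM(\Delta\beta\Delta) \times_{G\backslash(G/P \times G/P)} \mM(\equiv)$, and here the key observation is that the fiber product of $\mM(\gamma) \to G\backslash(G/P \times G/P)$ with the diagonal $\mM(\equiv)$ depends only on the cyclic equivalence class of the word $\gamma$: both sides parameterize a cyclically closed sequence of flags with adjacent pairs in the Schubert positions prescribed by the letters, and cyclic rotation of the word is exactly cyclic relabeling of the sequence. Since $\Delta^2\beta$ and $\Delta\beta\Delta$ are cyclic rotations of one another, this finishes the proof. The main obstacle is really just the careful identification in the displayed equation above --- in particular, checking that the map $pt/P \to G\backslash(G/P\times G/P)$ induced by the two compositions (horocycle-then-cut versus direct diagonal) agrees; once that is in hand, everything else is formal manipulation of fiber products.
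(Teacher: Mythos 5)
Your proof is correct and follows essentially the same route as the paper: the first display via the sheaf-axiom covering and a repackaging of the fiber product (the paper does this by redrawing the dashed covering line so that the two half-twists sit with the braid), and the second display via the cyclic rotation $\Delta^2\beta^\circ=\Delta\beta\Delta^\circ$ together with the observation that imposing trivial monodromy converts the cylindrical gluing into the diagonal gluing. The only difference is presentational: where the paper just says ``comparing gives the second equality,'' you make the comparison explicit by computing that the fiber of $\mathrm{Bun}_{G,P}(\boing)\to G\backslash G$ over the identity is $\mathrm{pt}/P$ mapping diagonally into $G\backslash(G/P\times G/P)$ --- a welcome amount of detail, but not a different argument.
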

\begin{proof}
The first  equality in the first statement was already established in the above discussion. 
For the second equality in the first statement, consider the covering determined by the following diagram:
\begin{center}
\includegraphics[scale = .3]{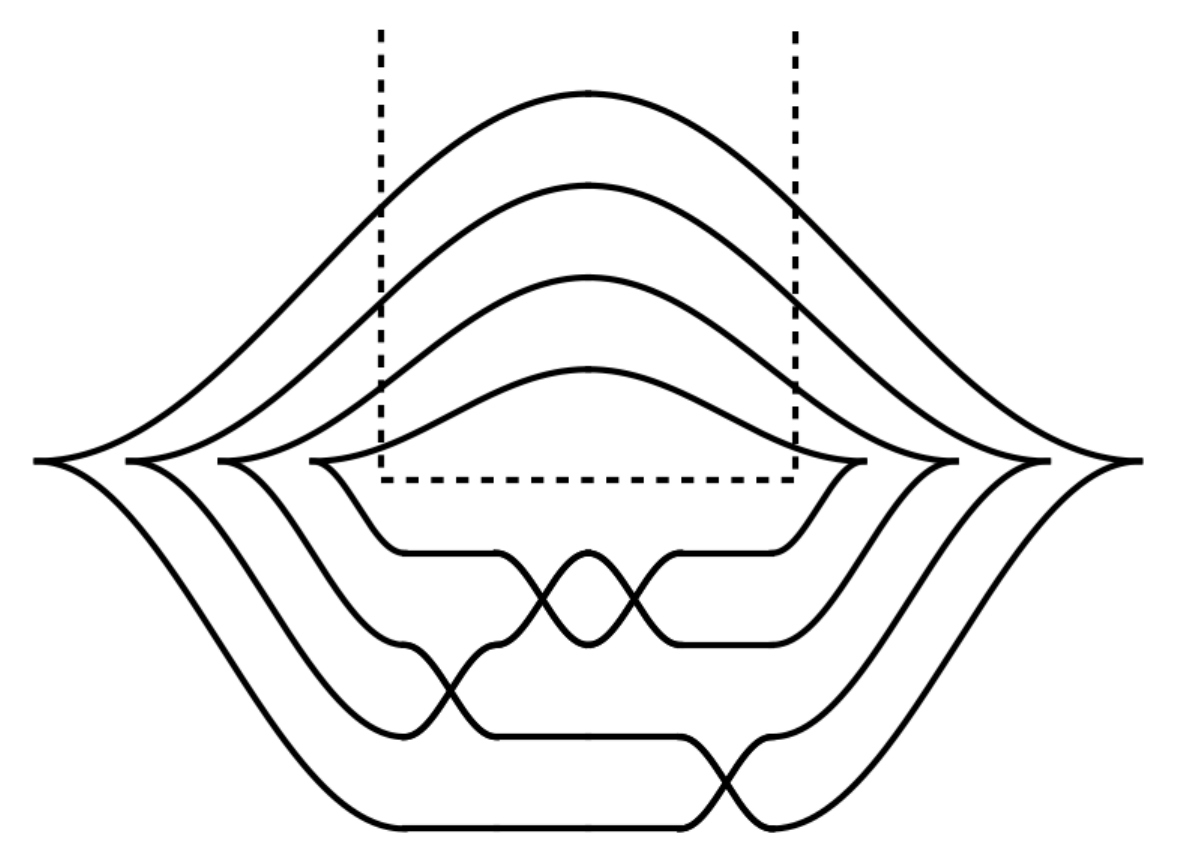} 
\end{center}
For the second statement, note $\Delta^2 \beta^\circ = \Delta \beta \Delta^\circ$.  We have 
$$\mM (\Delta^2 \beta^\circ) = \mM(\Delta\beta \Delta) \times_{\mM(\equiv) \times \mM(\equiv)} \mM(\equiv) = 
\mM(\Delta \beta \Delta) \times_{(G \times G) \backslash(G/P \times G/P )} G\backslash G/P$$
comparing this to 
$$\mM(\beta^\succ) = \mM(\Delta \beta \Delta) \times _{G \backslash (G /P \times G / P)} G\backslash G/P$$
gives the second equality.
\end{proof}

\begin{remark}
This Proposition has a more conceptual explanation: $\beta^\succ$ is the Legendrian
cable of the standard unknot by $\Delta \beta \Delta^\circ$.  A detailed account can be
found in \cite[Sec. 7]{STWZ}. 
\end{remark}

We study the moduli space $\mM_1(\beta^\succ)$ by considering ruling filtrations.  We restrict to a certain class of filtrations:

\begin{definition}
Let $\beta$ be a positive braid, and $\cF \in \cC_1(\beta^\succ)$.  Then 
we say a ruling filtration $R_\bullet \cF$ is normal if $\underline{\Gr_i F}$  connects the $i$-th innermost cusps. 
\end{definition}

\begin{proposition} \label{prop:oneruling}
Let $\beta$ be a positive braid, and let $\cF \in \cC_1(\beta^\succ)$.  Then there exists a unique normal ruling filtration 
$R_\bullet \cF$. The corresponding ruling $\underline{R}$ is normal. 
\end{proposition}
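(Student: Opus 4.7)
The plan is to induct on the number of cusp pairs of $\beta^{\succ}$, peeling off the innermost eye at each step. In the base case $\beta$ has two strands, $\beta^{\succ}$ has a single eye, and by Proposition \ref{prop:bimaslov} any rank-one $\cF$ is an eye sheaf; the unique normal ruling filtration is $0 \subset \cF = \cE$, whose underlying ruling is trivially normal.

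For the inductive step I would first exhibit the innermost eye sheaf $\cE_1$ as a canonical quotient of $\cF$. Via Proposition \ref{prop:rainbowmoduli}, $\cF$ corresponds to a flag sequence along $\Delta\beta\Delta$ with $F^0 = F^N$, and the two copies of $\Delta$ force the two intermediate flags (immediately before and after $\beta$) to be totally transverse to $F^0$. This pins down a canonical one-dimensional subspace $\ell \subset k^n$ associated to the innermost cusp pair. Using the combinatorial description of Proposition \ref{prop:bos}, I then build $\cE_1$ with stalk $\ell$ inside the innermost eye and zero outside, together with the natural projection $\cF \to \cE_1$. At each crossing traversed by the two boundary strands of the innermost eye, microlocal rank one and the acyclicity of Definition \ref{def:romulus} together uniquely dictate whether the innermost eye switches strands at that crossing, confirming that the construction is consistent.

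With $\cF \to \cE_1$ in hand I apply the Enucleation Lemma \ref{splotch} to obtain $\mathrm{Cone}(\cF \to \cE_1)[-1] \in \dgsh_{\Lambda \setminus \underline{\cE_1}}(\bR^2, k)$ of microlocal rank one. Because the innermost eye is nested inside every other eye, $\Lambda \setminus \underline{\cE_1}$ is Legendrian isotopic (by Theorem \ref{thm:4.1}, possibly after Reidemeister II cancellations absorbing crossings freed up by the removal) to the rainbow closure of a positive braid on two fewer strands, so the induction hypothesis yields the remainder of the filtration uniquely. The underlying ruling is normal in the standard sense because, by construction, at every switched crossing between the $i$-th and $(i+1)$-th innermost eyes the inner is enclosed by the outer, giving the $tt'b'b$ pattern of the definition.

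The main obstacle is the construction of $\cF \to \cE_1$ in the first step: one must show that the singular-support and rank-one constraints along the full trajectory of the innermost eye through the braid force a unique, consistent innermost path, and that the resulting quotient has exactly the expected singular-support drop. This reduces to a crossing-by-crossing case analysis where at each crossing the acyclicity condition of Definition \ref{def:romulus} together with the flag transversality data dictates a unique switch/no-switch choice.
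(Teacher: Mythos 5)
Your plan breaks down at its very first step: the innermost eye sheaf is not a quotient of $\cF$. In a normal ruling filtration the eye connecting the innermost cusps is the \emph{bottom} piece $R_1\cF$, i.e.\ a subobject; the canonical quotient is the \emph{outermost} eye. Concretely, write $0 \to \cE_1 \to \cF \to \cE_2 \to 0$ for a rank-one object on the trefoil of Example \ref{ex:rainbowtrefoil} lying in the stratum where every crossing is switched, with $\cE_1$ the inner eye and $\cE_2$ the outer one. Applying $\Hom(-,\cE_1)$ and using Lemma \ref{lemma:extlemma} gives $\Hom(\cE_2,\cE_1)=0$, and the boundary map $\Hom(\cE_1,\cE_1)\to\Ext^1(\cE_2,\cE_1)$ sends the identity to the extension class, which is nonzero by Lemma \ref{lem:ssswitch}; hence $\Hom(\cF,\cE_1)=0$ and no ``natural projection'' $\cF \to \cE_1$ exists. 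Relatedly, the Enucleation Lemma \ref{splotch} concerns the top graded piece $\Gr^R_n\cF$, which the normality condition forces to be the outermost eye, so it cannot be used to peel off the innermost one; and your bookkeeping is off as well: the rainbow closure of a two-strand braid already has two eyes, and deleting one eye from $\beta^\succ$ leaves the rainbow closure of a braid on \emph{one} fewer strand, not two. (The canonical object attached to the innermost cusp pair is in fact a sub-sheaf with stalk the line $T_1 = \ker\bigl(k^{\oplus n} \to k^{\oplus (n-1)}\bigr)$ coming from the innermost rainbow arc; a salvage along those lines would need a subobject version of enucleation and a separate uniqueness argument, neither of which your proposal supplies.)

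The paper's proof is organized quite differently. Existence is not obtained one eye at a time: the entire filtration is produced at once by intersecting with the canonical flag $T_\bullet$ coming from the rainbow arcs, $(\cT_i \cF)(S)^\uparrow := \cF(S)^\uparrow \cap T_i$, and the injectivity of the upward maps inside the braid (Proposition \ref{prop:bos}) shows each graded piece is an eye sheaf. Uniqueness is proved by induction, peeling off the \emph{outermost} eye as a quotient via Lemma \ref{splotch}. Finally, normality of the underlying ruling is a genuine theorem, not a consequence of the nesting of eyes at the cusps: switches can occur between non-consecutive eyes, and the vertical order of the four strands at a crossing in the braid is not controlled by their order at the cusps. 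The paper establishes it by showing the outermost eye meets every switch from below, excluding the two possible ``from the top'' local forms by using the surjection $\cF \to \Gr^R_n\cF$ together with the injectivity and exactness constraints at the crossing. Your final paragraph asserts this normality rather than proving it, and your ``crossing-by-crossing case analysis'' is exactly the content that is missing.
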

\begin{proof}
Note that by Proposition \ref{prop:bimapfiltrations}, we may assume that all the $R_i \cF$ are actual sheaves.  We work 
with legible objects as per Propositions \ref{prop:braidlegible},  \ref{prop:bos}.  

We imagine the braid to have all the cusps in the same horizontal line, which is moreover above the braid, as in Figure 
\ref{fig:braidclosure}.  Every region above this line admits a surjective map from the stalk of $\cF$ at the central region; we fix a 
framing $k^{\oplus n}$ for this stalk.  The normality assumption forces the filtration above the line through the cusps to be 
the filtration $T_\bullet$.  The stalks in the region below the line all admit canonical injective maps to the stalk in the central region. 
We write $\cF(p)^\uparrow$ for the image of the injection of the stalk $\cF(p) \to k^{\oplus n}$ given by
following the stalk along an upward path; it follows from
Proposition \ref{prop:bos} that any path going upward gives the same injection. 
Similarly for a stratum $S$ of the front diagram stratification in the braid, we write $\cF(S)^\uparrow$ 
for the image of the injection $R\Gamma(S, \cF) \to k^{\oplus n}$.  We do the same for filtered and graded pieces of $\cF$.  
We define a 
filtration  $\cT_\bullet \cF$ by  intersecting the images of these injective maps with the filtration $T_\bullet$, that is, 
$$(\cT_i \cF)(S)^\uparrow := \cF(S)^\uparrow \cap T_i k^{\oplus n}$$

It remains to show that $\cT_\bullet$ is the unique normal ruling filtration of $\cF$.  It is easy to see this away
from the braid; we restrict attention to the braid itself.  

We first check that $\cT_\bullet$ is a ruling filtration; i.e., that its associated graded pieces are eye sheaves.  Let 
$t_i = T_i k^{\oplus n} = \ker(k^{\oplus n} \to k^{\oplus (n-i) })$.  We have
by definition
$\cT_i \cF(S)^\uparrow = \cF(S)^\uparrow \cap t_i$.  Thus $$\Gr_i \cF(S)^\uparrow = \cF(S)^\uparrow \cap t_i / \cF(S)^\uparrow \cap t_{i-1} = \mathrm{im} 
(\cF(S) \to t_i / t_{i-1} )$$
and in particular, $\Gr_i \cF(S)$ is everywhere zero or one dimensional.  Since the upward maps within the braid
$\cF(S) \to \cF(N)$ are injective, the same holds for $\Gr_i \cF(S) \to \Gr_i \cF(N)$; it follows that $\Gr_i \cF$ is an eye sheaf. 

We check uniqueness.  We have already seen that any filtration $\cT'$ satisfying the assumptions is identified with $\cT$ away from 
the braid.  Throughout we take $S$ a stratum of the front diagram stratification of the braid.  
By Lemma \ref{splotch}, the observation that `normality' of a ruling filtration ensures that 
$\Lambda \setminus \underline{\Gr_n^{\cT'} \cF}$ is again a rainbow braid closure,  and by induction, it's enough to show that the 
penultimate step of the filtration $\cT'_{n-1}$ is characterized by $\cT'_{n-1} \cF (S)^\uparrow = \cT_{n-1} \cF(S)^\uparrow =  \cF(S)^\uparrow 
\cap t_{n-1}$.  Since we must have  $\cT'_{n-1}(S)^\uparrow \subset \cT'_{n-1}(k^{\oplus n})$, it follows that 
$\cT'_{n-1}\cF(S) \subset \cT_{n-1}\cF(S)$.   On the other hand, we must have $1 \ge \dim \Gr_{n-1}^{\cT'} \cF(S)  = 
\dim \cF (S) / \cT'_{n-1}\cF(S)$.   So if the inclusion $\cT'_{n-1}\cF(S) \subset \cT_{n-1}\cF(S)$ is strict, then 
we must have $\dim \cT'_{n-1}\cF(S) = \dim \cF(S) - 1$ and $\cT_{n-1}\cF(S) = \cF(S)$.  Let 
$\eta \in  \Gr_{n-1}^{\cT'}\cF (S) = \cF(S)/ \cT'_{n-1}\cF(S) $ be a nonzero element.  It maps to zero in 
$\Gr_{n-1}^{\cT} \cF(S) = 0$, hence to zero in $\Gr_{n-1}^{\cT}(k^{\oplus n})$.  This cannot happen in an eye sheaf: 
contradiction.

For normality, 
by induction and Lemma \ref{splotch}, it is enough to show that $\underline{\Gr_n F}$ meets any switch from the bottom.
At a crossing with bordering regions $N, E, S, W$, the data of $\cF$ is:
\[
\xymatrix{
& \cF(N)  \\
\cF(W)  \ar@{_{(}->}[ur] & & \cF(E) \ar@{^{(}->}[ul] &\\
& \cF(S) \ar@{_{(}->}[ul] \ar@{^{(}->}[ur]
}
\]

Suppose instead that $\underline{\Gr_n \cF}$ is an eye which meets this crossing in a switch and from the top, i.e. its boundary includes
the northeast and northwest arcs leaving the crossing.  Then
around this crossing it must be one of two forms:
\[
\xymatrix{
 &  k  & & & &  0 \\
  0 \ar[ur]  & & 0 \ar[ul]  & & k \ar[ur] & & k \ar[ul] &\\
  & 0 \ar[ul]  \ar[ur] & & & & k \ar@{=}[ul] \ar@{=}[ur]
}
\]

The eye $\underline{\Gr_n \cF (S)}$  cannot take the form on the right, because
then the isomorphism $ \Gr_n \cF (S) \to  \Gr_n \cF(S)^\uparrow$ would factor through zero, which is a contradiction.  
The form on the left can be ruled out because the surjection
$\cF \to \Gr_n \cF$ would induce a diagram
\[
\xymatrix{
 &  \cF(N) \ar@{-->>}[rrrr] & & & &  k \\
  \cF(W) \ar@{_{(}->}[ur] \ar@/^1pc/@{-->>}[rrrr] & & \cF(E) \ar@{^{(}->}[ul] \ar@/_1pc/@{-->>}[rrrr] & & 0 \ar[ur] & & 0 \ar[ul] &\\
  & \cF(S) \ar@{_{(}->}[ul]  \ar@{^{(}->}[ur] \ar@{-->>}[rrrr] & & & & 0 \ar[ul] \ar[ur]
}
\]
However, this diagram cannot commute: on the one hand, we must have $\cF(E) \oplus \cF(W)  
\twoheadrightarrow \cF(N) \twoheadrightarrow k$; on the other hand the maps $\cF(E) \to k$
and $\cF(W) \to k$ both factor through zero.  
This is again a contradiction, and completes the proof. 
\end{proof}

\begin{lemma}
\label{lemma:extlemma}
Let $\cE, \cF$ be two eye sheaves such that 
the cusps and upper strand of $\underline{\cF}$ 
are entirely contained in the interior of $\underline{\cE}$; suppose
moreover the bottom strands of $\underline{\cE}, \underline{\cF}$ meet only as 
eyes of a given ruling might meet, i.e. transversely or as at a switched crossing.
Say there are $t$ transverse intersections, and $s$ switched crossing intersections.  
\begin{itemize}
\item
 $R^0\mathcal{H}om(\cE, \cF)$ is constant with one dimensional stalk
  on $\mathrm{int} (\underline \cE \cap \underline \cF)$. 
It has standard boundary conditions
on the open interval along the top of $\cF$, and along the open intervals
on the $t/2$ open intervals $\partial \underline{\cE} \cap \mathrm{int} \underline{\cF}$; it has
costandard boundary conditions along the rest of the boundary. 
\item  
$R^1 \cH om(\cE, \cF)$ is a direct sum of one dimensional skyscraper sheaves on 
the $s$ switched crossings
\item All the other $R^i \cH om(\cE, \cF)$ vanish. 
\end{itemize}
In particular, $R^i\Hom(\cE, \cF)$ vanishes except when $i=1$, and $R^1 \Hom(\cE, \cF)   = k^{\oplus s + \frac{t}{2}}$.
\end{lemma}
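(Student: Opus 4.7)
I would begin by writing $\cE = j_{U_\cE !}\,k$ and $\cF = j_{U_\cF !}\,k$, where $U_\cE = \underline{\cE}^\circ \cup A_\cE$ consists of the open eye interior together with the open upper arcs (excluding cusps), and similarly $U_\cF$.  Then $R\cH om(\cE,\cF)$ is constructible with respect to the combined stratification of $\underline{\cE}\cup\underline{\cF}$, supported on the closure of $\underline{\cE}\cap\underline{\cF}$ (call it $K$), and can be computed locally using $R\cH om(j_! k_U,\cG) = j_*\, j^!\cG$, which for open $j$ reduces to $j_*(\cG\vert_U)$.

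Next, I would work through the strata one at a time.  Away from the crossings and cusps, the two sheaves locally look like $!$-extensions of constant sheaves from half-planes, and direct adjunction reproduces the degree-zero description in the statement: rank one on the interior, standard boundary across the top of $\cF$ and the $t/2$ pieces of $\partial\underline{\cE}\cap\mathrm{int}\,\underline{\cF}$, and costandard across the pieces of $\partial\underline{\cF}\cap\mathrm{int}\,\underline{\cE}$.  At a cusp of $\cF$ or at a transverse intersection of the two boundaries, a short computation with the triangle $\cF\to k_{\overline{U}_\cF}\to k_{\overline{U}_\cF\setminus U_\cF}$ (cusp case) or the open-adjunction formula (transverse case) shows $R\cH om$ vanishes in a neighborhood.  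The essential local computation is at a switched crossing, where the local geometry has $\underline{\cF}^\circ$ occupying only the north quadrant $N$ while $\underline{\cE}^\circ$ occupies $N\cup W\cup E$, with the two boundaries touching only at the crossing point.  Setting $V_\cE = B\setminus\overline{S}$, one has $R\Hom(\cE\vert_B,\cF\vert_B) = R\Gamma(V_\cE,\,j_! k_N)$ where $j\colon N\hra V_\cE$ is open; the triangle $j_! k_N \to k_{V_\cE} \to k_{V_\cE\setminus N}\to[1]$ produces on global sections a diagonal map $k \to k^{\oplus 2}$ (since $V_\cE$ is contractible but $V_\cE\setminus N$ deformation retracts onto its two components $W\cup$ open $\mathrm{NW}$ arc and $E\cup$ open $\mathrm{NE}$ arc), whose cokernel is a rank-one $H^1$.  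Thus $R^1\cH om$ carries a rank-one skyscraper at each switched crossing, and all other $R^i\cH om$ contributions vanish.

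Finally I would assemble global sections.  The preceding step identifies $R^0\cH om(\cE,\cF) = j_{L!}\,k_L$ where $L$ is the interior of the intersection together with the standard-boundary arcs, and $R^1\cH om(\cE,\cF) = \bigoplus_{c\in\mathrm{switches}}k_c$.  The skyscrapers contribute $k^{\oplus s}$ to $H^1$.  For the degree-zero piece apply $R\Gamma(\bR^2,-)$ to $j_{L!}k_L \to k_K \to k_{K\setminus L}\to[1]$.  The set $K$ is contractible (it is the intersection of two topological discs whose boundaries meet only transversely or tangentially, with $\cF$'s cusps and upper strand strictly inside $\cE$), while $K\setminus L$ has exactly $t/2+1$ connected components: one for each ``inside'' arc of $\partial\underline{\cF}\cap\underline{\cE}$, together with its endpoint cusps or transverse-crossing corners and any switched crossings lying in its interior.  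The diagonal map $k \to k^{\oplus (t/2+1)}$ has cokernel $k^{\oplus t/2}$, so $H^1(R^0\cH om) = k^{t/2}$; combined with the skyscraper contribution we obtain $R^1\Hom(\cE,\cF) = k^{\oplus(s+t/2)}$ and $R^i\Hom = 0$ otherwise.  The hard part will be the switched-crossing computation---correctly identifying the local geometry and the degree shift producing the $R^1$ contribution---together with verifying contractibility of $K$ and the count of components of $K\setminus L$, which rely on the normality of the ruling.
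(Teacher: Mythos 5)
The paper states this lemma without proof, so there is nothing of the authors' to compare against; judged on its own terms, your argument is correct and supplies exactly the missing computation. The local models are right: away from the singular points $R\cH om(\cE,\cF)$ is the rank-one constant sheaf on the locus $L$ consisting of $\mathrm{int}(\underline\cE\cap\underline\cF)$, the top arc of $\underline\cF$, and the $t/2$ arcs of $\partial\underline\cE$ inside $\underline\cF$; at cusps of $\cF$ and at transverse crossings the stalk vanishes; and at a switched crossing your cone computation $R\Gamma(V_\cE,j_!k_N)=\mathrm{Cone}(k\to k^{\oplus 2})[-1]$ correctly produces a one-dimensional stalk in degree one and nothing else. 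The global count is also right: $K=\underline\cE\cap\underline\cF$ is a disk, $K\setminus L$ is the union of the $t/2+1$ closed arcs of $\partial\underline\cF$ inside $\underline\cE$ (with the switches interior to them), and the triangle $j_{L!}k_L\to k_K\to k_{K\setminus L}$ gives $H^1=k^{t/2}$ and no other cohomology, so together with the $s$ skyscrapers one gets $R^1\Hom=k^{\oplus(s+t/2)}$ and vanishing elsewhere.

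Two points should be tightened. First, $U_\cE$ and $U_\cF$ are only locally closed (half-open disks), not open, so the adjunction should be quoted as $R\cH om(i_!k_U,\cG)\cong Ri_*(i^!\cG)$ for a locally closed immersion $i$; in your actual local computations the inclusions you invoke (e.g.\ $N\hra V_\cE$) are genuinely open, so nothing breaks, and likewise ``$R\cH om$ vanishes in a neighborhood'' of a cusp or transverse point should read ``has vanishing stalk there'' (nearby it agrees with the constant sheaf on $L$). Second, and more substantively, the place where the ruling-type/nesting hypothesis really enters is the local model at a switch, not primarily the contractibility of $K$: your computation assumes the corner of the bottom strand of $\cF$ opens upward inside $\underline\cE$, i.e.\ locally $\underline\cF$ is the north wedge and $\underline\cE\supset N\cup E\cup W$. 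In the opposite, interleaved configuration (bottom of $\cE$ forming the upper corner at the tangency, which is exactly the non-normal pattern), the stalk at the switch sits in degree zero, the point is absorbed into $L$, and both the skyscraper count and the component count of $K\setminus L$ change, so the stated formula would fail. You should therefore say explicitly that the hypotheses (cusps and top strand of $\underline\cF$ inside $\mathrm{int}\,\underline\cE$, meetings only of ruling type) are being read as forcing your local picture at every switch; this is the configuration that actually occurs in the paper's application, by Proposition \ref{prop:oneruling}.
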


\begin{lemma} \label{lem:ssswitch} Let $\cE, \cF$ be as in Lemma \ref{lemma:extlemma}; note the assumptions there
amount to the statement that $\partial \underline{\cE} \cup \partial \underline{\cF} = \beta^\succ$ for
some positive braid $\beta$.  Let
$0 \to \cF \to \cG \to \cE \to 0$ be an extension.  Then $\cG \in \cC_1(\beta^\succ)$ if and only if $\cG$ is
locally a nontrivial extension at every switched crossing, i.e. if
the image of $[\cG] \in R^1 \cH om(\cE, \cF)$ under the canonical map 
$R^1 \cH om(\cE, \cF) \to \mathrm{H}^0(R^1 \cH om(\cE, \cF)) = k^{\oplus s}$ lands inside 
$(k^*)^{\oplus s}$. 
\end{lemma}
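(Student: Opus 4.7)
The plan is to reduce to local conditions at each crossing of $\beta^\succ$, handle the transverse (non-switched) crossings automatically, and then carry out the key local calculation at each switched crossing. Since singular support is preserved under cones, the extension $0 \to \cF \to \cG \to \cE \to 0$ automatically gives $\cG \in \dgsh_{\Lambda^+}(\bR^2,k)$. By Theorem \ref{thm:comb}, upgrading to $\cG \in \cC_1(\beta^\succ)$ requires two further conditions at each crossing $c$ of $\beta^\succ$: acyclicity of $\cG(c) \to \cG(nw) \oplus \cG(ne) \to \cG(N)$, and microlocal rank one. The rank-one condition is automatic: each branch of $\Lambda$ over a crossing of $\beta^\succ$ lies on exactly one of $\partial\underline{\cE}$ or $\partial\underline{\cF}$, and the triangle $\mmon(\cF) \to \mmon(\cG) \to \mmon(\cE) \to$ shows that $\mmon(\cG)$ inherits rank one from the corresponding eye sheaf on that branch. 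The problem therefore reduces to verifying the acyclicity condition at every crossing.

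At each of the $t$ transverse (non-switched) crossings, Lemma \ref{lemma:extlemma} shows the local stalk of $R^1\cH om(\cE,\cF)$ vanishes, hence $\cG \cong \cE \oplus \cF$ in a neighborhood. Direct inspection of two transversally crossing eye sheaves (each a constant sheaf extended by zero from a half-open region) verifies the acyclicity condition for each summand separately, and therefore for the direct sum.

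The main step is the local analysis at each of the $s$ switched crossings. At such a crossing $c$ the stalk of $R^1\cH om(\cE,\cF)$ is one-dimensional, so local extensions of $\cE$ by $\cF$ near $c$ are classified up to isomorphism by a scalar $\lambda \in k$. Using the combinatorial model of \S\ref{subsec:acm}, we write $\cE$ and $\cF$ explicitly as functors on the poset of strata around $c$: for each of the regions $N,E,S,W$ the values are $k$ or $0$, determined by the local picture of the two eyes meeting at the switch. Assembling $\cG$ with local class $\lambda$ and writing out the complex $\cG(c) \to \cG(nw) \oplus \cG(ne) \to \cG(N)$ reduces acyclicity to the nonvanishing of a determinant proportional to $\lambda$. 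Combining with the previous paragraph, $\cG \in \cC_1(\beta^\succ)$ iff the image of $[\cG]$ in $k^{\oplus s}$ has no zero components, i.e., lies in $(k^*)^{\oplus s}$. The main obstacle is not the size of this local calculation but the combinatorial bookkeeping around a switch: one must carefully track which of the four quadrants lies in $\underline{\cE}$, in $\underline{\cF}$, in both, or in neither, and fix the orientation of the switch, so that the scalar appearing in the determinant is faithfully identified with $\lambda$.
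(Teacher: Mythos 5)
Your argument is correct and is essentially the paper's proof: the paper likewise uses $\SS(\cG)\subset\SS(\cE)\cup\SS(\cF)$ to reduce everything to the switched crossings and then appeals to exactly the local calculation you carry out (acyclicity of $\cG(c)\to\cG(nw)\oplus\cG(ne)\to\cG(N)$ at a switch being equivalent to nonvanishing of the local class). One small patch: at a transverse crossing, vanishing of the stalk of $R^1\cH om(\cE,\cF)$ does not by itself give splitting on a neighborhood (you need the local $\Ext^1$, hence also $H^1$ of the local $R^0\cH om$, to vanish); it is simpler to note that each eye sheaf separately satisfies the crossing condition there, so by the long exact sequence any extension does, with no splitting required.
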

\begin{proof} Since $\SS(\cG) \subset \SS(\cE) \cup \SS(\cF)$, the only thing to check is the singular support
at the switched crossings.  This is a straightforward local calculation. \end{proof}

\begin{proposition} \label{prop:rulingmoduli}
Let $\beta$ be a positive braid on $n$ strands.  Let $R$ be a graded normal ruling of 
$\beta^\succ$.  Let  $E_1, \ldots, E_n$ be the eyes of the ruling, ordered so that $E_i$ 
contains the $i$th strand above the central region.

Let $\cC_1(\beta^\succ)_R \subset \cC_1(\beta^\succ)$ be the full subcategory whose
objects  whose (unique) normal ruling filtration has underlying ruling $R$, 
and let $\mM_1(\beta^\succ)_R \subset \mM_1(\beta^\succ)$ be the corresponding moduli space.  

Then the space $\mM_1(\beta^\succ)_R$ is an iterated fiber bundle over point, each bundle having fibres of the form
$((k^*)^{a_i} \times k^{b_i}) / k^*$ for some $a_i, b_i$ such that $\sum a_i = s$ and $\sum b_i = {\frac{1}{2}(w-s)}$,
where there are $w$ total crossings and $s$ switched crossings. 

In particular, if $k$ is a finite field with $q$
elements, then the (orbifold) cardinality of $\cC_1(\beta^\succ)_R$ is given by
$$\# \cC_1(\beta^\succ)_R = (q-1)^{s-n} q^{\frac{w-s}{2}} = (q^{1/2} - q^{-1/2})^{s-n} q^{\frac{w-n}{2}}$$
\end{proposition}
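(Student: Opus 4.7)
The plan is to realize $\mM_1(\beta^\succ)_R$ as an iterated fiber bundle whose $i$-th stage extends the growing normal ruling filtration by the eye $E_i$. By Proposition \ref{prop:oneruling}, every object of $\cC_1(\beta^\succ)_R$ carries a unique normal ruling filtration with $\Gr_i = E_i$, so each such object is built by iterated short exact sequences
\[
0 \to \cT_{i-1} \to \cT_i \to E_i \to 0, \qquad i = 1, \ldots, n,
\]
where $\cT_i$ lies in the rank-one category attached to the partial front $\Lambda^{(i)} := \partial\bigl(\bigcup_{j \le i} E_j\bigr)$ subject to the ruling conditions at the switched crossings among $E_1, \ldots, E_i$.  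Denote by $\mM_{\le i}$ the corresponding moduli stack, so $\mM_{\le 0} = \mathrm{pt}$ and $\mM_{\le n} = \mM_1(\beta^\succ)_R$; the forgetful maps $p_i : \mM_{\le i} \to \mM_{\le i-1}$ are the stages of the tower, and I need to show each fiber is $[(\mathbb{G}_m^{s_i} \times \mathbb{G}_a^{b_i})/\mathbb{G}_m]$.

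The fiber of $p_i$ over a fixed $\cT_{i-1}$ classifies extensions of $E_i$ by $\cT_{i-1}$ that obey the new singular-support conditions at crossings where $E_i$ meets earlier eyes.  By Lemma \ref{lem:ssswitch}, this is exactly the requirement that the extension class in $\Ext^1(E_i, \cT_{i-1})$ project nontrivially to the $1$-dimensional skyscraper summand at each switched crossing between $E_i$ and some $E_j$, $j < i$.  Lemma \ref{lemma:extlemma} computes $\Ext^k(E_i, E_j) = 0$ for $k \neq 1$ and $\dim \Ext^1(E_i, E_j) = s_{ij} + t_{ij}/2$, where $s_{ij}$ counts switched crossings between the two eyes and $t_{ij}/2$ counts the intervals of $\partial E_i$ inside $E_j$ (or vice versa).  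The vanishing of $\Ext^0$ and $\Ext^2$ on all graded pieces forces the long exact sequences associated to the filtration of $\cT_{i-1}$ to degenerate into short exact sequences, yielding a (non-canonical) isomorphism
\[
\Ext^1(E_i, \cT_{i-1}) \;\cong\; \bigoplus_{j<i} \Ext^1(E_i, E_j) \;\cong\; k^{s_i + b_i}, \qquad s_i := \sum_{j<i} s_{ij}, \quad b_i := \sum_{j<i} t_{ij}/2.
\]
Since the skyscrapers are supported at pairwise distinct points, the $s_i$ projections are linearly independent, so the valid-extension locus is the open subscheme $\mathbb{G}_m^{s_i} \times \mathbb{G}_a^{b_i} \subset \Ext^1(E_i, \cT_{i-1})$.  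Two valid extensions define isomorphic $\cT_i$ (with $\cT_{i-1}$ fixed) precisely when their classes differ by the scaling action of $\mathrm{Aut}(E_i) = \mathbb{G}_m$, and no further inner automorphisms of extensions arise because $\Hom(E_i, \cT_{i-1}) = 0$.  Thus the fiber of $p_i$ is $[(\mathbb{G}_m^{s_i} \times \mathbb{G}_a^{b_i})/\mathbb{G}_m]$, as required.

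The global identities $\sum_i s_i = s$ and $\sum_i b_i = (w-s)/2$ are combinatorial consequences of the ruling: every switched crossing is counted exactly once (when the outer of its two eyes is added), and every non-switch crossing contributes a transverse intersection of two distinct eye boundaries, which the normality of $R$ organizes into $(w-s)/2$ total boundary-in-interior arcs across the ordered pairs $j < i$.  Taking $\mathbb{F}_q$-cardinalities of the fibers and multiplying gives
\[
|\mM_1(\beta^\succ)_R(\mathbb{F}_q)| \;=\; \prod_{i=1}^n \frac{(q-1)^{s_i} q^{b_i}}{q-1} \;=\; (q-1)^{s-n} q^{(w-s)/2},
\]
and the substitution $q - 1 = q^{1/2}(q^{1/2} - q^{-1/2})$ yields the second form.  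The main obstacles I anticipate are (i) verifying inductively via Lemma \ref{splotch} that the $\cT_i$ produced at each stage really does lie in the intended ``partial rainbow'' category so that iterative application of Lemma \ref{lemma:extlemma} is legitimate, and (ii) the combinatorial accounting of non-switch crossings as boundary-in-interior arcs, where the normality of $R$ is crucial to rule out pathological configurations that would disturb the count of $b_i$.
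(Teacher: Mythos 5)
Your proposal is correct and is essentially the paper's own argument: both induct on the eyes of the ruling via the unique normal ruling filtration (using Lemma \ref{splotch} to stay inside rainbow closures), identify each fiber with the locus in $\Ext^1(\cE_i, R_{i-1}\cF)$ cut out by the switched-crossing conditions of Lemma \ref{lem:ssswitch}, compute this via Lemma \ref{lemma:extlemma} and the vanishing of $\Ext^{\ne 1}$ between eye sheaves, and quotient by the $k^*$ of automorphisms coming from $\Hom(\cE_i, R_{i-1}\cF)=0$. The only cosmetic difference is that you split the filtered $\Ext^1$ as a direct sum, where the paper keeps it as an iterated bundle, which changes nothing in the count.
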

\begin{proof}
For any $\cF \in \cC_1(\beta^\succ)_R$, we  write $R_\bullet \cF$ for its (unique) normal
ruling filtration.  We write $\cE_i$ for the eye sheaf
with $\underline{\cE_i} = E_i$.  

Note that $\beta^\succ \setminus E_n$ is again the rainbow closure of a braid, which we denote  
$\beta_{n-1}$.  We still write $R$ for the restriction of the ruling $R$ to $\beta_{n-1}$.  
The uniqueness of the ruling filtration $R_\bullet \cF$ together with Lemma \ref{splotch} asserts that the
following map is well defined: 
\begin{eqnarray*}
\pi_n : \cM_1(\beta^\succ)_R & \to & \cM_1(\beta_{n-1}^\succ)_R \\
\cF & \mapsto & R_{n-1} \cF
\end{eqnarray*}
Let $\Ext^1_{\beta}(\cE_n, R_{n-1} \cF) \subset \Ext^1(\cE_n, R_{n-1} \cF)$ denote those
extensions giving rise to sheaves in $\cC_1$, i.e., which satisfy the correct singular support
condition at the crossings.  Then the fibres of the above map $\pi_n$ are
$\Ext^1_\beta(\cE_n, R_{n-1} \cF) / (\mathrm{Aut}(\cF) / \mathrm{Aut}(R_{n-1} \cF) )$. 

We claim $\Ext^a(\cE_{i},R_{j} \cF)=0$ if $a\ne 1$ and $i>j$.  Indeed,
from the exact sequence $$0 \to R_{j-1} \cF \to R_j \cF \to \cE_j \to 0$$ we have the exact sequence
$$\Ext^a(\cE_{i},R_{j-1} \cF) \to \Ext^a(\cE_{i},R_{j} \cF) \to \Ext^a(\cE_{i},\cE_j)$$
We may assume the first term vanishes by induction; the third vanishes by Lemma \ref{lemma:extlemma}, so the
second vanishes as well. 
It follows that when $a=1$ and $i > j$ we have the short exact sequence

$$0 \to \Ext^a(\cE_{i},R_{j-1} \cF) \to \Ext^a(\cE_{i},R_{j} \cF) \to \Ext^a(\cE_{i},\cE_j) \to 0$$

In particular we have 
$$\mathrm{Aut}(\cF)  = \mathrm{Aut}(R_{n-1} \cF) \times \mathrm{Aut}(\cE) \times
\Hom^0(\cE, R_{n-1} \cF) = \mathrm{Aut}(R_{n-1} \cF) \times \mathrm{Aut}(\cE)$$
so $$\mathrm{Aut}(\cF) / \mathrm{Aut}(R_{n-1} \cF)  = k^*$$

Moreover, we see $\Ext^1(\cE_n, R_{n-1} \cF)$ admits a filtration with associated graded
pieces $\Ext^1(\cE_n, \cE_j)$ for $j < n$.  We claim that an extension $\cG$, i.e., 
$0 \to R_{n-1} \cF \to \cG \to \cE_n \to 0$, will have singular support in $\beta$ if and only if,
for all $j$, 
$\mathrm{Gr}_j [\cG] \in \Ext^1(\cE_n, \cE_j)$ has singular support in the two-strand braid
whose front diagram is $\underline{\cE_n} \cup \underline{\cE_j}$.  This is just because the singular
support calculation only needs to be made at the switched crossings, and only two eye sheaves contribute
nontrivially to this calculation at any crossing.  By Lemma \ref{lem:ssswitch}, this amounts to saying that
$\Ext^1_\beta(\cE_n, R_{n-1} \cF)$ is an $n-1$-step iterated bundle whose $j$'th fibre is
$(k^*)^{\oplus s_{nj}} \times k^{\oplus t_{nj}/2}$, where $s_{nj}$ is the number of switched crossings
between $E_n$ and $E_j$, and $t_{nj}$ is the number of transverse crossings between $E_n$ and $E_j$. 

Since $s = \sum s_{ij}$ and $t = \sum t_{ij}$, an induction on $n$ completes the proof. 
\end{proof}

\begin{remark}
The quantity  $t/2$ is equal to the number of ``returns''
of $R$ as defined in \cite{HR}.
\end{remark}

\begin{definition}
Let $K$ be a topological knot.  We write $P(K) \in \bZ[(q^{1/2}- q^{-1/2})^{\pm 1}, a^{\pm 1}]$ for the HOMFLY polynomial of
$K$.  Our conventions are given by the following skein relation and normalization:

\begin{eqnarray*}
  \label{eq:skein}
  a \, P(\undercrossing) -  a^{-1} \, 
  P(\overcrossing) & = & (q^{1/2} - q^{-1/2})  
  \, P(\smoothing) \\
  a - a^{-1} & = & (q^{1/2}-q^{-1/2})P(\bigcirc)
\end{eqnarray*}
With these conventions, the lowest degree power of $a$ in the HOMFLY
polynomial of a positive braid with $w$ crossings and $n$ strands is $a^{w-n}$.  
\end{definition}

\begin{theorem} \label{thm:homflycount}
Let $\beta$ be a positive braid on $n$ strands with $w$ crossings.  Let the coefficients $k$ be a finite field with $q$ elements. 
Then the (orbifold) cardinality of $\cC_1(\beta^\succ)$ is related to the HOMFLY
polynomial by the formula
$$\# \cC_1(\beta^\succ) = \left[(aq^{-1/2})^{n-w} P_{\beta^\succ}(a,q)\right|_{a=0}$$
\end{theorem}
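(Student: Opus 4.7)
The plan is to combine the stratification of $\cC_1(\beta^\succ)$ by graded normal rulings (Theorem \ref{thm:augthm}) with Rutherford's identification of the ruling polynomial with the lowest $a$-coefficient of HOMFLY.

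First I would unpack Proposition \ref{prop:rulingmoduli} to get the count on each stratum. Since each stratum $\cM_1(\beta^\succ)_R$ is an iterated bundle over a point whose fibres have the form $((k^*)^{a_i} \times k^{b_i})/k^*$ with $\sum a_i = s(R)$ (the number of switches) and $\sum b_i = (w - s(R))/2$, its orbifold cardinality over $\bF_q$ is
\[
\# \cC_1(\beta^\succ)_R \;=\; (q-1)^{s(R)-n}\, q^{(w-s(R))/2} \;=\; (q^{1/2}-q^{-1/2})^{s(R)-n}\, q^{(w-n)/2}.
\]
Next, by Proposition \ref{prop:oneruling}, every $\cF \in \cC_1(\beta^\succ)$ admits a unique normal ruling filtration, and the underlying ruling is automatically graded and normal. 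Hence $\cC_1(\beta^\succ) = \bigsqcup_R \cC_1(\beta^\succ)_R$ as a stratification indexed by the set $\cR$ of graded normal rulings of $\beta^\succ$, and summing gives
\[
\# \cC_1(\beta^\succ) \;=\; q^{(w-n)/2} \sum_{R \in \cR} (q^{1/2}-q^{-1/2})^{s(R)-n}.
\]

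Now I would invoke Rutherford's theorem \cite{Ru}. In the conventions used here, the ruling polynomial of $\beta^\succ$ is $\sum_R z^{j(R)}$ where $z = q^{1/2}-q^{-1/2}$ and $j(R) = -\chi(\Sigma_R) = s(R)-n$ (since the filling surface has $n$ eyes and $s(R)$ switches, hence Euler characteristic $n - s(R)$). Rutherford's theorem identifies this polynomial with the coefficient of the lowest power of $a$ in $P_{\beta^\succ}(a,q)$: specifically, for a rainbow closure of a positive braid with $w$ crossings on $n$ strands, the lowest power of $a$ in $P_{\beta^\succ}$ is $a^{w-n}$, and
\[
\left[(aq^{-1/2})^{n-w} P_{\beta^\succ}(a,q)\right]_{a=0} \;=\; q^{(n-w)/2}\cdot a^{n-w}\text{-coefficient of }P_{\beta^\succ}(a,q) \;=\; q^{(n-w)/2}\sum_R z^{s(R)-n}\cdot q^{?}
\]
The small remaining task is just to chase the normalization carefully: Rutherford's paper states the identity up to an overall power of $q$, and one must check that the overall factor of $q^{(w-n)/2}$ produced by our point count matches the factor $q^{(n-w)/2}$ produced by the substitution $a \to 0$ in $(aq^{-1/2})^{n-w}P_{\beta^\succ}$ \emph{after} accounting for Rutherford's own convention (which differs from ours by rescaling $a$ and $q$).

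The main obstacle is bookkeeping rather than mathematics: translating between the several conventions for the HOMFLY polynomial and the ruling polynomial (different papers normalize the unknot, the sign of the skein relation, and the variable $z$ differently). Once that translation is fixed, the theorem is an immediate combination of the stratum count above with Rutherford's theorem. I expect no genuinely new input beyond Propositions \ref{prop:oneruling} and \ref{prop:rulingmoduli} already established, plus the cited result of Rutherford.
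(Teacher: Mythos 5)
Your proposal matches the paper's proof essentially verbatim: the paper likewise combines Propositions \ref{prop:oneruling} and \ref{prop:rulingmoduli} to write $\# \cC_1(\beta^\succ) = q^{\frac{w-n}{2}} \sum_R (q^{1/2}-q^{-1/2})^{s(R)-n}$ over normal rulings and then cites Rutherford \cite{Ru} (noting the differing normalization) to identify this with the stated HOMFLY coefficient. No gap; the approach is the same.
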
 
\begin{proof}
By Propositions \ref{prop:oneruling} and \ref{prop:rulingmoduli}, we have 
$$\# \cC_1(\beta^\succ) = q^{\frac{w-n}{2}} \sum_R  (q^{1/2} - q^{-1/2})^{s(R)-n}$$
where the sum is over all normal rulings, and $s(R)$ is the number of switched crossings
in the ruling $R$.  By \cite{Ru}, this is equal to the stated coefficient of the HOMFLY polynomial
(although slightly different normalization conditions for HOMFLY are used in \cite{Ru}.) 
\end{proof}

\begin{remark} 
Using the Lefschetz trace formula for Artin stacks \cite{Beh}
and arguing as in Katz's appendix to \cite{HRV}, 
Proposition \ref{prop:rulingmoduli} 
implies that $\# \cC_1(\beta^\succ)$
is given by the weight polynomial of the moduli
stack $\cM_1(\beta^\succ)$. \end{remark}

\begin{remark} \label{rem:maulik}
As shown in \cite{M},  for a positive braid of the form $\beta = \beta' \Delta^2$, where $\beta'$ is again positive, 
coefficients of the other powers of $a$, and indeed the HOMFLY polynomials
corresponding to other representations, can be recovered from the lowest degree term
in $a$ of the ordinary HOMFLY polynomials of certain auxiliary positive braids.  These braids
are made as follows: first consider the positive braid $\beta''$ formed from $\beta'$ 
by adding a strand which loops around the
others once, and then, for each partition $\pi$, consider the braid $\beta''_\pi$ formed
by cabling $\beta''$ by the Aiston-Morton idempotent \cite{AM} corresponding to $\pi$.  Note
the link of any plane curve singularity can be presented in the form $\beta' \Delta^2$, where $\beta'$ is a braid
presentation of the singularity obtained by blowing up.  Thus,
for all such links, we have moduli spaces whose point counts recover all HOMFLY invariants. 
\end{remark}

\begin{remark}
One might suppose that there should be some relation between Theorem \ref{thm:homflycount}, Theorem \ref{thm:HHH}, Proposition \ref{prop:rainbowmoduli}, and Remark \ref{rem:maulik}.  We consider it a very interesting problem to determine what this may be.  
\end{remark}

\begin{example}
\label{ex:rainbowtrefoil}
The trefoil is the $(2,3)$ torus knot, the closure of a braid with two
strands and three crossings.
After closing all strands in arcs ``over the top'' --- see
Figures~\ref{fig:planartrefoil}
and \ref{fig:trefoilpic} ---
the front diagram has four compact regions:  a rainbow region, a central
region (with the label $k^2$ in the diagram),
and left and right (lower) regions.
The front diagram $\Phi$ is a braid closure, and we equip it with its
canonical binary Maslov potential.  Write
$\Lambda$ for the associated Legendrian in $T^{\infty,-}\bR^2$.
Proposition \ref{prop:braidlegible} plus a further small argument
can be used to show that all
objects of $\dgfun_\Lambda(\bR^2,k)$ are legible objects.
A rank-one legible object $F$ is defined by injective
maps $a_1, a_2,a_3,a_4$ all from $k\rightarrow k^2$ and a projection $\pi:
k^2\rightarrow k.$
Write $l_i = Im(a_i)\subset k^2$, $i = 1,...,4,$ and set $l_0 = ker(\pi).$
The crossing condition --- see Definition \ref{def:legible}  --- requires
$$l_i \neq l_{i+1\; {\rm mod}\; 4}$$
and the moduli space $\cM_1$ is the set of such choices modulo $\PGL_2(k).$
\begin{figure}[H]
\includegraphics[scale = .4]{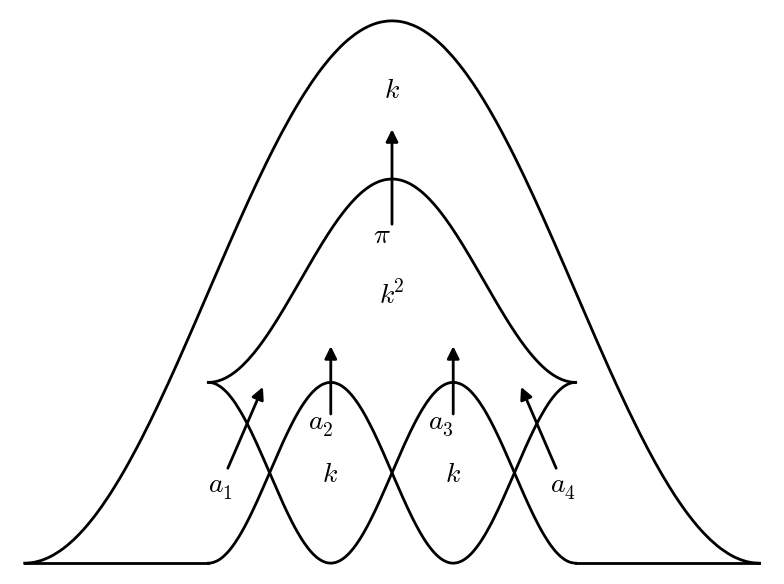}
\caption{Legible diagram for the trefoil as (2,3) braid closed over the top.}
\label{fig:trefoilpic}
\end{figure}

$\cM_1$ has a stratification by the loci where $l_2 = l_0,$ where
$l_3 = l_0,$ and the
complement of these.  Let $F$ be an object and let $E_C$
be the constant sheaf with fiber $l_0$ on the central region,
let $E_L$ be the
sheaf in $\cC$ which is constant with fiber $l_0$
on the union of the central and left regions,
and let $E_R$ be the right counterpart of $E_L.$  Then on the first
stratum, $E_L$
is a subobject of $F$ (in the category of sheaves), $E_R$ is a subobject on the
second
stratum, and $E_C$ is a subobject on the third stratum.  This stratification is
therefore
the \emph{ruling stratification}, since the sheaves $E_L, E_R$ and $E_C$
all represent
disks corresponding to the three different (graded, normal) rulings of the
front diagram, with switching sets respectively just the right crossing,
just the left crossing, and all crossings.  Write $\cM_1 = \cS_L\cup \cS_R
\cup \cS_C.$

Now the $l_i$ are in $\bP^1_k.$   By the $\PGL_2(k)$ action, we may assume
$l_0 = \infty$ and $l_1 = 0$ (leaving a $k^*$ symmetry group).
Then on $\cS_L$ we have $l_2 = \infty$ with $l_3 \neq l_4 \in k =
\cP^1_k\setminus \infty.$
Therefore $\cS_L \cong (k\times k^*)/k^* \cong k$ under the identification
$(l_3,l_4) \leftrightarrow (l_3,l_4-l_3)$.
Similarly, $\cS_R\cong k.$  On $\cS_C$ we can put $l_0, l_2, l_3 = \infty,
0, 1$
and then $l_1 \in k^*$ and $l_4 \in \bP^1 \setminus \{1,\infty\}\cong k^*,$
so $\cS_C\cong k^*\times k^*.$
Now let $k = \bF_q$ be a field with $q$ elements. We count
$q + q + (q-1)^2 = q^2 + 1$ elements of $\cC_1$; each has a $k^*$ symmetry group, so the orbifold
cardinality is $(q^2 + 1)/(q-1)$.  If $q = 2$, this equals 5.
\end{example}

\section{Relation to Legendrian Contact Homology}
\label{sec:conj}

The microlocalization theorem of \cite{N,NZ} relates the constructible
sheaves to the Fukaya category of a cotangent.  Local systems on a compact base
manifold are related to compact Lagrangian objects which do not intersect
the contact manifold at infinity.  Let $M = \bR^2$ or $S^1 \times \bR$ and $\Lambda \subset T^{\infty,-}M$
be a Legendrian knot or link (see Section \ref{sec:lkb} for definitions).  The category $\dgsh_{\Lambda}(M, k)_0$ that
we have defined is a category of constructible sheaves orthogonal to local systems.
It is meant to model aspects of the Fukaya category ``at infinity.''  

Legendrian contact homology is another construction which captures aspects
of the Fukaya category at contact infinity, so it is natural to expect some relationship
between the two.  While the precise relationship in a general setting may be more
subtle, for the two geometries under consideration, the relationship
is as simple as possible.  
In this section, we describe this relationship through the formulation
of a theorem (proved in \cite{NRSSZ}), and work through several illustrative
examples.

\subsection{Legendrian Contact Homology and Sheaves}

Legendrian contact homology provides recipe for producing a category out of a Legendrian submanifold of the jet space $J^1(M)$ of a manifold $M.$\footnote{Recall that 
$\bR^3$ and $\bR^3/\bZ \cong S^1 \times \bR^2$,
with their standard contact structures, are contactomorphic to $J^1(\bR)$ and
$J^1(S^1)$, respectively.}
Chekanov \cite{C} and Eliashberg \cite{E} have constructed a differential graded algebra (dga) defined
on the free tensor algebra generated by the Reeb chords.  Ng \cite{Ng} has given a
combinatorial recipe for
computing the C-E dga in terms of the front
projections of a Legendrian knot in $\bR^3$ based on his
technique of ``resolving'' a front projection into a Lagrangian projection, where
the Reeb chords are simply crossings.  The case of knots in $S^1 \times \bR^2$
was considered by Ng and Rutherford in \cite{NR}.

The differential of the C-E dga is computed from
counting holomorphic maps of
disks with punctures along the boundary:
the disks map to the symplectization of the contact manifold, with boundary
arcs mapping to the Lagrangian associated to the Legendrian.
Near a puncture the disk is conformal to a semi-infinite strip, and the
map must limit to a Reeb chord.

An augmentation $\epsilon$ of a commutative algebra $A$ over a ring $R$
is an algebra map $\epsilon:  A \rightarrow R$
to the ground ring, i.e. an $R$-rank one $A$-module. 
When $A$ is a noncommutative dga, an augmentation $\epsilon:  A \rightarrow R$ is a map of dga's,
where $R$ is given the zero differential, i.e. $\epsilon$ obeys $\epsilon\circ d = 0.$
In the symplectic setting where $A$ is the C-E dga of a
Legendrian, the authors of \cite{NRSSZ}
define a category whose objects are augmentations
and whose endomorphisms are essentially Legendrian contact homology.
The construction stems from the important
earlier work of
Bourgeois and Chantraine \cite{BC}. 
It is known (see, e.g., \cite{Ek}) that Lagrangian fillings of Legendrian knots
give rise to augmentations.
Likewise, they give rise to sheaves in our category by the
microlocalization theorems of \cite{N,NZ}.
In neither case is it clear precisely which objects arise from filling surfaces, although certainly some do not.
Nevertheless we have conjectured (in an earlier draft of this paper), based on
computations in examples described below, that our category and the augmentation category agree
over any coefficient field $k$.  This conjecture has since been proven in \cite{NRSSZ}.

\begin{theorem}[\cite{NRSSZ}]
\label{mainconj}
Let $\Lambda$ be a Legendrian knot or link in $\bR^3$ or $S^1 \times \bR^2$
with its standard contact structure.  Let $k$ be a field, and
recall $\cC_1(\Lambda)$ is the full subcategory of rank one objects in
$\dgsh_\Lambda(\bR^2, k)_0$ or $\dgsh_\Lambda(S^1 \times \bR, k)_0$.  
Let $\cA\mathit{ug}(\Lambda)$ be the category of $k$-augmentations of
the C-E dga over $k$ associated to $\Lambda$, as defined in \cite{NRSSZ}.
There is a quasi-equivalence of $A_{\infty}$-categories
\begin{equation}
\label{eq:}
\cC_1(\Lambda) \cong \cA\mathit{ug}(\Lambda)
\end{equation}
\end{theorem}

We will describe computations supporting this theorem and relating to moduli spaces of objects (an issue not addressed in \cite{NRSSZ})
in the next subsection.

\begin{remark}
Theorem \ref{mainconj} is limited.  
The category $\dgsh_\Lambda(\bR^2, k)_0$ is much larger than $\cC_1(\Lambda),$
as it contains all higher-rank objects.  One would like to find
corresponding objects and a corresponding category
in the setting of contact Legendrian homology.
\end{remark}

\subsection{Computations and Examples}

Here we perform explicit calculations in some illustrative examples.

\subsubsection{Spaces of morphisms for binary Maslov objects}

Recall from Section \ref{sec:pixelation} that by taking the knot $\Lambda$ to 
have a `grid' front diagram, we give a full 
quasi-embedding of
$\dgsh_\Lambda(\bR^2, k)$ into  
the triangulated
dg category of complexes of $\bZ^2$-graded modules
over the ring $k[x,y].$  In particular, we use Macaulay2 to 
compute Homs between objects in the latter category. 

Assume $\Lambda$ carries a binary Maslov potential.  Then an object $\cF \in \cC_1(\Lambda)$ 
can be represented by a $\bZ^2$-graded  $k[x,y]$ module, rather than a complex of them.  Moreover,
all such modules have the same graded dimensions.  Indeed, if $M$ is a such a module,
$\dim_k M_{i,j}$ is just the weighted number of times one encounters the knot travelling in the $-x$ (or equivalently
$-y$) direction in the grid diagram from $(i+1/2, j+1/2)$; the weight is $(-1)^\mu$ where $\mu$ is the Maslov potential. 

We recall that the meaning of the endomorphism $x: M_{i,j} \to M_{i, j+1}$ is that 
$M_{i,j} = \cF_{(i+1/2, j+1/2)} = \cF_{(i+1/2, j+1)} $, and $M_{i,j} = \cF_{(i+1/2, j+1+1/2)}$,
and $x: M_{i,j} \to M_{i, j+1}$ is the rightward generization map.  The action of $y$ similarly
captures the upward generization maps. 
Conditions at crossings ensure that $x$ and $y$ commute.  

We write $X, Y \in End_k(M)$ for the matrices giving the action of $x, y$.   One constructs the resolution
of $M$ as a $k[x,y]$-module in the following standard way.  Put $R = k[x,y]$.  Construct the diagram.
$$\xymatrix{M\otimes_k R\ar[r]^{x-X}&M\otimes_k R\\
{}&M\otimes_k R \ar[u]_{y-Y}}$$
Though the resolution is not free, $M\otimes _k R$ is the cokernel of the map
from $(M\otimes_k R)\oplus (M\otimes_k R) \rightarrow M\otimes_k R$
to the upper-right corner, and this data can be used to construct the
free resolution.  More prosaically, it can be fed into the Macaulay2
computer program to construct homomorphisms between $R$-modules.

\begin{example}
Let $\Lambda$ be the rainbow closure of the thrice twisted two-strand braid,
i.e., the trefoil $(2,3)$ torus knot of Figure \ref{fig:planartrefoil}.  When $k = \bF_2$, there are five objects
--- see Example \ref{ex:rainbowtrefoil}.

We have computed all Ext groups between all five objects
of the $(2,3)$ torus knot.  The diagonal entries --- the End groups
of each object --- have $h^0 = 1, h^1 = 2, h^2 = 0,$
while the Ext groups between any two different objects have
$h^0 = 0, h^1 = 1, h^2 = 0.$
The same Ext groups were calculated for the category of augmentations
of the Legendrian trefoil knot
in \cite{BC}.
\end{example}

\begin{example}
\label{ex:34pixels}
Consider the $(3,4)$ torus knot grid diagram pictured in
Figure \ref{fig:34knot}.
\begin{figure}[H]
\includegraphics[scale = .7]{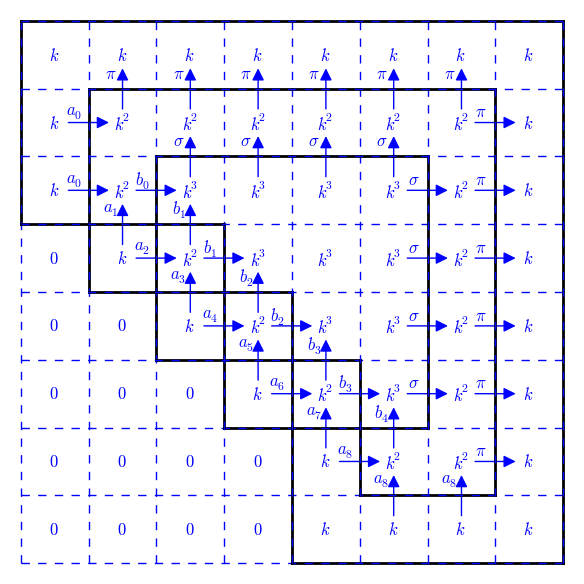}
\caption{Grid diagram for the $(3,4)$ torus knot.}\label{fig:34knot}
\end{figure}
There are 24 grid squares $(i,j)$ with $M_{i,j}$ one-dimensional, 16 within
a two-dimensional region, and 10 three-dimensional squares.  Therefore,
$M$ is a vector space of dimension
$24\cdot 1 + 16\cdot 2 + 10\cdot 3 = 86$.

There are $93$ objects of the category $\cC_1(\Phi)$
over $\bF_2.$
For each object $L$,
we compute (via Macaulay2) the dimensions
of the graded space of endomorphisms $h^i = {\rm dim}\, Ext^i(L,L)$
as $h^0 = 1, h^1 = 6, h^2 = 0.$  All other $Ext$ groups are zero, since $M$
has a free resolution of length three.
We can go from cohomological to homological
notation and define $h_i = h^{1-i}$, then organize the results into a Poincar\'e polynomial
$P = \sum_i h_i t^i = 6 + t.$  This agrees with the linearized contact homology
polynomial of the (3,4)
torus knot
computed in \cite{CN}.
\end{example}

\begin{example} The knot $m8_{21}$ of \cite{CN} is the first example in \cite{CN} of a knot
with two distinct linearized contact homologies.
The front diagram is pictured here.
\begin{figure}[H]
\begin{center}
\includegraphics[scale = .6]{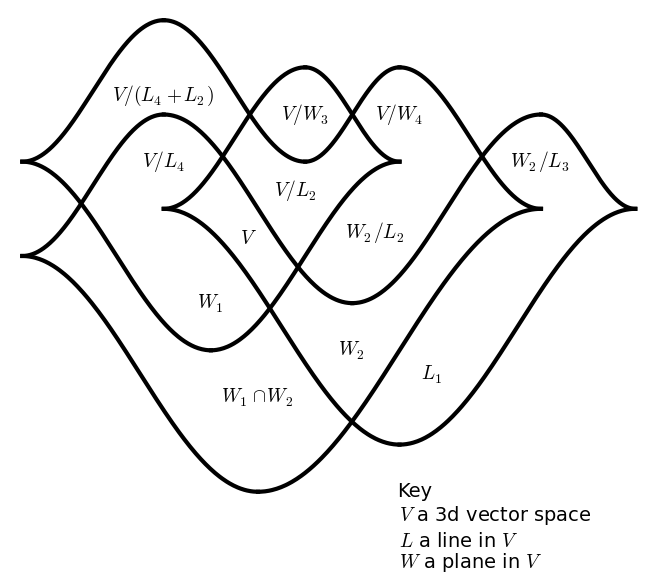}
\caption{Legible diagram for the knot $m8_{21}$ of \cite{CN}.}\label{fig:m821}
\end{center}
\end{figure}
The diagram admits a binary Maslov potential, so all objects of $\cC_1(m8_{21})$ can be represented
by sheaves, rather than complexes thereof.  Moreover, the stratification induced by the front diagram
fails to be regular only on account of the non-compact component, where in any case the sheaves in question
are zero; it follows that all objects are legible. 

In fact, objects can be rigidified by framing the stalk in the region labelled $V$ by a vector space $V = k^3$, 
as then all other stalks admit
either canonical 
injective maps to or surjective maps from a subquotient of $V$.  An object is then determined by the data of a collection of lines
$L$ and planes $W$ in $V$, subject to certain incidence conditions, some of which are captured in the diagram 
by the appearance of two labels in one region, indicating that the corresponding spaces are isomorphic by the evident map.  

As per Theorem \ref{thm:numbers}, 
we compare the augmentation
number of Ng and Sabloff -- which can be computed from the ruling polynomial of this knot, $R(z) = 3 + 2z^2$,
to be $2^1R(z^2=1/2) = 8$ -- to the number of rank one objects of $\cC_1(m8_{21})$.  In fact we find
$10$ objects, 4 of which have a $\bZ/2$ automorphism
group, 6 of which have no nontrivial automorphisms.\footnote{For an
example of an object $F$ with an automorphism, first label the seven nonzero vectors of $V$
by $1,...,7,$ and use this same labeling for the lines that they generate.
Label planes by the three lines they contain, e.g.~$(123).$
Objects are tuples $(L_1,L_2,L_3,L_4;W_1,W_2,W_3,W_4)$.
We define $F$ by setting $(L_1, L_2, L_3, L_4) = (2, 1, 3, 2)$ and notice that
they lie in a single plane $(123).$  Now set $(W_1,W_2,W_3,W_4)
= ((145),(123),(145),(167)).$ The nontrivial automorphism
is defined by its action on the basis of vectors $1, 2, 4.$
It fixes $1$ and $2$ and sends $4$ to $5$ (and $5$ to $4$).  It also swaps $6$ and $7$.
This automorphism clearly leaves the object $F$ fixed.} None of these objects have higher 
homotopies.
The weighted count of objects
is therefore $6 + 4/2 = 8.$

For each object $c$ we compute the self-hom dimensions $h^i := \dim \Ext^i(c,c).$
As in the previous example, we
define $h_i = h^{1-i}$ and create the Poincar\'e polynomial
$P(c) = \sum_i h_i t^i.$  For the six objects with trivial automorphism group,
we compute $P(c) = 2+t.$  For the four objects with $\bZ/2$ automorphism
group, we compute $P(c) = t^{-1} + 4 + 2t.$  These values agree with the two
linearized contact homologies associated to this knot.

In addition, we have computed all $\Ext$ groups between objects via pixelation,
using Macaulay2,
though no analogue of this computation seems to have been performed
in Legendrian contact homology.  Here we record the results.
Order the 10 objects in 3 groups, the first being those for which $L_2, L_3$ and $L_4$
are linearly independent, the second being those for which $W_1$ is not equal
to $W_3$ or $W_4$, and the third being the four objects for which $W_1$ equals
$W_3$ or $W_4$.  In this ordering of the objects, the Ext matrix has group block form
$$\begin{pmatrix}
X&Y&Y\\
Z&X&Z\\
Z&Y&W\\
\end{pmatrix}$$
where:
\begin{itemize}
\item $X$ is a $3\times 3$ matrix with entries $(1,2,0)$ on the diagonal and $(0,1,0)$ off
the diagonal.
\item $Y$ is a $3\times 3$ or $3\times 4$ or $4\times 3$ matrix with every entry equal to $(1,2,0).$
\item $Z$ is a $3\times 3$ or $3\times 4$ or $4\times 3$ matrix with every entry equal to $(0,2,1).$
\item$W$ is a $4\times 4$ matrix with diagonal entries $(2,4,1)$ and off-diagonal entries $(1,3,1).$
\end{itemize}
\end{example}

\subsubsection{The Chekanov Pair}
\label{sec:chekpair}

The Chekanov pair of 
Legendrian knots
are topologically isomorphic but are
inequivalent under Legendrian isotopies.
They have the same Thurston-Bennequin and rotation numbers.
Front diagrams $\Phi_1$ and $\Phi_2$ for this pair are in the figure below.
\begin{figure}[H]
\begin{center}
\includegraphics[scale = .5]{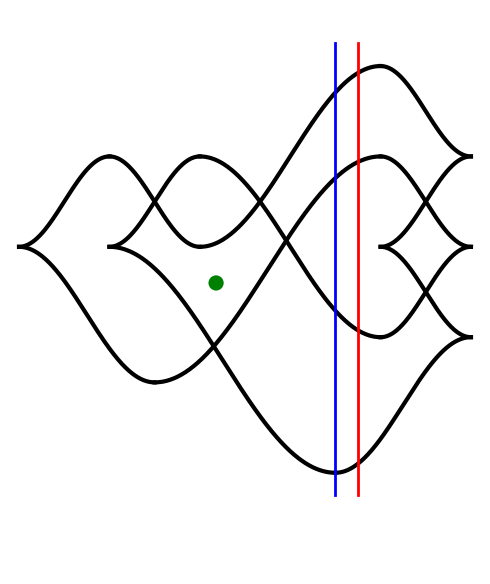}
\includegraphics[scale = .5]{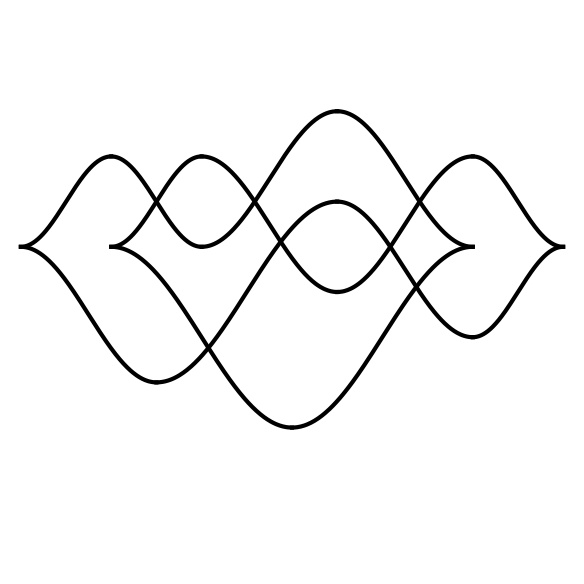}
\caption{Front diagrams $\Phi_1$ (left) and $\Phi_2$ (right) for the Chekanov pair.}\label{fig:cepair}
\end{center}
\end{figure}
Famously, the C-E dga differentiates this pair. We show now that 
the category $\cC_1$ does so, too:  $\cC_1(\Phi_1)\not\cong\cC_1(\Phi_2).$

Note first that $\Phi_2$ is binary Maslov, so every object in $\cC_1(\Phi_2)$
is equivalent to an ordinary sheaf, concentrated in degree zero.
As a result, 
$\Ext^m(M,M)=0$ for $m<0.$
To show $\cC_1(\Phi)\not\cong \cC_1(\Phi_2)$ we 
produce an $\cF \in \cC_1(\Phi_1)$ with  $\Ext^{-1}( \cF,\cF) \neq 0.$

\begin{proposition}
There exists an object $\cF \in \cC_1(\Phi_1)$ with $\Ext^{-1}(\cF,\cF) \neq 0.$
\end{proposition}
\begin{proof}

On the diagram $\Phi_1$ of Figure \ref{fig:cepair},
there is a unique graded normal ruling.  Its switches are 
on the top and bottom of the diamond at the right of the diagram; we denote
these points $t$ and $b$ respectively.  
The resulting
ruling has
three eyes:  the diamond $D$, an eye $T$ touching the top of the diamond
(and containing the left-most left cusp), and an eye $B$ touching the bottom of the diamond.

We write $\cE_D, \cE_T, \cE_B$ for the corresponding eye sheaves. 
Since $t = T \cap D$ and $b = B \cap D$, it is easy to calculate the following hom sheaves: 

\begin{eqnarray*} 
\cH om^\bullet(\cE_T, \cE_D) & = & k_t[-2] \\
\cH om^\bullet(\cE_D,  \cE_B) & = & k_b[-2] 
\end{eqnarray*}

It follows that $\Ext^1(\cE_D, \cE_T[1]) = k$.  As with Lemma \ref{lem:ssswitch}, if we take a nontrivial extension class,
the corresponding sheaf $\cG$ satisfies the appropriate singular support condition at $t$.  In a neighborhood
of $b$, we have
$\cH om^\bullet(\cE_B, \cG) = \cH om^\bullet(\cE_B,  \cE_D) = k_b[-2]$, again taking the image
in $\Ext^1(\cG, \cE_B[-1])$ of a nontrivial extension class in $\Ext^1(\cE_D,  \cE_B)$ gives a sheaf $\cF$ which satisfies
the singular support condition both at $t$ and $b$.  Everywhere else the singular support conditions were satisfied
by the constituents $\cE_T, \cE_D, \cE_B$, so we conclude 
that $\cF \in  \cC_1(\Phi_1)$. 

Let $L$ be the open set to the left of the red line, and $R$ the open set to the right of the blue line.  
We will construct an element of $\Ext^{-1}(\cF, \cF)$ by constructing a nonzero element of
$\Ext^{-1}(\cF|_L, \cF|_L)$ which restricts to zero in $\Ext^{-1}(\cF|_{L \cap R}, \cF|_{L \cap R})$
and hence can be glued to the zero element of $\Ext^{-1}(\cF|_R, \cF|_R)$. 

But $\cF|_L = \cE_T[1] \oplus \cE_B[-1]$, and so in particular we have 
\begin{eqnarray*}
\cH om^\bullet(\cF, \cF)|_{L \cap R} & = &  \cH om^\bullet(\cE_T[1] \oplus \cE_B[-1], \cE_T[1] \oplus \cE_B[-1])|_{L \cap R} \\
& = & \cH om^\bullet(\cE_T, \cE_T)|_{L \cap R} \oplus  \cH om^\bullet(\cE_B, \cE_B)|_{L \cap R} 
\end{eqnarray*}
because the eyes are disjoint in this region. 

It will therefore suffice to produce a nonzero element of the summand $\Ext^{-1}(\cE_B[-1]|_{L}, \cE_T[1]|_{L}) = 
\Ext^1(\cE_B|_{L}, \cE_T|_{L})$ of $\Ext^{-1}(\cF|_{L}, \cF|_{L})$, since this will certainly restrict to zero
in $\Hom^\bullet(\cF, \cF)|_{L \cap R}$.  The sheaf $\cH om(\cE_B|_{L}, \cE_T|_{L})$ is supported on the heart-shaped
intersection of these two eyes --- the region indicated by the green dot in
Figure \ref{fig:cepair} (left) ---
and is constant on the interior with standard boundary conditions along two intervals
and co-standard boundary conditions along another two.  Thus 
$\Hom(\cE_B|_{L}, \cE_T|_{L})$ is the cohomology of a disc relative two disjoint intervals on the boundary,
which has a nonzero class in $H^1$. 
\end{proof}

\end{document}